\documentclass[12pt]{amsart}
\usepackage{txfonts}      
\usepackage{amssymb}
\usepackage{eucal}
\usepackage{amsmath}
\usepackage{amscd}
\usepackage{xcolor}
\usepackage{multicol}
\usepackage[all]{xy}           
\usepackage{graphicx}
\usepackage{color}
\usepackage{colordvi}
\usepackage{xspace}
\usepackage{amssymb}
\usepackage{tikz}
\usepackage{makecell}
\usepackage{appendix}
\usepackage{enumerate,enumitem}
\usepackage{amsthm}
\usepackage[thicklines]{cancel}
\usepackage[compress]{cite}
\usepackage{ifpdf}
\ifpdf
\usepackage[colorlinks,final,backref=page,hyperindex]{hyperref}
\else
\usepackage[colorlinks,final,backref=page,hyperindex,hypertex]{hyperref}
\fi

\usepackage[active]{srcltx} 
\topmargin -.8cm \textheight 21.6cm \oddsidemargin 0cm
\evensidemargin -0cm \textwidth 16cm

\newtheorem{thm}{Theorem}[section]
\newtheorem{lem}[thm]{Lemma}
\newtheorem{cor}[thm]{Corollary}
\newtheorem{pro}[thm]{Proposition}
\theoremstyle{definition}
\newtheorem{defi}[thm]{Definition}
\newtheorem{ex}[thm]{Example}
\newtheorem{rmk}[thm]{Remark}



\title[ Anti-pre-Poisson bialgebras and relative Rota-Baxter operators] {Anti-pre-Poisson bialgebras and relative Rota-Baxter operators}

\author{Qinxiu Sun}
\address{Department of Mathematics, Zhejiang University of Science and Technology, Hangzhou, 310023} \email{qxsun@126.com}
\author{Min Wu}
\address{Department of Mathematics, Zhejiang University of Science and Technology, Hangzhou, 310023} \email{wuminzju@163.com}

\subjclass[2020]{17A30, 17A36, 17B38, 17B40, 16T10}

\keywords{ anti-Pre-Poisson algebra, anti-pre-Poisson Yang-Baxter equation, quasi-triangular anti-pre-Poisson bialgebra,
 factorizable anti-pre-Poisson bialgebra, relative Rota-Baxter operator}
\begin{document}
\begin{abstract}
	
 In this paper, we first introduce the notion of an anti-pre-Poisson bialgebra, which is shown to be equivalent to
  both quadratic anti-pre-Poisson algebras and matched pairs of Poisson algebras. 
  The study of coboundary anti-pre-Poisson bialgebras leads to the anti-pre-Poisson Yang-Baxter equation (APP-YBE).
   Skew-symmetric solutions of this equation give rise to coboundary anti-pre-Poisson bialgebras.
Furthermore, we investigate how solutions without skew-symmetry can also induce such bialgebras, 
prompting the introduction of quasi-triangular and factorizable anti-pre-Poisson bialgebras. In particular,
 solutions of the APP-YBE whose symmetric parts are invariant induce a quasi-triangular anti-pre-Poisson bialgebra. 
 Such solutions are also interpreted as relative Rota-Baxter operators with weights.
Finally, we establish a one-to-one correspondence between quadratic Rota-Baxter anti-pre-Poisson algebras
 and factorizable anti-pre-Poisson bialgebras.
 
\end{abstract}

\maketitle

\vspace{-1.2cm}

\tableofcontents

\vspace{-1.2cm}

\allowdisplaybreaks

\section{Introduction}

Pre-Lie algebras \cite{4} serve as the underlying algebraic structures of symplectic forms on Lie algebras.
 A symmetric analogue of a symplectic form on a Lie algebra is a non-degenerate commutative 2-cocycle \cite{9}. 
 The notion of anti-pre-Lie algebras was introduced in \cite{12} as the underlying algebraic structures of such non-degenerate 
 commutative 2-cocycles on Lie algebras.
These can be viewed as the ''anti-structures'' "anti-structures" of pre-Lie algebras and are characterized as Lie-admissible algebras
 whose negative left multiplication operators yield representations of the commutator Lie algebras.
Anti-pre-Lie algebras are closely connected to various other algebraic structures, including transposed Poisson algebras \cite{5} 
and differential algebras \cite{7}. Builds upon these foundations, Gao, Liu, and Bai introduced anti-dendriform algebras in \cite{10}.
These retain the property of splitting associativity: the sum of their two bilinear operations forms an associative algebra.
However, unlike dendriform algebras, where the left and right multiplication operators 
furnish a representation of the total associative algebra, while in an anti-dendriform algebra, 
it is the negatives of these operators that constitute a representation of the sum associative algebra.
 In particular, for an anti-dendriform algebra $(A, \succ, \prec)$, 
 if the relation $x \succ y = y \prec x$ holds for all $x, y \in A$, then $(A, \ast = \succ)$ is called an anti-Zinbiel algebra \cite{14}.
Notably, there exists a close relationship between anti-dendriform algebras and anti-pre-Lie algebras. 
The study of anti-dendriform algebras and their connections to Novikov-type algebras establishes a foundational framework relevant 
to anti-pre-Lie algebras, as demonstrated in \cite{10}. Specifically, the following commutative diagram holds:
\begin{displaymath}
\xymatrix{
 \text{anti-dendriform algebras} \ar[d] \ar[r] & \text{anti-pre-Lie algebras}  \ar[d] \\
\text{associative algebras} \ar[r] & \text{Lie algebras.}
}
\end{displaymath}
Bai and Liu integrated the structures of anti-Zinbiel algebras and anti-pre-Lie algebras on the same vector space, 
thereby introducing the notion of an anti-pre-Poisson algebra \cite{14}, which bears a close relationship to Poisson algebras.
On the one hand, an anti-pre-Poisson algebra naturally gives rise to a Poisson algebra. 
This is realized through the sub-adjacent commutative associative algebra (from the anti-Zinbiel part)
 and the sub-adjacent Lie algebra (from the anti-pre-Lie part).
On the other hand, an anti-pre-Poisson algebra can also be constructed from an anti-$\mathcal{O}$-operator on a Poisson algebra.

 A bialgebraic structure comprises an algebra and a coalgebra equipped with certain compatibility conditions. 
 In the early 1980s, Drinfeld established the theory of Lie bialgebras \cite{8},
  which were found to have deep connections with the classical Yang-Baxter equation and classical integrable systems. 
  Subsequently, V. Zhelyabin introduced associative D-bialgebras in \cite{24,25}. 
  Aguiar later in \cite{1} studied antisymmetric infinitesimal bialgebras as the associative analogue of Lie bialgebras, 
  which are also equivalent to double constructions of Frobenius algebras \cite{3}. 
  Following the development of infinitesimal bialgebra theory \cite{1,3}, 
  similar bialgebraic frameworks have been extended to various other algebraic structures, 
  such as pre-Lie algebras \cite{2}, Leibniz algebras \cite{22}, 
   Novikov algebras \cite{11} and pre-Novikov algebras \cite{15}.
A Manin triple of Poisson algebras corresponds to a Poisson bialgebra \cite{17},
 offering a natural framework for constructing compatible Poisson brackets in integrable systems.
 However, the approach based on Manin triples with respect to invariant bilinear forms on
  both commutative associative algebras and Lie algebras is not well-suited for transposed Poisson algebras.
Alternatively, Bai and Liu developed bialgebra theories for anti-pre-Lie algebras, transposed Poisson algebras
 and anti-pre-Lie Poisson algebras in \cite{13}, via Manin triples constructed from commutative 2-cocycles on Lie algebras.
We have further established bialgebra theories for anti-dendriform algebras \cite{20} and anti-pre-Novikov algebras \cite{21}.

 Within the theory of Lie bialgebras, coboundary Lie bialgebras especially quasi-triangular Lie bialgebras,
 which play a fundamental role in mathematical physics. As a specialized subclass of quasi-triangular Lie bialgebras,
  factorizable Lie bialgebras provide a crucial link between classical $r$-matrices and certain factorization problems.
   They exhibit diverse applications in integrable systems, see \cite{16} and references therein.
  Recently, these results on factorizable and quasi-triangular structures have been successfully 
   extended to antisymmetric infinitesimal bialgebras \cite{18}, pre-Lie bialgebras \cite{23} and Leibniz bialgebras \cite{6}.
 
It is natural to investigate bialgebra theory for anti-pre-Poisson algebras,
 which serves as the primary motivation for this work. Specifically, we introduce the notion of an anti-pre-Poisson bialgebra. 
 The study of coboundary anti-pre-Poisson bialgebras leads to the anti-pre-Poisson Yang-Baxter equation (APP-YBE),
  where any skew-symmetric solution gives rise to an anti-pre-Poisson bialgebra. More importantly,
   we also examine how solutions without skew-symmetry can induce such bialgebras. In particular,
    we prove that solutions of the APP-YBE whose symmetric parts are invariant yield quasi-triangular anti-pre-Poisson bialgebras.
    Furthermore, we consider factorizable anti-pre-Poisson bialgebras as a special subclass of
     quasi-triangular anti-pre-Poisson bialgebras.
     We show that the double space of any anti-pre-Poisson bialgebra naturally carries a factorizable structure. 
  Finally, we characterize solutions of the APP-YBE with invariant symmetric parts in terms of 
  relative Rota-Baxter operators on anti-pre-Poisson algebras.

 The paper is organized as follows. In Section 2, we review fundamental concepts and results related 
 to anti-pre-Lie algebras and anti-Zinbiel algebras. In particular, we study the representations of anti-pre-Poisson algebras.
  Section 3 builds upon these foundations to develop a bialgebra theory for anti-pre-Poisson algebras. 
  By examining the coboundary case, we introduce the anti-pre-Poisson Yang–Baxter equation (APP-YBE), 
  whose skew-symmetric solutions yield anti-pre-Poisson bialgebras.
  We further define the notion of $\mathcal{O}$-operators on anti-pre-Poisson algebras and use them to construct 
  skew-symmetric solutions of the APP-YBE. In Section 4, we explore quasi-triangular and factorizable anti-pre-Poisson bialgebras.
  We prove that the double of any anti-pre-Poisson bialgebra naturally carries a factorizable structure. 
  Finally, in Section 5, we introduce quadratic Rota-Baxter anti-pre-Poisson algebras and 
  establish their correspondence with factorizable anti-pre-Poisson bialgebras.
 
 {\bf Notations.} Throughout the paper, $k$ is a field.  All vector spaces and algebras are over $k$. 
 All algebras are finite-dimensional, although many results still hold in the infinite-dimensional case.
 Let $V$ be a vector space with a binary operation $\ast$. Define linear maps
$L_{\ast}, R_{\ast},\mathrm{ad}:V\rightarrow \hbox{End}(V)$ by
 $L_{\ast}(a)b:=a\ast b, \  \ R_{\ast}(a)b:=b\ast a, \ \ \mathrm{ad}(a)b=a\ast b-b\ast a$~ for all$~a, b\in V$.
Assume that
 $r=\sum\limits_{i}a_i\otimes b_i \in V\otimes V$. Put
 \begin{small}
\begin{align*}
r_{12}\ast r_{13}:=\sum_{i,j}a_i\ast a_j\otimes b_i\otimes b_j,\;r_{23}\ast r_{12}:=\sum_{i,j}a_j\otimes a_i\ast b_j\otimes b_i,\;
r_{31}\ast r_{23}:=\sum_{i,j}b_i\otimes a_j\otimes a_i\ast b_j,\\
r_{21}\ast r_{13}:=\sum_{i,j}b_i\ast a_j\otimes a_i\otimes b_j,\;
r_{32}\ast r_{21}:=\sum_{i,j}b_j\otimes b_i\ast a_j\otimes a_i,\;
r_{31}\ast r_{32}:=\sum_{i,j}b_i\otimes b_j\otimes a_i\ast a_j,\\
r_{13}\ast r_{32}:=\sum_{i,j}a_i\otimes b_j\otimes b_i\ast a_j,\;
r_{23}\ast r_{21}:=\sum_{i,j}b_j\otimes a_i\ast a_j\otimes b_i,\;
r_{21}\ast r_{31}:=\sum_{i,j}b_i\ast b_j\otimes a_i\otimes a_j,\\
r_{23}\ast r_{13}:=\sum_{i,j}a_i \otimes a_j \otimes b_i\ast b_j,\;
r_{12}\ast r_{31}:=\sum_{i,j}a_i\ast b_j\otimes b_i\otimes a_j.
\end{align*}\end{small}

\section{Representation of anti-pre-Poisson algebras}
This section commences with a review of the foundational properties of anti-Zinbiel 
and anti-pre-Lie algebras, laid out in \cite{10,12,13,20}.

An {\bf anti-dendriform algebra} is a vector space $A$ together with two bilinear maps $\succ,\prec:A\times A\longrightarrow A$ such that
 \begin{align*}&x\succ (y\succ z)=-(x\cdot y)\succ z=-x\prec (y\cdot z)=(x\prec y)\prec z,\\&
(x\succ y)\prec z=x\succ (y\prec z),\end{align*} 
  for all $x,y,z\in A$, where $x\cdot y= x\succ y+x\prec y$. $(A,\cdot)$ is an associative algebra, which is called the associated associative 
  algebra of the anti-dendriform algebra $(A,\succ,\prec)$. Furthermore, $(A,\succ,\prec)$ is called the compatible 
  anti-dendriform algebra structure on $(A,\cdot)$. When $x\succ y=y\prec x=x\ast y$, $(A,\ast)$ is called an anti-Zinbiel algebra. 
  More precisely, an anti-Zinbiel algebra is a vector space $A$ together with a bilinear map $\ast:A\times A\longrightarrow A$ such that
\begin{align*}&x\ast (y\ast z)=-(x\ast y+y\ast x)\ast z=- (y\ast z+z\ast y)\ast x=z\ast(y\ast x),
\\& z\ast(x\ast y)=x\ast(z\ast y),\end{align*}  for all $x,y,z\in A$.
 Denote $x\star y= x\ast y+y\ast x$, $(A,\star)$ is a commutative associative algebra, which is called the associated associative 
  algebra of the anti-Zinbiel algebra $(A,\ast)$. Furthermore, $(A,\ast)$ is called the compatible 
  anti-Zinbiel algebra structure on $(A,\star)$.

 Let $(A, \cdot)$ be a commutative associative algebra and $\omega$ be a bilinear form on $(A, \cdot)$. If $\omega$ is
symmetric and satisfies 
\begin{equation}\omega(x \cdot y, z) + \omega(y \cdot z, x)+ \omega(z \cdot x, y)=0, \forall~x, y, z \in A,\end{equation}
 then $\omega$ is called a commutative Connes cocycle \cite{10}.
 
 Let $(A,\ast)$ be an anti-Zinbiel algebra. A bilinear form $\omega$ on $(A,\ast)$ is
called invariant if
\begin{equation} \label{C1} \omega(x \ast y, z)=-\omega(x, y\star z), ~\forall~x, y, z \in A.\end{equation}

\begin{pro} \cite{10}\label{Qa1}
Let $(A,\ast)$ be an anti-Zinbiel algebra and $\omega$ be a symmetric invariant
bilinear form on $(A,\ast)$. Then $\omega$ is a commutative Connes cocycle on the associated commutative associative
algebra $(A,\star)$. Conversely, assume that $(A,\star)$ is a commutative associative algebra and $\omega$ is a non-degenerate
commutative Connes cocycle on $(A,\star)$. Then 
$\omega$ is invariant on the compatible anti-Zinbiel
algebra $(A,\ast)$ defined by Eq.~ (\ref{C1}).
\end{pro}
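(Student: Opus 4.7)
The plan is to prove the two directions by dualizing the same manipulation: invariance on $(A,\ast)$ transforms into the commutative Connes cocycle of $(A,\star)$ via symmetry of $\omega$ and commutativity of $\star$, and conversely.

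For the forward direction, I would expand $\omega(x\star y,z)=\omega(x\ast y,z)+\omega(y\ast x,z)$ and apply the invariance condition \eqref{C1} to each summand, producing $-\omega(x,y\star z)-\omega(y,x\star z)$. Then symmetry of $\omega$ and commutativity of $\star$ rewrite this as $-\omega(y\star z,x)-\omega(z\star x,y)$. After transposition this is precisely the commutative Connes cocycle identity $\omega(x\star y,z)+\omega(y\star z,x)+\omega(z\star x,y)=0$ on $(A,\star)$.

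For the backward direction, since $\omega$ is non-degenerate and $A$ is finite-dimensional, the equation $\omega(x\ast y,z)=-\omega(x,y\star z)$ uniquely defines a bilinear operation $\ast$ on $A$, and by construction $\omega$ is invariant on $(A,\ast)$. What remains is to show that $(A,\ast)$ is anti-Zinbiel and is compatible with $\star$, i.e.\ $x\ast y+y\ast x=x\star y$. Compatibility is obtained by running the forward computation in reverse: pair $x\ast y+y\ast x$ against an arbitrary $z$ via $\omega$, apply the defining invariance twice, invoke the Connes cocycle to reassemble the three cyclic terms into $\omega(x\star y,z)$, and then conclude by non-degeneracy. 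The two anti-Zinbiel axioms are then verified by the same strategy: pair each side of an axiom with an arbitrary $w\in A$, iteratively push every $\ast$ off the first slot of $\omega$ using the defining invariance, and reduce the resulting relation to an identity in the commutative associative algebra $(A,\star)$ combined with the Connes cocycle.

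The main obstacle is the bookkeeping in this last step. Each anti-Zinbiel axiom unfolds under $\omega$ into several nested expressions involving both $\ast$ and $\star$, and the cancellations depend on repeated application of symmetry, commutativity, associativity, and the cyclic Connes cocycle identity. Once a systematic normal form is adopted---for instance, always moving $\ast$-products into the right slot of $\omega$ until only $\star$-products remain---the verification becomes routine; but tracking the combinatorics, especially for the four-term chain of equalities in the first anti-Zinbiel axiom, is where care is required.
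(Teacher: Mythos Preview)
The paper does not supply its own proof of this proposition: it is stated with a citation to \cite{10} and then used without argument. So there is no in-paper proof to compare against, and your proposal stands on its own.

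Your forward direction is correct and complete as written. Expanding $\omega(x\star y,z)$ via $\star=\ast+\ast^{\mathrm{op}}$, applying \eqref{C1} to each summand, and then using symmetry of $\omega$ and commutativity of $\star$ gives exactly the cyclic Connes identity.

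For the converse, your outline is the standard one and is sound. The invariance of $\omega$ on the new $\ast$ holds by definition, and your argument for compatibility $x\ast y+y\ast x=x\star y$ (pair against $z$, unwind with \eqref{C1} twice, and close the cycle with the Connes cocycle) is exactly the forward computation read backward together with non-degeneracy. The remaining verification of the anti-Zinbiel axioms is, as you say, bookkeeping: each pairing $\omega(\text{axiom},w)$ reduces, after repeated use of \eqref{C1}, symmetry, and the Connes cocycle, to an identity in the commutative associative algebra $(A,\star)$. One small suggestion: rather than attacking the four-term chain directly, it is cleaner to first check the symmetric-looking identities $z\ast(x\ast y)=x\ast(z\ast y)$ and $x\ast(y\ast z)=-(x\star y)\ast z$ separately; the remaining equalities in the chain then follow from these together with compatibility and commutativity of $\star$. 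This cuts down the combinatorics considerably.
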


 A {\bf representation (bimodule)} of an anti-Zinbiel algebra $(A,\ast)$ is a triple
  $(V,l_{\ast}, r_{\ast})$, 
 where $V$ is a
vector space and $l_{\ast}, r_{\ast}: A \longrightarrow \hbox{End}
(V)$ are two linear maps satisfying the following relations, for all
$x,y\in A$,
 \begin{align*}&l_{\ast}(x)l_{\ast}(y)=-l_{\ast}(x\star y)=-r_{\ast}(x)l_{\star}(y)=r_{\ast}(y\ast x),\\&
l_{\ast}(x)r_{\ast}(y)=-r_{\ast}(y)l_{\star}(x)=-r_{\ast}(x)l_{\star}(y)=l_{\ast}(y)r_{\ast} (x),\\&
r_{\ast}(x\ast y)=l_{\ast}(x)r_{\ast}(y), \ \ \ 
l_{\ast}(y)l_{\ast}(x)=l_{\ast}(x)l_{\ast} (y),\end{align*} 
where $l_{\star}(x)=l_{\ast}(x)+r_{\ast}(x)$ and $x\star y=x\ast y+y\ast x$.

Let $A$ and $V$ be vector spaces. For
a linear map $f: A \longrightarrow \hbox{End} (V)$, define a linear
map $f^{*}: A \longrightarrow \hbox{End} (V^{*})$ by $\langle
f^{*}(x)u^{*},v\rangle=-\langle u^{*},f(x)v\rangle$ for all $x\in A,
u^{*}\in V^{*}, v\in V$, where $\langle \ , \ \rangle$ is the usual
pairing between $V$ and $V^{*}$.

\begin{pro} \cite{20} \label{zr} Let $(V,l_{\ast}, r_{\ast})$ be a representation of an anti-Zinbiel algebra $(A,\ast)$.
 Then
\begin{enumerate}
	\item $(V,-l_{\ast})$ is a representation of the associated
 associative algebra $(A,\star)$.
 \item $(V,l_{\ast}+r_{\ast})$ is a representation of the associated
 associative algebra $(A,\star)$.
	\item $(V^{*},l_{\ast}^{*}+r_{\ast}^{*},-r_{\ast}^{*})$ is also a
	representation of $(A,\ast)$. We call it the {\bf dual representation}.
	\item $(V^{*},l_{\ast}^{*})$ is a
	representation of $(A,\star)$.
\item $(V^{*},-l_{\ast}^{*}-r_{\ast}^{*})$ is a
	representation of $(A,\star)$. 
\end{enumerate}
\end{pro}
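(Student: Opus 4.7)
My plan is to verify each of the five claimed representations in turn, using the defining axioms of an anti-Zinbiel representation applied to $(V,l_{\ast},r_{\ast})$, together with the pairing convention $\langle f^{*}(x)u^{*},v\rangle=-\langle u^{*},f(x)v\rangle$. I expect parts (1), (2), (4), (5) to be short direct checks, while (3) will be the main computational obstacle.

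For (1), the first defining axiom $l_{\ast}(x)l_{\ast}(y)=-l_{\ast}(x\star y)$ rewrites as $(-l_{\ast})(x)(-l_{\ast})(y)=(-l_{\ast})(x\star y)$, which is precisely the statement that $-l_{\ast}$ is an associative action of $(A,\star)$. For (2), expanding $(l_{\ast}+r_{\ast})(x)(l_{\ast}+r_{\ast})(y)$ into four terms and applying the chain $l_{\ast}(x)r_{\ast}(y)=l_{\ast}(y)r_{\ast}(x)=-r_{\ast}(y)l_{\star}(x)$ from the second axiom, together with $r_{\ast}(x\ast y)=l_{\ast}(x)r_{\ast}(y)$ and the first axiom, collapses the sum to $(l_{\ast}+r_{\ast})(x\star y)$. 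For (4) and (5), I will invoke a short general observation: if $(V,\rho)$ is a representation of the commutative associative algebra $(A,\star)$, then the sign convention gives $\rho^{*}(x)\rho^{*}(y)=-\rho^{*}(x\star y)$, hence $(V^{*},-\rho^{*})$ is again a representation of $(A,\star)$. Applied to (1) with $\rho=-l_{\ast}$ this yields $(V^{*},l_{\ast}^{*})$, proving (4); applied to (2) with $\rho=l_{\ast}+r_{\ast}$ it yields $(V^{*},-l_{\ast}^{*}-r_{\ast}^{*})$, proving (5).

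The main work is (3). Writing $L:=l_{\ast}^{*}+r_{\ast}^{*}$ and $R:=-r_{\ast}^{*}$, so that $L+R$ equals $l_{\ast}^{*}$, I need to verify the four groups of anti-Zinbiel representation identities for $(V^{*},L,R)$. The commutativity axiom $L(x)L(y)=L(y)L(x)$ follows from part (2) together with the general dualization observation above. Each remaining axiom becomes, after pairing with an arbitrary $v\in V$ via the sign convention, a dual form of an identity already available for $(V,l_{\ast},r_{\ast})$; the repeated use of $r_{\ast}(x\ast y)=l_{\ast}(x)r_{\ast}(y)=l_{\ast}(x)l_{\ast}(y)$, where the last equality combines the fourth and first axioms, lets one convert between the $l_{\ast}$- and $r_{\ast}$-versions of each term. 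The only real obstacle is disciplined sign bookkeeping, since the minus sign in the definition of $f^{*}$ interacts with the several built-in minus signs of the anti-Zinbiel axioms, and one must align them carefully to match the exact maps $l_{\ast}^{*}+r_{\ast}^{*}$ and $-r_{\ast}^{*}$ claimed in (3).
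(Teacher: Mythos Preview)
The paper does not supply a proof of this proposition at all: it is stated with a citation to \cite{20} and no argument is given in the present paper. Consequently there is nothing to compare at the level of technique; what matters is whether your direct verification is sound, and it is.

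Your treatment of (1), (2), (4), (5) is correct. The general observation that for a commutative associative representation $(V,\rho)$ one has $\rho^{*}(x)\rho^{*}(y)=-\rho^{*}(x\star y)$, hence $(V^{*},-\rho^{*})$ is again a representation, is exactly the right device to deduce (4) from (1) and (5) from (2). For (3), your plan of pairing each axiom for $(V^{*},L,R)$ against an arbitrary $v\in V$ and reducing to the defining identities of $(V,l_{\ast},r_{\ast})$ is the standard and correct route; the checks go through as you indicate. One small correction: the identity $l_{\ast}(x)r_{\ast}(y)=l_{\ast}(x)l_{\ast}(y)$ that you invoke does hold, but it follows by comparing the first and second axiom blocks (both contain the term $-r_{\ast}(x)l_{\star}(y)$), not from the fourth and first as you wrote. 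This does not affect the validity of your argument.
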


An {\bf anti-pre-Lie algebra} is a vector space $A$ together with a bilinear map $\circ:A \times A \longrightarrow A$ such that
	\begin{align*}
		&x\circ(y\circ z)-y\circ(x\circ z)=[y,x]\circ z,\\&
[x,y]\circ z+[y,z]\circ x+[z,x]\circ y=0,
	\end{align*} 
 for all $x,y,z\in A$, where $[x, y]=x\circ y -y \circ x$. $(A, [ \ , \ ])$ is a Lie algebra, which is called the {\bf sub-adjacent Lie algebra} 
of $(A,\circ)$ and is denoted by $\frak g(A)$. Moreover, $(A,\circ)$ is called a {\bf compatible anti-pre-Lie algebra} of $(A, [ \ , \ ])$.

Recall that a commutative 2-cocycle \cite{9} on a Lie algebra $(g, [ \ , \ ])$ is
a symmetric bilinear form $\omega$ such that
\begin{equation} \omega([x, y], z) + \omega([y, z], x) + \omega([z, x], y) = 0, ~\forall~x, y, z\in g.\end{equation}

Let $(A, \circ)$ be an anti-pre-Lie algebra. A bilinear form $\omega$ on $(A,\circ)$ is
called invariant if
\begin{equation} \label{C2}
	\omega(x\circ y, z)=\omega(y,[x, z]), ~~\forall~x, y, z\in g. 
\end{equation}

\begin{pro} \cite{12}\label{Qa2}
Let $(A,\circ)$ be an anti-pre-Lie algebra and $\omega$ be a symmetric invariant
bilinear form on $(A,\circ)$. Then $\omega$ is a commutative 2-cocycle on the associated Lie
algebra $(A,[ \ , \ ])$. Conversely, suppose that $(A,[ \ , \ ])$ is a Lie algebra and $\omega$ is a non-degenerate
commutative 2-cocycle on $(A,[ \ , \ ])$ . Then $\omega$ is invariant on the compatible anti-pre-Lie
algebra $(A,\circ)$ defined by Eq~(\ref{C2}).
\end{pro}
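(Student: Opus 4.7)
The plan is to prove both directions by pairing with $\omega$. For the forward implication, writing $[x,y] = x\circ y - y\circ x$ and applying symmetry of $\omega$ together with the invariance Eq.~(\ref{C2}) gives
\[
\omega([x,y], z) = \omega(y, [x,z]) - \omega(x, [y,z]).
\]
Cyclically summing in $(x,y,z)$ and using antisymmetry of $[\ ,\ ]$ collapses the result to twice $\omega(y,[x,z]) - \omega(x,[y,z]) - \omega(z,[x,y])$; this last expression vanishes because a further application of symmetry and invariance yields $\omega(y,[x,z]) = \omega(z,[x,y]) + \omega(x,[y,z])$. This establishes the commutative 2-cocycle identity.

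For the converse, non-degeneracy lets me \emph{define} $x\circ y$ as the unique element satisfying $\omega(x\circ y, z) = \omega(y, [x, z])$ for all $z$. I would first verify the compatibility $x\circ y - y\circ x = [x,y]$: testing against $z$ gives $\omega(y,[x,z]) - \omega(x,[y,z])$, and this equals $\omega([x,y], z)$ by the commutative 2-cocycle identity (rewritten via symmetry of $\omega$); non-degeneracy then closes the loop. The first anti-pre-Lie axiom is then almost automatic, since pairing $x\circ(y\circ z) - y\circ(x\circ z) - [y,x]\circ z$ against $w$ and applying invariance twice reduces it to $\omega(z,\, [y,[x,w]] - [x,[y,w]] - [[y,x], w])$, which vanishes by the Jacobi identity.

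The principal obstacle is the second anti-pre-Lie axiom $[x,y]\circ z + [y,z]\circ x + [z,x]\circ y = 0$. Pairing with $\omega(\cdot, w)$ reduces this to showing
\[
S := \omega(z, [[x,y], w]) + \omega(x, [[y,z], w]) + \omega(y, [[z,x], w]) = 0.
\]
My strategy is to derive two different expressions for $S$. First, expanding each outer bracket via Jacobi, $[[x,y], w] = [x,[y,w]] + [y,[w,x]]$ together with its cyclic analogues, and then applying the 2-cocycle to each resulting two-term block, produces $S = -V$, where
\[
V := \omega([y,w], [z,x]) + \omega([x,w], [y,z]) + \omega([z,w], [x,y]).
\]
Second, applying the 2-cocycle directly to each summand of $V$ with triples such as $(y, w, [z,x])$ yields $V = S - \omega\!\bigl(w,\, [[z,x], y] + [[y,z], x] + [[x,y], z]\bigr)$, and the correction term vanishes by Jacobi; so $V = S$. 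Combining gives $S = -S$, whence $S = 0$ (assuming $\mathrm{char}\, k \neq 2$). This double deployment of the commutative 2-cocycle identity with Jacobi intervening in different slots is the subtle step of the proof.
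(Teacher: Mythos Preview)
Your proof is correct. The paper itself does not prove this proposition at all: it is quoted as a result from \cite{12} and stated without argument, so there is no ``paper's proof'' to compare against. Your forward direction is clean; in fact the parenthesised expression $\omega(y,[x,z]) - \omega(x,[y,z]) - \omega(z,[x,y])$ vanishes directly by the identity you derive, so no factor of~$2$ needs to be divided out there. For the converse, your handling of the second anti-pre-Lie axiom is the genuinely nontrivial step, and your two computations are both valid: pairing the six Jacobi-expanded terms into three blocks of the shape $\omega(z,[x,[y,w]]) + \omega(x,[z,[w,y]])$ and applying the 2-cocycle to each block does give $S=-V$, while applying the 2-cocycle to each summand of $V$ with triples $(y,w,[z,x])$ etc.\ gives $V=S$; together these force $2S=0$.

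One remark: the assumption $\mathrm{char}\,k\neq 2$ that you flag at the end is genuinely used in your argument for the second axiom, and the paper works over an arbitrary field~$k$. If you want a characteristic-free route, you could instead observe that cyclically summing the first anti-pre-Lie axiom (already established via Jacobi) yields $x\circ[y,z]+y\circ[z,x]+z\circ[x,y] = -\bigl([x,y]\circ z+[y,z]\circ x+[z,x]\circ y\bigr)$; pairing both sides with $\omega(-,w)$ gives $V=-S$, which is exactly your first relation obtained by a different path and still leaves you needing $S=V$ (hence the factor of~$2$). So the characteristic restriction appears to be intrinsic to this style of argument; whether \cite{12} avoids it or simply assumes $\mathrm{char}\,k\neq 2$ is worth checking if you care about full generality.
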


 A representation of an anti-pre-Lie algebra $(A, \circ)$ is a triple
$(V,l_{\circ},r_{\circ} )$, such that $V$ is a vector space and $l_{\circ}, r_{\circ}: A \longrightarrow \hbox{End}(V)$ 
 are linear maps for all $x, y \in A$ satisfying:
	\begin{align*}&l_{\circ}(y \circ x)-l_{\circ}(x \circ y)=l_{\circ} (x)l_{\circ}(y)-l_{\circ}(y)l_{\circ}(x), \\&
r_{\circ}(x \circ y) = l_{\circ}(x)r_{\circ}(y) + r_{\circ}(y)l_{\circ}(x) -r_{\circ}(y)r_{\circ}(x), \\&
l_{\circ}(y \circ x)-l_{\circ}(x \circ y) = r_{\circ}(x)l_{\circ}(y)-r_{\circ}(y)l_{\circ}(x) -r_{\circ}(x)r_{\circ}(y)+r_{\circ}(y)r_{\circ}(x).\end{align*}

\begin{pro}\cite{12}\label{pr} Let $(V,l_{\circ},r_{\circ} )$ be a representation of an anti-pre-Lie algebra $(A, \circ)$.
\begin{enumerate}
\item $(V,-l_{\circ})$ and $(V,l_{\circ}-r_{\circ})$ are representations of the sub-adjacent Lie algebra $(\frak g(A), [ \ , \ ])$.
\item $(V^{*},r_{\circ}^{*}-l_{\circ}^{*},r^{*}_{\circ})$ is a representation of $(A,\circ)$.
\item $(V^{*},-l_{\circ}^{*})$ is a representation of the sub-adjacent Lie algebra $(\frak g(A), [ \ , \ ])$.
\end{enumerate}
\end{pro}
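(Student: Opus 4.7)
My plan is to verify each of the three statements directly against the defining axioms of representations, using only the three relations imposed on $(V,l_\circ,r_\circ)$ and the adjoint convention $\langle f^*(x)u^*,v\rangle=-\langle u^*,f(x)v\rangle$.

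For part (1), I would start with $(V,-l_\circ)$. Recall that a representation of a Lie algebra $(\mathfrak{g},[\,,\,])$ on $V$ is a linear map $\rho\colon\mathfrak{g}\to\mathrm{End}(V)$ satisfying $\rho([x,y])=\rho(x)\rho(y)-\rho(y)\rho(x)$. Taking $\rho=-l_\circ$ and using $[x,y]=x\circ y-y\circ x$, the required identity becomes $-l_\circ(x\circ y)+l_\circ(y\circ x)=l_\circ(x)l_\circ(y)-l_\circ(y)l_\circ(x)$, which is exactly the first defining axiom of the representation $(V,l_\circ,r_\circ)$. For $(V,l_\circ-r_\circ)$, set $\rho=l_\circ-r_\circ$ and expand $\rho([x,y])-\rho(x)\rho(y)+\rho(y)\rho(x)$. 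The $l_\circ l_\circ$ terms cancel thanks to the first axiom, and the remaining cross and $r_\circ r_\circ$ terms cancel by combining the second axiom (which rewrites $r_\circ(x\circ y)$ and $r_\circ(y\circ x)$) with the third axiom (which provides the identity relating $l_\circ(x\circ y)-l_\circ(y\circ x)$ to the mixed $l_\circ r_\circ,r_\circ l_\circ,r_\circ r_\circ$ products). Both verifications are essentially rearrangements of the three stated identities.

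For part (3), I would simply invoke (1): since $(V,-l_\circ)$ is a representation of the sub-adjacent Lie algebra $\mathfrak{g}(A)$, its dual $(V^*,(-l_\circ)^*)$ is again a representation of $\mathfrak{g}(A)$ by the standard duality for Lie algebra representations. Unwinding the sign convention $\langle(-l_\circ)^*(x)u^*,v\rangle=-\langle u^*,-l_\circ(x)v\rangle=\langle u^*,l_\circ(x)v\rangle=-\langle l_\circ^*(x)u^*,v\rangle$ gives $(-l_\circ)^*=-l_\circ^*$, which is the claim.

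Part (2) is the main computational burden, since I must verify all three anti-pre-Lie representation axioms for the pair $(\tilde l_\circ,\tilde r_\circ)=(r_\circ^*-l_\circ^*,\,r_\circ^*)$. My strategy is to translate each axiom into a statement about $l_\circ,r_\circ$ via the pairing: for any $u^*\in V^*$, $v\in V$, evaluate $\langle\bigl[\tilde l_\circ(y\circ x)-\tilde l_\circ(x\circ y)-\tilde l_\circ(x)\tilde l_\circ(y)+\tilde l_\circ(y)\tilde l_\circ(x)\bigr]u^*,v\rangle$ and analogously for the second and third axioms, and then use $\langle f^*(x)u^*,v\rangle=-\langle u^*,f(x)v\rangle$ to push everything onto $V$, where the three axioms for $(l_\circ,r_\circ)$ are available. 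The hard part will be the second anti-pre-Lie axiom, which mixes $\tilde r_\circ(x\circ y)=r_\circ^*(x\circ y)$ with the products $\tilde l_\circ(x)\tilde r_\circ(y),\tilde r_\circ(y)\tilde l_\circ(x),\tilde r_\circ(y)\tilde r_\circ(x)$; after dualising, this requires carefully reordering the compositions (remembering the order-reversal under $(\cdot)^*$) and then packaging the resulting combination of $l_\circ,r_\circ$ products as a consequence of all three original axioms. Once the bookkeeping is done, each identity reduces to a known axiom, completing the proof.
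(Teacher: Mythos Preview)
The paper does not supply its own proof of this proposition: it is quoted verbatim from \cite{12} as a known result, with no argument given. Your proposed direct verification against the representation axioms is the standard approach and is correct in outline; presumably this is also how \cite{12} proceeds, so there is nothing to compare.
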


Now we come to study the representation theory of anti-pre-Poisson algebras.

\begin{defi} \cite{1}
	An {\bf anti-pre-Poisson algebra} is a triple $(A,
\ast,\circ )$, where $(A, \ast )$ is an anti-Zinbiel algebra,
$(A,\circ )$ is an anti-pre-Lie algebra and the following conditions hold:
\begin{align}&\label{ppa eq1.1}(x\circ y-y \circ x)\ast z=y\ast( x\circ
z)-x\circ (y\ast z),\\&\label{ppa eq1.2}(x\ast y+y \ast x)\circ z=-x\ast (y\circ z)-y\ast( x\circ
z),
\\&\label{ppa eq1.3}z\circ (x\ast y+y \ast x)+x\ast (y\circ z)+y\ast( x\circ
z)=x\ast (z\circ y)+y\ast (z\circ x)
,\end{align}for all $x,y,z\in A$.\end{defi}

By Eq.~(\ref{ppa eq1.1}), we obtain
\begin{align}\label{ppa eq1.0}y\ast( x\circ
z)+x\ast (y\circ z)=x\circ (y\ast z)+y\circ( x\ast z).\end{align}

Let $(A, \ast,\circ )$ be an anti-pre-Poisson algebra. Define
	$$x\star y=x\ast y+y\ast x,~[x,y]=x\circ y-y\circ x,~\forall~x,y\in A.$$ Then
	$(A, \star,[ \ , \ ] )$ is a Poisson algebra, that is, $(A,\star)$ is a commutative associative algebra, 
$(A,[ \ , \ ])$ is  a Lie algebra and the following equation holds:
	\begin{align}\label{ppa eq1.01}
		[z,x\star y]=[z,x]\star y+x\star [z,y],\;\forall x,y,z\in A.
	\end{align}
	Moreover, $(A, \star,[ \ , \ ] )$ is called the {\bf sub-adjacent Poisson algebra} of $(A,\ast,\circ)$, 
and $(A,\ast,\circ)$ is called the {\bf compatible anti-pre-Poisson algebra} of $(A, \star,[ \ , \ ] )$.

\begin{ex} \label{Ae} Let $A$ be a vector space with a basis $\{e_1, e_2, e_3\}$. Define two bilinear
	maps $\ast,\circ:A \times A \longrightarrow A$ respectively by (only non-zero multiplications are listed):
	\begin{equation*}
		e_1\ast e_1=e_3,\ \ \ e_1\circ e_2=e_3. 
	\end{equation*}
 By direct calculation, $(A,\ast,\circ)$ is an anti-pre-Poisson algebra.
\end{ex}
	
	\begin{defi} 
		Let $(A,\ast,\circ)$ be an anti-pre-Poisson algebra,
		$V$ a vector space and $l_{\ast},r_{\ast},l_{\circ},r_{\circ}: A
		\longrightarrow \hbox{End} (V)$ be linear maps. Then
		$(V,l_{\ast},r_{\ast},l_{\circ},r_{\circ})$ is called a
		{\bf representation} of $(A,\ast,\circ)$ if $(V,l_{\ast},r_{\ast})$ is
		a representation of $(A,\ast)$, $(V,l_{\circ},r_{\circ})$ is a representation of
		$(A,\circ)$ and they satisfy the following compatible
		conditions for all $x,y\in A$:
		\begin{align}&\label{r1}l_{\ast}(x\circ y-y\circ x)=l_{\ast}(y)l_{\circ}(x)-l_{\circ}(x)l_{\ast}(y),\\&
		\label{r2}l_{\circ}(x\ast y+y\ast x)=-l_{\ast}(x)l_{\circ}(y)-l_{\ast}(y)l_{\circ}(x),\\&
		\label{r3}r_{\ast}(x)(l_{\circ}(y)-r_{\circ}(y))=r_{\circ}(y\ast x)-l_{\ast}(y)r_{\circ}(x),\\&
		\label{r4}r_{\circ}(x)(l_{\ast}(y)+r_{\ast}(y))=-r_{\ast}(y\circ x)-l_{\ast}(y)r_{\circ}(x),
	\\&\label{r5}r_{\ast}(x)(l_{\circ}(y)-r_{\circ}(y))=r_{\ast}(y\circ x)-l_{\circ}(y)r_{\ast}(x),
\\&\label{r6}r_{\circ}(x\ast y+y\ast x)+l_{\ast}(x)l_{\circ}(y)+l_{\ast}(y)l_{\circ}(x)=l_{\ast}(x)r_{\circ}(y)+l_{\ast}(y)r_{\circ}(x),
\\&\label{r7}l_{\circ}(x)(l_{\ast}(y)+r_{\ast}(y))+r_{\ast}(y\circ x)+l_{\ast}(y)r_{\circ}(x)=l_{\ast}(y)l_{\circ}(x)+r_{\ast}(x\circ y).\end{align}
	 \end{defi}
By Eq.~(\ref{r7}), we obtain
\begin{equation}\label{r8}
l_{\ast}(x)(l_{\circ}(y)-r_{\circ}(y))-l_{\circ}(y)(l_{\ast}(x)+r_{\ast}(x))
=r_{\ast}(x\circ y-y\circ x).\end{equation}

By Eqs.~(\ref{r1}) and (\ref{r2}), we obtain
\begin{equation}\label{r6}l_{\circ}(x\ast y+y\ast
x)=-l_{\circ}(y)l_{\ast}(x)-l_{\circ}(x)l_{\ast}(y).\end{equation}

By Eqs.~(\ref{r3}) and (\ref{r5}), we get
\begin{equation}\label{r9}r_{\ast}(y\circ x)-r_{\circ}(y\ast
x)= l_{\circ}(y)r_{\ast}(x)-l_{\ast}(y)r_{\circ}(x).\end{equation}

\begin{pro} \label{Dr} Let $(A,\ast,\circ)$ be an anti-pre-Poisson algebra
 and $(V,l_{\ast},r_{\ast} ,l_{\circ},r_{\circ})$ be a representation of
$(A,\ast,\circ)$. Then
\begin{enumerate}
	\item $(V^{*},l_{\ast}^{*}+r_{\ast}^{*},-r_{\ast}^{*},r_{\circ}^{*}-l_{\circ}^{*},r^{*}_{\circ})$
	is also a representation of $(A,\ast,\circ)$. We call it the {\bf dual
		representation} .
	\item $(V,-l_{\ast} ,-l_{\circ} )$ 
	is a representation of $(A,\star,[ \ ,\ ])$.
\item $(V,l_{\ast}+r_{\ast} ,l_{\circ}-r_{\circ} )$ 
	is a representation of $(A,\star,[ \ ,\ ])$.
\item $(V,l_{\ast}^{*} ,-l_{\circ}^{*} )$ 
	is a representation of $(A,\star,[ \ ,\ ])$.
\end{enumerate}
\end{pro}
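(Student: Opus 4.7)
The plan is to verify the four claims in turn. For (1), that $(V^{*},l_{\ast}^{*}+r_{\ast}^{*},-r_{\ast}^{*},r_{\circ}^{*}-l_{\circ}^{*},r^{*}_{\circ})$ is a representation of $(A,\ast,\circ)$ breaks naturally into three tasks: that $(V^{*},l_{\ast}^{*}+r_{\ast}^{*},-r_{\ast}^{*})$ is a representation of the anti-Zinbiel algebra $(A,\ast)$ (which is Proposition \ref{zr}(3)); that $(V^{*},r_{\circ}^{*}-l_{\circ}^{*},r_{\circ}^{*})$ is a representation of the anti-pre-Lie algebra $(A,\circ)$ (which is Proposition \ref{pr}(2)); and that the seven mixed compatibility identities \eqref{r1}--\eqref{r7} hold for the new five-tuple. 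For the third task, I would apply the defining relation $\langle f^{*}(x)u^{*},v\rangle=-\langle u^{*},f(x)v\rangle$ to each of \eqref{r1}--\eqref{r7} in turn; using that adjunction reverses compositions and inverts signs, each identity on $V$ should reorganize into exactly the corresponding identity on $V^{*}$ for the dualized operators.

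For (2), (3), and (4), the anti-Zinbiel and anti-pre-Lie components of each candidate are already identified in Propositions \ref{zr}(1,2) and \ref{pr}(1,3) as representations of the sub-adjacent commutative associative algebra $(A,\star)$ and of the sub-adjacent Lie algebra $(A,[\ ,\ ])$, respectively. What remains is the Poisson compatibility, namely the Leibniz identity
\begin{align*}
\rho_{[\ ,\ ]}(x)\rho_{\star}(y)-\rho_{\star}(y)\rho_{[\ ,\ ]}(x)=\rho_{\star}([x,y]),
\end{align*}
which corresponds at the module level to the Poisson axiom \eqref{ppa eq1.01}. For (2), with $\rho_{\star}=-l_{\ast}$ and $\rho_{[\ ,\ ]}=-l_{\circ}$, this reduces directly to \eqref{r1}. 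For (3), with $\rho_{\star}=l_{\ast}+r_{\ast}$ and $\rho_{[\ ,\ ]}=l_{\circ}-r_{\circ}$, both sides expand into four cross-terms each, and the identity must be assembled using \eqref{r1}--\eqref{r7} together with the derived consequences \eqref{r8} and \eqref{r9}. For (4), which I interpret as being asserted on $V^{*}$ (the literal statement cannot type-check on $V$), the required identity is the adjoint of the one just verified in (2), so a single application of the pairing rule finishes the argument.

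The principal obstacle is part (3): the combinations $l_{\ast}+r_{\ast}$ and $l_{\circ}-r_{\circ}$ produce eight operator compositions in the Leibniz identity, and several of the mixed conditions \eqref{r1}--\eqref{r7}, together with \eqref{r8} and \eqref{r9}, must be invoked simultaneously for the terms to cancel cleanly. Part (1) is conceptually routine dualization but is calculationally the longest, since all seven compatibility identities must be checked term by term in the dual. Parts (2) and (4) are comparatively immediate, being essentially single applications of \eqref{r1} and its adjoint.
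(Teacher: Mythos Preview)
Your overall strategy matches the paper's: invoke Propositions~\ref{zr} and~\ref{pr} for the separate anti-Zinbiel and anti-pre-Lie pieces, then verify the mixed compatibilities \eqref{r1}--\eqref{r7} directly. One point to sharpen in part~(1): the dualization is not an identity-by-identity correspondence. For instance, the paper shows that checking \eqref{r1} for the dual operators $(l_{\ast}^{*}+r_{\ast}^{*},\,r_{\circ}^{*}-l_{\circ}^{*})$ requires combining \eqref{r1}, \eqref{r4}, and \eqref{r5} on $V$, not just \eqref{r1} alone. Your phrase ``each identity on $V$ should reorganize into exactly the corresponding identity on $V^{*}$'' understates this mixing; be prepared for each dual identity to draw on several of the original ones.

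For part~(4), the paper takes a slightly different shortcut: it applies the already-proved item~(c) to the dual representation from item~(a), obtaining $(V^{*},\,(l_{\ast}^{*}+r_{\ast}^{*})+(-r_{\ast}^{*}),\,(r_{\circ}^{*}-l_{\circ}^{*})-r_{\circ}^{*})=(V^{*},l_{\ast}^{*},-l_{\circ}^{*})$ directly. Your route---dualizing the Poisson compatibility from~(2)---is equally valid and equally short; both avoid any fresh computation. Your reading of the typo (the representation in~(d) lives on $V^{*}$, not $V$) is correct.
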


\begin{proof} (a) On the basis of Propositions
\ref{zr} and \ref{pr}, $(V^{*},l_{\ast}^{*}+r_{\ast}^{*},-r_{\ast}^{*})$ is a
representation of $(A,\ast)$ and
$(V^{*},r_{\circ}^{*}-l_{\circ}^{*},r^{*}_{\circ})$ is  a
representation of $(A,\circ)$. We only need to check that Eqs.~(\ref{r1})-(\ref{r7}) hold for
$(V^{*},l_{\ast}^{*}+r_{\ast}^{*},-r_{\ast}^{*},r_{\circ}^{*}-l_{\circ}^{*},r^{*}_{\circ})$.
As an example, we give an explicit proof of (\ref{r1}). By Eqs.~(\ref{r1}), (\ref{r4}) and (\ref{r5}),
we have for any $x,y\in A,u\in V,v^{*}\in V^{*} $,
\begin{align*}&\langle [(l_{\ast}^{*}(x)+r_{\ast}^{*})(x\circ y-y\circ x)-(l_{\ast}^{*}(x)+r_{\ast}^{*})(y)(r_{\circ}^{*}-l_{\circ}^{*})(x)
+(r_{\circ}^{*}-l_{\circ}^{*})(x)(l_{\ast}^{*}(x)+r_{\ast}^{*})(y)]
v^{*},u\rangle\\=&
\langle v^{*},(l_{\ast}+r_{\ast})(y\circ x-x\circ y)u+(r_{\circ}-l_{\circ})(x)(l_{\ast}+r_{\ast})(y)u
-(l_{\ast}+r_{\ast})(y)(r_{\circ}-l_{\circ})(x)u
\rangle
\\=&0,\end{align*} 
which yields that Eq.~(\ref{r1}) holds for substituting $(l_{\ast},l_{\circ})$ by 
$(l_{\ast}^{*}+r_{\ast}^{*},r_{\circ}^{*}-l_{\circ}^{*})$.
Items (b) and (c) can be proved similarly. Combining Items (a) and (c), we get that Item (d) holds.
\end{proof}

\begin{ex} Let $(A,\ast,\circ)$ be an anti-pre-Poisson algebra.
Then $(A,L_{\ast}, R_{\ast},L_{\circ}, R_{\circ})$ is a
representation of $(A,\ast,\circ)$, which is called the {\bf regular
representation} of $(A,\ast,\circ)$. Moreover, 
$(A^{*},L_{\ast}^{*}+R_{\ast}^{*},-R_{\ast}^{*},R_{\circ}^{*}-L_{\circ}^{*},R^{*}_{\circ})$ is the dual
representation.
Furthermore, $(A,  -{L} _{\ast},  -{L} _{\circ})$ and 
 $(A^{*},{L}^{*}_{\ast},  -{L}^{*}_{\circ})$ are representations of $(A,\star,[ \ , \ ])$.
\end{ex}

Recall the matched pairs of Poisson algebras from \cite{17}.

\begin{pro} \cite{17}\label{Df} Let $(P_{1},\star_{1},[ \ , \ ]_1)$ and
$(P_{2},\star_{2},[ \ , \ ]_2)$ be two Poisson
algebras. Suppose that there exist four linear maps
$\mu_{1},\rho_1:P_1\longrightarrow \hbox{End}(P_2)$
and $\mu_{2},\rho_2:P_2\longrightarrow
\hbox{End}(P_1)$.
Define multiplications on $P_{1}\oplus P_{2}$ by
\begin{equation}\label{eq:140}[x+a,y+b]=[x,y]_{1}+\rho_{2}(a)y-\rho_{2}(b)x+[a,b]_{2}
	+\rho_{1}(x)b-\rho_{1}(y)a,\end{equation}
\begin{equation}\label{eq:141}(x+a)\star
	(y+b)=x\star_{1}y+\mu_{2}(a)y+\mu_{2}(b)x+a\star_{2}b+\mu_{1}(x)b+\mu_{1}(y)a,\;\forall x,y\in P_{1},a,b\in P_{2}.\end{equation}
Then $(P_{1}\oplus P_{2},\star,[\ ,\ ])$ is a Poisson algebra if and only if the following conditions hold:
\begin{enumerate}
	\item $(P_{1},P_{2},\mu_{1},\mu_{2})$
	is a matched pair of commutative associative algebras.
	\item $(P_{1},P_{2},\rho_{1},\rho_{2})$
	is a matched pair of Lie algebras.
	\item $(P_{2},\mu_{1},\rho_{1})$ is a 
	representation of  $(P_1,\star_{1},[ \ , \ ]_1)$.
	\item $(P_{1},\mu_{2},\rho_{2})$ is a 
	representation of  $(P_2,\star_{2},[ \ , \ ]_2)$.
\item The following compatible conditions hold:
\begin{small}
\begin{equation}\label{P1}
	\rho_{2}(a)(x\star_{1}y)=(\rho_{2}(a)x)\star_{1}y
	+x\star_1(\rho_{2}(a)y)-\mu_{2}(\rho_{1}(x)a)y
	-\mu_{2}(\rho_{1}(y)a)x, \end{equation}
\begin{equation}\label{P2}[x,\mu_{2}(a)y]_{1}-\rho_{2}(\mu_{1}(y)a)x
	=\mu_{2}(\rho_{1}(x)a)y-(\rho_{2}(a)x)\star_{1}y
	+\mu_{2}(a)([x,y]_{1}),\end{equation}
\begin{equation}\label{P3}\rho_{1}(x)(a\star_{2}b)=(\rho_{1}(x)a)\star_{2}b
	+a\star_2(\rho_{1}(x)b)-\mu_{1}(\rho_{2}(a)x)b
	-\mu_{1}(\rho_{2}(b)x)a,\end{equation}
\begin{equation}\label{P4}[a,\mu_{1}(x)b]_{2}-\rho_{1}(\mu_{2}(b)x)a
	=\mu_{1}(\rho_{2}(a)x)b-(\rho_{1}(x)a)\star_{2}b
	+\mu_{1}(x)([a,b]_{2}),
\end{equation}
\end{small}
\end{enumerate} 
for any $x,y\in P_{1}$ and $a,b\in P_{2}$.
In this case, 
we denote this Poisson algebra simply by $P_{1}\bowtie P_{2}$ and 
$(P_{1},P_{2},\mu_{1},\rho_{1},\mu_{2},\rho_{2})$
satisfying the above conditions is called a {\bf matched pair of
Poisson algebras}.\end{pro}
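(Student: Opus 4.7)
The plan is to unpack the three defining axioms of a Poisson algebra on $P := P_1 \oplus P_2$ and match each to a subset of the listed conditions. A Poisson structure on $P$ requires that $(P, \star)$ be commutative associative, that $(P, [\ ,\ ])$ be a Lie algebra, and that the Leibniz identity $[w, u \star v] = [w, u] \star v + u \star [w, v]$ hold. By the classical matched-pair theorems for commutative associative algebras and for Lie algebras, respectively, the first demand is equivalent to (a) and the second to (b); these also supply the associative- and Lie-representation parts of (c) and (d).

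The remaining task is to translate the Leibniz identity into the compatibility equations (\ref{P1})-(\ref{P4}) and into the residual Leibniz-type identities needed to upgrade $(P_2, \mu_1, \rho_1)$ and $(P_1, \mu_2, \rho_2)$ from compatible pairs of representations to representations of Poisson algebras. I would substitute $u = x + a$, $v = y + b$, $w = z + c$ with $x, y, z \in P_1$ and $a, b, c \in P_2$ into the Leibniz identity, expand using (\ref{eq:140})-(\ref{eq:141}), and project the resulting equality onto $P_1$ and $P_2$ separately. By multilinearity it suffices to handle the eight cases determined by which of $P_1, P_2$ contains each of $u, v, w$, and by commutativity of $\star$ this reduces to six genuinely distinct cases.

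The all-in-$P_1$ and all-in-$P_2$ cases reproduce the Poisson identities on $P_1$ and $P_2$, which hold by hypothesis. The case $u, v \in P_1$, $w \in P_2$ yields (\ref{P1}) as its $P_1$-component and, as its $P_2$-component, the identity $\rho_1(x \star_1 y) = \mu_1(x)\rho_1(y) + \mu_1(y)\rho_1(x)$, which together with the associative- and Lie-representation conditions already extracted from (a) and (b) completes the statement that $(P_2, \mu_1, \rho_1)$ is a representation of $(P_1, \star_1, [\ ,\ ]_1)$. The case $u \in P_1$, $v \in P_2$, $w \in P_1$ yields (\ref{P2}) as its $P_1$-component together with a further Leibniz-type compatibility as its $P_2$-component. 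The two symmetric cases obtained by swapping $P_1 \leftrightarrow P_2$ (equivalently, $1 \leftrightarrow 2$ in the indices) produce (\ref{P3}), (\ref{P4}) and the analogous compatibilities that finish the proof of (d). The six cases thus collectively amount to exactly the hypotheses of $(P_1, \star_1, [\ ,\ ]_1)$ and $(P_2, \star_2, [\ ,\ ]_2)$ being Poisson together with (c), (d), and (\ref{P1})-(\ref{P4}), giving the claimed equivalence.

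The main obstacle is purely organizational: expanding the Leibniz identity on mixed elements produces a large number of terms whose $P_1$- and $P_2$-components must be tracked separately. I would manage the bookkeeping by consistently writing every element as an explicit sum with labeled components, and by grouping cancellations in each case according to the Poisson identities already available in $P_1$ and $P_2$ alone. Once this is set up, the residual equations in each case coincide directly with one of (\ref{P1})-(\ref{P4}) or with one of the Leibniz-type identities needed for (c)-(d), so each implication is mechanical.
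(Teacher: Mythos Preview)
Your proposal is correct and follows the standard matched-pair methodology: reduce the Poisson axioms on $P_1\oplus P_2$ to the associative and Lie matched-pair conditions (giving (a), (b)) and then expand the Leibniz identity on mixed triples to extract (\ref{P1})--(\ref{P4}) together with the residual module compatibilities that complete (c) and (d). Your case analysis and component projections are accurate; in particular the check that the triple $(x,y,a)$ with $x,y\in P_1$, $a\in P_2$ yields (\ref{P1}) on the $P_1$-side and $\rho_1(x\star_1 y)=\mu_1(x)\rho_1(y)+\mu_1(y)\rho_1(x)$ on the $P_2$-side is exactly what is needed.

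There is, however, nothing in the paper to compare against: this proposition is imported from \cite{17} and stated without proof. The authors use it as a known building block (notably in the proofs of Proposition~\ref{Mb2} and Theorem~\ref{Mb3}) rather than reproving it. Your write-up is therefore a self-contained verification of a quoted result; if you want to match the paper's treatment, a citation suffices, but if you prefer to include a proof, what you have outlined is the expected one and would only need the bookkeeping you describe to be complete.
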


\begin{pro}\label{Mp}
	 Let $(A_1,\ast_1,\circ_1)$ and $(A_2,\ast_2,\circ_2)$ be two anti-pre-Poisson
algebras. Suppose that there are linear maps
$l_{1},r_{1},\rho_1,\mu_1:A_1\longrightarrow
\hbox{End} (A_2)$ and
$l_{2},r_{2},\rho_2,\mu_2:A_2\longrightarrow
\hbox{End} (A_1)$.
Define multiplications on $A_1\oplus A_2$   by
\begin{eqnarray}
&&(x+a)\ast
(y+b)=x{\ast_1}y+l_{2}(a)y+r_{2}(b)x+a{\ast_2}b+l_{1}(x)b+r_{1}(y)a
,\label{ppmp eq1.1}\\
&&(x+a)\circ
(y+b)=x{\circ_1}y+\rho_2(a)y+\mu_2(b)x+a{\circ_2}b+\rho_1(x)b+\mu_1(y)a
,~~\forall~x,y\in A_1,a,b\in A_2.\label{ppmp eq1.2}
\end{eqnarray}
Then $(A_{1}\oplus A_{2},\ast,\circ)$ is an anti-pre-Poisson algebra if and only if  the following conditions hold:
\begin{enumerate}
	\item $(A_2,l_{1},r_{1},\rho_1,\mu_1)$ is a representation of $(A_1,\ast_1,\circ_1)$.
	\item $(A_1,l_{2},r_{2},\rho_2,\mu_2)$ is a representation of $(A_2,\ast_2,\circ_2)$.
	\item $(A_1,A_2, l_{1},r_{1},l_{2},r_{2})$ is a matched pair of anti-Zinibel algebras.
	\item $(A_1,A_2,\rho_1,\mu_1,\rho_2,\mu_2)$ is a matched pair of anti-pre-Lie algebras.
	\item The following compatible conditions hold:
\begin{small}
	\begin{align}&\label{ppmp eq1.4}r_{2}(a)[x, y]_1=-x\circ_{1}
		r_{2}(a)y+y\ast_{1}\mu_2(a)x-\mu_2(l_{1}(y)a)x+r_{2}(\rho_1(x)a)y,
\\&\label{ppmp eq1.5}(\mu_2(a)x-\rho_2(a)x)
		\ast_{1} y+l_{2}(\rho_1(x)a-\mu_1(x)a)y=
		l_{2}(a)(x\circ_1 y)-x\circ_{1}
		l_{2}(a)y-\mu_2(r_{1}(y)a)x,
\\&\label{ppmp eq1.6}(\rho_2(a)x-\mu_2(a)x)
		\ast_{1} y+l_{2}(\mu_1(x)a-\rho_1(x)a)y=
		x\ast_{1}
		\rho_2(a)y+r_{2}(\mu_1(y)a)x-\rho_2(a)(x\ast_{1}y),
\\&\label{ppmp eq1.7}\mu_2(a)(x\star_{1} )=-x\ast_{1}
		\mu_2(a)y-y\ast_{1}
		\mu_2(a)x-r_{2}(\rho_1(y)a)x-r_{2}(\rho_1(x)a)y,
\\&\label{ppmp eq1.8}(l_{\star_2}(a)x)
		\circ_{1} y+\rho_2(l_{\star_1}(x)a)y=-x\ast_{1}
		\rho_2(a)y-r_{2}(\mu_1(y)a)x-
		l_{2}(a)(x\circ_1 y),
\\&\label{ppmp eq1.9}r_{1}(x)[a,b]_{2} =r_{1}(\rho_2(a)x)b-a\circ_{2}
		r_{1}(x)b+b\ast_{2}
		\mu_1(x)a-\mu_1(l_{2}(b)x)a,
\\&\label{ppmp eq1.10}(\mu_1(x)a-\rho_1(x)a)
		\ast_{2} b+l_{1}(\rho_2(a)x-\mu_2(a)x)b=l_{1}(x)(a\circ_2 b)-a\circ_{2}
		l_{1}(x)b-\mu_1(r_{2}(b)x)a,
\\&\label{ppmp eq1.11}(\rho_1(x)a-\mu_1(x)a)
		\ast_{2} b+l_{1}(\mu_2(a)x-\rho_2(a)x)b=
		-\rho_1(x)(a\ast_{2}b)+a\ast_{2}
		\rho_1(x)b+r_{1}(\mu_2(b)x)a,
\\&\label{ppmp eq1.12}\mu_1(x)(a\star_{2} b)+a\ast_{2}
		\mu_1(x)b+b\ast_{2}
		\mu_1(x)a+r_{1}(\rho_2(b)x)a+r_{1}(\rho_2(a)x)b=0,
\\&\label{ppmp eq1.13}(l_{\star_1}(x)a)
		\circ_{2} b+\rho_1(l_{\star_2}(a)x)b+a\ast_{2}
		\rho_1(x)b+r_{1}(\mu_2(b)x)a+
		l_{1}(x)(a\circ_2 b)=0,
\\&\label{ppmp eq1.14}(\rho_{2}-\mu_{2})(a)(x\star_{1} y)-x\ast_1\rho_2(a)y-y\ast_1\rho_2(a)x-r_2(\mu_1(y)a)x-r_2(\mu_1(x)a)y=0,
\\&\label{ppmp eq1.15}[y,l_{\star_2}(a)x]_1+(\mu_2-\rho_2)(l_{\star_1}(x)a)y
-l_2(a)(y\circ_1 x)-x\ast_1\mu_2(a)y-r_2(\rho_1(y)a)x=0,
\\&\label{ppmp eq1.16}[b,l_{\star_1}(x)a]_2+(\mu_1-\rho_1)(l_{\star_2}(a)x)b
-l_1(x)(b\circ_2 a)-a\ast_2\mu_1(x)b-r_1(\rho_2(b)x)a=0,
\\&\label{ppmp eq1.17}(\rho_{1}-\mu_{1})(x)(a\star_{2} b)-a\ast_{2} \rho_1(x)b-b\ast_{2} \rho_1(x)a
-r_1(\mu_2(b)x)a-r_1(\mu_2(a)x)b=0,\end{align}
\end{small}
\end{enumerate}
where $l_{\star_1}=l_1+r_1,~l_{\star_2}=l_2+r_2,~[x,y]_1=x\circ_1 y-y\circ_1 x,~[a,b]_2=a\circ_2 b-b\circ_2 a$
and $x\star_1 y=x\ast_1y+y\ast_1 x,~a\star_2 b=a\ast_2 b+b\ast_2 a$ for all $x,y\in A_1,a,b\in A_2$.
In this case, we
denote this anti-pre-Poisson algebra by $A_1\bowtie A_2$, and 
$(A_1, A_2,
l_{1},r_{1},\rho_1,\mu_1,l_{2},r_{2},\rho_2$,
$\mu_2)$
satisfying the above conditions is called a {\bf matched pair of
anti-pre-Poisson algebras}. \end{pro}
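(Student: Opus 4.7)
My plan is to verify both implications simultaneously by expanding each defining axiom of an anti-pre-Poisson algebra on $A_1\oplus A_2$ into the homogeneous components lying in $A_1$ and $A_2$, and then matching these components against the listed conditions. Since $(A_1\oplus A_2,\ast,\circ)$ is an anti-pre-Poisson algebra iff $(A_1\oplus A_2,\ast)$ is anti-Zinbiel, $(A_1\oplus A_2,\circ)$ is anti-pre-Lie, and Eqs.~(\ref{ppa eq1.1})--(\ref{ppa eq1.3}) hold, the proof decouples naturally into three parts.

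First I would handle the anti-Zinbiel axioms for $\ast$. Substituting arbitrary $x+a$, $y+b$, $z+c$ with $x,y,z\in A_1$, $a,b,c\in A_2$ into $x\ast(y\ast z) = -(x\star y)\ast z = \cdots$ and expanding via Eq.~(\ref{ppmp eq1.1}), the purely $A_1$-valued (resp.\ $A_2$-valued) terms reproduce the anti-Zinbiel relations of $A_1$ (resp.\ $A_2$), while the mixed terms split into: terms involving one element of $A_2$ and two of $A_1$, which give half of condition (a) (the anti-Zinbiel representation axioms for $(A_2, l_1, r_1)$); symmetrically, the corresponding terms with two $A_2$ elements yield half of (b); and the remaining coupled terms assemble into the matched pair axioms for anti-Zinbiel algebras of condition (c). The same analysis applied to the anti-pre-Lie axioms for $\circ$ using Eq.~(\ref{ppmp eq1.2}) yields the anti-pre-Lie halves of conditions (a), (b), and condition (d). This step is essentially bookkeeping, invoking the known bialgebra-style decompositions for anti-Zinbiel and anti-pre-Lie algebras, which are recorded in \cite{20} and \cite{13} (and already cited in the matched-pair propositions referenced here).

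The third and main step is to treat the three mixed compatibility relations (\ref{ppa eq1.1})--(\ref{ppa eq1.3}), which are where the compatibility between $\ast$ and $\circ$ enters. For each of the three identities I would substitute $x+a,\ y+b,\ z+c$ and separate the resulting expression by the total number of $A_2$-entries. The components with zero (resp.\ three) $A_2$-entries reproduce (\ref{ppa eq1.1})--(\ref{ppa eq1.3}) in $A_1$ (resp.\ $A_2$), confirming that $A_1$ and $A_2$ are themselves anti-pre-Poisson algebras and completing conditions (a) and (b). The components with exactly one or two $A_2$-entries produce identities linear in $l_i,r_i,\rho_i,\mu_i$ on tensor products of elements from both spaces; grouping according to where the output lands ($A_1$ or $A_2$) and which of $x,y,z$ is the $A_2$-element, I would match them term by term with (\ref{ppmp eq1.4})--(\ref{ppmp eq1.17}). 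Concretely, Eq.~(\ref{ppa eq1.1}) with exactly one $A_2$-entry yields four identities that I expect to pair with (\ref{ppmp eq1.4})--(\ref{ppmp eq1.6}) and (\ref{ppmp eq1.9})--(\ref{ppmp eq1.11}); Eq.~(\ref{ppa eq1.2}) yields (\ref{ppmp eq1.7}), (\ref{ppmp eq1.8}), (\ref{ppmp eq1.12}), (\ref{ppmp eq1.13}); and Eq.~(\ref{ppa eq1.3}) yields the remaining (\ref{ppmp eq1.14})--(\ref{ppmp eq1.17}).

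The main obstacle is purely combinatorial: keeping the roles of $(l_i,r_i)$ versus $(\rho_i,\mu_i)$, of $\ast_i$ versus $\circ_i$, and of left versus right arguments straight across fourteen compatibility equations. To control this I would fix a canonical ordering (by which variable lies in $A_2$ and by the ``type'' of the output term) and proceed one compatibility at a time, using Eqs.~(\ref{r1})--(\ref{r9}) and the already-verified matched-pair axioms of (c) and (d) to simplify duplicate terms. Since each required identity appears exactly once as a coefficient of an independent term in the expansion, the two directions follow simultaneously: the $\Rightarrow$ direction extracts the identities, and the $\Leftarrow$ direction sums them back up. No genuinely new idea is needed beyond the systematic expansion, so I would present the proof by indicating the decomposition explicitly for one representative axiom (say Eq.~(\ref{ppa eq1.3})) and leaving the remaining cases to the reader.
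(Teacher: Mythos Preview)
Your proposal is correct and follows essentially the same approach as the paper: expand the anti-pre-Poisson axioms on $A_1\oplus A_2$ for homogeneous triples and match the $A_1$- and $A_2$-components against the listed conditions. The paper's proof is more telegraphic---it simply tabulates which specific triple $(x,y,a)$, $(x,a,y)$, $(a,x,y)$, etc.\ substituted into each of Eqs.~(\ref{ppa eq1.1})--(\ref{ppa eq1.3}) yields which pair of equations from $\{(\ref{r1})$--$(\ref{r7})\}$ and $\{(\ref{ppmp eq1.4})$--$(\ref{ppmp eq1.17})\}$---while you organise the same bookkeeping by the number of $A_2$-entries, but the underlying computation is identical.
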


\begin{proof} To prove that $(A_1\oplus A_2,\ast,\circ)$ is an anti-pre-Poisson algebra,
		we only need to check that Eqs.~(\ref{ppa eq1.1})-(\ref{ppa eq1.3}) hold for 
all $x+a,y+b,z+c\in A_1\oplus A_2$ with $x,y,z\in A_1,a,b,c\in A_2$. 
Indeed, we may verify the following cases through direct computations:
\begin{enumerate}
\item Eq.~(\ref{ppa eq1.1}) holds for $(x,y,a)$ if and only if Eqs.~(\ref {r1}) and (\ref {ppmp eq1.4}) hold.
\item Eq.~(\ref{ppa eq1.1}) holds for $(x,a,y)$ if and only if Eqs.~(\ref {r5}) and (\ref {ppmp eq1.5}) hold.
\item Eq.~(\ref{ppa eq1.1}) holds for $(a,x,y)$ if and only if Eqs.~(\ref {r3}) and (\ref {ppmp eq1.6}) hold.
\item Eq.~(\ref{ppa eq1.1}) holds for $(x,a,b)$ if and only if Eqs.~(\ref {r3}) and (\ref {ppmp eq1.11}) hold.
\item Eq.~(\ref{ppa eq1.1}) holds for $(a,x,b)$ if and only if Eqs.~(\ref {r5}) and (\ref {ppmp eq1.10}) hold.
\item Eq.~(\ref{ppa eq1.1}) holds for $(a,b,x)$ if and only if Eqs.~(\ref {r1}) and (\ref {ppmp eq1.9}) hold.
\item Eq.~(\ref{ppa eq1.2}) holds for $(x,y,a)$ if and only if Eqs.~(\ref {r2}) and (\ref {ppmp eq1.8}) hold.
\item Eq.~(\ref{ppa eq1.2}) holds for $(x,a,y)$ if and only if Eqs.~(\ref {r4}) and (\ref {ppmp eq1.8}) hold.	
\item Eq.~(\ref{ppa eq1.2}) holds for $(x,a,b)$ if and only if Eqs.~(\ref {r4}) and (\ref {ppmp eq1.13}) hold.
\item Eq.~(\ref{ppa eq1.2}) holds for $(a,b,x)$ if and only if Eqs.~(\ref {r2}) and (\ref {ppmp eq1.12}) hold.
\item Eq.~(\ref{ppa eq1.3}) holds for $(x,y,a)$ if and only if Eqs.~(\ref {r6}) and (\ref {ppmp eq1.14}) hold.
\item Eq.~(\ref{ppa eq1.3}) holds for $(a,x,y)$ if and only if Eqs.~(\ref {r7}) and (\ref {ppmp eq1.15}) hold.	
\item Eq.~(\ref{ppa eq1.3}) holds for $(x,a,b)$ if and only if Eqs.~(\ref {r7}) and (\ref {ppmp eq1.16}) hold.
\item Eq.~(\ref{ppa eq1.3}) holds for $(a,b,x)$ if and only if Eqs.~(\ref {r6}) and (\ref {ppmp eq1.17}) hold.
\end{enumerate}
\end{proof}

\begin{pro}\label{Mb2}
	Let $(A_{1}, \ast_1,\circ_1)$ and  $(A_{2}, \ast_2,\circ_2)$ be anti-pre-Poisson algebras, 
whose sub-adjacent Poisson algebras are $(A_{1},\star_{1},[ \ , \ ]_{1})$ and $(A_{2},\star_{2},[ \ , \ ]_{2})$.
	If $(A_1, A_2,
	l_{\ast_1},r_{\ast_1},l_{\circ_1},r_{\circ_1},l_{\ast_2},r_{\ast_2}, $ \ \ \ \ $l_{\circ_2},r_{\circ_2})$
	 is a matched pair of
		anti-pre-Poisson algebras. Then $(A_1, A_2,
		l_{\ast_1}+r_{\ast_1},l_{\circ_1}-r_{\circ_1},l_{\ast_2}+r_{\ast_2},l_{\circ_2}-r_{\circ_2})$ is a matched pair of Poisson algebras.
\end{pro}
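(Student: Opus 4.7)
The plan is to bundle the matched pair data into a single anti-pre-Poisson algebra on $A_1 \oplus A_2$ via Proposition \ref{Mp}, pass to its sub-adjacent Poisson algebra, and read off the matched pair of Poisson algebras by invoking the ``only if'' direction of Proposition \ref{Df}.

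First, by Proposition \ref{Mp}, the hypothesis yields an anti-pre-Poisson algebra $A_1 \bowtie A_2$ on $A_1 \oplus A_2$ with products $\ast, \circ$ given by (\ref{ppmp eq1.1})--(\ref{ppmp eq1.2}) (under the identifications $l_i = l_{\ast_i}$, $r_i = r_{\ast_i}$, $\rho_i = l_{\circ_i}$, $\mu_i = r_{\circ_i}$). Its sub-adjacent Poisson algebra, with $u \star v := u \ast v + v \ast u$ and $[u,v] := u \circ v - v \circ u$, is automatically a Poisson algebra. Symmetrizing (\ref{ppmp eq1.1}) and antisymmetrizing (\ref{ppmp eq1.2}) on arbitrary $x+a, y+b \in A_1 \oplus A_2$ will give
\begin{align*}
(x+a) \star (y+b) &= x \star_1 y + (l_{\ast_2}+r_{\ast_2})(a)y + (l_{\ast_2}+r_{\ast_2})(b)x \\
&\quad + a \star_2 b + (l_{\ast_1}+r_{\ast_1})(x)b + (l_{\ast_1}+r_{\ast_1})(y)a, \\
[x+a, y+b] &= [x,y]_1 + (l_{\circ_2}-r_{\circ_2})(a)y - (l_{\circ_2}-r_{\circ_2})(b)x \\
&\quad + [a,b]_2 + (l_{\circ_1}-r_{\circ_1})(x)b - (l_{\circ_1}-r_{\circ_1})(y)a,
\end{align*}
which are precisely (\ref{eq:141}) and (\ref{eq:140}) with $\mu_i := l_{\ast_i}+r_{\ast_i}$ and $\rho_i := l_{\circ_i}-r_{\circ_i}$.

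Next, since the sub-adjacent Poisson algebra of each $(A_i, \ast_i, \circ_i)$ is $(A_i, \star_i, [\ ,\ ]_i)$ by definition, and since by Proposition \ref{Dr}(c) the anti-pre-Poisson representation $(A_j, l_{\ast_i}, r_{\ast_i}, l_{\circ_i}, r_{\circ_i})$ of $(A_i, \ast_i, \circ_i)$ descends to the Poisson representation $(A_j, l_{\ast_i}+r_{\ast_i}, l_{\circ_i}-r_{\circ_i})$ of $(A_i, \star_i, [\ ,\ ]_i)$, all the ingredients demanded by Proposition \ref{Df} are in place. Applying its ``only if'' direction to the Poisson structure on $A_1 \oplus A_2$ identified above then gives that $(A_1, A_2, l_{\ast_1}+r_{\ast_1}, l_{\circ_1}-r_{\circ_1}, l_{\ast_2}+r_{\ast_2}, l_{\circ_2}-r_{\circ_2})$ is a matched pair of Poisson algebras.

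No serious obstacle arises. The only care needed is the notational translation between the $\rho_i, \mu_i$ of Proposition \ref{Mp} (anti-pre-Lie actions) and the $l_{\circ_i}, r_{\circ_i}$ of the present statement, as well as between the $\rho_i, \mu_i$ of Proposition \ref{Df} (Poisson actions) and the symmetrized/antisymmetrized combinations. In particular, the intricate compatibility conditions (\ref{P1})--(\ref{P4}) require no independent verification---they are forced by the Poisson property of $A_1 \oplus A_2$.
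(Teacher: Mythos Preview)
Your proof is correct and follows essentially the same route as the paper: use Proposition \ref{Mp} to build the anti-pre-Poisson algebra on $A_1\oplus A_2$, compute its sub-adjacent Poisson structure explicitly, and apply the ``only if'' direction of Proposition \ref{Df}. Your appeal to Proposition \ref{Dr}(c) is in fact unnecessary---the ``only if'' direction of Proposition \ref{Df} already delivers the representation conditions (c) and (d) there---but it does no harm.
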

\begin{proof}
	By Proposition \ref{Mp}, there is an anti-pre-Poisson algebra on $A_{1}\oplus A_{2}$ given
 by \eqref{ppmp eq1.1} and \eqref{ppmp eq1.2}. The sub-adjacent Poisson algebra is given by 
	{\small
	\begin{eqnarray*}
(x+a)\star(y+b)&=&(x+a)\ast (y+b)+ (y+b)\ast (x+a)\\
&=&x\star_{1}y+	(l_{2}+r_{2})(a)y+(l_{2}+r_{2})(b)x+
a\star_{2}b+	(l_{1}+r_{1})(x)b+(l_{1}+r_{1})(y)a,\\
\ [(x+a),(y+b)]&=&(x+a)\circ (y+b)- (y+b)\circ (x+a)\\
&=&[x,y]_{1}+(\rho_2-\mu_2)(a)y-(\rho_2-\mu_2)(b)x+
[a,b]_{2}+(\rho_1-\mu_1)(x)b-(\rho_1-\mu_1)(y)a.
	\end{eqnarray*}}By Proposition \ref{Df}, $(A_1, A_2,
		l_{\ast_1}+r_{\ast_1},l_{\circ_1}-r_{\circ_1},l_{\ast_2}+r_{\ast_2},l_{\circ_2}-r_{\circ_2})$ is a matched pair of Poisson algebras.
\end{proof}
\section{Anti-pre-Poisson bialgebras }

\subsection{Anti-Zinbiel bialgebras and anti-pre-Lie bialgebras}
We begin with recalling the bialgebra theories for anti-Zinbiel algebras and anti-pre-Lie bialgebras \cite{13}.
Since an anti-Zinbiel algebra is a special anti-dendriform algebra, 
one can turn to \cite{20} for the anti-Zinbiel bialgebra theory.  

\begin{defi} 
	\begin{enumerate}
		\item An {\bf anti-Zinbiel coalgebra} is a vector space $A$ together
		with a linear map $\Delta: A \longrightarrow A\otimes A$ satisfying the following equations:
\begin{small} 
	\begin{align}&\label{Az1}(\tau\otimes I)(I\otimes  \Delta)\Delta=(I\otimes  \Delta)\Delta,
\\&\label{Az2}
(I\otimes \Delta)\Delta=-[(\Delta+\tau\Delta)\otimes I]\Delta=(I\otimes \tau)(\tau\otimes I)(I\otimes\tau \Delta)\Delta
=-(\tau\otimes I)(I\otimes \tau)[( \Delta+\tau \Delta)\otimes I]\Delta.\end{align} \end{small}
		\item An {\bf anti-Zinbiel bialgebra} is a triple $(A,\ast,
		\Delta)$, where $(A,\ast)$ is an anti-Zinbiel algebra and $(A,
		\Delta)$ is an anti-Zinbiel coalgebra such that for all $x,y\in A$,
	 the following compatible conditions hold:
\begin{small} 
\begin{align}&\label{Az3}\Delta(x\star y)=(I\otimes L_{\star}(x))\Delta(y)-(L_{\ast} (y)\otimes I)\Delta(x),\\&
\label{Az4}(\Delta+\tau\Delta)(x\ast y)=(L_{\ast} (x)\otimes I)(\Delta+\tau\Delta)(y)-(I\otimes R_{\ast} (y))\Delta(x),\end{align}
\end{small}
\end{enumerate}
where $x\star y=x\ast y+y\ast x$ and $\tau(x\otimes y)=y\otimes x$ for all $x,y\in A$. 
\end{defi}
 
A {\bf quadratic anti-Zinbiel algebra} $(A,\ast,\omega)$ is an anti-Zinbiel algebra together with a 
non-degenerate symmetric bilinear form $\omega$ such that Eq.~(\ref{C1}) holds.

\begin{thm} \label{Zba} Let $(A, \ast_1)$ be an anti-Zinbiel algebra
equipped with a comultiplication $\Delta:A\longrightarrow A\otimes
A$. Suppose that $\Delta^{*}:A^{*}\otimes A^{*}\longrightarrow
A^{*}$ induces an anti-Zinbiel algebra on $A^{*}$. Put
$\ast_2=\Delta^{*}$. Then the following conditions are equivalent:
\begin{enumerate}
	\item There is a quadratic anti-Zinbiel algebra $(A \oplus A^{*},\ast,\omega)$ such that 
	$(A, \ast_1)$ and $(A^{*}, \ast_2)$ are anti-Zinbiel subalgebras, where the bilinear 
	form $\omega$ on $A \oplus A^{*}$ is given by
	\begin{equation*}\omega(x + a, y + b)= \langle x,b\rangle +\langle a,y\rangle,~\forall~x,y\in A,a,b\in A^{*}.
	\end{equation*}
\item $((A,\star_1),(A^{*},\star_2),L_{\ast_1}^{*},L_{\ast_2}^{*})$ is
a matched pair of commutative associative algebras.
\item $(A,  A^{*},L_{\ast_1}^{*}+R_{\ast_1}^{*},-R_{\ast_1}^{*},L_{\ast_2}^{*}+R_{\ast_2}^{*},-R_{\ast_2}^{*})$
 is a matched pair of anti-Zinbiel algebras.
\item $(A, \ast_{1},\Delta)$ is an anti-Zinbiel bialgebra.
\end{enumerate}
\end{thm}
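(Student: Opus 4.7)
The plan is to prove the theorem via the chain $(1) \Longleftrightarrow (3) \Longleftrightarrow (4)$, together with $(2) \Longleftrightarrow (3)$, following the standard Manin-triple / matched-pair / bialgebra paradigm, with $(2)$ inserted by passing to the sub-adjacent commutative associative algebras on both sides of the matched pair.

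For $(1) \Longleftrightarrow (3)$, the approach starts from an anti-Zinbiel multiplication $\ast$ on $A \oplus A^{*}$ extending $\ast_1$ and $\ast_2$. The mixed products take the form $x \ast b = l_1(x)b + r_2(b)x$ and $b \ast x = l_2(b)x + r_1(x)b$ for linear maps $l_1, r_1 : A \to \mathrm{End}(A^{*})$ and $l_2, r_2 : A^{*} \to \mathrm{End}(A)$. Applying the invariance relation $\omega(u \ast v, w) = -\omega(u, v \star w)$ from Eq.~(\ref{C1}) to all triples drawn from $A$ and $A^{*}$, and using the pairing formula defining $\omega$, I expect these maps to be uniquely pinned down as $(l_1, r_1) = (L_{\ast_1}^{*} + R_{\ast_1}^{*}, -R_{\ast_1}^{*})$ and $(l_2, r_2) = (L_{\ast_2}^{*} + R_{\ast_2}^{*}, -R_{\ast_2}^{*})$; these are exactly the dual representations furnished by Proposition~\ref{zr}(c). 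The anti-Zinbiel axioms for $\ast$ on $A \oplus A^{*}$ then become precisely the matched-pair axioms for anti-Zinbiel algebras (the anti-Zinbiel specialization of Proposition~\ref{Mp}), giving $(3)$.

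For $(3) \Longleftrightarrow (4)$, I would identify $\Delta$ with the dual of $\ast_2$ and rewrite each matched-pair axiom as a tensor identity in $\Delta$, $\ast_1$, $L_{\ast_1}$, and $R_{\ast_1}$. After dualization, the essential content collapses into Eqs.~(\ref{Az3}) and (\ref{Az4}); the remaining matched-pair conditions become automatic consequences of the anti-Zinbiel identities on $A$ and $A^{*}$ together with the coalgebra identities (\ref{Az1})--(\ref{Az2}). For $(2) \Longleftrightarrow (3)$, applying Proposition~\ref{zr}(d) and adding the $l$ and $r$ components of the anti-Zinbiel representations in $(3)$ recovers $L_{\ast_1}^{*}$ and $L_{\ast_2}^{*}$ respectively; hence, passing to the sub-adjacent commutative associative algebras on both sides (the anti-Zinbiel analogue of Proposition~\ref{Mb2}) yields the matched pair in $(2)$. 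Conversely, since $\ast_1$ and $\ast_2$ are already prescribed to be anti-Zinbiel, the remaining matched-pair conditions in $(3)$ not captured by $(2)$ are tautological.

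The main obstacle will lie in $(3) \Longleftrightarrow (4)$: the matched-pair axioms for anti-Zinbiel algebras are numerous, and many become equivalent after dualization owing to the identities built into $\Delta$ by (\ref{Az1})--(\ref{Az2}). Sorting the genuinely new constraints from the redundant ones will require careful bookkeeping analogous to the case analysis in the proof of Proposition~\ref{Mp}, and may also rely on the identities collected in Proposition~\ref{zr} to unify multiple matched-pair relations into the compact forms (\ref{Az3}) and (\ref{Az4}).
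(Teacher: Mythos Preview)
The paper does not prove Theorem~\ref{Zba}; it is quoted from \cite{20} as background. Your overall scheme $(1)\Leftrightarrow(3)\Leftrightarrow(4)$ together with $(3)\Rightarrow(2)$ via the anti-Zinbiel analogue of Proposition~\ref{Mb2} is the standard one and is sound.

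The gap is in your converse $(2)\Rightarrow(3)$. You assert that once $\ast_1,\ast_2$ are anti-Zinbiel, ``the remaining matched-pair conditions in $(3)$ not captured by $(2)$ are tautological.'' This is not justified: the anti-Zinbiel matched-pair axioms constrain the maps $l=L_\ast^*+R_\ast^*$ and $r=-R_\ast^*$ \emph{separately}, whereas condition $(2)$ only sees their sum $l+r=L_\ast^*$. There is no general mechanism by which the associative matched-pair relations for $L_\ast^*$ lift automatically to the full list of anti-Zinbiel matched-pair relations for the pair $(l,r)$, even when that pair happens to be the dual representation. The way this implication is actually closed---and the route the present paper takes for the analogous Theorem~\ref{Mb3}---is not $(2)\Rightarrow(3)$ directly, but rather $(2)\Leftrightarrow(4)$: one dualizes the \emph{associative} matched-pair identities (fewer and simpler than the anti-Zinbiel ones) into the bialgebra compatibilities (\ref{Az3})--(\ref{Az4}), and then $(2)\Leftrightarrow(3)$ follows from your already-established $(3)\Leftrightarrow(4)$. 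Alternatively one can prove $(1)\Leftrightarrow(2)$ directly via Proposition~\ref{Qa1}, relating the quadratic anti-Zinbiel structure to the commutative Connes cocycle on the sub-adjacent algebra, and then use your $(1)\Leftrightarrow(3)$. Either route avoids the unsupported ``tautological'' step.
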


 Let $(A,\ast)$ be an anti-Zinbiel algebra and 
$r=\sum_{i}a_i\otimes b_i\in A\otimes A$. Denote
\begin{align*}&D(r)=r_{12}\ast r_{23}-r_{12}\ast r_{13}+r_{13}\star r_{23},
\\&D_{1}(r)=r_{12}\star r_{13}+r_{23}\ast r_{12}-r_{23}\ast r_{13},
\\&D_{2}(r)=r_{13}\ast r_{23}+r_{12}\star r_{23}+r_{13}\ast r_{12},
\\&D_{3}(r)=r_{13}\star r_{23}+r_{12}\ast r_{23}+r_{21}\ast r_{13},
\\&D_{6}(r)=r_{31}\star r_{21}+r_{32}\ast r_{21}+r_{23}\ast r_{31},
\\&D_{7}(r)=r_{13}\star r_{23}+r_{12}\ast r_{23}-r_{21}\ast r_{31},
\\&D_{8}(r)=r_{31}\star r_{21}+r_{32}\ast r_{21}-r_{23}\ast r_{13},
\\&D_{4}(r)=r_{23}\star r_{12}+r_{13}\ast r_{12}+r_{23}\ast r_{21}+r_{13}\star r_{21}+r_{13}\star r_{23},
\\&D_{5}(r)=r_{31}\star r_{21}+r_{31}\star r_{23}+r_{21}\ast r_{23}+r_{21}\star r_{32}+r_{31}\ast r_{32}.
\end{align*}

\begin{pro}\label{Zb0} Let $(A,\ast)$ be an anti-Zinbiel algebra and 
$r=\sum_{i}a_i\otimes b_i\in A\otimes A$. Define a linear map 
$\Delta_{r}:A\longrightarrow A\otimes A$ by 
\begin{equation}\label{CB1}\Delta_{r}(x)=-(I\otimes L_{\star}(x)+L_{\ast}(x)\otimes I)r,\;\forall x\in A.\end{equation}
Then we have
\begin{enumerate}
		\item Eq.~\eqref{Az1} holds if and only if the following equation holds:
\begin{equation}\label{Cd1}(L_{\ast}(x)\otimes I \otimes I-I\otimes I\otimes L_{\ast}(x))D_{3}(r)=0.\end{equation}
\item Eq.~\eqref{Az2} holds if and only if the following equations hold:
\begin{small}
\begin{align}&\label{Cd2}
(L_{\ast}(x)\otimes I \otimes I)D_{3}(r)
+I\otimes I\otimes L_{\star}(x))D_{4}(r)\\&+\sum_{i}(L_{\ast}(x\ast a_i)\otimes I\otimes I+I\otimes L_{\ast}(x\ast a_i)\otimes I)[(r+\tau(r))\otimes b_i]=0\nonumber,
\\&\label{Cd3}
(L_{\star}(x)\otimes I \otimes I)D_{5}(r)+(I\otimes I\otimes L_{\ast}(x))D_{6}(r)\\&+
\sum_{i}(I\otimes I\otimes L_{\ast}(x\ast a_i)+I\otimes L_{\ast}(x\ast a_i)\otimes I)[(r+\tau(r))\otimes b_i]=0\nonumber,
\\&\label{Cd4}
(L_{\ast}(x)\otimes I \otimes I)D_{7}(r)
-(I\otimes I\otimes L_{\ast}(x))D_{8}(r)\\&+(I\otimes L_{\ast}(x)\otimes I)(r_{32}\ast r_{21}-r_{23}\ast r_{12})
=0\nonumber.\end{align}
\end{small}
\item Eq.~\eqref{Az3} holds automatically.
\item Eq.~\eqref{Az4} holds if and only if the following equation holds:
\begin{align}&\label{Cd5}
(L_{\ast}(x)L_{\ast}(y)\otimes I -L_{\ast}(x\ast y)\otimes I-I\otimes L_{\ast}(x\ast y)+L_{\ast}(x) \otimes L_{\ast}(y))(r+\tau(r))
=0.
\end{align}
\end{enumerate}
\end{pro}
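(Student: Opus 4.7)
The plan is to substitute the explicit formula $\Delta_r(x)=-(I\otimes L_\star(x)+L_\ast(x)\otimes I)r$ from~(\ref{CB1}) into each of the axioms~(\ref{Az1})--(\ref{Az4}), expand the compositions in $A^{\otimes 3}$, and match the resulting tensor-monomials with the predefined expressions $D_3(r),\ldots,D_8(r)$. The workhorse identities will be
\[
L_\ast(x)L_\ast(y)=-L_\ast(x\star y)=L_\ast(y)L_\ast(x),\qquad L_\star(x)L_\star(y)=L_\star(x\star y),
\]
both derived directly from the anti-Zinbiel defining axioms together with the commutativity of the associated product $\star$.

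Part~(c) drops out essentially for free. Writing $r=\sum_i a_i\otimes b_i$, the left-hand side of~(\ref{Az3}) expands as $-\sum_i a_i\otimes L_\star(x\star y)b_i-\sum_i L_\ast(x\star y)a_i\otimes b_i$, while expanding the right-hand side produces two ``mixed'' contributions $\pm\sum_i L_\ast(y)a_i\otimes L_\star(x)b_i$ that cancel, and the remaining terms coincide with the left-hand side by the two displayed identities. Hence~(\ref{Az3}) holds with no constraint on $r$.

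For parts (a), (b), and (d), I will follow a uniform strategy. Each iterated comultiplication $(I\otimes\Delta_r)\Delta_r(x)$ and its various permutations under $\tau\otimes I$ or $(I\otimes\tau)(\tau\otimes I)$ is a double sum $\sum_{i,j}$ in $A^{\otimes 3}$ in which the external variable $x$ enters through one of the compositions $L_\ast(x)L_\ast(a_j)$, $L_\ast(x)L_\star(a_j)$, $L_\star(x)L_\ast(a_j)$, or $L_\star(x)L_\star(a_j)$. Using the displayed identities, I absorb each such composition into a single operator of the form $L_\ast(x\ast a_j)$ or $L_\star(x\star a_j)$ acting on $(r+\tau(r))\otimes b_j$, which is exactly what appears on the right-hand sides of~(\ref{Cd2})--(\ref{Cd3}). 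The leftover terms then carry the external operator $L_\ast(x)$ or $L_\star(x)$ in a single tensor slot and collect into the appropriate $D_i(r)$. In this way, (\ref{Az1}) produces~(\ref{Cd1}); the three equalities in~(\ref{Az2}) produce~(\ref{Cd2}), (\ref{Cd3}), and (\ref{Cd4}) respectively; and~(\ref{Az4}) produces~(\ref{Cd5}) after symmetrizing via $\tau$.

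The main obstacle is purely combinatorial bookkeeping: the $D_i(r)$ differ only by which of $r_{12}$, $r_{13}$, $r_{23}$ (and their transposes) appears in which slot, and the swaps $\tau\otimes I$ and $(I\otimes\tau)(\tau\otimes I)$ in~(\ref{Az1})--(\ref{Az2}) make the slot assignments non-obvious. My approach is, for each equation, to fix a canonical placement of the external operator, then collect all ``$r$-in-$r$'' contributions slot-by-slot and identify them with the $D_i(r)$ definitions by termwise comparison. No new structural insight beyond the two displayed compositional identities is required.
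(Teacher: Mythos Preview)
The paper does not actually prove Proposition~\ref{Zb0}; it is recalled from~\cite{20} (anti-Zinbiel algebras being special anti-dendriform algebras), so there is no in-paper proof to compare against. Your approach is the standard one used for such coboundary computations, and your treatment of part~(c) is correct: the mixed terms $\pm L_\ast(y)a_i\otimes L_\star(x)b_i$ cancel, and the identities $L_\ast(y)L_\ast(x)=-L_\ast(x\star y)$ and $L_\star(x)L_\star(y)=L_\star(x\star y)$ finish it.

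For parts~(a), (b), and~(d), your strategy is sound in outline, but what you have written is a plan rather than a proof. The two displayed identities do not suffice on their own: you will also need the mixed compositions $L_\ast(x)L_\star(y)$ and $L_\star(x)L_\ast(y)$, which do not collapse as cleanly (for instance, $x\ast(y\star z)$ is not simply $-(x\star y)\ast z$ in an anti-Zinbiel algebra). These are the source of the $(r+\tau(r))\otimes b_i$ correction terms in~(\ref{Cd2})--(\ref{Cd3}), and you have to verify explicitly that the leftover pieces after applying the clean identities really do organize themselves into $L_\ast(x\ast a_i)$ acting on the symmetric part. Similarly, your claim that the three equalities in~(\ref{Az2}) correspond one-to-one with~(\ref{Cd2}), (\ref{Cd3}), (\ref{Cd4}) is plausible but unverified; the permutations $(\tau\otimes I)(I\otimes\tau)$ and $(I\otimes\tau)(\tau\otimes I)$ in~(\ref{Az2}) shuffle the slots nontrivially, and matching them to $D_4(r),\ldots,D_8(r)$ requires actually writing out each side. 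The ``combinatorial bookkeeping'' you flag as the main obstacle is in fact the entire content of the proof, and until it is carried out there is nothing to check.
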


\begin{thm} \label{Zb01}
Let $(A,\ast)$ be an anti-Zinbiel algebra
and $r=\sum\limits_{i}a_{i}\otimes b_{i}\in A\otimes A$ skew-symmetric. Define a linear map
$\Delta_r:A\longrightarrow A\otimes A$ by Eq.~\eqref{CB1}.
 Then $(A,\ast,\Delta_r)$ is an anti-Zinbiel bialgebra if and only if the following equations hold:
\begin{align}&\label{Cd6}(L_{\ast}(x)\otimes I \otimes I-I\otimes I\otimes L_{\ast}(x))D(r)=0,\\
&\label{Cd7}
(L_{\ast}(x)\otimes I \otimes I+I\otimes I\otimes L_{\star}(x))D(r)=0,
\\&\label{Cd8}
(L_{\star}(x)\otimes I \otimes I)D_{1}(r)+(I\otimes I\otimes L_{\ast}(x))D_{1}(r)=0,
\\&\label{Cd9}
(L_{\ast}(x)\otimes I \otimes I)D(r)-(I\otimes I\otimes L_{\ast}(x))D_{1}(r)=0.\end{align}
\end{thm}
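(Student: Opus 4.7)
The plan is to derive this theorem directly from Proposition \ref{Zb0} by specializing to the skew-symmetric case. By that proposition, $(A,\ast,\Delta_r)$ is an anti-Zinbiel bialgebra if and only if Eqs.~\eqref{Cd1}--\eqref{Cd5} all hold; the task is therefore to show that these five conditions collapse, under the assumption $r+\tau(r)=0$, to the four conditions \eqref{Cd6}--\eqref{Cd9}.

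The first observation is that skew-symmetry kills several pieces outright: Eq.~\eqref{Cd5} holds trivially (making Eq.~\eqref{Az4} automatic), the trailing $[(r+\tau(r))\otimes b_i]$ summands inside Eqs.~\eqref{Cd2} and \eqref{Cd3} vanish, and in Eq.~\eqref{Cd4} the extra term $(I\otimes L_{\ast}(x)\otimes I)(r_{32}\ast r_{21}-r_{23}\ast r_{12})$ also disappears, since relabeling each of the two dummy indices via $r=-\tau(r)$ identifies $r_{32}\ast r_{21}$ with $r_{23}\ast r_{12}$.

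The heart of the argument is then to rewrite the remaining combinations $D_i(r)$ purely in terms of $D(r)$ and $D_1(r)$ under skew-symmetry, so that \eqref{Cd1}--\eqref{Cd4} become precisely \eqref{Cd6}--\eqref{Cd9}. The prototypical identity is $r_{21}\ast r_{13}=-r_{12}\ast r_{13}$, proved by applying $r=-\tau(r)$ to the $i$-index; this already collapses $D_3(r)$ to $D(r)$ and turns \eqref{Cd1} into \eqref{Cd6}. I would carry out analogous term-by-term manipulations on each of $D_4, D_5, D_6, D_7, D_8$---in each case exchanging $a_i\leftrightarrow b_i$ or $a_j\leftrightarrow b_j$ with the appropriate sign---to obtain the remaining collapses, matching \eqref{Cd2}, \eqref{Cd3}, \eqref{Cd4} with \eqref{Cd7}, \eqref{Cd8}, \eqref{Cd9} respectively.

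The main obstacle is purely combinatorial: there are roughly a dozen expressions $r_{pq}\ast r_{rs}$ with various permuted subscripts, and each must be massaged into a canonical ordering using the two available applications of $r=-\tau(r)$ (one per copy of $r$). No step is conceptually subtle, but keeping the signs and index permutations straight---so that the final identifications coincide on the nose with \eqref{Cd6}--\eqref{Cd9}---is where careless bookkeeping would most easily derail the proof.
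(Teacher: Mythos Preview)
The paper does not supply its own proof of Theorem~\ref{Zb01}; the result is recalled from the anti-dendriform bialgebra theory in \cite{20} (see the opening of Section~3.1), and it is stated immediately after Proposition~\ref{Zb0} precisely because it is the skew-symmetric specialization of that proposition. Your plan---feed $r+\tau(r)=0$ into Proposition~\ref{Zb0} and watch Eqs.~\eqref{Cd1}--\eqref{Cd5} collapse to Eqs.~\eqref{Cd6}--\eqref{Cd9}---is therefore exactly the intended argument, and the sample reductions you carried out (e.g.\ $D_3(r)=D(r)$, the vanishing of $r_{32}\ast r_{21}-r_{23}\ast r_{12}$) are correct. The remaining identifications $D_4(r)=D(r)$, $D_7(r)=D(r)$, $D_8(r)=D_1(r)$, and the $D_5,D_6$ versus $D_1$ match for \eqref{Cd8} go through by the same double application of $r=-\tau(r)$ you describe; there is no hidden obstruction beyond the bookkeeping you already flagged.
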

The equation $D(r)=r_{12}\ast r_{23}-r_{12}\ast r_{13}+r_{13}\star r_{23}=0$ is called 
the {\bf anti-Zinbiel Yang-Baxter equation} or {\bf AZ-YBE}
 in $(A,\ast)$. If $r$ is skew-symmetric, then 
 \begin{equation*}D(r)=0\Longleftrightarrow D_{1}(r)=0\Longleftrightarrow D_{2}(r)=0.\end{equation*}

\begin{thm} \label{Zb1}
Let $(A,\ast)$ be an anti-Zinbiel algebra and $r\in A\otimes A$.
If $r$ is a skew-symmetric solution of the AZ-YBE in $(A,\ast)$, then $(A,\ast,\Delta_r)$ is an anti-Zinbiel bialgebra, where
$\Delta_{r}:A\rightarrow A\otimes A$ is given by Eq.~\eqref{CB1}.
\end{thm}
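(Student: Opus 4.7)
The plan is to reduce the theorem to Theorem \ref{Zb01} by exploiting the skew-symmetry of $r$, so that the four conditions \eqref{Cd6}--\eqref{Cd9} all collapse to the single statement $D(r)=0$.

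First I would record the effect of skew-symmetry on each of the twelve triple products listed in the \emph{Notations}. Writing $r=\sum_i a_i\otimes b_i$ with $\tau(r)=-r$, each $r_{ij}\ast r_{kl}$ can be rewritten, up to a permutation of the three tensor factors, as another such term with a sign. For example $r_{21}\ast r_{31}=-r_{12}\ast r_{31}$ (swapping factors $1$ and $2$ in the first $r$), and similarly every product $r_{ij}\ast r_{kl}$ that appears in $D_1(r)$ and $D_2(r)$ becomes, after suitable sign changes, one of the terms occurring in $D(r)$. I would arrange these identities into a short table, from which it follows immediately that
$$D_1(r)=\sigma_1\cdot D(r),\qquad D_2(r)=\sigma_2\cdot D(r),$$
where $\sigma_1,\sigma_2$ are permutation operators on $A\otimes A\otimes A$ (with signs). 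In particular $D(r)=0$ forces $D_1(r)=0$ and $D_2(r)=0$, establishing the equivalence stated immediately before Theorem \ref{Zb1}.

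With this in hand, the theorem is essentially mechanical. By Proposition \ref{Zb0} the map $\Delta_r$ defined by Eq.~\eqref{CB1} always satisfies Eq.~\eqref{Az3}, and Eq.~\eqref{Az4} becomes trivial because $r+\tau(r)=0$ so the left-hand side of Eq.~\eqref{Cd5} vanishes. Thus $(A,\Delta_r)$ is an anti-Zinbiel coalgebra precisely when Eqs.~\eqref{Cd1}--\eqref{Cd4} hold, and $(A,\ast,\Delta_r)$ is an anti-Zinbiel bialgebra precisely when the conditions \eqref{Cd6}--\eqref{Cd9} of Theorem \ref{Zb01} hold. But \eqref{Cd6} and \eqref{Cd7} are manifestly consequences of $D(r)=0$, while \eqref{Cd8} and \eqref{Cd9} involve $D_1(r)$ (and $D(r)$), which also vanish by the equivalence above. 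Therefore all four conditions are satisfied.

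The main obstacle is the first step: carefully checking, in the skew-symmetric case, the identifications between $D(r)$, $D_1(r)$ and $D_2(r)$. This is purely combinatorial but tedious, because one must keep track of which tensor slot each copy of $a_i$ or $b_i$ occupies and apply $\tau$ in the correct pair of slots; a sign error here would break the equivalence and hence the whole argument. Once this bookkeeping is done correctly, the remainder of the proof is a direct citation of Theorem \ref{Zb01} together with the trivial observation that $r+\tau(r)=0$.
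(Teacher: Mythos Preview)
Your proposal is correct and follows essentially the same route as the paper: the paper states Theorem~\ref{Zb1} immediately after Theorem~\ref{Zb01} and the displayed equivalence $D(r)=0\Longleftrightarrow D_{1}(r)=0\Longleftrightarrow D_{2}(r)=0$ for skew-symmetric $r$, leaving the (implicit) proof to be exactly the reduction you describe. Your extra remarks about Proposition~\ref{Zb0} and $r+\tau(r)=0$ are already absorbed into the passage from Proposition~\ref{Zb0} to Theorem~\ref{Zb01}, so once you cite Theorem~\ref{Zb01} they are redundant, but not incorrect.
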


\begin{defi}
	\begin{enumerate}
		\item An {\bf anti-pre-Lie coalgebra} is a vector space $A$ together
		with a linear map $\delta: A \longrightarrow A\otimes A$ such that
\begin{small}
		\begin{align}&\label{Pa1}
			(I\otimes\delta)\delta-(\tau\otimes I)(I\otimes\delta)\delta=(\tau\otimes I)(\delta\otimes I)\delta-(\delta\otimes I)\delta,\\&
\label{Pa2}[I^{\otimes 3}-\tau\otimes I+(\tau\otimes I)(I\otimes \tau)-(\tau\otimes I)(I\otimes \tau)(\tau\otimes I)
+(I\otimes \tau)(\tau\otimes I)-I\otimes \tau](\delta\otimes I)\delta=0,\end{align}
\end{small}
where $\tau: A\otimes A\longrightarrow A\otimes A$ is a map defined by $\tau(x\otimes y)=y\otimes x$ for all $x,y\in A$.
	\item An {\bf anti-pre-Lie bialgebra} is a triple $(A,\circ,
	\delta)$, where $(A,\circ)$ is an anti-pre-Lie algebra and $(A,
	\delta)$ is an anti-pre-Lie coalgebra such that for all $x,y\in A$,
	satisfying the following compatible conditions:
	\begin{small}
\begin{align}&\label{Pa3}
		\delta(x\circ y-y\circ x)=
(I\otimes \mathrm{ad}(x)-L_\circ(x) \otimes I)\delta(y)-(I\otimes \mathrm{ad}(y)-L_\circ(y) \otimes I)\delta(x),\\
		&\label{Pa4}(I\otimes I-\tau)\Big(\delta(x\circ y)-(L_\circ(x) \otimes I)\delta(y)-(I\otimes L_\circ(x))\delta(y)+(I\otimes R_\circ(y))\delta(x)\Big)=0,
	\end{align}
\end{small}
where $\mathrm{ad}(x)=L_{\circ}(x)-R_{\circ}(x)$.
	\end{enumerate}	
\end{defi}
 A {\bf quadratic anti-pre-Lie algebra} $(A,\circ,\omega)$ is an anti-pre-Lie algebra together with a 
 non-degenerate symmetric bilinear form $\omega$ such that Eq.~(\ref{C2}) holds.

\begin{thm} \label{Mb1} Let $(A, \circ_1)$ be an anti-pre-Lie algebra
	equipped with a comultiplication $\delta:A\longrightarrow A\otimes
	A$. Suppose that $\delta^{*}:A^{*}\otimes A^{*}\longrightarrow
	A^{*}$ induces an anti-pre-Lie algebra on $A^{*}$. Put
	$\circ_2=\delta^{*}$. Then the following conditions are equivalent:
	\begin{enumerate}
		\item There is a quadratic anti-pre-Lie algebra $(A \oplus A^{*},\ast,\omega)$ such that 
		$(A, \ast_1)$ and $(A^{*}, \ast_2)$ are anti-Zinbiel subalgebras, where the bilinear 
		form $\omega$ on $A \oplus A^{*}$ is given by
		\begin{equation*}\omega(x + a, y + b)= \langle x,b\rangle +\langle a,y\rangle,~\forall~x,y\in A,a,b\in A^{*}.
		\end{equation*}
		\item $((A, [ \ , \ ]_1),(A^{*},[ \ , \ ]_2), -L_{\circ_1}^{*}, -L_{\circ_2}^{*})$ is a matched pair of Lie algebras.
		\item $((A, \circ_1),( A^{*},\circ_{2}), -\mathrm{ad}_{1}^{*}, R_{\circ_1}^{*}, -\mathrm{ad}_{2}^{*}, R_{\circ_2}^{*})$ 
is a matched pair of anti-pre-Lie algebras.
		\item $(A, \circ_{1},\delta)$ is an anti-pre-Lie bialgebra.
	\end{enumerate}
\end{thm}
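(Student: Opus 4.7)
The plan is to prove the four-way equivalence by establishing a cycle of implications, leveraging the bialgebra structure theorems already available for anti-Zinbiel algebras (Theorem \ref{Zba}) as a template, together with the duality results in Proposition \ref{pr}. I would organize the proof around the natural bridge condition (c), showing (a) $\Leftrightarrow$ (c), (c) $\Leftrightarrow$ (d), and (b) $\Leftrightarrow$ (c) separately.

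\textbf{Step 1: (a) $\Leftrightarrow$ (c).} Suppose first that $(A\oplus A^*,\ast,\omega)$ is a quadratic anti-pre-Lie algebra with $(A,\circ_1)$ and $(A^*,\circ_2)$ as subalgebras. Write the product on $A\oplus A^*$ using four linear maps $l_1,r_1,l_2,r_2$ as in \eqref{ppmp eq1.2} (for $\circ$ alone), and use the invariance condition \eqref{C2} $\omega(u\ast v,w)=\omega(v,[u,w])$ together with the explicit hyperbolic form $\omega(x+a,y+b)=\langle x,b\rangle+\langle a,y\rangle$. Plugging in pairs $(x,y,b)$, $(a,x,y)$, etc., a direct identification of coefficients forces the structure maps to be precisely $l_1=-\mathrm{ad}_1^*$, $r_1=R_{\circ_1}^*$ and $l_2=-\mathrm{ad}_2^*$, $r_2=R_{\circ_2}^*$. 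Conversely, given the matched pair in (c), Proposition \ref{pr}(b) guarantees that $(A^*,-\mathrm{ad}_1^*,R_{\circ_1}^*)$ is a representation of $(A,\circ_1)$ and symmetrically for $A$, so the product defined by \eqref{ppmp eq1.2} is an anti-pre-Lie algebra on $A\oplus A^*$; the invariance of $\omega$ is then an immediate consequence of how $-\mathrm{ad}^*$ and $R_\circ^*$ are defined via the natural pairing.

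\textbf{Step 2: (c) $\Leftrightarrow$ (d).} Here the dictionary is the standard one: a comultiplication $\delta:A\to A\otimes A$ dualizes to a bilinear product $\circ_2=\delta^*$ on $A^*$, and the structure maps of the regular representation of $(A^*,\circ_2)$ pulled back through the natural pairing give $L_{\circ_2}(a)$ acting on $A$ as $-\mathrm{ad}_1^*(a)$ (via the appropriate duality) and so on. I would rewrite the two bialgebra axioms \eqref{Pa3}-\eqref{Pa4} by pairing them against arbitrary elements of $A^*\otimes A^*\otimes A^*$ and, in parallel, expand the mixed compatibility conditions for a matched pair of anti-pre-Lie algebras with the specific choices $(l_1,r_1)=(-\mathrm{ad}_1^*,R_{\circ_1}^*)$ and $(l_2,r_2)=(-\mathrm{ad}_2^*,R_{\circ_2}^*)$. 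Both sides reduce to identical tensor identities, giving the equivalence.

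\textbf{Step 3: (c) $\Leftrightarrow$ (b).} For (c) $\Rightarrow$ (b): this is an anti-pre-Lie analogue of Proposition \ref{Mb2}. Taking the sub-adjacent Lie bracket of the anti-pre-Lie algebra $A\bowtie A^*$ from Step 1, the resulting structure maps of the Lie matched pair become $l_1-r_1=-\mathrm{ad}_1^*-R_{\circ_1}^*=-L_{\circ_1}^*$ and similarly $-L_{\circ_2}^*$; Proposition \ref{pr}(c) confirms these are representations of the respective Lie algebras, and the matched-pair axioms for Lie algebras follow from Eqs.~(\ref{r1})-type analogues for anti-pre-Lie matched pairs. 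For the converse (b) $\Rightarrow$ (c), one must promote the matched pair of Lie algebras to a matched pair of anti-pre-Lie algebras; here the key observation is that $-\mathrm{ad}_i^*$ and $R_{\circ_i}^*$ together satisfy the anti-pre-Lie representation axioms (Proposition \ref{pr}(b)) purely from the fact that $-L_{\circ_i}^*$ is a Lie representation combined with the defining identities of $(A,\circ_i)$ and $(A^*,\circ_2)$ as anti-pre-Lie algebras.

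The main obstacle I anticipate is Step 3's converse direction, where one must check that every cross compatibility condition in the anti-pre-Lie matched pair (the higher-order analogue of \eqref{ppmp eq1.9}-\eqref{ppmp eq1.13} but restricted to anti-pre-Lie data) follows from the single Lie matched-pair compatibility together with the individual anti-pre-Lie axioms of $(A,\circ_1)$ and $(A^*,\circ_2)$. This is the step where one cannot appeal directly to an earlier proposition and must carry out a careful, if routine, verification case by case; every other step is essentially bookkeeping once Propositions \ref{pr} and \ref{Mp} are in hand.
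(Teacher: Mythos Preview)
The paper does not actually prove Theorem \ref{Mb1}; it is stated without proof in Section 3.1 as a result recalled from \cite{13}, alongside the parallel anti-Zinbiel statement Theorem \ref{Zba}. So there is no proof in this paper to compare against.

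Your outline is correct, and Steps 1 and 2 are the standard bookkeeping. Your worry about the converse in Step 3 is well-placed: verifying every anti-pre-Lie matched-pair cross condition directly from the single Lie matched-pair compatibility does work, but it is tedious. There is a cleaner route that your own Step 1 has already prepared: prove (b) $\Rightarrow$ (a) instead of (b) $\Rightarrow$ (c). From the matched pair of Lie algebras in (b) you obtain a Lie bracket on $A\oplus A^{*}$ by \eqref{eq:140}; a short check on mixed triples (e.g.\ $x,y\in A$, $c\in A^{*}$, using $\langle L_{\circ_1}^{*}(x)c,y\rangle=-\langle c,x\circ_1 y\rangle$) shows that the hyperbolic form $\omega$ is a non-degenerate commutative 2-cocycle on this Lie algebra. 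Proposition \ref{Qa2} then hands you a compatible anti-pre-Lie product on $A\oplus A^{*}$ via \eqref{C2}, and the very same invariance computation you used in Step 1 shows that $A$ and $A^{*}$ sit inside as anti-pre-Lie subalgebras carrying their original products $\circ_1$, $\circ_2$. That is (a), and your Step 1 already gives (a) $\Rightarrow$ (c). This bypasses the case-by-case verification entirely and is the argument one typically finds in the literature for this type of equivalence.
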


Let $(A,\circ)$ be an anti-pre-Lie algebra and
$r=\sum_{i}a_{i}\otimes b_{i}\in A\otimes A$. Denote
\begin{equation*}\label{Pa5}
        \begin{split}
            &S(r)=r_{12}\circ r_{13}+r_{12}\circ r_{23}-[r_{13}, r_{23}],\\&
            S_{1}(r)=[r_{13},r_{12}]-r_{23}\circ r_{12}-r_{23}\circ r_{13},
            \\&S_{2}(r)=[r_{12},r_{23}]+r_{13}\circ r_{12}-r_{13}\circ r_{23},
            \\&S_{3}(r)=r_{31}\circ r_{32}-r_{21}\circ r_{23}+[r_{31}, r_{23}]-[r_{21}, r_{32}]+[r_{12}, r_{13}],
            \\&S_{4}(r)=r_{12}\circ r_{13}-r_{32}\circ r_{31}+[r_{12}, r_{31}]-[r_{32}, r_{13}]+[r_{23}, r_{21}],
            \\&S_{5}(r)=r_{23}\circ r_{21}-r_{13}\circ r_{12}+[r_{23}, r_{12}]-[r_{13}, r_{21}]+[r_{31}, r_{32}].
        \end{split}
    \end{equation*}

\begin{pro}\label{Cg1}
Let $(A,\circ)$ be an anti-pre-Lie algebra and
$r=\sum_{i}a_{i}\otimes b_{i}\in A\otimes A$. Define a linear map
$\delta_{r}:A\rightarrow A\otimes A$ by 
\begin{equation}\label{CB2}
		\delta_{r}(x)=(L_{\circ}(x)\otimes I-I\otimes \mathrm{ad}(x))r,\;\forall x\in A.\end{equation}
Then we have
\begin{enumerate}
    \item \label{it:bb1} Eq.~(\ref{Pa1}) holds if and only if for all $x\in
    A$, the following equation holds:
  \begin{small}
\begin{equation}\label{Pa5}
        \begin{split}
            &[L_{\circ }(x)\otimes I\otimes I-(\tau\otimes I)\big(L_{\circ }(x)\otimes I\otimes I\big)
            -I\otimes I\otimes\mathrm{ad}(x)]S(r)\\
            &+\sum_{j}\big(I\otimes L_{\circ}(a_{j})\otimes\mathrm{ad}(x)
            -\mathrm{ad}(a_{j})\otimes I\otimes\mathrm{ad}(x)\big)[(r+\tau(r))\otimes b_{j}]\\
            &+(I^{\otimes 3}-\tau\otimes I)\sum_{j}\big(L_{\circ}(x\circ a_{j})\otimes I\otimes I
            -L_{\circ}(x)R_{\circ}(a_{j})\otimes I\otimes I\big)[\big(r+\tau(r)\big)\otimes b_{j}]=0.
        \end{split}
    \end{equation}\end{small}
  \begin{small}
  \item \label{it:bb2} Eq.~(\ref{Pa2}) holds if and only if for all $x\in
    A$, the following equation holds:
\begin{equation}\label{Pa6}
    \begin{split}
    &(\mathrm{ad}(x)\otimes I \otimes  I)S_{3}(r)+(I \otimes \mathrm{ad}(x)\otimes I)S_{4}(r)+(I\otimes I \otimes \mathrm{ad}(x))S_{5}(r)\\&
       +\sum_{i} (\mathrm{ad}(x\circ a_i)\otimes I \otimes  I-I \otimes\mathrm{ad}(x\circ a_i)\otimes   I)[(r+\tau(r))\otimes b_i]\\&+
        (I \otimes\mathrm{ad}(x\circ a_i)\otimes  I-I\otimes   I \otimes\mathrm{ad}(x\circ a_i))[b_i\otimes (r+\tau(r))] \\&+
        (I \otimes  I\otimes\mathrm{ad}(x\circ a_i)-\mathrm{ad}(x\circ a_i)\otimes I\otimes   I )(\tau\otimes I)[b_i\otimes (r+\tau(r))]=0.
    \end{split}\end{equation}
\end{small}
\item \label{it:bb3} Eq.~(\ref{Pa3}) holds if and only if for all $x,y\in A$, the following equation holds:
\begin{small}
    \begin{equation}\label{Pa7}
        \big(I\otimes L_{\circ}(x\circ y)-I\otimes L_{\circ}(x)L_{\circ}(y)+L_{\circ}(x)L_{\circ}(y)\otimes I-L_{\circ}(x\circ y)\otimes I+L_{\circ}(y)\otimes L_{\circ}(x)-L_{\circ}(x)\otimes L_{\circ}(y)\big)\big(r+\tau(r)\big)=0.
    \end{equation}
\end{small}
\item \label{it:bb4} Eq.~(\ref{Pa4}) holds automatically.
\end{enumerate}
\end{pro}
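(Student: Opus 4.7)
The proof is a direct, if lengthy, computation. The plan is to substitute the explicit formula $\delta_r(x) = (L_\circ(x)\otimes I - I\otimes \mathrm{ad}(x))r$ into each of the four equations \eqref{Pa1}--\eqref{Pa4}, expand systematically, and reorganize the resulting three-tensor expressions using the anti-pre-Lie identity $x\circ(y\circ z)-y\circ(x\circ z)=[y,x]\circ z$. Each of the four items is treated independently, since the resulting equations decouple into those coming from \eqref{Pa1}, \eqref{Pa2}, \eqref{Pa3}, and \eqref{Pa4} respectively.

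For item (a), I would compute $(I\otimes\delta_r)\delta_r(x)$ and $(\delta_r\otimes I)\delta_r(x)$ by successive application of the formula, producing sums indexed by pairs $(i,j)$. Grouping terms by where $x$ appears, contributions with $L_\circ(x)$ in the first slot assemble into $(L_\circ(x)\otimes I\otimes I)S(r)$; those with $\mathrm{ad}(x)$ in the third slot contribute $-(I\otimes I\otimes \mathrm{ad}(x))S(r)$; and once the operator $(\tau\otimes I)$ on the right-hand side of \eqref{Pa1} is applied, its contribution matches $-(\tau\otimes I)(L_\circ(x)\otimes I\otimes I)S(r)$. The remaining terms, in which $x$ enters via a nested product such as $x\circ a_j$, collapse after applying the anti-pre-Lie identity into the correction sums involving $(r+\tau(r))\otimes b_j$ that appear in \eqref{Pa5}.

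Item (b) follows the same template but with more permutations. Applying the six-term alternating operator of \eqref{Pa2} to $(\delta_r\otimes I)\delta_r(x)$ and grouping by the tensor slot containing $\mathrm{ad}(x)$ yields $(\mathrm{ad}(x)\otimes I\otimes I)S_3(r)$, $(I\otimes \mathrm{ad}(x)\otimes I)S_4(r)$, and $(I\otimes I\otimes \mathrm{ad}(x))S_5(r)$, together with correction terms involving $x\circ a_i$ acting through $\mathrm{ad}$ in each slot, matching \eqref{Pa6}. Items (c) and (d) are much shorter. For (c), substituting $\delta_r$ into both sides of \eqref{Pa3} and expanding, the contributions arising from the skew part of $r$ cancel by the anti-pre-Lie axioms (this is where the anti-pre-Lie identity for $x,y$ against $r$ appears), leaving an obstruction depending only on $r+\tau(r)$ that is precisely \eqref{Pa7}. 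For (d), the prefactor $I\otimes I-\tau$ in \eqref{Pa4} kills all contributions that are symmetric in the last two tensor slots, and the residual antisymmetric part vanishes identically using the defining relations of the anti-pre-Lie algebra, so no condition on $r$ survives.

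The main obstacle is the sheer bookkeeping: items (a) and (b) each produce dozens of triple-tensor terms whose regrouping into $S(r), S_3(r), S_4(r), S_5(r)$ requires a consistent indexing convention tracking which of $a_i, b_i, a_j, b_j$ occupies which tensor slot, together with disciplined use of $\mathrm{ad}=L_\circ-R_\circ$ to manage the signs arising from the $R_\circ$-contributions. Once the indexing is fixed, the anti-pre-Lie identity is the only structural input needed, and the calculation becomes a mechanical verification for each item.
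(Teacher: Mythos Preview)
The paper does not supply its own proof of this proposition: it is one of the results recalled from reference~\cite{13} at the start of Section~3.1, and is stated without argument. Your direct-computation sketch is the standard approach and matches the template the paper \emph{does} carry out in detail for the analogous anti-pre-Poisson statement, Proposition~\ref{Ac3}: substitute the coboundary formula, expand the iterated $\delta_r$, group terms by the tensor slot in which $x$ acts, and use the anti-pre-Lie identity to absorb the residual ``$x\circ a_j$'' terms into corrections supported on $r+\tau(r)$. One small imprecision: in item~(d) the operator $I\otimes I-\tau$ acts on $A\otimes A$ (a 2-tensor), not on a 3-tensor, so it is simply $I-\tau$ killing the symmetric part of the bracketed expression; otherwise your outline is sound.
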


\begin{thm} \label{Cg2}
Let $(A,\circ)$ be an anti-pre-Lie algebra
and $r=\sum\limits_{i}a_{i}\otimes b_{i}\in A\otimes A$ skew-symmetric. Define a linear map
$\delta_r:A\longrightarrow A\otimes A$ by Eq.~(\ref{CB2}).
 Then $(A,\circ,\delta_r)$ is an anti-pre-Lie bialgebra if and only if the following equations hold:
 \begin{small}
 \begin{align}&\label{Ly1}
       [L_{\circ }(x)\otimes I\otimes I-(\tau\otimes I)\big(L_{\circ }(x)\otimes I\otimes I\big)
            -I\otimes I\otimes\mathrm{ad}(x)]S(r)=0,
\\&\label{Ly2}
 (I \otimes \mathrm{ad}(x)\otimes I)S_{2}(r)-(I\otimes I \otimes \mathrm{ad}(x))S(r)-(\mathrm{ad}(x)\otimes I \otimes  I)S_{1}(r)=0.
\end{align}\end{small}
 \end{thm}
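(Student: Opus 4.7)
The plan is to invoke Proposition~\ref{Cg1}, which translates each of the four defining conditions \eqref{Pa1}--\eqref{Pa4} of an anti-pre-Lie bialgebra into an explicit identity on $r$, and then to use the skew-symmetry $r+\tau(r)=0$ to either trivialise or simplify each of these identities in turn.

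First I would dispose of the automatic cases. By Proposition~\ref{Cg1}(\ref{it:bb4}), condition \eqref{Pa4} holds with no hypothesis on $r$ whatsoever. For \eqref{Pa3}, the corresponding identity \eqref{Pa7} in Proposition~\ref{Cg1}(\ref{it:bb3}) is a single tensor operator applied to $(r+\tau(r))$, hence trivially satisfied by skew-symmetric $r$. Next, for \eqref{Pa1}, the equivalent identity \eqref{Pa5} consists of one ``principal'' piece plus two sums of the shape $\sum_j(\cdots)[(r+\tau(r))\otimes b_j]$; these sums vanish under skew-symmetry, and the surviving piece is precisely \eqref{Ly1}.

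The substantive work concerns \eqref{Pa2}. Its translation \eqref{Pa6} has three tail sums, each containing a factor $(r+\tau(r))$ or $[b_i\otimes(r+\tau(r))]$, all of which drop out for skew-symmetric $r$, leaving
\[
(\mathrm{ad}(x)\otimes I\otimes I)S_3(r)+(I\otimes\mathrm{ad}(x)\otimes I)S_4(r)+(I\otimes I\otimes\mathrm{ad}(x))S_5(r)=0.
\]
I would then establish the three ``symmetry identities''
\[
S_3(r)=-S_1(r),\qquad S_4(r)=S_2(r),\qquad S_5(r)=-S(r),
\]
valid for skew-symmetric $r$. Each is obtained by substituting $r_{qp}=-r_{pq}$ in $A^{\otimes 3}$ (using bilinearity of $\circ$ and of $[\,,\,]$) into the five-term definition of the relevant $S_i$ and watching the commutator terms telescope. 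Feeding these back into the displayed equation converts it directly into \eqref{Ly2}, completing the chain of equivalences.

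The hard part will be the bookkeeping for the three $S$-identities: each $S_i(r)$ is a sum of five monomials mixing $\circ$ and $[\,,\,]$, so one has to track carefully how the sign flip in each $r_{pq}$ propagates and how the resulting commutator terms cancel pairwise. Everything else---the triviality of the conditions derived from \eqref{Pa3} and \eqref{Pa4}, the collapse of \eqref{Pa5} to \eqref{Ly1}, and the disappearance of the tail sums in \eqref{Pa6}---follows mechanically from Proposition~\ref{Cg1} together with $r+\tau(r)=0$.
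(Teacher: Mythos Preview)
Your proposal is correct and follows precisely the approach implicit in the paper's presentation: Theorem~\ref{Cg2} is obtained from Proposition~\ref{Cg1} by specialising to skew-symmetric $r$, whereupon all terms containing $r+\tau(r)$ vanish, \eqref{Pa5} reduces to \eqref{Ly1}, and \eqref{Pa6} reduces to \eqref{Ly2} via the identities $S_3(r)=-S_1(r)$, $S_4(r)=S_2(r)$, $S_5(r)=-S(r)$, which your sign-tracking argument establishes correctly. The paper itself states this theorem as a recalled result (from \cite{13}) without writing out the reduction, so there is nothing further to compare.
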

 
If $r$ is skew-symmetric, then 
 \begin{equation*}S(r)=0\Longleftrightarrow S_{1}(r)=0\Longleftrightarrow S_{2}(r)=0.\end{equation*}
 
The equation
$S(r)=0$ is called the {\bf anti-pre-Lie Yang-Baxter equation} or {\bf APL-YBE} in $(A,\circ)$.
 
\begin{thm} \label{Pb1}
	Let $(A,\circ)$ be an anti-pre-Lie algebra and $r\in A\otimes A$. If $r$ is a skew-symmetric solution of 
the APL-YBE in $(A,\circ)$, then $(A,\circ,\delta_r)$ is an anti-pre-Lie bialgebra, where
the linear map $\delta_r:A\longrightarrow A\otimes A$ is given by Eq.~(\ref{CB2}). 
\end{thm}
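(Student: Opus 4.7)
The plan is to deduce Theorem~\ref{Pb1} as an immediate consequence of Theorem~\ref{Cg2} together with the equivalence $S(r)=0 \Longleftrightarrow S_{1}(r)=0 \Longleftrightarrow S_{2}(r)=0$, which holds whenever $r$ is skew-symmetric and is recorded just after Theorem~\ref{Cg2}. The proof therefore has the same shape as the anti-Zinbiel analogue Theorem~\ref{Zb1}.

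First, I would invoke Theorem~\ref{Cg2}: when $r$ is skew-symmetric, the triple $(A,\circ,\delta_r)$ is an anti-pre-Lie bialgebra if and only if the two tensor identities (\ref{Ly1}) and (\ref{Ly2}) hold for every $x\in A$. Thus it suffices to verify these two identities under the single hypothesis $S(r)=0$.

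Next, equation (\ref{Ly1}) is immediate: its left-hand side has the form $T(x)S(r)$ for the operator $T(x) = L_{\circ}(x)\otimes I\otimes I - (\tau\otimes I)(L_{\circ}(x)\otimes I\otimes I) - I\otimes I\otimes \mathrm{ad}(x)$, so $S(r)=0$ yields (\ref{Ly1}) at once. For (\ref{Ly2}), I would first apply the stated equivalence to deduce $S_{1}(r)=S_{2}(r)=0$ from $S(r)=0$; each of the three summands $(I\otimes \mathrm{ad}(x)\otimes I)S_{2}(r)$, $(I\otimes I\otimes \mathrm{ad}(x))S(r)$ and $(\mathrm{ad}(x)\otimes I\otimes I)S_{1}(r)$ appearing on the left-hand side of (\ref{Ly2}) then vanishes individually, so (\ref{Ly2}) also holds. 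Combined with Theorem~\ref{Cg2}, this yields the conclusion.

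The only genuinely nontrivial input is the equivalence $S(r)=0 \Longleftrightarrow S_{1}(r)=0 \Longleftrightarrow S_{2}(r)=0$ for skew-symmetric $r$, and I expect this to be the main technical point. It is a tensor-index computation in which one uses $r=\sum_{i}a_{i}\otimes b_{i} = -\sum_{i}b_{i}\otimes a_{i}$ to rewrite each $r_{ij}\circ r_{kl}$ (and each bracket $[r_{ij},r_{kl}]$) as a permutation of its legs with the appropriate sign, and then relabels the summation indices to see that $S_{1}(r)$ and $S_{2}(r)$ are obtained from $S(r)$ by permuting the three tensor factors modulo the skew-symmetry sign. Once this equivalence is taken as given, the theorem reduces to the short verification above, and no further argument is needed.
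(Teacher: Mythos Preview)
Your proposal is correct and follows exactly the route the paper implicitly takes: Theorem~\ref{Pb1} is stated without an explicit proof, as it follows at once from Theorem~\ref{Cg2} together with the equivalence $S(r)=0\Longleftrightarrow S_{1}(r)=0\Longleftrightarrow S_{2}(r)=0$ recorded immediately before it. Your verification that (\ref{Ly1}) and (\ref{Ly2}) both collapse termwise once $S(r)=S_{1}(r)=S_{2}(r)=0$ is precisely the intended argument.
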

 
 \subsection{Quadratic anti-pre-Poisson algebras and anti-pre-Poisson bialgebras}
 This section is devoted to presenting a bialgebra theory for anti-pre-Poisson algebras.

\begin{defi} \label{Ac1} An {\bf anti-pre-Poisson coalgebra} is a triple
$(A,\Delta,\delta)$, where $(A,\Delta)$ is an anti-Zinbiel coalgebra
and $(A,\delta)$ is an anti-pre-Lie coalgebra and $\Delta$ and
$\delta$ are compatible in the following sense:
\begin{small}
\begin{align}\label{Pc1.1}&(I\otimes \tau)(\tau\otimes I)(I\otimes\Delta)\delta
+(\tau\otimes I)(I\otimes \tau)(\tau\otimes I)(I\otimes\Delta)\delta+(I\otimes \delta)\Delta
+(\tau\otimes I)(I\otimes \delta)\Delta
\\=&(I\otimes \tau)(I\otimes \delta)\Delta+(\tau\otimes I)(I\otimes \tau)(I\otimes \delta)\Delta\nonumber,
\end{align}
\begin{equation}\label{Pc1.2}((\delta-\tau\delta)\otimes I)\Delta=(\tau\otimes I)(I\otimes\delta)\Delta-(I\otimes \Delta)\delta,\end{equation}
\begin{equation}\label{Pc1.3}((\Delta+\tau\Delta)\otimes I)\delta=-(I\otimes \delta)\Delta-(\tau\otimes I)(I\otimes\delta)\Delta.\end{equation}
\end{small}
\end{defi}

Let $A$ be a vector space with linear maps $\Delta,\delta:A\rightarrow A\otimes A$
and $\ast,\circ:A^{*}\otimes A^{*}\rightarrow A^{*}$ be the linear dual of $\Delta,\delta$ respectively. 
By a straightforward computation, $(A,\Delta,\delta)$ is 
an anti-pre-Poisson coalgebra if and only if $(A^{*},\ast,\circ)$ is an anti-pre-Poisson algebra.

\begin{defi}\label{Ac2}  Let $(A,\ast,\circ)$ be an anti-pre-Poisson algebra.
Suppose that there are two comultiplications
$\Delta,\delta:A\longrightarrow A\otimes A$ such that
$(A,\Delta,\delta)$ is an anti-pre-Poisson coalgebra. If in addition, $(A,
\ast,\Delta)$ is an anti-Zinbiel bialgebra, $(A,\circ, \delta)$ is
an anti-pre-Lie bialgebra and $\Delta,\delta$ satisfy the following
compatible conditions:
\begin{small}
\begin{align}\label{APB1}&
	\delta(x{\star}y)=(I\otimes L_{\star}(x))\delta(y)+(I\otimes L_{\star}(y))\delta(x) 
+(L_{\circ}(x)\otimes I)\Delta(y)+(L_{\circ}(y)\otimes I)\Delta(x)
	,\end{align}
\begin{align}\label{APB2}&
	\Delta([x, y])=-(L_{\circ}(x)\otimes I)\Delta(y)-(I\otimes
	L_{\star}(y))\delta(x)-(L_{\ast}(y)\otimes I)\delta(x)+(I\otimes
	\mathrm{ad}(x) )\Delta(y),\end{align}
\begin{align}\label{APB3}
	\Delta(x\circ y)+\tau\Delta(x\circ
	y)=&(L_{\circ}(x)\otimes
	I)\Delta(y)+\tau(L_{\circ}(x)\otimes I)\Delta(y)+(I\otimes
	L_{\circ}(x))\Delta(y)\\&+\tau(I\otimes
	L_{\circ}(x))\Delta(y)+(I\otimes
	R_{\ast}(y))\delta(x)+\tau(I\otimes R_{\ast}(y))\delta(x)\nonumber
	,\end{align}
\begin{align}\label{APB4}
\delta(x\ast y)-\tau\delta(x\ast y)=&(I\otimes
L_{\ast}(x))\delta(y)-\tau(L_{\ast}(x)\otimes
I)\delta(y)-(L_{\circ}(x)\otimes I)\Delta(y)\\&-\tau(I\otimes
L_{\circ}(x))\Delta(y)-(I\otimes R_{\ast}(y))\delta(x)
-\tau(I\otimes R_{\circ}(y))\Delta(x)\nonumber,\end{align}
\end{small}
for all $x,y\in A$, then $(A,\ast,\circ, \Delta,\delta)$ is called
an {\bf anti-pre-Poisson bialgebra}.\end{defi}

\begin{rmk} $(A,\Delta,\delta)$ is 
an anti-pre-Poisson coalgebra 
 if and only if $(A^{*},\ast_{A^{*}},\circ_{A^{*}})$
 is an anti-pre-Poisson algebra, where $\ast_{A^{*}},\circ_{A^{*}}$ are the linear dual of $\Delta,\delta$
 respectively, that is,
 \begin{align}&\label{Dm1}\langle \Delta(x),\zeta\otimes \eta\rangle=\langle x,\zeta\ast_{A^{*}} \eta\rangle
\\&\label{Dm2}\langle \delta(x),\zeta\otimes \eta\rangle=\langle x,\zeta\circ_{A^{*}} \eta\rangle,~\forall~x\in A,\zeta,\eta\in A^{*}.
\end{align}
 Thus, an anti-pre-Poisson bialgebra $(A,\ast,\circ,\Delta,\delta)$ is sometimes
denoted by $(A,\ast,\circ,A^{*},\ast_{A^{*}},\circ_{A^{*}})$, where the anti-pre-Poisson
 algebra structure $(A^{*},\ast_{A^{*}},\circ_{A^{*}})$
 on the dual space $A^{*}$
corresponds to the anti-pre-Poisson coalgebra $(A,\Delta,\delta)$
through Eqs.~(\ref{Dm1})-(\ref{Dm2}).
\end{rmk}

\begin{defi} Let $(A, \ast,\circ)$ be an anti-pre-Poisson
algebra. If there exists a non-degenerate symmetric bilinear form 
$\omega$ on $A$ such that $(A,\ast,\omega)$ is a quadratic anti-Zinbiel algebra and $(A,\circ,\omega)$ is a quadratic anti-pre-Lie algebra, 
then we say that $(A,\ast,\circ,\omega)$ is a {\bf quadratic anti-pre-Poisson algebra}.
\end{defi}

\begin{pro}\label{Qc} Let $(A, \ast, \circ,\omega)$ be a quadratic anti-pre-Poisson algebra, and $(A, \star, [ \ , \ ])$ be
its sub-adjacent Poisson algebra. Then $\omega$ is both a commutative Connes cocycle on the 
associative algebra $(A, \star)$ and a commutative 2-cocycle on the Lie algebra $(A,  [ \ , \ ])$. 
Conversely,
assume that $(A, \star, [ \ , \ ])$ is a Poisson algebra and $\omega$ is
 both a commutative Connes cocycle on the associative algebra $(A, \star)$
  and a commutative 2-cocycle on the Lie algebra $(A,  [ \ , \ ])$. Then
there exists a compatible quadratic anti-pre-Poisson algebra $(A, \ast, \circ,\omega)$
 given by Eqs.~(\ref{C1}) and (\ref{C2}).\end{pro}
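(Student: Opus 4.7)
The forward direction is immediate: a quadratic anti-pre-Poisson algebra $(A,\ast,\circ,\omega)$ is, by definition, simultaneously a quadratic anti-Zinbiel algebra $(A,\ast,\omega)$ and a quadratic anti-pre-Lie algebra $(A,\circ,\omega)$. Propositions~\ref{Qa1} and~\ref{Qa2} then give that $\omega$ is a commutative Connes cocycle on the associated commutative associative algebra $(A,\star)$ and a commutative 2-cocycle on the associated Lie algebra $(A,[\,,\,])$, and these two structures together are precisely the sub-adjacent Poisson algebra $(A,\star,[\,,\,])$ of the given anti-pre-Poisson algebra.

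For the converse, the plan is first to construct the two products $\ast$ and $\circ$ from Eqs.~(\ref{C1}) and~(\ref{C2}), and then to check that they fit together into an anti-pre-Poisson algebra with quadratic form $\omega$. By the converse halves of Propositions~\ref{Qa1} and~\ref{Qa2}, Eq.~(\ref{C1}) unambiguously defines an anti-Zinbiel algebra $(A,\ast)$ compatible with $(A,\star)$ and having $\omega$ invariant, while Eq.~(\ref{C2}) defines an anti-pre-Lie algebra $(A,\circ)$ compatible with $(A,[\,,\,])$ and also having $\omega$ invariant. In particular the sub-adjacent operations automatically match: $x\ast y+y\ast x=x\star y$ and $x\circ y-y\circ x=[x,y]$.

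It remains to verify the three cross-compatibility conditions \eqref{ppa eq1.1}--\eqref{ppa eq1.3}. My plan is, for each of them, to pair both sides with an arbitrary $w\in A$ and to invoke non-degeneracy of $\omega$. Applying Eq.~(\ref{C1}) to every $\ast$-term and Eq.~(\ref{C2}) to every $\circ$-term rewrites the identity as a statement involving only $\omega$, $\star$ and $[\,,\,]$. One can then shuffle the surviving $\star$-arguments via the commutative Connes cocycle, shuffle the $[\,,\,]$-arguments via the commutative 2-cocycle, and finally invoke the Poisson Leibniz rule $[z,x\star y]=[z,x]\star y+x\star[z,y]$ to collapse both sides to the same Poisson-algebra expression.

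The main obstacle is the bookkeeping in this last step: after applying~(\ref{C1}) and~(\ref{C2}) to each product appearing in \eqref{ppa eq1.1}--\eqref{ppa eq1.3}, one obtains expressions mixing several instances of terms of the form $\omega(y,z\star[x,w])$, $\omega(z,[x,y]\star w)$, $\omega(y\star z,[x,w])$, etc., and the desired equality is reached only after carefully iterating the two cocycle identities together with the Leibniz rule. The computation is forced once the substitutions are made, but is lengthy; it will be cleanest organised as repeated replacements $\omega(p,q\star r)\leftrightarrow-\omega(p\ast q,r)$ and $\omega(p,[q,r])\leftrightarrow\omega(q\circ p,r)$, iterated until both sides meet at a common combination of $\omega$, $\star$ and $[\,,\,]$.
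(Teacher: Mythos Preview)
Your approach matches the paper's own proof in its essential content: both directions are handled by citing Propositions~\ref{Qa1} and~\ref{Qa2}. The paper's proof is in fact the single sentence ``Combining Proposition~\ref{Qa1} and Proposition~\ref{Qa2}, we get the statement,'' and stops there.

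You go further than the paper by explicitly flagging that, in the converse direction, the cross-compatibility conditions \eqref{ppa eq1.1}--\eqref{ppa eq1.3} between $\ast$ and $\circ$ still need to be verified, and by sketching how to do so (pair against an arbitrary $w$, unwind via \eqref{C1} and \eqref{C2}, and reduce to the Poisson Leibniz rule using the two cocycle identities). This is the correct and natural plan, and your remark that the bookkeeping is straightforward but lengthy is accurate. The paper either regards this verification as routine or defers it implicitly to the reference where anti-pre-Poisson algebras were introduced; in any case, your outline is more complete than what the paper writes.
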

 
 \begin{proof}
 Combining Proposition \ref{Qa1} and Proposition \ref{Qa2}, we get the statement.
 
  \end{proof}
\begin{thm} \label{Mb3} Let $(A, \ast_1,\circ_1)$ be an anti-pre-Poisson algebra
equipped with two comultiplications $\Delta,\delta:A\longrightarrow
A\otimes A$. Suppose that $\Delta^{*},\delta^{*}:A^{*}\otimes
A^{*}\longrightarrow A^{*}$ induce an anti-pre-Poisson algebra structure
on $A^{*}$. Put $\ast_2=\Delta^{*},\circ_2=\delta^{*}$. Then the
following conditions are equivalent:
\begin{enumerate}
	\item\label{equiva1} There is a quadratic anti-pre-Poisson algebra $(A \oplus A^{*},\ast,\circ,\omega )$ such that 
	$(A, \ast_1,\circ_1)$ and $(A^{*}, \ast_2$,
	$\circ_2)$ are anti-pre-Poisson subalgebras, where the bilinear 
	form $\omega$ on $A \oplus A^{*}$ is given by
	\begin{equation}\label{eq:om}
		\omega(x + a, y + b)= \langle x,b\rangle +\langle a,y\rangle,~\forall~x,y\in A,a,b\in A^{*}.
	\end{equation}
\item\label{equiva2}
$(A, A^{*}, L_{\ast_1}^{*},-L_{\circ_1}^{*},L_{\ast_2}^{*},-L_{\circ_2}^{*}) $ is a matched pair of Poisson algebras.
\item\label{equiva3}
$(A, A^{*}, L_{\ast_1}^{*}+R_{\ast_1}^{*}, -R_{\ast_1}^{*}, R_{\circ_1}^{*}-L_{\circ_1}^{*},R_{\circ_1}^{*},L_{\ast_2}^{*}+R_{\ast_2}^{*}, -R_{\ast_2}^{*},
R_{\circ_2}^{*}-L_{\circ_2}^{*},R_{\circ_2}^{*})$ is a matched pair of anti-pre-Poisson algebras.
	\item\label{equiva4} $(A, \ast_{1},\circ_{1},\Delta,\delta)$ is an anti-pre-Poisson bialgebra.
\end{enumerate}
\end{thm}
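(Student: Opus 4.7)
My plan is to establish the cycle (a) $\Leftrightarrow$ (c), (c) $\Leftrightarrow$ (d), (c) $\Rightarrow$ (b), and (b) $\Rightarrow$ (a), so that all four statements are mutually equivalent. The foundation is the analogous Manin-triple type theorems for the two component structures: Theorem \ref{Zba} for anti-Zinbiel algebras and Theorem \ref{Mb1} for anti-pre-Lie algebras. These already settle the pure anti-Zinbiel and pure anti-pre-Lie parts, so the new work is isolating the cross compatibilities that glue them together into an anti-pre-Poisson structure.

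For (a) $\Leftrightarrow$ (c), starting from (c), I would apply Proposition \ref{Mp} to produce an anti-pre-Poisson structure on $A \oplus A^{*}$. The crucial point is that the specific choice of representations $(L_{\ast_1}^{*}+R_{\ast_1}^{*}, -R_{\ast_1}^{*}, R_{\circ_1}^{*}-L_{\circ_1}^{*}, R_{\circ_1}^{*})$ is exactly the dual representation from Proposition \ref{Dr}(a), so a direct verification using the natural pairing $\omega$ from \eqref{eq:om} shows that the invariance conditions \eqref{C1} and \eqref{C2} hold on the total algebra, yielding the quadratic structure in (a). Conversely, given (a), the invariance of $\omega$ combined with the fact that $A$ and $A^{*}$ are anti-pre-Poisson subalgebras forces the cross actions to be exactly these dual representations, and unfolding the axioms of an anti-pre-Poisson algebra on $A\oplus A^{*}$ recovers all the matched pair data of (c).

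For (c) $\Leftrightarrow$ (d), this is a computational translation via duality. Under $\Delta^{*}=\ast_2$ and $\delta^{*}=\circ_2$, the conditions that $(A^{*}, L_{\ast_1}^{*}+R_{\ast_1}^{*}, -R_{\ast_1}^{*})$ is an anti-Zinbiel representation and that $(A^{*}, R_{\circ_1}^{*}-L_{\circ_1}^{*}, R_{\circ_1}^{*})$ is an anti-pre-Lie representation of $(A,\ast_1,\circ_1)$ dualize, by Theorems \ref{Zba} and \ref{Mb1}, to the statements that $(A,\ast_1,\Delta)$ is an anti-Zinbiel bialgebra and $(A,\circ_1,\delta)$ is an anti-pre-Lie bialgebra, together with the coalgebra axioms. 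The remaining cross compatibility conditions \eqref{ppmp eq1.4}--\eqref{ppmp eq1.17} from Proposition \ref{Mp} dualize, in groups corresponding to symmetric and skew-symmetric parts, to the four compatibility equations \eqref{APB1}--\eqref{APB4} in Definition \ref{Ac2}.

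For (c) $\Rightarrow$ (b), Proposition \ref{Mb2} immediately yields a matched pair of Poisson algebras whose representations are $l+r$ and $\rho-\mu$; a quick check gives $(L_{\ast_1}^{*}+R_{\ast_1}^{*})+(-R_{\ast_1}^{*}) = L_{\ast_1}^{*}$ and $(R_{\circ_1}^{*}-L_{\circ_1}^{*})-R_{\circ_1}^{*} = -L_{\circ_1}^{*}$, matching (b) exactly (and analogously on the $A^{*}$ side). For (b) $\Rightarrow$ (a), Proposition \ref{Df} produces a Poisson algebra on $A \oplus A^{*}$, and the specific appearance of $L_{\ast_i}^{*}$ and $-L_{\circ_i}^{*}$ ensures by direct calculation that $\omega$ from \eqref{eq:om} is both a commutative Connes cocycle on $(A\oplus A^{*}, \star)$ and a commutative 2-cocycle on $(A\oplus A^{*}, [\ ,\ ])$; Proposition \ref{Qc} then supplies the compatible quadratic anti-pre-Poisson structure. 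The main obstacle is the bookkeeping in (c) $\Leftrightarrow$ (d): the fourteen cross compatibility conditions in Proposition \ref{Mp} need to be matched carefully with the four bialgebra compatibilities by splitting each into its symmetric and skew-symmetric parts, but since Theorems \ref{Zba} and \ref{Mb1} already handle the anti-Zinbiel and anti-pre-Lie sub-blocks, only the genuinely mixed identities require explicit verification, and these are dictated by the duality pairing between $(l,r,\rho,\mu)$ and $(\Delta,\delta)$.
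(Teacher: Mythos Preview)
Your overall cycle is sound and the ingredients are correct, but the route you take to reach condition~(d) differs from the paper's and is the harder one. The paper establishes the equivalence of (a), (b), (c) as you do (via Theorems~\ref{Zba}, \ref{Mb1} for the component structures, Proposition~\ref{Mb2} for (c)~$\Rightarrow$~(b), and Proposition~\ref{Qc} for the passage back through the Connes/2-cocycle conditions), but then proves (b)~$\Leftrightarrow$~(d) rather than (c)~$\Leftrightarrow$~(d). The advantage is that the matched pair of \emph{Poisson} algebras in (b) carries only the four extra compatibility conditions \eqref{P1}--\eqref{P4}, and the paper shows by a direct pairing computation that, under $\mu_i=L_{\ast_i}^{*}$ and $\rho_i=-L_{\circ_i}^{*}$, each \eqref{P$k$} is equivalent to the corresponding \eqref{APB$k$}. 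This is a clean four-to-four correspondence.

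Your proposed (c)~$\Leftrightarrow$~(d) is not wrong, but it forces you to confront the fourteen cross conditions \eqref{ppmp eq1.4}--\eqref{ppmp eq1.17} of Proposition~\ref{Mp} and argue that, under the dual representations, they collapse to exactly \eqref{APB1}--\eqref{APB4}. That collapse does happen (it must, since (c)~$\Leftrightarrow$~(b)~$\Leftrightarrow$~(d)), but establishing it directly means showing that many of the fourteen conditions become redundant or pair up under the duality symmetry, which is considerably more work than the paper's four explicit computations. Your remark that ``only the genuinely mixed identities require explicit verification'' is true, but in (c) there are fourteen such mixed identities, not four. If you want a self-contained direct argument, the paper's shortcut through the sub-adjacent Poisson matched pair (b) is the efficient choice.
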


\begin{proof} 
By Theorem \ref{Mb1}, Theorem \ref{Mb3} and Proposition \ref{Mb2}, it follows directly that
\eqref{equiva1} $\Longleftrightarrow$ \eqref{equiva2} $\Longleftrightarrow$ \eqref{equiva3}.
It remains to show that \eqref{equiva2} $\Longleftrightarrow$ \eqref{equiva4}.
In view of Theorem \ref{Mb1} and Theorem \ref{Mb3}, it suffices to prove the following equivalences under the conditions
\begin{align*}
	\mu_{1}=L_{\ast_{1}}^{*}, \ \ \ \rho_{1}=-L_{\circ_{1}}^{*}, \ \ \
	\mu_{2}=L_{\ast_{2}}^{*}, \ \ \ \rho_{2}=-L_{\circ_{2}}^{*}.
\end{align*}
 In fact, for all $x,y\in A,a,b\in A^{*}$, we obtain
\begin{align*}
\langle [a,L_{\ast_{1}}^{*}(x)b]_{2},y\rangle&=\langle a\circ_2 (L_{\ast_{1}}^{*}(x)b)-(L_{\ast_{1}}^{*}(x)b)\circ_2 b,y
\rangle
\\&=-\langle (I\otimes L_{\ast_{1}}(x))(\delta(y)-\tau\delta(y)),a\otimes b\rangle,\end{align*}
\begin{align*}
\langle L_{\circ_{1}}^{*}(L_{\ast_{2}}^{*}(b)x)a,y
\rangle &=-\langle a,(L_{\ast_{2}}^{*}(b)x)\circ_{1}y
\rangle=\langle R_{\circ_{1}}^{\ast}(y)a,L_{\ast_{2}}^{*}(b)x
\rangle\\&=-\langle b\ast_{2}R_{\circ_{1}}^{\ast}(y)a,x
\rangle=\langle a\otimes b,(R_{\circ_{1}}(y)\otimes I)\tau\Delta(x)\rangle,
\end{align*}
\begin{align*}
\langle -L_{\ast_{1}}^{*}(L_{\circ_{2}}^{*}(a)x)b,y\rangle&=\langle b,(L_{\circ_{2}}^{*}(a)x)\ast_{1}y\rangle
=\langle -R_{\ast_{1}}^{*}(y)b,L_{\circ_{2}}^{*}(a)x\rangle
\\&=\langle a\circ_{2}(R_{\ast_{1}}^{*}(y)b),x\rangle
=-\langle (I\otimes R_{\ast_{1}}(y))\delta(x),a\otimes b,\rangle,\end{align*}
\begin{align*}
\langle (L_{\circ_{1}}^{*}(x)a)\star_{2}b+L_{\ast_{1}}^{*}(x)([a,b]_{2}),y
\rangle
&=\langle (\Delta^{*}+\Delta^{*}\tau)((L_{\circ_{1}}^{*}(x)a)\otimes b),y
\rangle-\langle [a,b]_{2},x\ast_{1}y\rangle
\\&=-\langle a\otimes b,(L_{\circ_{1}}(x)\otimes I)(\Delta +\tau \Delta)(y)+(\tau\delta- \delta)(x\ast_{1}y)
\rangle.
\end{align*}
Thus, $(\ref{P4})\Longleftrightarrow (\ref{APB4})$. Similarly, $(\ref{P1})\Longleftrightarrow (\ref{APB1}),~
(\ref{P2})\Longleftrightarrow (\ref{APB2}),~(\ref{P3})\Longleftrightarrow (\ref{APB3})$.
The proof is completed.
\end{proof}

Let $(A, \ast,\circ,\Delta, \delta)$ be
 an anti-pre-Poisson bialgebra. Then $(D=A\oplus A^{*},\ast_{D},\circ_{D})$
 is an anti-pre-Poisson algebra, where
 \begin{align}\label{Db1}(x+a)\ast_{D}(y+b)=&x\ast_A y+(L_{\ast_{A^*}}^{*}+R_{\ast_{A^*}}^{*})
(a)y-R_{\ast_{A^*}}^*
(b)x\\&+a\ast_{A^*}b+(L_{\ast_A}^{*}+R_{\ast_A}^{*}(x)b-R_{\ast_A}^{*}(y)a\nonumber
,\end{align}
\begin{align}\label{Db2}
(x+a)\circ_{D}(y+b)=&x\circ_A y+(R_{\circ_{A^*}}^{*}-L_{\circ_{A^*}}^{*})
(a)y+R_{\circ_{A^*}}^{*}(b)x\\&+a\circ_{A^*}b+(R_{\circ_A}^{*}-L_{\circ_A}^{*})
(x)b+R_{\circ_A}^{*}(y)a\nonumber,
\end{align}
for all $x,y\in A,a,b\in A^{*}$.
$(D=A\oplus A^{*},\ast_{D},\circ_{D})$ is called the double anti-pre-Poisson algebra. 


\subsection{Coboundary anti-pre-Poisson bialgebras.}

In this section, we consider the coboundary anti-pre-Poisson
bialgebras. 

\begin{defi} An anti-pre-Poisson bialgebra $(A,\ast,\circ,\Delta,\delta)$ is called coboundary if there is some
$r\in A\otimes A$ such that Eqs.~\eqref{CB1} and \eqref{CB2} hold.
\end{defi}

\begin{pro} \label{Ac3}
Let $(A,\ast,\circ,\Delta_r,\delta_r)$ be an anti-pre-Poisson bialgebra
and $r=\sum\limits_{i}a_{i}\otimes b_{i}\in A\otimes A$, where
$\Delta_r,\delta_r:A\longrightarrow A\otimes A$ are given by Eqs.~(\ref{CB1}) and
(\ref{CB2}) respectively. Then
\begin{enumerate}
\item \label{CNPB1} Eq.~(\ref{Pc1.1}) holds if and only if
\begin{small}
\begin{align}\label{CA1}&
\sum_{i}(L_{\ast}([x,b_{i}])\otimes I \otimes I)[(r+\tau(r))\otimes a_{i}]
-(L_{\ast}(x\star b_i)\otimes I \otimes I)(\tau\otimes I)[a_i\otimes (r+\tau(r))]
\\&+( I\otimes I \otimes L_{\circ}(x\star b_i))[a_i\otimes (r+\tau(r))]
+( I\otimes I \otimes L_{\circ}(x\star b_i))(\tau\otimes I)[a_i\otimes (r+\tau(r))]
\nonumber\\&+( I \otimes L_{\ast}([x,b_i])\otimes I)[(r+\tau(r))\otimes a_{i}]
-( I \otimes L_{\circ}(x\star b_i)\otimes I)[a_i\otimes (r+\tau(r))]
\nonumber\\&+(L_{\ast}(x)\otimes I \otimes I)([r_{32},r_{12}]+r_{13}\circ r_{32}
+[r_{13},r_{23}]-r_{12}\circ r_{23}+[r_{31},r_{21}])
\nonumber\\&+( I \otimes L_{\ast}(x)\otimes I)([r_{31},r_{21}]+r_{23}\circ r_{31}+[r_{23},r_{13}]-r_{21}\circ r_{13}+[r_{32},r_{12}])
\nonumber\\&+(I\otimes I\otimes L_{\circ}(x))(r_{13}\star r_{23}-r_{32}\star r_{12}-r_{31}\ast r_{12}-r_{31}\star r_{21}-r_{32}\ast r_{21})
=0\nonumber.\end{align}\end{small}
\item \label{CNPB2}
	 Eq.~(\ref{Pc1.2}) holds if and only if
\begin{small}
\begin{align}\label{CA2}&
(I\otimes I\otimes L_{\star} (x))([r_{23}, r_{12}]-r_{13}\circ r_{12}+r_{23}\circ r_{21}-[r_{13}, r_{21}]+[r_{13}, r_{23}])	
\\&-(L_{\circ} (x\ast a_i)\otimes I\otimes I)((r+\tau(r))\otimes b_i)
+(I\otimes L_{\circ} (x\ast a_i)\otimes I)((r+\tau(r))\otimes b_i)
\nonumber\\&-(L_{\circ} (x)\otimes I\otimes I)(r_{13}\star r_{23}+r_{12}\ast r_{23}+r_{21}\ast r_{13})
\nonumber\\&+(I\otimes L_{\ast}(x)\otimes I)(r_{21}\circ r_{13} -r_{12}\circ r_{23}+[r_{13}, r_{23}])=0\nonumber
\end{align}\end{small}
\item \label{CNPB3} Eq.~(\ref{Pc1.3}) holds if and only if
\begin{small}
\begin{align}\label{CA3}&
(I\otimes I\otimes \mathrm{ad}(x))(r_{13}\ast r_{12}+r_{23}\star r_{12}+r_{13}\star r_{21}+r_{23}\ast r_{21}+r_{13}\star r_{23})\\&
+(L_{\ast}(x)\otimes I\otimes I-I\otimes L_{\ast}(x)\otimes I)([r_{13}, r_{23}]-r_{12}\circ r_{23}+r_{21}\circ r_{13})
\nonumber\\&-(L_{\ast} (x\circ a_i)\otimes I\otimes I+ I\otimes L_{\ast} (x\circ a_i)\otimes  I)[(r+\tau(r))\otimes b_i]
=0.\nonumber\end{align}\end{small}
\item \label{CNPB4} Eqs.~(\ref{APB1}) and (\ref{APB2}) hold naturally.
\item \label{CNPB5} Eq.~(\ref{APB3}) holds if and only if
\begin{small}
	\begin{align}\label{CA4}
		&(I\otimes L_{\circ}(x)L_{\ast}(y)-I\otimes L_{\ast}(x\circ y)-L_{\ast}(x\circ y)\otimes I
+L_{\circ}(x)L_{\ast}(y)\otimes I 
\\&+L_{\circ}(x)\otimes L_{\ast}(y)+L_{\ast}(y)\otimes L_{\circ}(x))(r+\tau(r))=0.\nonumber\end{align}
			\item \label{CNPB6} Eq.~(\ref{APB4}) holds if and only if
		\begin{align}\label{CA5}&
			(I\otimes L_{\ast}(x)L_{\circ}(y)-I\otimes L_{\circ}(x\ast y)-L_{\circ}(x)\otimes L_{\ast}(y)-L_{\circ}(y)\otimes L_{\ast}(x)
	\\&-L_{\circ}(x)L_{\ast}(y)\otimes I+L_{\circ}(x\ast y)\otimes I)(r+\tau(r))=0\nonumber.\end{align}\end{small}
\end{enumerate}
\end{pro}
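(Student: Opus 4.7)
The plan is to handle the six items one at a time by substituting the coboundary formulas
$\Delta_r(x)=-(I\otimes L_\star(x)+L_\ast(x)\otimes I)r$ and
$\delta_r(x)=(L_\circ(x)\otimes I-I\otimes\mathrm{ad}(x))r$
into the target equation, expand the resulting tensor expression in $A^{\otimes 3}$, and then collapse all derivatives of the form $L_\ast(x)L_\star(y)$, $L_\circ(x)L_\ast(y)$, $R_\ast(y)L_\circ(x)$, etc., using the representation identities \eqref{r1}--\eqref{r9}, the anti-Zinbiel and anti-pre-Lie axioms, and the cross axioms \eqref{ppa eq1.1}--\eqref{ppa eq1.3} (together with the derived identity \eqref{ppa eq1.0}). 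In each case the leftover terms regroup into one of the auxiliary expressions already displayed for the $D_i$ and $S_i$ families and the new ones involving mixed brackets $[\,,\,]$ with $\star$ that appear on the right-hand sides of (\ref{CA1})--(\ref{CA5}).

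For items \eqref{CNPB1}--\eqref{CNPB3} I would proceed as follows. Compute the nine iterated applications arising in (\ref{Pc1.1}), namely the six terms of the form $(\text{perm})(I\otimes\Delta_r)\delta_r(x)$ and $(\text{perm})(I\otimes\delta_r)\Delta_r(x)$; collect the pieces where both factors acting on $r$ are multiplications in $A$ (producing the $r_{ij}\ast r_{kl}$ and $r_{ij}\circ r_{kl}$ blocks) separately from those where one factor is $L_\ast(x\circ b_i)$ or $L_\circ(x\ast a_i)$ acting on a slot and the other pushes $r+\tau(r)$. The commutativity relations $L_\ast(x)L_\star(y)=-L_\ast(x\ast y+y\ast x)$ and the mixed relations \eqref{r1}--\eqref{r2} are what convert $L_\ast(x)\delta_r(\cdots)$ and $L_\circ(x)\Delta_r(\cdots)$ into the desired $\mathrm{ad}$- or $L_\star$-coefficient terms. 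Repeating the same mechanism for (\ref{Pc1.2}) and (\ref{Pc1.3}) produces (\ref{CA2}) and (\ref{CA3}) respectively.

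For items \eqref{CNPB4}--\eqref{CNPB6} the strategy is more symmetric. Substituting into (\ref{APB1}) and (\ref{APB2}) and collecting, every single term cancels using Eqs.~\eqref{ppa eq1.1}--\eqref{ppa eq1.3} applied slotwise together with the anti-Zinbiel/anti-pre-Lie axioms; this gives the ``hold naturally'' assertion of \eqref{CNPB4}. For (\ref{APB3}) one expands $\Delta_r(x\circ y)+\tau\Delta_r(x\circ y)$ and the right-hand side, then uses \eqref{r1} and the derived relation \eqref{r8} to eliminate all terms involving $r$ alone and leave only the $(r+\tau(r))$ tensor coefficient equation (\ref{CA4}); the analysis of (\ref{APB4}) is completely parallel and produces (\ref{CA5}).

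The main obstacle is purely combinatorial: keeping correct track of the twelve flavors of tensor products $r_{ij}\ast r_{kl}$ and $r_{ij}\circ r_{kl}$ listed in the Notations section, since at each step permutations like $(\tau\otimes I)$ and $(I\otimes\tau)$ shuffle them nontrivially, and the bookkeeping for which axiom among \eqref{r1}--\eqref{r7}, \eqref{ppa eq1.1}--\eqref{ppa eq1.3} to invoke must match exactly the slot on which the multiplication by $x$ or $y$ lives. Once a disciplined slot-by-slot convention is fixed, each of the six verifications reduces to a finite (but tedious) tensor computation with no conceptual obstruction beyond what already appears in Propositions~\ref{Zb0} and \ref{Cg1}; indeed, the pieces coming from the pure anti-Zinbiel and pure anti-pre-Lie parts already appear there, and only the genuinely mixed terms involving $L_\ast\!\leftrightarrow L_\circ$ interactions need to be computed anew.
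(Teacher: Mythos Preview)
Your proposal is correct and follows essentially the same approach as the paper: substitute the coboundary formulas, expand in $A^{\otimes 3}$, and regroup using the compatibility axioms.  The paper carries out item~(a) in full and then says the remaining items are analogous; its only organizational device you did not name explicitly is that, after expanding, it splits the sum as $A(1)+A(2)+A(3)$ according to which tensor slot carries the operator depending on $x$, and for the simplification it invokes directly the element-level consequences of \eqref{ppa eq1.3} (namely $b_j\ast[x,b_i]+b_i\circ(x\star b_j)=x\ast[b_i,b_j]$ and $b_j\circ(x\star b_i)+b_i\circ(x\star b_j)=-x\circ(b_i\star b_j)$) rather than the representation-theoretic form \eqref{r1}--\eqref{r9}; but this is the same computation packaged slightly differently, and your ``slot-by-slot convention'' remark points at the same idea.
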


\begin{proof} Using Eq.~(\ref{ppa eq1.3}), we get
\begin{align*}&
b_{j}\ast[x,b_{i}]
+b_i\circ(x\star b_j)=x\ast[b_{i},b_{j}],
\\&b_{j}\circ(x\star b_i)+b_{i}\circ(x\star b_j)=-x\circ(b_{i}\star b_j).
\end{align*}

Combining Eqs.~\eqref{CB1} and \eqref{CB2}, we have for all $x\in A$, 
\begin{small}
\begin{align*}&(I\otimes \tau)(\tau\otimes I)[I\otimes(\Delta_r+\tau \Delta_r)]\delta_r(x)
-(I\otimes \tau)(I\otimes \delta_r)\Delta_r(x)-(\tau\otimes I)(I\otimes \tau)(I\otimes \delta_r)\Delta_{r}(x)
\\&+(I\otimes \delta_r)\Delta_r(x)
+(\tau\otimes I)(I\otimes \delta_r)\Delta_r(x)
\\=&\sum_{i,j}a_{j}\otimes [x,b_{i}]\star b_{j}\otimes a_{i}+[x,b_{i}]\ast a_{j}\otimes b_{j}\otimes a_{i}-
a_j\otimes b_i\star b_j\otimes x\circ a_i-b_i\ast a_j\otimes b_{j}\otimes x\circ a_{i}\\&+
[x,b_{i}]\star b_{j}\otimes a_{j}\otimes a_{i}
+ b_{j}\otimes[x,b_{i}]\ast a_{j}\otimes a_{i}
-b_i\star b_j\otimes a_j\otimes  x\circ a_i-b_{j}\otimes b_i\ast a_j\otimes  x\circ a_{i}\\&-
a_i\otimes [x\star b_i,b_j]\otimes a_{j}+a_i\otimes b_j \otimes(x\star b_i)\circ a_j-
x\ast a_i \otimes[b_i,b_j]\otimes a_{j}+x\ast a_i \otimes b_{j}\otimes b_i\circ a_{j}\\&
-[x\star b_i,b_j]\otimes a_{i}\otimes a_{j}+b_{j}\otimes a_i \otimes(x\star b_i)\circ a_j
-[b_i,b_j]\otimes x\ast a_i \otimes a_j+b_{j}\otimes x\ast a_i \otimes b_i\circ a_{j}
\\&+a_i \otimes a_j \otimes [x\star b_i, b_{j}]-a_i\otimes(x\star b_i)\circ a_{j}\otimes b_j
+x\ast a_i \otimes a_{j}\otimes [b_i,b_j]-x\ast a_i \otimes  b_i\circ a_{j}\otimes b_j
\\&+ a_j \otimes a_i \otimes [x\star b_i, b_{j}]-(x\star b_i)\circ a_{j}\otimes a_i\otimes b_j
+a_{j}\otimes x\ast a_i \otimes  [b_i,b_j]-b_i\circ a_{j}\otimes x\ast a_i \otimes   b_j
\\=&A(1)+A(2)+A(3),
\end{align*}\end{small}
where
\begin{small}
\begin{align*}&A(1)=
\sum_{i,j}[x,b_{i}]\ast a_{j}\otimes b_{j}\otimes a_{i}+
[x,b_{i}]\star b_{j}\otimes a_{j}\otimes a_{i}
-x\ast a_i \otimes[b_i,b_j]\otimes a_{j}+x\ast a_i \otimes b_{j}\otimes b_i\circ a_{j}
\\&-[x\star b_i,b_j]\otimes a_{i}\otimes a_{j}
+x\ast a_i \otimes a_{j}\otimes [b_i,b_j]-x\ast a_i \otimes  b_i\circ a_{j}\otimes b_j
-(x\star b_i)\circ a_{j}\otimes a_i\otimes b_j
\\=&\sum_{i}(L_{\ast}([x,b_{i}])\otimes I \otimes I)[(r+\tau(r))\otimes a_{i}]+
 b_{j}\ast[x,b_{i}]\otimes a_{j}\otimes a_{i}+(L_{\ast}(x)\otimes I \otimes I)([r_{32},r_{12}]\\&+r_{13}\circ r_{32})
-(L_{\ast}(x\star b_j)\otimes I \otimes I)(\tau\otimes I)(a_j\otimes \tau(r))
+b_i\circ(x\star b_j)\otimes a_{j}\otimes a_{i}\\&
+(L_{\ast}(x)\otimes I \otimes I)([r_{13},r_{23}]-r_{12}\circ r_{23})-
(L_{\ast}(x\star b_j)\otimes I \otimes I)(\tau\otimes I)(a_j\otimes r)
\\=&\sum_{i}(L_{\ast}([x,b_{i}])\otimes I \otimes I)[(r+\tau(r))\otimes a_{i}]
-(L_{\ast}(x\star b_j)\otimes I \otimes I)(\tau\otimes I)[a_j\otimes (r+\tau(r))]
\\&+(L_{\ast}(x)\otimes I \otimes I)([r_{32},r_{12}]+r_{13}\circ r_{32}
+[r_{13},r_{23}]-r_{12}\circ r_{23}+[r_{31},r_{21}]),\end{align*}
\begin{align*}&A(2)=
\sum_{i,j}a_{j}\otimes [x,b_{i}]\star b_{j}\otimes a_{i}
+ b_{j}\otimes[x,b_{i}]\ast a_{j}\otimes a_{i}
-a_i\otimes [x\star b_i,b_j]\otimes a_{j}
-[b_i,b_j]\otimes x\ast a_i \otimes a_j\\&+b_{j}\otimes x\ast a_i \otimes b_i\circ a_{j}
-a_i\otimes(x\star b_i)\circ a_{j}\otimes b_j
+a_{j}\otimes x\ast a_i \otimes  [b_i,b_j]-b_i\circ a_{j}\otimes x\ast a_i \otimes   b_j
\\=&
( I \otimes L_{\ast}([x,b_i])\otimes I)(r\otimes a_i)+
a_{j}\otimes b_{j}\ast[x,b_{i}] \otimes a_{i}+( I \otimes L_{\ast}([x,b_i])\otimes I)(\tau(r)\otimes a_i)
\\&-( I \otimes L_{\circ}(x\star b_i)\otimes I)(a_i\otimes \tau(r))+a_i\otimes b_j\circ(x\star b_i)\otimes a_{j}
+( I \otimes L_{\ast}(x)\otimes I)([r_{31},r_{21}]+r_{23}\circ r_{31})
\\&-( I \otimes L_{\circ}(x\star b_i)\otimes I)(a_i\otimes r)
+( I \otimes L_{\ast}(x)\otimes I)([r_{23},r_{13}]-r_{21}\circ r_{13})
\\=&( I \otimes L_{\ast}([x,b_i])\otimes I)[(r+\tau(r))\otimes a_{i}]
-( I \otimes L_{\circ}(x\star b_i)\otimes I)[a_i\otimes (r+\tau(r))]
\\&+( I \otimes L_{\ast}(x)\otimes I)([r_{31},r_{21}]+r_{23}\circ r_{31}+[r_{23},r_{13}]-r_{21}\circ r_{13}+[r_{32},r_{12}]),\end{align*}
\begin{align*}&A(3)=
\sum_{i,j}-
a_j\otimes b_i\star b_j\otimes x\circ a_i-b_i\ast a_j\otimes b_{j}\otimes x\circ a_{i}
-b_i\star b_j\otimes a_j\otimes  x\circ a_i-b_{j}\otimes b_i\ast a_j\otimes  x\circ a_{i}\\&
+a_i\otimes b_j \otimes(x\star b_i)\circ a_j+b_{j}\otimes a_i \otimes(x\star b_i)\circ a_j
+a_i \otimes a_j \otimes [x\star b_i, b_{j}]
+ a_j \otimes a_i \otimes [x\star b_i, b_{j}]
\\=&(I\otimes I\otimes L_{\circ}(x))(-r_{32}\star r_{12}-r_{31}\ast r_{12}-r_{31}\star r_{21}-r_{32}\ast r_{21})
+( I\otimes I \otimes L_{\circ}(x\star b_i))(a_i\otimes \tau(r))\\&
+( I\otimes I \otimes L_{\circ}(x\star b_i))(\tau\otimes I)(a_i\otimes \tau(r))+
+( I\otimes I \otimes L_{\circ}(x\star b_i))(a_i\otimes r)\\&
-a_i \otimes a_j \otimes b_{j}\circ(x\star b_i)
+( I\otimes I \otimes L_{\circ}(x\star b_i))(\tau\otimes I)(a_i\otimes r)
-a_i \otimes a_j \otimes b_{i}\circ(x\star b_j)
\\=&(I\otimes I\otimes L_{\circ}(x))(r_{13}\star r_{23}-r_{32}\star r_{12}-r_{31}\ast r_{12}-r_{31}\star r_{21}-r_{32}\ast r_{21})
\\&+( I\otimes I \otimes L_{\circ}(x\star b_i))[a_i\otimes (r+\tau(r))]
+( I\otimes I \otimes L_{\circ}(x\star b_i))(\tau\otimes I)[a_i\otimes (r+\tau(r))]
.\end{align*}\end{small}
Therefore, Eq.~(\ref{Pc1.1}) holds if and only if Eq.~(\ref{CA1}) holds. The remaining items can be checked in the same way. \end{proof}

\begin{thm} \label{Bc1} Let $(A,\ast,\circ)$ be an anti-pre-Poisson algebra equipped with
linear maps $\Delta_r,\delta_r:A\longrightarrow A\otimes A$ given by Eqs.~\eqref{CB1} and \eqref{CB2} respectively. 
Then $(A,\ast,\circ,\Delta_r,\delta_r)$
 is an anti-pre-Poisson bialgebra if and only if 
 Eqs.~\eqref{Cd1}-\eqref{Cd4}, \eqref{Pa5}-\eqref{Pa7} and \eqref{CA1}-\eqref{CA5} hold.\end{thm}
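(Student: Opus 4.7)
The proof is essentially an assembly: the statement just repackages all the coboundary criteria already established piecewise for the anti-Zinbiel part, the anti-pre-Lie part, and the mixed compatibility conditions. The plan is to unfold the definition of an anti-pre-Poisson bialgebra into its constituent axioms and then apply Propositions~\ref{Zb0}, \ref{Cg1} and \ref{Ac3} in turn. Concretely, by Definition~\ref{Ac2}, $(A,\ast,\circ,\Delta_r,\delta_r)$ is an anti-pre-Poisson bialgebra if and only if
\begin{enumerate}
\item $(A,\ast,\Delta_r)$ is an anti-Zinbiel bialgebra, i.e.\ Eqs.~\eqref{Az1}--\eqref{Az4} hold;
\item $(A,\circ,\delta_r)$ is an anti-pre-Lie bialgebra, i.e.\ Eqs.~\eqref{Pa1}--\eqref{Pa4} hold;
\item $(A,\Delta_r,\delta_r)$ is an anti-pre-Poisson coalgebra, i.e.\ Eqs.~\eqref{Pc1.1}--\eqref{Pc1.3} hold;
\item the compatibility conditions \eqref{APB1}--\eqref{APB4} hold.
\end{enumerate}

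First, I would apply Proposition~\ref{Zb0} to item~(1): it translates \eqref{Az1}--\eqref{Az4} into the coboundary identities \eqref{Cd1}--\eqref{Cd4} (with \eqref{Az3} automatic). Next, I would apply Proposition~\ref{Cg1} to item~(2), which converts \eqref{Pa1}--\eqref{Pa4} into the coboundary identities \eqref{Pa5}--\eqref{Pa7} (with \eqref{Pa4} automatic). Finally, I would apply Proposition~\ref{Ac3} to items~(3) and~(4): its parts (\ref{CNPB1})--(\ref{CNPB3}) convert \eqref{Pc1.1}--\eqref{Pc1.3} into \eqref{CA1}--\eqref{CA3}, while its parts (\ref{CNPB4})--(\ref{CNPB6}) show that \eqref{APB1}--\eqref{APB2} follow automatically for the coboundary $\Delta_r,\delta_r$, and that \eqref{APB3}--\eqref{APB4} are equivalent to \eqref{CA4}--\eqref{CA5}. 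Collecting all surviving conditions yields precisely the list in the theorem.

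The one structural point to verify is that no coboundary identity is used twice in an inconsistent way; that is, the various $D_i(r)$ and $S_i(r)$ expressions appearing in Propositions~\ref{Zb0}, \ref{Cg1} and \ref{Ac3} are computed from the same $r$ and the same operations $\ast,\circ$, and the identities (\ref{ppa eq1.0})--(\ref{ppa eq1.3}) linking them are already available. Thus one can cite the three propositions in parallel without encountering hidden overlap, and the equivalence is obtained by conjunction.

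The only real obstacle is bookkeeping: I must make sure that every axiom appearing in Definition~\ref{Ac2} is accounted for by exactly one of the three propositions, and that nothing is double-counted. In particular, because \eqref{Az3}, \eqref{Pa4}, \eqref{APB1} and \eqref{APB2} become automatic for coboundary comultiplications, the list in the statement is strictly shorter than a naive enumeration would suggest; I would double-check (by retracing the proofs of Proposition~\ref{Ac3}\eqref{CNPB4} and the relevant parts of Propositions~\ref{Zb0} and \ref{Cg1}) that these automatic identities genuinely follow from \eqref{CB1}--\eqref{CB2} alone, with no hidden assumption on $r$. Once this bookkeeping is confirmed, the theorem follows by simply conjoining the three sets of equivalences.
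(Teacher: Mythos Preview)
Your proposal is correct and follows exactly the same approach as the paper: the paper's proof is the single sentence ``This follows from Proposition~\ref{Zb0}, Proposition~\ref{Cg1} and Proposition~\ref{Ac3},'' and your write-up simply spells out that citation by unfolding Definition~\ref{Ac2} and matching each axiom to the relevant item of those three propositions. The extra paragraph on bookkeeping (checking that the automatic identities require no hidden hypotheses on $r$) is harmless due diligence, not additional content.
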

\begin{proof} This follows from Proposition \ref{Zb0}, Proposition \ref{Cg1} and Proposition \ref{Ac3}.\end{proof}

The following conclusion is apparent.

\begin{pro} \label{Ac4}
Let $(A,\ast,\circ)$ be an anti-pre-Poisson algebra and $r=\sum\limits_{i}a_{i}\otimes b_{i}\in A\otimes A$ skew-symmetric.
Define linear maps $\Delta_r,\delta_r:A\longrightarrow A\otimes A$ by Eqs.~(\ref{CB1}) and (\ref{CB2}) respectively
Then $(A,\ast,\circ,\Delta_r,\delta_r)$
 is an anti-pre-Poisson bialgebra if and only if
 Eqs.~(\ref{Cd6})-(\ref{Cd9}), (\ref{Ly1})-(\ref{Ly2}) and
  the following equations hold:
 \begin{small}
\begin{align}&\label{Py1}
(L_{\circ} (x)\otimes I\otimes I)D(r)-(I\otimes I\otimes L_{\star} (x)+I\otimes L_{\ast}(x)\otimes I)S(r)
=0,
\\&\label{Py2}
(I\otimes I\otimes \mathrm{ad}(x))D(r)
-(L_{\ast}(x)\otimes I\otimes I-I\otimes L_{\ast}(x)\otimes I)S(r)
=0,\\&\label{Py3}
( I \otimes L_{\ast}(x)\otimes I)S_{2}(r)+(L_{\ast}(x)\otimes I \otimes I)S_{1}(r)
+(I\otimes I\otimes L_{\circ}(x))D(r)
=0.\end{align}\end{small}
\end{pro}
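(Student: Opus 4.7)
The plan is to deduce this proposition as a direct specialization of Theorem \ref{Bc1}, which characterizes a general coboundary anti-pre-Poisson bialgebra by the full list of equations \eqref{Cd1}-\eqref{Cd4}, \eqref{Pa5}-\eqref{Pa7}, and \eqref{CA1}-\eqref{CA5}. The skew-symmetry hypothesis $\tau(r)=-r$ lets us eliminate or compress each of these equations. The first observation is that Eqs.~\eqref{Cd5}, \eqref{Pa7}, \eqref{CA4}, and \eqref{CA5} all contain $r+\tau(r)$ as an explicit tensor factor, hence they are automatically satisfied.

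Next I would appeal to the already-proved anti-Zinbiel and anti-pre-Lie results. The anti-Zinbiel coalgebra compatibility \eqref{Cd1}-\eqref{Cd4} collapses, under skew-symmetry, to \eqref{Cd6}-\eqref{Cd9} by Theorem \ref{Zb01}; this uses the equivalence $D(r)=0\Leftrightarrow D_1(r)=0\Leftrightarrow D_2(r)=0$ stated just before that theorem to unify the various $D_i$'s in \eqref{Cd1}-\eqref{Cd4} into $D(r)$ and $D_1(r)$. In the same way, Theorem \ref{Cg2} reduces the anti-pre-Lie coalgebra conditions \eqref{Pa5}-\eqref{Pa6} (with \eqref{Pa7} having disappeared) to \eqref{Ly1}-\eqref{Ly2}, using $S(r)=0\Leftrightarrow S_1(r)=0\Leftrightarrow S_2(r)=0$.

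The real work is to convert the three mixed compatibility conditions \eqref{CA1}-\eqref{CA3} into \eqref{Py1}-\eqref{Py3}. For each of them I would proceed in two steps: first, drop all terms of the form $(\cdots)[(r+\tau(r))\otimes b_i]$ or $[a_i\otimes(r+\tau(r))]$, since these vanish; second, in the remaining ``bulk'' terms substitute $r_{21}=-r_{12}$, $r_{31}=-r_{13}$, $r_{32}=-r_{23}$ and collect. For \eqref{CA3}, the surviving terms group naturally into $(I\otimes I\otimes\mathrm{ad}(x))D(r)$ and $(L_{\ast}(x)\otimes I\otimes I-I\otimes L_{\ast}(x)\otimes I)S(r)$, giving \eqref{Py2}. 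For \eqref{CA2}, the remaining pieces assemble as a $D(r)$-block on the first leg and an $S(r)$-block on the second/third legs, producing \eqref{Py1}. For \eqref{CA1}, one similarly reads off \eqref{Py3} after noting that under $\tau(r)=-r$ the bracket combinations such as $[r_{32},r_{12}]+r_{13}\circ r_{32}+[r_{13},r_{23}]-r_{12}\circ r_{23}+[r_{31},r_{21}]$ reduce, modulo sign, to $S_1(r)$ or $S_2(r)$, and the $\star$-vs-$\ast$ combinations on the third leg collapse to $D(r)$.

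The main obstacle is the bookkeeping in this last step: \eqref{CA1} in particular contains many terms mixing $\ast$, $\star$, $\circ$ and brackets across three tensor legs, and one has to be careful that signs flip consistently (e.g.\ $r_{21}\ast r_{13}=-r_{12}\ast r_{13}$) so that each group of terms really does assemble into an $S_i(r)$ or $D(r)$ rather than an unrelated expression. Once this is verified, the proposition follows immediately from Theorem \ref{Bc1} by listing the surviving reduced equations.
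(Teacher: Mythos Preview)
Your approach is correct and is exactly what the paper has in mind: immediately before the proposition the authors write ``The following conclusion is apparent,'' giving no further argument, so the intended proof is precisely the specialization of Theorem~\ref{Bc1} under $\tau(r)=-r$ that you describe. Your identification of which condition collapses to which---\eqref{CA3}$\to$\eqref{Py2}, \eqref{CA2}$\to$\eqref{Py1}, \eqref{CA1}$\to$\eqref{Py3}, together with the anti-Zinbiel and anti-pre-Lie reductions via Theorems~\ref{Zb01} and~\ref{Cg2}---is accurate, and your remark that the only real work is the sign bookkeeping in assembling the $S(r)$, $S_1(r)$, $S_2(r)$, $D(r)$ blocks is on point.
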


\begin{rmk} \label{Ac6} Assume that $\sigma_{123},\sigma_{23}:A\otimes A\otimes A\longrightarrow A\otimes A\otimes A$ 
are linear maps defined respectively by
\begin{equation*}\sigma_{123}(x\otimes y\otimes z)=z\otimes x\otimes y, \ \ \
\sigma_{23}(x\otimes y\otimes z)=x\otimes z\otimes y, \ \ \forall~x,y,z\in A. \end{equation*}
If $r$ is skew-symmetric, then
\begin{align*}&\sigma_{123}S(r)=S_{1}(r),\ \ \  \sigma_{23}S(r)=S_{2}(r).\end{align*}
\end{rmk}

\begin{thm} \label{Ac5} Let $(A,\ast,\circ)$ be an anti-pre-Poisson algebra
and $r\in A\otimes A$ be skew-symmetric.
Then $(A,\ast,\circ,\Delta_r,\delta_r)$ is an anti-pre-Poisson bialgebra if
$S(r)=D(r)=0$, where $\Delta_r,\delta_r$ are defined by Eqs.~\eqref{CB1} and \eqref{CB2} respectively.
\end{thm}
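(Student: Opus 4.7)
The plan is to derive Theorem \ref{Ac5} as a direct corollary of Proposition \ref{Ac4}. Since $r$ is assumed skew-symmetric, the characterization of when $(A,\ast,\circ,\Delta_r,\delta_r)$ is an anti-pre-Poisson bialgebra reduces to verifying three groups of identities: the anti-Zinbiel conditions \eqref{Cd6}--\eqref{Cd9}, the anti-pre-Lie conditions \eqref{Ly1}--\eqref{Ly2}, and the three mixed compatibility conditions \eqref{Py1}--\eqref{Py3}. I will check each group in turn under the hypotheses $D(r)=0$ and $S(r)=0$.

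First I would handle the two ``single-structure'' pieces. Since $r$ is skew-symmetric and $D(r)=0$, Theorem \ref{Zb1} immediately gives that $(A,\ast,\Delta_r)$ is an anti-Zinbiel bialgebra, so \eqref{Cd6}--\eqref{Cd9} hold. Equivalently, one can invoke the remark following Theorem \ref{Zb01} that $D(r)=0\Leftrightarrow D_1(r)=0\Leftrightarrow D_2(r)=0$ for skew-symmetric $r$, so every operator applied to $D(r)$, $D_1(r)$ or $D_2(r)$ in \eqref{Cd6}--\eqref{Cd9} vanishes. Similarly, since $S(r)=0$ and $r$ is skew-symmetric, Theorem \ref{Pb1} yields that $(A,\circ,\delta_r)$ is an anti-pre-Lie bialgebra, so \eqref{Ly1}--\eqref{Ly2} are satisfied; alternatively, the equivalence $S(r)=0\Leftrightarrow S_1(r)=0\Leftrightarrow S_2(r)=0$ (stated just before Theorem \ref{Pb1}) makes this transparent.

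The last step is to verify the mixed compatibility conditions \eqref{Py1}--\eqref{Py3}. Inspection of their left-hand sides shows that each term is of the form $(\text{linear map})\cdot D(r)$, $(\text{linear map})\cdot S(r)$, $(\text{linear map})\cdot S_1(r)$, or $(\text{linear map})\cdot S_2(r)$. Under the hypotheses $D(r)=S(r)=0$ combined with the skew-symmetric equivalences $S(r)=0\Leftrightarrow S_1(r)=0\Leftrightarrow S_2(r)=0$, all of these vanish identically, so \eqref{Py1}--\eqref{Py3} hold trivially.

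I do not expect a genuine obstacle here; the content of the theorem is exactly that the bialgebra axioms in the skew-symmetric coboundary case are governed solely by the vanishing of $D(r)$ and $S(r)$, because Proposition \ref{Ac4} has already absorbed the tedious case-by-case tensor computation. The only mild subtlety is to keep careful track of the sign and index conventions in \eqref{Py1}--\eqref{Py3} when appealing to the skew-symmetric equivalences $D(r)\equiv D_1(r)\equiv D_2(r)$ and $S(r)\equiv S_1(r)\equiv S_2(r)$; once these are applied, the proof reduces to a one-line invocation of Proposition \ref{Ac4}.
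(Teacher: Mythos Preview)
Your proposal is correct and follows essentially the same approach as the paper: the paper's proof is a one-line citation of Theorem \ref{Zb1}, Theorem \ref{Pb1}, Theorem \ref{Bc1}, Proposition \ref{Ac4} and Remark \ref{Ac6}, which is exactly the reduction you describe in more detail. Your invocation of the skew-symmetric equivalences $S(r)=0\Leftrightarrow S_1(r)=0\Leftrightarrow S_2(r)=0$ is precisely the content of Remark \ref{Ac6} (via the permutations $\sigma_{123},\sigma_{23}$), so nothing is missing.
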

\begin{proof} 
Combining Theorem \ref{Zb1}, Theorem \ref{Pb1}, Theorem \ref{Bc1}, Proposition \ref{Ac4} and Remark \ref{Ac6}, we can get the statement.
\end{proof}

\begin{defi} 
Let $(A,\ast,\circ)$ be an anti-pre-Poisson algebra and $r\in
A\otimes A$. We say that  $r$ satisfies {\bf the anti-pre-Poisson Yang-Baxter equation} or {\bf APP-YBE} in short
 if $r$ satisfies both the AZ-YBE:
\begin{equation}D(r)=r_{12}\ast r_{23}-r_{12}\ast r_{13}+r_{13}\star r_{23}=0\end{equation}
 and the APL-YBE:
\begin{equation} S(r)=r_{12}\circ r_{23}+r_{12}\circ r_{13}-[r_{13}, r_{23}]=0.\end{equation}
 \end{defi}
 
 Thus, the following conclusion is reached.
 
\begin{cor}\label{Bc2} 
	Let $(A,\ast,\circ)$ be an anti-pre-Poisson algebra and $r\in A\otimes A$ be
	a skew-symmetric solution of the APP-YBE in $(A,\ast,\circ)$.
	Then $(A,\ast,\circ,\Delta_r,\delta_r)$ is an anti-pre-Poisson bialgebra, where $\Delta_r,\delta_r:A\rightarrow A\otimes A$ are 
linear maps given by Eqs.~\eqref{CB1} and \eqref{CB2} respectively.
\end{cor}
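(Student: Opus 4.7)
The proof plan: this corollary is essentially a restatement of Theorem \ref{Ac5} in the language of the APP-YBE. By the definition just given, $r$ being a skew-symmetric solution of the APP-YBE means precisely that $r$ is skew-symmetric and satisfies both $D(r)=0$ and $S(r)=0$; these are exactly the hypotheses of Theorem \ref{Ac5}, which then yields the bialgebra conclusion. I would therefore phrase the proof as a one-line appeal to Theorem \ref{Ac5}.

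For completeness, I would briefly indicate the internal structure underlying Theorem \ref{Ac5}, so that the reader sees why only the two conditions $D(r)=0$ and $S(r)=0$ are needed. By Proposition \ref{Ac4}, the bialgebra conditions for $(A,\ast,\circ,\Delta_r,\delta_r)$ with skew-symmetric $r$ split into three groups: first, the anti-Zinbiel bialgebra conditions \eqref{Cd6}--\eqref{Cd9}, which hold by Theorem \ref{Zb1} as soon as $D(r)=0$; second, the anti-pre-Lie bialgebra conditions \eqref{Ly1}--\eqref{Ly2}, which hold by Theorem \ref{Pb1} as soon as $S(r)=0$; and third, the mixed compatibility conditions \eqref{Py1}--\eqref{Py3} together with the skew-symmetric forms of \eqref{CA4}--\eqref{CA5}.

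The only mildly delicate point — and the closest thing to an obstacle — is the mixed group. The conditions \eqref{Py1}--\eqref{Py3} are built from $D(r)$, $S(r)$, $S_1(r)$ and $S_2(r)$; by Remark \ref{Ac6}, skew-symmetry of $r$ gives $\sigma_{123}S(r)=S_1(r)$ and $\sigma_{23}S(r)=S_2(r)$, so $S(r)=0$ forces $S_1(r)=S_2(r)=0$ as well, and hence \eqref{Py1}--\eqref{Py3} all vanish. The remaining equations \eqref{CA4} and \eqref{CA5} each contain the factor $r+\tau(r)$, which is zero by skew-symmetry, so they hold trivially. With all three groups verified, the bialgebra axioms follow and the corollary is established.
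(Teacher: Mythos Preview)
Your proposal is correct and matches the paper's approach exactly: the paper presents this corollary as an immediate consequence of Theorem~\ref{Ac5} together with the definition of the APP-YBE (the paper simply writes ``Thus, the following conclusion is reached''). Your additional unpacking of the internal structure is accurate but goes beyond what the paper records; note that the conditions \eqref{CA4}--\eqref{CA5} are already absorbed into Proposition~\ref{Ac4} for skew-symmetric $r$, so you need not list them separately.
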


\begin{ex} Let $(A,\ast,\circ)$ be the 3-dimensional anti-pre-Poisson algebra given in Example \ref{Ae}
with a basis $\{e_1, e_2, e_3 \}$. Then 
\begin{equation*}r=(e_1+e_2)\otimes e_3-e_3\otimes (e_1+e_2)\end{equation*} is a skew-symmetric 
solution of the APP-YBE in $(A,\ast,\circ)$. Thus, $(A,\ast,\circ,\Delta_r,\delta_r)$ is an anti-pre-Poisson bialgebra, 
where $\Delta_r,\delta_r:A\rightarrow A\otimes A$ are 
linear maps given by Eqs.~\eqref{CB1} and \eqref{CB2} respectively. Explicitly,
\begin{align*}& \Delta(e_{1})=e_3\otimes e_3,\ \ \Delta(e_{2})=\Delta(e_{3})=0,\\
&\delta(e_{1})=2e_3\otimes e_3,\ \ \delta(e_{2})=-e_3\otimes e_3,\ \
\delta(e_{3})=0.\end{align*}
\end{ex}

For a vector space $A$, the isomorphism $A\otimes A^{*}\simeq Hom (A^{*},A)$ identifies an element $r\in A\otimes A$ with a map
$T_{r}:A^{*}\longrightarrow A$. Explicitly, 
\begin{equation} \label{YE7} T_{r}:A^{*}\longrightarrow A,\ \ \  \langle T_{r}(\zeta),\eta\rangle=\langle r,\zeta\otimes\eta\rangle,
\ \ \ \forall~\zeta,\eta\in A^{*}.\end{equation}
It is clearly that $T_{r}^{*}=T_{\tau(r)},~T_{r+\tau(r)}^{*}=T_{r+\tau(r)}$.

\begin{pro} \label{Ys1} Let $(A,\circ)$ be an anti-pre-Lie
 algebra and 
$r=\sum_{i}a_i\otimes b_i\in A\otimes A$. Then the following conclusions hold:
\begin{enumerate}
		\item $r$ is a solution of the APL-YBE $S(r)=r_{12}\circ r_{23}+r_{12}\circ r_{13}-[r_{13}, r_{23}]=0$
if and only if
\begin{equation*}T_{\tau(r)}(\zeta)\circ T_{\tau(r)}(\eta)=T_{\tau(r)}( \mathrm{ad}^*(T_{r}(\zeta))\eta+R_{\circ}^*(T_{\tau(r)}(\eta))\zeta).\end{equation*}
\item $r$ is a solution of the APL-YBE $S(r)=r_{12}\circ r_{23}+r_{12}\circ r_{13}-[r_{13}, r_{23}]=0$
if and only if
\begin{equation*}[T_{r}(\zeta), T_{r}(\eta)]=T_{r}( L_{\circ}^*(T_{\tau(r)}(\zeta))\eta+L_{\circ}^*(T_{r}(\eta))\zeta).\end{equation*}
\item $r$ is a solution of the equation
$S_{2}(r)=r_{13}\circ r_{12}+[r_{12}, r_{23}]-r_{13}\circ r_{23}=0$
if and only if
\begin{equation*}T_{r}(\zeta)\circ T_{r}(\eta)=-T_{r}(\mathrm{ad}^*(T_{r}(\zeta))\eta+R_{\circ}^*(T_{\tau(r)}(\eta))\zeta).\end{equation*}
\item $r$ is a solution of the equation
$S_{1}(r)=[r_{13},r_{12}]-r_{23}\circ r_{12}-r_{23}\circ r_{13}=0$
if and only if
\begin{equation*}[T_{\tau(r)}(\zeta), T_{\tau(r)}(\eta)]=-T_{\tau(r)}( L_{\circ}^*(T_{\tau(r)}(\zeta))\eta+L_{\circ}^*(T_{r}(\eta))\zeta)
.\end{equation*}
\end{enumerate}
\end{pro}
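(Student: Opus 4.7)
The plan is to prove each of the four items by testing the corresponding tensor equation against an arbitrary triple $\zeta\otimes\eta\otimes\xi\in(A^*)^{\otimes 3}$ and matching the result with the scalar identity obtained by pairing the claimed operator equation against $\xi$. Since the pairing between $A$ and $A^*$ is non-degenerate, the equivalence will follow.

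The basic dictionary I would record first is
\[
T_r(\zeta)=\sum_i\langle a_i,\zeta\rangle\,b_i,\qquad T_{\tau(r)}(\zeta)=\sum_i\langle b_i,\zeta\rangle\,a_i,
\]
together with the standard dual-action formulas $\langle L_\circ^*(x)\eta,y\rangle=-\langle\eta,x\circ y\rangle$, $\langle R_\circ^*(x)\eta,y\rangle=-\langle\eta,y\circ x\rangle$, and the relation $\mathrm{ad}^*=L_\circ^*-R_\circ^*$. Under the index convention fixed in the Notations (the first $r$ in a product $r_{pq}\circ r_{kl}$ carries index $i$, the second $j$), each monomial $r_{pq}\circ r_{kl}$ or $[r_{pq},r_{kl}]$ contracts in two of its three slots against two of $\zeta,\eta,\xi$, producing an element of $A$ of the form $T_r(\cdot)$, $T_{\tau(r)}(\cdot)$, a product $T_r(\zeta)\circ T_r(\eta)$ or $T_{\tau(r)}(\zeta)\circ T_{\tau(r)}(\eta)$, or a bracket of the latter; pairing with the remaining functional yields the desired scalar.

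For item (b), I pair $S(r)$ with $\zeta\otimes\eta\otimes\xi$; the three monomials collapse respectively to $-\langle T_r(L_\circ^*(T_r(\zeta))\eta),\xi\rangle$, $-\langle T_r(L_\circ^*(T_{\tau(r)}(\eta))\zeta),\xi\rangle$, and $-\langle[T_r(\zeta),T_r(\eta)],\xi\rangle$, and swapping $\zeta\leftrightarrow\eta$ via skew-symmetry of $[\,,\,]$ brings the result to the exact form stated in item (b). For item (a), I instead pair $S(r)$ against $\xi\otimes\zeta\otimes\eta$; the same three monomials now produce $\langle T_{\tau(r)}(\zeta)\circ T_{\tau(r)}(\eta),\xi\rangle$, $-\langle T_{\tau(r)}(R_\circ^*(T_{\tau(r)}(\eta))\zeta),\xi\rangle$, and $-\langle T_{\tau(r)}(\mathrm{ad}^*(T_r(\zeta))\eta),\xi\rangle$ (after unpacking $\mathrm{ad}^*=L_\circ^*-R_\circ^*$ in the bracket term), matching item (a) exactly. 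Items (c) and (d) are handled by the same template applied to $S_2(r)$ with pairing $\zeta\otimes\eta\otimes\xi$ and to $S_1(r)$ with pairing $\xi\otimes\zeta\otimes\eta$, respectively; in item (d) a final $\zeta\leftrightarrow\eta$ swap, again using skew-symmetry of the bracket, produces the sign as stated.

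The only real obstacle is bookkeeping: correctly reading off each $r_{pq}\circ r_{kl}$ and $[r_{pq},r_{kl}]$ under the paper's index convention, choosing the correct permutation of the test functionals for each item, and tracking the signs coming from the dual-action formulas and from $\mathrm{ad}^*=L_\circ^*-R_\circ^*$. Beyond this clerical care there is no conceptual ingredient: everything reduces to dualizing the tensor identity and reading off the operator identity on $A^*$.
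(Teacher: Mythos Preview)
Your proposal is correct and follows essentially the same approach as the paper's proof: the paper pairs $S(r)$ against $\theta\otimes\zeta\otimes\eta$ (your $\xi\otimes\zeta\otimes\eta$), computes each of the three monomials exactly as you describe for item~(a), and then states that the remaining items are verified similarly. Your description simply spells out the permutations of the test functionals needed for items~(b)--(d), which the paper leaves implicit.
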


\begin{proof} According to Eq.~\eqref{YE7}, we have for all $\zeta,\eta,\theta\in A^{*}$,
\begin{small}
\begin{align*}\langle\theta\otimes \zeta\otimes \eta,r_{12}\circ r_{13}\rangle
&=\sum_{i,j}\langle \theta\otimes \zeta\otimes \eta,a_i\circ a_j\otimes b_i\otimes b_j\rangle
\\&=\sum_{i,j}\langle \theta,a_i\circ a_j\rangle\langle \zeta,b_i\rangle \langle \eta,b_j\rangle 
=\langle T_{\tau(r)}(\zeta)\circ T_{\tau(r)}(\eta),\theta \rangle
,\end{align*}
\begin{align*}\langle \theta\otimes \zeta\otimes \eta,[r_{23},r_{13}]\rangle
&=\sum_{i,j}\langle \theta\otimes \zeta\otimes \eta,a_i \otimes a_j\otimes [b_j, b_i]\rangle
\\&=\sum_{i,j}\langle \zeta,a_j\rangle \langle \theta,a_i\rangle\langle \eta,[b_j, b_i]\rangle 
=\langle \eta,[T_{r}(\zeta), T_{r}(\theta)]\rangle
\\&=-\langle \mathrm{ad}^{*}(T_{r}(\zeta))\eta, T_{r}(\theta)\rangle
=-\langle T_{\tau(r)}(\mathrm{ad}^{*}(T_{r}(\zeta))\eta), \theta\rangle,\end{align*}
 \begin{align*}\langle \theta\otimes \zeta\otimes \eta,r_{12}\circ r_{23}\rangle
 &=\sum_{i,j}\langle \theta\otimes \zeta\otimes \eta,a_i \otimes (b_i\circ a_j)\otimes b_j\rangle
 \\& =\sum_{i,j}\langle \theta,a_i\rangle\langle \eta,b_j\rangle \langle \zeta,b_i\circ a_j\rangle 
 =\langle \zeta,T_{r}(\theta)\circ T_{\tau(r)}(\eta)\rangle
  \\&=-\langle R_{\circ}^{*}(T_{\tau(r)}(\eta))\zeta,T_{r}(\theta)\rangle
 =-\langle T_{\tau(r)}(R_{\circ}^{*}(T_{\tau(r)}(\eta))\zeta),\theta\rangle.\end{align*}\end{small}
 Thus, Item (a) holds. The other items can be verified similarly.
\end{proof}

\begin{pro} \cite{19} \label{Ys2} Let $(A,\ast)$ be an anti-Zinbiel algebra and
$r\in A\otimes A$. Then  \begin{enumerate}
\item
 $r$ is a solution of the equation $ D(r)=r_{13}\star r_{23}-r_{12}\ast r_{13}+r_{12}\ast r_{23}=0$
 in $(A,\ast,\circ)$
if and only if the following equation holds:
 \begin{equation*}T_{\tau(r)}(\eta)\ast T_{\tau(r)}(\zeta)=-T_{\tau(r)}(R_{\ast}^{*}(T_{\tau(r)}(\zeta))\eta
+L_{\star}^{*}(T_{r}(\eta))\zeta),~~\forall~\eta,\zeta\in A^{*}.
\end{equation*}
\item
 $r$ is a solution of the equation $ D(r)=r_{13}\star r_{23}-r_{12}\ast r_{13}+r_{12}\ast r_{23}=0$
 in $(A,\succ,\prec)$
if and only if the following equation holds:
 \begin{equation*}T_{r}(\eta)\star T_{r}(\zeta)=T_{r}(L_{\ast}^{*}(T_{r}(\eta))\zeta-L_{\ast}^{*}(T_{\tau(r)}(\zeta))\eta),~~\forall~\eta,\zeta\in A^{*}.
\end{equation*}
\item
 $r$ is a solution of the equation $ D_{2}(r)=r_{23}\star r_{12}+r_{13}\ast r_{23}+r_{13}\ast r_{12}=0$
 in $(A,\ast,\circ)$
 if and only if the following equation holds:
 \begin{equation*}T_{r}(\eta)\ast T_{r}(\zeta)=T_{r}(R_{\ast}^{*}(T_{\tau(r)}(\zeta))\eta
+L_{\star}^{*}(T_{r}(\eta))\zeta),~~\forall~\eta,\zeta\in A^{*}.
\end{equation*}
\item
 $r$ is a solution of the
equation $D_{1}(r)= r_{12}\star r_{13}+r_{23}\ast r_{12}-r_{23}\ast r_{13}=0$ in $(A,\ast,\circ)$
if and only if the following equation holds:
 \begin{equation*}T_{\tau(r)}(\eta)\star T_{\tau(r)}(\zeta)=T_{\tau(r)}(L_{\ast}^{*}(T_{\tau(r)}(\zeta))\eta
 -L_{\ast}^{*}(T_{r}(\eta))\zeta),~~\forall~\eta,\zeta\in A^{*}.
\end{equation*}
 \end{enumerate}
\end{pro}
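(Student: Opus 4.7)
The plan is to prove each of the four items by exactly the same pairing/computational technique used in the proof of Proposition~\ref{Ys1}, adapted to the anti-Zinbiel setting: pair the three-tensor equation $D(r)=0$ (resp.\ $D_1(r)=0$, $D_2(r)=0$) against an arbitrary $\theta\otimes \zeta\otimes \eta\in A^{*}\otimes A^{*}\otimes A^{*}$, rewrite each scalar contribution in terms of $T_r$, $T_{\tau(r)}$ and the dual representation maps $L_{\ast}^{*}, R_{\ast}^{*}, L_{\star}^{*}$, and then strip off the arbitrary $\theta$ to read the identity as an operator equation on $A^{*}$.

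The basic dictionary I would install first is the three formulas, for $r=\sum_i a_i\otimes b_i$,
\begin{align*}
T_{r}(\zeta)&=\sum_i\langle\zeta,a_i\rangle b_i,\qquad T_{\tau(r)}(\zeta)=\sum_i\langle\zeta,b_i\rangle a_i,\\
\langle\zeta,x\ast y\rangle&=-\langle L_{\ast}^{*}(x)\zeta,y\rangle=-\langle R_{\ast}^{*}(y)\zeta,x\rangle,\qquad \langle\zeta,x\star y\rangle=-\langle L_{\star}^{*}(x)\zeta,y\rangle.
\end{align*}
With these, each of the nine building blocks in the expressions $D(r),D_1(r),D_2(r)$ has a canonical pairing rule: a term of the form $r_{ij}\ast r_{kl}$ pairs against $\theta\otimes\zeta\otimes\eta$ to produce a single product $\langle T_{?}(\cdot)\ast T_{?}(\cdot),\theta\rangle$ or a one-operator twist $\langle T_{?}(X_{\ast/\star}^{*}(T_{?}(\cdot))\cdot),\theta\rangle$, where the choice of $T_r$ versus $T_{\tau(r)}$ in each slot is dictated by whether that slot carries an $a$ or a $b$.

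For item~(a) I would compute the three pairings
\begin{align*}
\langle \theta\otimes\zeta\otimes\eta,\,r_{12}\ast r_{13}\rangle &=\langle T_{\tau(r)}(\zeta)\ast T_{\tau(r)}(\eta),\theta\rangle,\\
\langle \theta\otimes\zeta\otimes\eta,\,r_{12}\ast r_{23}\rangle &=\langle T_{\tau(r)}(R_{\ast}^{*}(T_{\tau(r)}(\zeta))\eta),\theta\rangle,\\
\langle \theta\otimes\zeta\otimes\eta,\,r_{13}\star r_{23}\rangle &=\langle T_{\tau(r)}(L_{\star}^{*}(T_{r}(\eta))\zeta),\theta\rangle,
\end{align*}
(up to the signs coming from the definition of $f^{*}$), swap $\zeta\leftrightarrow\eta$ where symmetry forces, and collect. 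Since $\theta$ is arbitrary, $D(r)=0$ is then equivalent to the claimed identity for $T_{\tau(r)}(\eta)\ast T_{\tau(r)}(\zeta)$. Item~(c) is obtained the same way after repeating with $D_2(r)$; it produces the complementary sign because $D_2$ interchanges the roles of the ``$a$-slot'' and ``$b$-slot'' compared to $D$. Items~(b) and~(d) come from pairing the corresponding expressions in which the $b$-slot/$a$-slot assignment is reversed throughout, swapping every occurrence of $T_r$ with $T_{\tau(r)}$ and every $R_{\ast}^{*}$ with $L_{\ast}^{*}$; in other words (b) is the mirror of (a) and (d) is the mirror of (c).

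The principal bookkeeping obstacle is keeping track of signs introduced by the convention $\langle f^{*}(x)u^{*},v\rangle=-\langle u^{*},f(x)v\rangle$ together with the fact that $r_{13}\star r_{23}=r_{13}\ast r_{23}+r_{23}\ast r_{13}$ splits into two terms whose pairings yield $T_r$ in one case and $T_{\tau(r)}$ in the other. Once the dictionary above is fixed and applied uniformly, each of the four equivalences follows immediately, and the proof reduces to a substitution check rather than any structural argument.
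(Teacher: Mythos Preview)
Your approach is correct and is exactly the method the paper uses: the paper does not give a separate proof of this proposition (it is cited from \cite{19}), but the pairing computation you outline is precisely the one carried out in the paper for the anti-pre-Lie analogue, Proposition~\ref{Ys1}. The only bookkeeping point worth noting is that your displayed formulas for the individual pairings already have the variables in the swapped order relative to the raw computation (e.g.\ pairing $r_{12}\ast r_{23}$ against $\theta\otimes\zeta\otimes\eta$ naturally produces $R_{\ast}^{*}(T_{\tau(r)}(\eta))\zeta$, not $R_{\ast}^{*}(T_{\tau(r)}(\zeta))\eta$), so the ``swap $\zeta\leftrightarrow\eta$'' you mention is indeed required at the end to match the statement verbatim, but this is cosmetic and the argument goes through exactly as you describe.
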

Recall that an $\mathcal O$-operator $T$ of an anti-Zinbiel
algebra $(A,\ast)$ associated to a representation $(V,l_{\ast},r_{\ast})$ is a linear map $T:V\longrightarrow A$ satisfying
$T(u)\ast T(v)=T (l_{\ast}(T(u))v+r_{\ast}(T(v))u)$ for all $u,v\in V$. An $\mathcal O$-operator $T$ of an anti-pre-Lie
algebra $(A,\circ)$ associated to a representation $(V,l_{\circ},r_{\circ})$ is a linear map $T:V\longrightarrow A$ satisfying
$T(u)\circ T(v)=T (l_{\circ}(T(u))v+r_{\circ}(T(v))u)$ for all $u,v\in V$. 

\begin{defi} Let $(A,\ast,\circ)$ be an anti-pre-Poisson algebra and $(V,l_{\ast},r_{\ast},l_{\circ},r_{\circ})$ be
 its representation.
  An $\mathcal O$-operator $T$ of $(A,\ast,\circ)$ associated to $(V,l_{\ast},r_{\ast},l_{\circ},r_{\circ})$
   is a linear map $T:V\longrightarrow A$ 
   which is both an $\mathcal O$-operator $T$ of $(A,\ast )$  associated to $(V,l_{\ast},r_{\ast} )$
   and an $\mathcal O$-operator $T$ of $(A,\circ )$  associated to $(V,l_{\circ},r_{\circ} )$, that is, the following equations hold:
   \begin{eqnarray}
 &&T(u)\ast T(v)=T (l_{\ast}(T(u))v+r_{\ast}(T(v))u),\\
 &&T(u)\circ T(v)=T (l_{\circ}(T(u))v+r_{\circ}(T(v))u),\;\forall~u,v\in V.
   \end{eqnarray}
\end{defi}

\begin{ex} \label{Pe1} Let $(A,\ast,\circ)$ be the 3-dimensional anti-pre-Poisson algebra given in Example \ref{Ae}
with a basis $\{e_1, e_2, e_3 \}$. Define a linear map $T:A\longrightarrow A$ by a matrix
 $\begin {bmatrix}
 t_{11}&0&0\\
t_{21}&t_{22}&0\\
t_{31}&t_{32}&t_{33}
\end {bmatrix}$
with respect to the basis $\{e_1, e_2, e_3 \}$, where $t_{ij}\in k~(i=1,2,3 )$. 
Then $T$ is an $\mathcal{O}$-operator of $(A,\ast,\circ)$ associated to the representation
$(A,L_{\ast},R_{\ast},L_{\circ},R_{\circ})$ if and only if the following equations hold:
\begin{equation*} t_{11}t_{21}=t_{21}t_{33},\ \ \ t_{11}^{2}=2t_{11}t_{33},\ \ t_{11}t_{22}=(t_{11}+t_{22})t_{33}.\end{equation*}
\end{ex}

\begin{ex} \label{Pe1} Let $(A,\ast,\circ)$ be the 3-dimensional anti-pre-Poisson algebra given in Example \ref{Ae}
with a basis $\{e_1, e_2, e_3 \}$. Assume that $\{e_{1}^{*}, e_{2}^{*}, e_{3}^{*} \}$ is the dual basis of $A^{*}$.
Define a linear map $T:A^{*}\longrightarrow A$ by a matrix
 $\begin {bmatrix}
 0&0&t_{13}\\
0&0&t_{23}\\
t_{31}&t_{32}&t_{33}
\end {bmatrix}$
with respect to the basis $\{e_{1}^{*}, e_{2}^{*}, e_{3}^{*} \}$, where $t_{ij}\in k~(i=1,2,3 )$. 
Then $T$ is an $\mathcal{O}$-operator of $(A,\ast,\circ)$ associated to the representation
$(A^{*},L_{\star}^{*},-R_{\ast}^{*},-\mathrm{ad}^{*},R_{\circ}^{*})$ if and only if the following equations hold:
\begin{equation*} t_{23}t_{13}=t_{13}t_{32}-2t_{23}t_{31},\ \ \ t_{13}^{2}=-t_{13}t_{31}.\end{equation*}
\end{ex}

Combining Proposition 3.23 and Proposition 3.24, we have the following conclusion.

\begin{thm} \label{Op1} Let $(A,\ast,\circ)$ be an anti-pre-Poisson algebra and
$r\in A\otimes A$ be skew-symmetric. Then the following conditions are equivalent:
\begin{enumerate}
		\item $r$ is a solution of the APP-YBE in $(A,\ast,\circ)$.
 \item $T_r$
is an $\mathcal O$-operator of the Poisson algebra $(A, \star, [ \ , \ ])$ associated to $(A^{*}, L_{\ast}^{*},-L_{\circ}^{*})$.
\item $T_r$
is an $\mathcal O$-operator of $(A,\ast,\circ)$ associated to 
$(A^{*},L_{\ast}^{*}+R_{\ast}^{*}, -R_{\ast}^{*}, R_{\circ}^{*}-L_{\circ}^{*},R_{\circ}^{*})$.
\end{enumerate}
\end{thm}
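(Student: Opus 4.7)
The proof is essentially an assembly of the dictionaries provided by Propositions~\ref{Ys1} and~\ref{Ys2}, exploiting the fact that for a skew-symmetric $r$ one has $T_{\tau(r)}=-T_{r}$. My plan is to prove (a)$\Leftrightarrow$(c) and (a)$\Leftrightarrow$(b) by splitting the APP-YBE into its two components $D(r)=0$ and $S(r)=0$, translating each component separately via the propositions just mentioned, and then recombining.

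For (a)$\Leftrightarrow$(c), the representation in (c) is exactly the dual representation from Proposition~\ref{Dr}(a). First I will apply Proposition~\ref{Ys2}(a) to the equation $D(r)=0$: after substituting $T_{\tau(r)}=-T_{r}$ and swapping the roles of $\eta$ and $\zeta$, the identity becomes
\[
T_{r}(\zeta)\ast T_{r}(\eta)=T_{r}\bigl((L_{\ast}^{*}+R_{\ast}^{*})(T_{r}(\zeta))\eta-R_{\ast}^{*}(T_{r}(\eta))\zeta\bigr),
\]
which is precisely the $\mathcal{O}$-operator condition for the anti-Zinbiel part of~(c). Analogously, I will apply Proposition~\ref{Ys1}(a) to $S(r)=0$; using $\mathrm{ad}^{*}=L_{\circ}^{*}-R_{\circ}^{*}$ and $T_{\tau(r)}=-T_{r}$, it rewrites as
\[
T_{r}(\zeta)\circ T_{r}(\eta)=T_{r}\bigl((R_{\circ}^{*}-L_{\circ}^{*})(T_{r}(\zeta))\eta+R_{\circ}^{*}(T_{r}(\eta))\zeta\bigr),
\]
i.e.\ the anti-pre-Lie $\mathcal{O}$-operator condition in~(c). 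Conjoining the two gives the full equivalence with~(c).

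For (a)$\Leftrightarrow$(b), I will use Propositions~\ref{Ys2}(b) and~\ref{Ys1}(b) instead. With $T_{\tau(r)}=-T_{r}$, the first collapses $D(r)=0$ to
\[
T_{r}(\eta)\star T_{r}(\zeta)=T_{r}\bigl(L_{\ast}^{*}(T_{r}(\eta))\zeta+L_{\ast}^{*}(T_{r}(\zeta))\eta\bigr),
\]
which is the $\mathcal{O}$-operator condition for the commutative associative algebra $(A,\star)$ relative to $(A^{*},L_{\ast}^{*})$. The second similarly reduces $S(r)=0$ to
\[
[T_{r}(\zeta),T_{r}(\eta)]=T_{r}\bigl(-L_{\circ}^{*}(T_{r}(\zeta))\eta+L_{\circ}^{*}(T_{r}(\eta))\zeta\bigr),
\]
the $\mathcal{O}$-operator condition for the Lie algebra $(A,[\,,\,])$ relative to $(A^{*},-L_{\circ}^{*})$. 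Since an $\mathcal{O}$-operator on a Poisson algebra amounts to being an $\mathcal{O}$-operator on both its underlying commutative associative and Lie parts with respect to the same representation, and $(A^{*},L_{\ast}^{*},-L_{\circ}^{*})$ is a representation of $(A,\star,[\,,\,])$ by Proposition~\ref{Dr}(d), the conjunction of these two conditions is exactly~(b).

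The only real subtlety is bookkeeping: one has to be careful with the sign and the $\tau$ versus non-$\tau$ choice in Propositions~\ref{Ys1} and~\ref{Ys2}, because the two propositions give formulas in terms of both $T_{r}$ and $T_{\tau(r)}$. The skew-symmetry hypothesis is what allows everything to collapse to identities purely in $T_{r}$, and the compatibility between the ``anti'' representations in (c) and the ordinary Poisson representation in (b) is precisely what Proposition~\ref{Dr} guarantees. No further verification of the compatibility axioms between $\ast$ and $\circ$ is required, since the $\mathcal{O}$-operator conditions only involve the individual products, and Propositions~\ref{Ys1} and~\ref{Ys2} already encode the necessary equivalences.
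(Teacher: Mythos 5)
Your proposal is correct and follows essentially the same route as the paper, which obtains Theorem~\ref{Op1} precisely by combining Propositions~\ref{Ys1} and~\ref{Ys2} under the skew-symmetry collapse $T_{\tau(r)}=-T_{r}$ (together with the equivalences $D(r)=0\Leftrightarrow D_{1}(r)=0\Leftrightarrow D_{2}(r)=0$ and $S(r)=0\Leftrightarrow S_{1}(r)=0\Leftrightarrow S_{2}(r)=0$ for skew-symmetric $r$). Your sign bookkeeping, the identification $\mathrm{ad}^{*}=L_{\circ}^{*}-R_{\circ}^{*}$, and the appeal to Proposition~\ref{Dr} for the relevant representations all check out.
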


In the light of Theorem 6.19 \cite{20} and Theorem 2.26 \cite{13}, we have the following result.

\begin{thm} \label{Yo}
 Let $(A,\ast,\circ)$ be an anti-pre-Poisson algebra and $(V,l_{\ast},r_{\ast},l_{\circ},r_{\circ})$ be
  a representation of $(A,\ast,\circ)$. Suppose that $(V^{*},l_{\ast}^{*}+r_{\ast}^{*},-r_{\ast}^{*},r_{\circ}^{*}-l_{\circ}^{*},r^{*}_{\circ})$ 
 is the dual representation of $A$ given by Proposition \ref{Dr}. 
Let $\hat{A}=A\ltimes V^{*}$ and
 $T:V\longrightarrow A$ be a linear map which is identifies an element in $\hat{A}\otimes \hat{A}$ through
 ($Hom(V,A)\simeq A\otimes V^{*}\subseteq \hat{A}\otimes \hat{A}$).
  Then $r=T-\tau(T)$ is a skew-symmetric solution of
 the APP-YBE in the anti-pre-Poisson algebra $\hat{A}$ if and only if $T$ is an $\mathcal O$-operator
on $(A,\ast,\circ)$ associated to  $(V,l_{\ast},r_{\ast},l_{\circ},r_{\circ})$.
\end{thm}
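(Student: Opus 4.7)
The plan is to reduce the theorem to the two cited component results by splitting both the APP-YBE condition and the $\mathcal O$-operator condition into their anti-Zinbiel and anti-pre-Lie parts. Since Theorem~6.19 of \cite{20} handles the anti-Zinbiel/anti-dendriform side and Theorem~2.26 of \cite{13} handles the anti-pre-Lie side, no new Yang-Baxter calculation will be required.

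First I would observe that the anti-pre-Poisson semidirect product $\hat{A}=A\ltimes V^{*}$, built from the dual representation supplied by Proposition~\ref{Dr}(a), decomposes cleanly: its underlying anti-Zinbiel algebra coincides with the anti-Zinbiel semidirect product $A\ltimes_{(l_{\ast}^{*}+r_{\ast}^{*},\,-r_{\ast}^{*})}V^{*}$, while its underlying anti-pre-Lie algebra coincides with $A\ltimes_{(r_{\circ}^{*}-l_{\circ}^{*},\,r_{\circ}^{*})}V^{*}$. This is immediate from Proposition~\ref{Mp}, since the bilinear operations $\ast_{\hat A}$ and $\circ_{\hat A}$ are defined component-wise from the representation data of the two separate factors.

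Next, by the very definition of the APP-YBE, $r=T-\tau(T)$ is a skew-symmetric solution of the APP-YBE in $\hat{A}$ if and only if it is simultaneously a skew-symmetric solution of $D(r)=0$ in $(\hat{A},\ast_{\hat A})$ and of $S(r)=0$ in $(\hat{A},\circ_{\hat A})$. In parallel, $T:V\longrightarrow A$ is an $\mathcal O$-operator of $(A,\ast,\circ)$ associated to $(V,l_{\ast},r_{\ast},l_{\circ},r_{\circ})$ if and only if it is simultaneously an $\mathcal O$-operator of $(A,\ast)$ associated to $(V,l_{\ast},r_{\ast})$ and an $\mathcal O$-operator of $(A,\circ)$ associated to $(V,l_{\circ},r_{\circ})$. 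I would then apply Theorem~6.19 of \cite{20}, which gives $T-\tau(T)$ solves the AZ-YBE in $A\ltimes V^{*}$ iff $T$ is an $\mathcal O$-operator of $(A,\ast)$ for $(V,l_{\ast},r_{\ast})$, and Theorem~2.26 of \cite{13}, which gives the analogous equivalence for the anti-pre-Lie side. Conjoining these two equivalences yields the theorem.

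The only delicate point, and hence the main potential obstacle, is to verify that the two semidirect-product structures invoked in the cited theorems agree precisely with the constituent anti-Zinbiel and anti-pre-Lie structures extracted from $\hat{A}$. This is what reduces to the decomposition I described in the first step; concretely, Proposition~\ref{Dr}(a) is obtained by assembling the anti-Zinbiel dual representation from Proposition~\ref{zr}(c) with the anti-pre-Lie dual representation from Proposition~\ref{pr}(b), with no cross terms entering either bilinear operation on $\hat A$. Once this identification is made explicit, the theorem follows formally from the two cited equivalences without any further compatibility computation.
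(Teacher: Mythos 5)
Your proposal is correct and matches the paper's own argument, which consists precisely of invoking Theorem~6.19 of \cite{20} for the anti-Zinbiel part and Theorem~2.26 of \cite{13} for the anti-pre-Lie part, after observing that both the APP-YBE and the $\mathcal O$-operator condition split by definition into these two components. Your added verification that the operations $\ast_{\hat A}$ and $\circ_{\hat A}$ on $\hat A=A\ltimes V^{*}$ involve only the respective anti-Zinbiel and anti-pre-Lie representation data (so the two semidirect products agree with those in the cited theorems) is exactly the implicit identification underlying the paper's one-line proof.
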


\begin{ex} Let $(A=ke_1\oplus ke_2\oplus ke_3,\ast,\circ)$ be the 3-dimensional anti-pre-Poisson algebra with an 
$\mathcal{O}$-operator $T$ given in Example \ref{Pe1}.
Denote the dual basis of $A^{*}$ by $\{e_1^{*}, e_2^{*}, e_3^{*}  \}$. 
The semi-direct product $A\ltimes A^{*}$ of $(A,\ast,\circ)$ with respect to its representation
$(A^{*},L_{\ast}^{*}+R_{\ast}^{*}, -R_{\ast}^{*}, R_{\circ}^{*}-L_{\circ}^{*},R_{\circ}^{*})$
 is an anti-pre-Poisson algebra with the multiplications $(\ast,\circ)$ defined by 
 \begin{align*}&
 (x+u)\ast (y+v)=x\ast y+(L_{\ast}^{*}+R_{\ast}^{*})(x)v-R_{\ast}^{*}(y)u,\\&
		 (x+u)\circ (y+v)=x\circ y+(R_{\circ}^{*}-L_{\circ}^{*})(x)v+R_{\circ}^{*}(y)u, \ \ ~\forall~x,y\in A,u,v\in A^{*}.
	\end{align*}
 Explicitly, the non-trivial multiplications $(\ast,\circ)$ are given by 
   \begin{flalign*}
&e_1\ast e_1=e_3, \ \ \ e_1\ast e_3^{*}= 2e_1^{*}, \ \ \ e_3^{*}\ast e_1= e_1^{*},\\&
e_1\circ e_2=e_3, \ \ \ e_1\circ e_{3}^{*}= e_2^{*}, \ \ \ e_2\circ e_{3}^{*}=e_3^{*}\circ e_2= -e_1^{*}.
\end{flalign*}
 In the light of Theorem \ref{Yo}, we know that
 $r=\sum_{i =1}^{3}T(e_i)\otimes e_i^{*}-e_i^{*}\otimes T(e_i)$
 is a skew-symmetric solution of the APP-YBE in the anti-pre-Poisson algebra $ A\ltimes A^{*} $. 
 According to Corollary \ref{Bc2}, 
$ (A\ltimes A^{*}, \Delta,\delta)$ is an anti-pre-Poisson bialgebra with the linear maps
$\Delta,\delta:A\ltimes A^{*}\longrightarrow (A\ltimes A^{*})\otimes (A\ltimes A^{*})$ defined respectively by
 \begin{align*}&\Delta(x)=-(I\otimes L_{\star}(x)+L_{\ast}(x)\otimes I)r,\\&
 \delta(x)=(L_{\circ}(x)\otimes I-I\otimes \mathrm{ad}(x))r,~~\forall~x\in A\ltimes A^{*}.
\end{align*}
Explicitly, the comultiplications $(\Delta,\delta)$ are given as follows (only non-zero operations are listed): 
 \begin{align*}&
\Delta(e_1)=2(t_{11}-t_{33})e_1^{*}\otimes e_3+(t_{33}-t_{11})e_3\otimes e_1^{*}, \ \ \ \Delta(e_{3}^{*})=t_{11}e_1^{*}\otimes e_1^{*},
\\&
\delta(e_1)=t_{21}e_3\otimes e_1^{*}+(t_{22}-t_{33})e_3\otimes e_2^{*} -e_2^{*}\otimes (t_{11}e_1+t_{21}e_2 +(t_{31}-t_{22})e_3)
+t_{21}e_1^{*}\otimes e_3,\\&
  \delta(e_2)=-(t_{11}+t_{33})e_1^{*}\otimes e_3,\ \ \ \delta(e_3^{*})=-t_{21}e_1^{*}\otimes e_1^{*}-(t_{11}+t_{22})e_1^{*}\otimes e_2^{*}.
\end{align*}
\end{ex}

\section{Quasi-triangular anti-pre-Poisson bialgebras and factorizable anti-pre-Poisson bialgebras}

By Theorem~\ref{Ac5}, a skew-symmetric solution of the APP-YBE yields a (coboundary) anti-pre-Poisson bialgebra. 
Here, we explore how solutions of the APP-YBE lead to anti-pre-Poisson bialgebras in general, 
demonstrating that such structures need not be skew-symmetric.

\subsection{Quasi-triangular anti-pre-Poisson bialgebras }

\begin{defi} \label{In}
 Let $(A,\ast,\circ)$ be an anti-pre-Poisson algebra and $r\in A\otimes A$. Then $r$ is called {\bf invariant} if 
 \begin{align}&\label{IE3}(I\otimes L_{\star}(x)+L_{\ast}(x)\otimes I)r=0,
\\&\label{IE4}(L_{\circ}(x)\otimes I-I\otimes \mathrm{ad}(x))r=0,~\forall~x\in A.
\end{align}
\end{defi}

\begin{lem} \label{In1}
 Let $(A,\ast,\circ)$ be an anti-pre-Poisson algebra and $r\in A\otimes A$. Then $r$ is invariant if and only if
 \begin{align}&\label{IE5}
L_{\star}(x) T_{r}(\zeta)=T_{r}(L_{\ast}^{*}(x)\zeta),
\\&\label{IE6}\mathrm{ad}(x) T_{r}(\zeta)=-T_{r}(L_{\circ}^{*}(x)\zeta),~~\forall~x\in A,\zeta\in A^{*}.
\end{align}
Moreover, Eqs.~(\ref{IE5})-(\ref{IE6}) hold if and only if the following equations hold:
\begin{align}&\label{IE55}
L_{\star}^{*} (T_{r}(\zeta))\eta=-R_{\ast}^{*}(T_{\tau(r)}(\eta))\zeta),
\\&\label{IE66}\mathrm{ad}^{*} (T_{r}(\zeta))\eta=-R_{\circ}^{*}(T_{\tau(r)}(\eta))\zeta,~~\forall~x\in A,\zeta\in A^{*}.
\end{align}
\end{lem}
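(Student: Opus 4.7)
The plan is to prove both equivalences by the standard dualization trick: translate the tensor identities to statements about $T_r$ and $T_{\tau(r)}$ using the pairing $\langle T_r(\zeta),\eta\rangle=\langle r,\zeta\otimes\eta\rangle$. There is nothing conceptually deep here; the whole argument is bookkeeping the signs coming from the dual convention $\langle f^*(x)u^*,v\rangle=-\langle u^*,f(x)v\rangle$, together with the identities $T_r(\zeta)=\sum_i\zeta(a_i)b_i$ and $T_{\tau(r)}(\eta)=\sum_i\eta(b_i)a_i$.

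For the first equivalence, I would pair the tensor $(I\otimes L_\star(x)+L_\ast(x)\otimes I)r$ against an arbitrary $\zeta\otimes\eta\in A^*\otimes A^*$. A direct calculation gives
\begin{equation*}
\langle (I\otimes L_\star(x)+L_\ast(x)\otimes I)r,\,\zeta\otimes\eta\rangle
=\eta\!\left(L_\star(x)T_r(\zeta)-T_r(L_\ast^*(x)\zeta)\right),
\end{equation*}
so the vanishing of the left-hand side for all $\zeta,\eta$ is equivalent to Eq.~\eqref{IE5}. The same computation with $L_\circ$ and $\mathrm{ad}$ in place of $L_\ast$ and $L_\star$ gives Eq.~\eqref{IE4} $\Leftrightarrow$ Eq.~\eqref{IE6}.

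For the second equivalence, I would first pair Eq.~\eqref{IE5} with an arbitrary $\eta\in A^*$ to reduce it to the scalar identity
\begin{equation*}
\sum_i\zeta(a_i)\,\eta(x\star b_i)+\sum_i\zeta(x\ast a_i)\,\eta(b_i)=0,\qquad\forall\,x\in A,\ \zeta,\eta\in A^*,
\end{equation*}
and dually pair Eq.~\eqref{IE55} with an arbitrary $y\in A$. Using the conventions $\langle L_\star^*(z)\eta,y\rangle=-\eta(z\star y)$ and $\langle R_\ast^*(w)\zeta,y\rangle=-\zeta(y\ast w)$, together with the explicit formulas for $T_r(\zeta)$ and $T_{\tau(r)}(\eta)$, the latter collapses (after relabelling $y$ as $x$ and invoking the commutativity of $\star$) to the very same scalar identity. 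This gives Eq.~\eqref{IE5} $\Leftrightarrow$ Eq.~\eqref{IE55}. Running the analogous computation with $\circ$, $[\,,\,]$, $\mathrm{ad}^*$ and $R_\circ^*$ in the appropriate slots then yields Eq.~\eqref{IE6} $\Leftrightarrow$ Eq.~\eqref{IE66}.

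The main obstacle, such as it is, is purely notational: the signs produced by the dual-action convention, the twist $\tau$ exchanging $T_r$ with $T_{\tau(r)}$, and the symmetry of $\star$ must all line up correctly. Once the scalar identities on $A^*\otimes A^*$ and on $A\times A^*\times A^*$ are written out in full, each equivalence is immediate by non-degeneracy of the pairings.
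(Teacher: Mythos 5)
Your proposal is correct and follows essentially the same route as the paper: the paper proves the first equivalence by exactly your pairing computation, moving $L_\ast(x)$ and $L_\star(x)$ (resp.\ $L_\circ(x)$ and $\mathrm{ad}(x)$) across the pairing via the dual convention and invoking non-degeneracy, and it dispatches the second equivalence ``analogously,'' which is precisely the dual pairing against $y\in A$ that you spell out via the common scalar identity. Your additional detail on the second equivalence (reducing Eqs.~\eqref{IE5} and \eqref{IE55} to one identity using commutativity of $\star$, and likewise with antisymmetry of the bracket) is a correct expansion of the step the paper leaves implicit.
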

\begin{proof} 
For all $~x\in A,\zeta,\eta\in A^{*}$, we have
\begin{small}
\begin{align*}\langle(I\otimes L_{\star}(x)+L_{\ast}(x)\otimes I)r,\zeta\otimes \eta\rangle
=&-\langle r,L_{\ast}^{*}(x)\zeta\otimes \eta\rangle-\langle r,\zeta\otimes L_{\star}^{*}(x)\eta\rangle
\\=&-\langle T_{r}(L_{\ast}^{*}(x))\zeta,\eta\rangle-\langle T_{r}(\zeta), L_{\star}^{*}(x)\eta\rangle
\\=&\langle L_{\star}(x)T_{r}(\zeta)-T_{r}(L_{\ast}^{*}(x))\zeta,\eta\rangle,\end{align*}
\begin{align*} \langle(I\otimes \mathrm{ad}(x)- L_{\circ}(x)\otimes I)r,\zeta\otimes \eta\rangle
=&\langle r,L_{\circ}^{*}(x)\zeta\otimes \eta\rangle-\langle r,\zeta\otimes  \mathrm{ad}^{*}(x)\eta\rangle
\\=&\langle T_{r}(L_{\circ}^{*}(x))\zeta,\eta\rangle-\langle T_{r}(\zeta), \mathrm{ad}^{*}(x)\eta\rangle
\\=&\langle T_{r}(L_{\circ}^{*}(x))\zeta+\mathrm{ad}(x) T_{r}(\zeta), \eta\rangle.
\end{align*}\end{small}
Thus, Eqs.~(\ref{IE3})-(\ref{IE4}) hold if and only if Eqs.~(\ref{IE5})-(\ref{IE6}) hold. Analogously,
Eqs.~(\ref{IE5})-(\ref{IE6}) hold if and only if Eqs.~(\ref{IE55})-(\ref{IE66}) hold. 
\end{proof}

\begin{pro} \label{Si}
 Let $(A,\ast,\circ)$ be an anti-pre-Poisson algebra and $r\in A\otimes A$. Then the following
 conditions are equivalent:
  \begin{enumerate}
\item $r+\tau(r)$ is invariant.
\item The following equations hold:
\begin{align}&\label{IE7}L_{\star}^{*} (T_{r+\tau(r)}(\zeta))\eta=-R_{\ast}^{*}(T_{r+\tau(r)}(\eta))\zeta)
, \ \ \ \mathrm{ad}^{*} (T_{r+\tau(r)}(\zeta))\eta=-R_{\circ}^{*}(T_{r+\tau(r)}(\eta))\zeta.\end{align} 
\item The following equations hold:
\begin{align}&\label{IE8}L_{\star}(x) T_{r+\tau(r)}(\zeta)=-T_{r+\tau(r)}(L_{\ast}^{*}(x)\zeta)
,\ \ \ \mathrm{ad}(x) T_{r+\tau(r)}(\zeta)=-T_{r+\tau(r)}(L_{\circ}^{*}(x)\zeta).\end{align}
\item The following equations hold:
\begin{align}&\label{LR1} 
T_{r+\tau(r)}(L_{\star}^{*}(x)\zeta)= -L_{\ast}(x)T_{r+\tau(r)}(\zeta),\ \ \
 T_{r+\tau(r)}(\mathrm{ad}^{*}(x)\zeta)=-L_{\circ}(x)T_{r+\tau(r)}(\zeta),
\end{align}
\end{enumerate}
for all $x\in A,~\zeta,\eta\in A^{*}$
 \end{pro}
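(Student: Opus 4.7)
The plan is to leverage a single observation: since $s := r+\tau(r)$ is symmetric as an element of $A\otimes A$, the associated operator $T_s\colon A^*\to A$ is self-adjoint, i.e. $T_s^*=T_s$. With this in hand, the three pairwise equivalences essentially reduce to applications of Lemma~\ref{In1} with $s$ in place of $r$, followed by a dualization.

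First, to establish (a) $\Longleftrightarrow$ (c), I would substitute $s$ for $r$ in the first half of Lemma~\ref{In1}. The invariance of $s$ is by definition the assertion that Eqs.~\eqref{IE3}--\eqref{IE4} hold with $r$ replaced by $s$, and the lemma then translates this directly to the identities displayed in (c) (up to the usual sign conventions coming from $f^*(x)=-f(x)^T$).

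Next, for (a) $\Longleftrightarrow$ (b), I would apply the second half of Lemma~\ref{In1} (the equivalence between \eqref{IE5}--\eqref{IE6} and \eqref{IE55}--\eqref{IE66}) to $s$. The statement \eqref{IE55}--\eqref{IE66} naturally involves both $T_s$ and $T_{\tau(s)}$; however, $\tau(s)=s$ forces $T_{\tau(s)}=T_s$, so the two operators collapse into one and we land exactly in (b).

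Finally, for (c) $\Longleftrightarrow$ (d), the strategy is to pair each identity in (c) with an arbitrary $\eta\in A^*$ and transfer operators across the pairing using both $\langle f^*(x)\zeta,y\rangle=-\langle\zeta,f(x)y\rangle$ and the self-adjointness $\langle T_s(\zeta),\eta\rangle=\langle T_s(\eta),\zeta\rangle$. The self-adjointness interchanges the arguments of $T_s$, while the convention on $f^*$ moves the multiplication from one side of $T_s$ to the other, producing the two identities of (d). The same procedure applied in reverse gives (d) $\Longrightarrow$ (c).

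The main obstacle is purely bookkeeping of signs: each passage across the pairing contributes a sign from $f^*(x)=-f(x)^T$, and these signs must be tracked consistently against the self-adjointness of $T_s$ so that the associative-type identity and the Lie-type identity transform into the correct form. No new structural input is needed beyond Lemma~\ref{In1} and the symmetry of~$s$.
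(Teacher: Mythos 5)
Your proposal is correct and matches the paper's own proof essentially verbatim: the paper likewise obtains (a) $\Longleftrightarrow$ (c) by applying Lemma~\ref{In1} to $r+\tau(r)$, gets (a) $\Longleftrightarrow$ (b) from the second half of that lemma (Eqs.~\eqref{IE55}--\eqref{IE66}) together with $\tau(r+\tau(r))=r+\tau(r)$, and gets (c) $\Longleftrightarrow$ (d) from the self-adjointness $T_{r+\tau(r)}^{*}=T_{r+\tau(r)}$. The only caveat is precisely the sign bookkeeping you flag: as printed, the first identity of Eq.~\eqref{IE8} carries a minus sign whereas Eq.~\eqref{IE5} of Lemma~\ref{In1} does not, a discrepancy in the paper's stated formulas rather than a flaw in your argument.
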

  
   \begin{proof} In view of Lemma \ref{In1}, $r+\tau(r)$ is invariant if and only if Eq.~(\ref{IE8}) holds.
 Since $T_{r+\tau(r)}=T_{r+\tau(r)}^{*}$, Eq.~(\ref{IE8}) holds if and only if Eq.~ (\ref{LR1}) holds.
By Eqs.~(\ref{IE55})-(\ref{IE66}), $r+\tau(r)$ is invariant if and only if Eq.~(\ref{IE7}) holds. 
The proof is completed. 
\end{proof}
  
By Eqs.~(\ref{IE8}) and (\ref{LR1}), we have 
 \begin{align}&\label{LR2} 
T_{r+\tau(r)}(R_{\ast}^{*}(x)\zeta)= R_{\ast}(x)T_{r+\tau(r)}(\zeta),\ \ \
 T_{r+\tau(r)}(R_{\circ}^{*}(x)\zeta)=R_{\circ}(x)T_{r+\tau(r)}(\zeta),
 \\&\label{LR3} 
 L_{\ast}^{*} (T_{r+\tau(r)}(\zeta))\eta=L_{\ast}^{*}(T_{r+\tau(r)}(\eta))\zeta)
, \ \ \ L_{\circ}^{*} (T_{r+\tau(r)}(\zeta))\eta=-L_{\circ}^{*}(T_{r+\tau(r)}(\eta))\zeta
\end{align}
  
 \begin{thm}\label{Ys3} Let $(A,\circ)$ be an anti-pre-Lie
 algebra and $r=\sum_{i}a_i\otimes b_i\in A\otimes A$. 
Assume that $( L_{\circ}(x)\otimes I-I\otimes  \mathrm{ad}(x))(r+\tau(r))=0$.
Then the following conditions are equivalent:
\begin{enumerate}
		\item $r$ is a solution of the APL-YBE $S(r)=r_{12}\circ r_{13}+r_{12}\circ r_{23}-[r_{13},r_{23}]=0$.
\item $r$ is a solution of the equation $S_{1}(r)=[r_{13},r_{12}]-r_{23}\circ r_{12}-r_{23}\circ r_{13}=0$.
\item $r$ is a solution of the equations $S_{2}(r)=[r_{12},r_{23}]+r_{13}\circ r_{12}-r_{13}\circ r_{23}=0$. 
\item $r$ is a solution of the equation $S(\tau(r))=r_{21}\circ r_{31}+r_{21}\circ r_{32}+[r_{32}, r_{31}]=0$.
\end{enumerate}
\end{thm}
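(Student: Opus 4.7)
My plan is to reduce the four tensor conditions to operator identities on $A^*$ through Proposition~\ref{Ys1} and then to pass between these identities using the invariance hypothesis, which Proposition~\ref{Si} recasts in operator form. Write $T = T_r$ and $T' = T_{\tau(r)}$, so $T+T' = T_{r+\tau(r)}$. Proposition~\ref{Ys1} characterizes $S(r)=0$ by each of two identities $E_1$, $E_2$ (parts (a), (b)), characterizes $S_1(r)=0$ by $E_4$ (part (d)), and characterizes $S_2(r)=0$ by $E_3$ (part (c)); applying part (a) to $\tau(r)$ shows that $S(\tau(r))=0$ is equivalent to the identity $E_1'$ obtained from $E_1$ by interchanging $T$ and $T'$. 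Meanwhile, Proposition~\ref{Si} translates the hypothesis $(L_\circ(x)\otimes I - I\otimes\mathrm{ad}(x))(r+\tau(r))=0$ into (J1): $\mathrm{ad}(x)((T+T')(\zeta)) = -(T+T')(L_\circ^*(x)\zeta)$, and (J2): $\mathrm{ad}^*((T+T')(\zeta))\eta + R_\circ^*((T+T')(\eta))\zeta = 0$; symmetrizing (J2) in $\zeta \leftrightarrow \eta$ then yields (J3): $L_\circ^*((T+T')(\zeta))\eta + L_\circ^*((T+T')(\eta))\zeta = 0$.

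The easy half of the theorem follows by direct substitution. Using (J2), the inner argument of $E_3$ equals the negative of that of $E_1'$, so $E_3 \Longleftrightarrow E_1'$, giving $S_2(r)=0 \Longleftrightarrow S(\tau(r))=0$. Using (J3), the right-hand side of $E_4$ becomes the $T \leftrightarrow T'$ swap of the right-hand side of $E_2$, which is the characterization of $S(\tau(r))=0$ from Proposition~\ref{Ys1}(b); thus $S_1(r)=0 \Longleftrightarrow S(\tau(r))=0$. This already establishes $S_1(r)=0 \Longleftrightarrow S_2(r)=0 \Longleftrightarrow S(\tau(r))=0$.

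The main obstacle is the remaining equivalence $S(r)=0 \Longleftrightarrow S(\tau(r))=0$. After applying (J3), this amounts to $E_4 \Longleftrightarrow E_4'$, where $E_4'$ is $E_4$ with $T$ and $T'$ interchanged. To prove $E_4 \Rightarrow E_4'$, I specialize (J1) successively to $x=T(\zeta)$ and to $x=T'(\eta)$, obtaining the two mixed-commutator identities $[T(\zeta), T(\eta)] + [T(\zeta), T'(\eta)] = -(T+T')(L_\circ^*(T(\zeta))\eta)$ and $[T(\zeta), T'(\eta)] + [T'(\zeta), T'(\eta)] = (T+T')(L_\circ^*(T'(\eta))\zeta)$. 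Subtracting these cancels the cross-bracket $[T(\zeta), T'(\eta)]$; substituting $E_4$ (in the equivalent form $[T'(\zeta), T'(\eta)] = T'(L_\circ^*(T(\zeta))\eta + L_\circ^*(T'(\eta))\zeta)$, obtained via (J3)) and simplifying isolates $[T(\zeta), T(\eta)] = -T(L_\circ^*(T(\zeta))\eta + L_\circ^*(T'(\eta))\zeta)$, which is exactly $E_4'$. The reverse implication follows from the $T \leftrightarrow T'$ symmetry of (J1)--(J3). Combining everything produces the desired chain $S(r)=0 \Longleftrightarrow S_1(r)=0 \Longleftrightarrow S_2(r)=0 \Longleftrightarrow S(\tau(r))=0$.
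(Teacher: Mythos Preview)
Your proof is correct, but it follows a different route from the paper's. The paper works entirely at the tensor level: it writes each of $S_1(r)$, $S_2(r)$, $S(\tau(r))$ explicitly as a permutation of $S(r)$ plus correction terms, e.g.\ $S_2(r)=\sigma_{23}S(r)-\sum_i(I\otimes I\otimes L_\circ(b_i)-I\otimes\mathrm{ad}(b_i)\otimes I)[a_i\otimes(r+\tau(r))]$, and then observes that all correction terms are killed by the hypothesis $(L_\circ(x)\otimes I-I\otimes\mathrm{ad}(x))(r+\tau(r))=0$ (together with its consequence $(R_\circ(x)\otimes I+I\otimes R_\circ(x))(r+\tau(r))=0$). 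Your argument instead passes through Proposition~\ref{Ys1} to reformulate everything as operator identities in $T=T_r$ and $T'=T_{\tau(r)}$, and then uses the operator form of invariance (your (J1)--(J3)) to move between them. The paper's tensor approach is shorter and makes the permutation structure transparent; your operator approach is more conceptual and aligns naturally with the later material on $\mathcal O$-operators and Rota--Baxter operators, at the cost of an extra step (the subtraction argument for $E_4\Rightarrow E_4'$) to close the loop back to $S(r)=0$. One small remark: you invoke Proposition~\ref{Si}, which is stated for anti-pre-Poisson algebras, but the anti-pre-Lie half of Lemma~\ref{In1} (equations~(\ref{IE6}) and~(\ref{IE66})) already gives (J1) and (J2) directly from the pairing computation, without any reference to the anti-Zinbiel part.
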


\begin{proof} Since 
$( L_{\circ}(x)\otimes I-I\otimes  \mathrm{ad}(x))(r+\tau(r))=0$,
$( R_{\circ}(x)\otimes I+I\otimes  R_{\circ}(x))(r+\tau(r))=0$.
 Note that 
 \begin{small}
\begin{align*}S_{1}(r)=&\sigma_{123}S(r)-r_{23}\circ (r_{12}+r_{21})-r_{23}\circ (r_{13}+r_{31})
+[r_{13},r_{12}+r_{21}]-[r_{13}+r_{31},r_{21}]
\\=&\sigma_{123}S(r)-\sum_{i}(I\otimes L_{\circ}(a_i)\otimes I)[(r+\tau(r))\otimes b_i]
+(I\otimes I\otimes L_{\circ}(b_i))(\tau\otimes I)[a_i\otimes(r+\tau(r)) ]\\&
-(\mathrm{ad}(a_i)\otimes I\otimes I)[(r+\tau(r))\otimes b_i]
-(\mathrm{ad}(b_i)\otimes I\otimes I)(\tau\otimes I)[a_i\otimes(r+\tau(r)) ],
\end{align*}
\begin{align*}S(\tau(r))=&S(r)
-r_{12}\circ (r_{13}+r_{31})-[r_{13}+r_{31},r_{32}]
-r_{12}\circ (r_{23}+r_{32})+[r_{13},r_{23}+r_{32}]
\\&+(r_{12}+r_{21})\circ r_{31}+(r_{12}+r_{21})\circ r_{32}
\\=&S(r)-\sum_{i}( L_{\circ}(a_i)\otimes I\otimes I-I\otimes I\otimes \mathrm{ad}(a_i))(\tau\otimes I)[b_i\otimes(r+\tau(r)) ]
\\&+(I\otimes L_{\circ}(b_i)\otimes  I-I\otimes I\otimes \mathrm{ad}(b_i))[a_i\otimes(r+\tau(r)) ]
\\&-(I\otimes R_{\circ}(b_i)\otimes  I+ R_{\circ}(b_i)\otimes I\otimes I)[(r+\tau(r))\otimes a_i],
\end{align*}
\begin{align*}S_{2}(r)=&\sigma_{23}S(r)-r_{13}\circ (r_{23}+r_{32})
+[r_{12},r_{23}+r_{32}]
\\=&\sigma_{23}S(r)-\sum_{i}(I\otimes I\otimes L_{\circ}(b_i)-I\otimes \mathrm{ad}(b_i)\otimes I)[a_i\otimes(r+\tau(r)) ].
\end{align*}\end{small}
Thus, Item (a) $\Longleftrightarrow $ Item (b), Item (a) $\Longleftrightarrow $ Item (c),
 Item (a) $\Longleftrightarrow $ Item (d).
\end{proof}
 \begin{thm}\cite{19}\label{Ys4} Let $(A,\ast)$ be an anti-Zinbiel
 algebra and $r=\sum_{i}a_i\otimes b_i\in A\otimes A$. 
Assume that $(I\otimes L_{\star}(x)+L_{\ast}(x)\otimes I)(r+\tau(r))=0$. Then the following conditions are equivalent:
\begin{enumerate}
		\item $r$ is a solution of the AZ-YBE $D(r)=r_{12}\ast r_{23}-r_{12}\ast r_{13}+r_{13}\star r_{23}=0$.
\item $r$ is a solution of the equation $D_{1}(r)=r_{12}\star r_{13}+r_{23}\ast r_{12}-r_{23}\ast r_{13}=0$.
\item $r$ is a solution of the equations $D_{2}(r)=r_{13}\ast r_{23}+r_{12}\star r_{23}+r_{13}\ast r_{12}=0$. 
\item $r$ is a solution of the equation $D(\tau(r))=r_{21}\ast r_{32}-r_{21}\ast r_{31}+r_{31}\star r_{32}=0$.
\end{enumerate}
\end{thm}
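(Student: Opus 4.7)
The plan is to adapt the argument of Theorem \ref{Ys3} to the anti-Zinbiel setting. The idea is that each of $D_{1}(r)$, $D_{2}(r)$ and $D(\tau(r))$ can be written as an appropriate permutation of $D(r)$ plus a sum of correction terms, each of which lies in the kernel of the invariance operator $(I\otimes L_{\star}(x)+L_{\ast}(x)\otimes I)$ evaluated on $r+\tau(r)$. Once this rewriting is in place, the hypothesis of the theorem kills all correction terms simultaneously, so all four expressions vanish together.

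Concretely, I would first introduce the permutations $\sigma_{123},\sigma_{23}:A^{\otimes 3}\to A^{\otimes 3}$ used in Remark \ref{Ac6} and compute, by direct expansion from the notation $r_{ij}\ast r_{kl}$ recalled in the Notations paragraph, the three differences
\begin{align*}
D_{1}(r)-\sigma_{123}D(r), \quad D_{2}(r)-\sigma_{23}D(r), \quad D(\tau(r))-D(r).
\end{align*}
Following exactly the pattern that produced the three formulas displayed in the proof of Theorem \ref{Ys3}, each difference reorganizes into a sum over the index $i$ of expressions of the shape
\begin{align*}
\sum_{i}\bigl(I\otimes L_{\star}(a_{i})\otimes I + L_{\ast}(a_{i})\otimes I\otimes I\bigr)\!\bigl[(r+\tau(r))\otimes b_{i}\bigr]
\end{align*}
and its analogues obtained by rotating the active slot and by swapping the roles of $a_{i}$ and $b_{i}$. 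The invariance hypothesis applied slot-by-slot then annihilates each such summand, which yields the implications (a)$\Longleftrightarrow$(b)$\Longleftrightarrow$(c). The equivalence (a)$\Longleftrightarrow$(d) proceeds identically: expanding $D(\tau(r))-D(r)$ produces only combinations of the same form.

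The main obstacle will be the bookkeeping, not the conceptual content. The anti-Zinbiel case mixes the asymmetric product $\ast$ with the symmetric $\star=\ast+\tau(\ast)$, so when grouping terms into the invariance pattern one must repeatedly use $L_{\star}=L_{\ast}+R_{\ast}$ together with the anti-Zinbiel axioms in order to convert, for example, a stray $r_{23}\ast r_{31}$ into a sum of permissible operators acting on $r+\tau(r)$. I would organize the expansion by fixing which slot of $A^{\otimes 3}$ carries the ``spectator'' factor $a_{i}$ or $b_{i}$, and then compare the resulting table with the definitions of $D$, $D_{1}$, $D_{2}$ and $D(\tau(r))$; the matching is mechanical but must be done carefully to keep all signs consistent. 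With the proof of Theorem \ref{Ys3} as a template the anti-Zinbiel analogue then follows; indeed this is precisely the content recorded in \cite{19}.
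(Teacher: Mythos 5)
Your proposal is correct and is essentially the paper's own route: the theorem is quoted from \cite{19} without a new proof here, and the argument there, like the proof of Theorem \ref{Ys3} that you rightly take as the template, proceeds exactly by expressing $D_{1}(r)-\sigma_{123}D(r)$, $D_{2}(r)-\sigma_{23}D(r)$ and $D(\tau(r))-D(r)$ as sums of correction terms built on $r+\tau(r)$ and annihilating them via the invariance hypothesis together with its $\tau$-conjugate $(L_{\star}(x)\otimes I+I\otimes L_{\ast}(x))(r+\tau(r))=0$ and the derived identity $(R_{\ast}(x)\otimes I-I\otimes R_{\ast}(x))(r+\tau(r))=0$ (the analogue of the relation $(R_{\circ}(x)\otimes I+I\otimes R_{\circ}(x))(r+\tau(r))=0$ used at the start of the proof of Theorem \ref{Ys3}). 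One minor remark: the anti-Zinbiel axioms you anticipate needing are in fact not used anywhere; the symmetry of $r+\tau(r)$ and these operator identities alone suffice, since stray terms such as $r_{23}\ast r_{31}$ only ever occur in the paired combinations $r_{23}\ast(r_{13}+r_{31})$ in which $r+\tau(r)$ already appears.
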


\begin{thm} \label{Qs} Let $(A,\ast,\circ)$ be an anti-pre-Poisson algebra and $r=\sum_{i}a_i\otimes b_i\in A\otimes A$. 
 Assume that
$\Delta_r,\delta_r$ are given by Eqs.~(\ref{CB1}) and (\ref{CB2}). If $r$ is a solution of the 
APP-YBE in $(A,\ast,\circ)$ and $r+\tau(r)$ is invariant.
Then $(A,\ast,\circ,\Delta_r,\delta_r)$ is an anti-pre-Poisson bialgebra.
\end{thm}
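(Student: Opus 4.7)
The plan is to apply Theorem~\ref{Bc1}, which reduces the claim to verifying the collection of identities \eqref{Cd1}--\eqref{Cd5}, \eqref{Pa5}--\eqref{Pa7}, and \eqref{CA1}--\eqref{CA5}. I would organize the verification by whether the identity is written directly in terms of $r+\tau(r)$ or whether it is a three-tensor identity involving $D(r), S(r)$ and their auxiliary variants $D_i(r), S_i(r)$.

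The identities whose left-hand sides are linear operators applied to $r+\tau(r)$, namely \eqref{Cd5}, \eqref{Pa7}, \eqref{CA4} and \eqref{CA5}, can be dispatched using the invariance hypothesis alone. By \eqref{IE3}--\eqref{IE4}, the operators $(L_\ast(x)\otimes I)$ and $(L_\circ(x)\otimes I)$ acting on $r+\tau(r)$ can be replaced by $-(I\otimes L_\star(x))$ and $(I\otimes \mathrm{ad}(x))$ respectively. Applying this rule systematically regroups each of \eqref{Cd5}, \eqref{Pa7}, \eqref{CA4}, \eqref{CA5} into the form $(I\otimes \varphi(x,y))(r+\tau(r))$ for some operator $\varphi(x,y)$ built from $L_\star, L_\ast, L_\circ$ and $\mathrm{ad}$. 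The anti-pre-Poisson compatibility axioms \eqref{ppa eq1.1}--\eqref{ppa eq1.3} (together with the associativity of $\star$ and the Jacobi identity for $[ \ , \ ]$) then force $\varphi(x,y)=0$.

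For the remaining three-tensor identities \eqref{Cd1}--\eqref{Cd4}, \eqref{Pa5}, \eqref{Pa6}, and \eqref{CA1}--\eqref{CA3}, I would invoke Theorems~\ref{Ys3} and~\ref{Ys4}. Under the invariance of $r+\tau(r)$, the hypotheses $D(r)=0$ and $S(r)=0$ yield $D_1(r)=D_2(r)=D(\tau(r))=0$ and $S_1(r)=S_2(r)=S(\tau(r))=0$. The auxiliary expressions $D_3,\ldots,D_8$ and $S_3, S_4, S_5$ appearing in \eqref{Cd1}--\eqref{Cd4} and \eqref{Pa5}--\eqref{Pa6} can be rewritten as linear combinations of these vanishing quantities modulo terms of the shape $(\text{operator})(r+\tau(r))$, and each such residue is killed a second time by invariance. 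Thus every summand in the remaining conditions is zero.

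The principal obstacle will be the genuinely mixed identities \eqref{CA1}--\eqref{CA3}, which entangle the anti-Zinbiel multiplication with the anti-pre-Lie multiplication in three-tensor sums containing both $r_{ij}\ast r_{kl}$ and brackets $[r_{ij},r_{kl}]$. No single APL-YBE or AZ-YBE identity controls the full expression; I would expect to have to apply the operators $(L_\ast(x)\otimes I\otimes I)$ and $(L_\circ(x)\otimes I\otimes I)$ to the APP-YBE identities, then use an additional invariance move (which transports an $L_\ast$ acting on $r+\tau(r)$ into an $\mathrm{ad}$ acting on the same tensor, and vice versa) to cancel the cross terms. The compatibility equations \eqref{r1}--\eqref{r7} of the regular representation will be essential here for converting $L_\ast L_\circ$-compositions into $L_\circ L_\ast$-compositions modulo lower-order terms. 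The fact that the same tensor $r+\tau(r)$ satisfies both invariance relations \eqref{IE3} and \eqref{IE4} is exactly what makes these cancellations possible, which explains the role of the symmetric-part invariance hypothesis.
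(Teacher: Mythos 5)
Your proposal takes essentially the same route as the paper's proof: reduce via Theorem~\ref{Bc1}, dispatch the identities acting on $r+\tau(r)$ (Eqs.~(\ref{Cd5}), (\ref{Pa7}), (\ref{CA4})--(\ref{CA5})) by the invariance transport rules, and rewrite the three-tensor conditions as $D(r)$, $S(r)$ plus residues of the form $(\text{operator})[(r+\tau(r))\otimes b_i]$ that are killed by invariance combined with the representation relations (\ref{r1})--(\ref{r7}) -- exactly the mechanism the paper uses, delegating the anti-Zinbiel half to \cite{19} and verifying Eq.~(\ref{CA2}) explicitly with the rest treated analogously. The only cosmetic difference is that in the paper the final cancellations are obtained after transporting operators through $T_{r+\tau(r)}$ via Eqs.~(\ref{IE8})--(\ref{LR2}) rather than by exhibiting an identically vanishing operator $\varphi(x,y)$, but this does not change the argument.
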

\begin{proof}
Since $r$ is a solution of the 
APP-YBE in $(A,\ast,\circ)$ and $r+\tau(r)$ is invariant, $S(r)=0,~D(r)=0$ and
Eqs.~(\ref{IE7})-(\ref{LR2}) hold. By Theorem 3.5 \cite{19}, $(A,\ast,\Delta_r)$ is an anti-Zinbiel bialgebra.
Using Eqs.~(\ref{IE8})-(\ref{LR2}) and (\ref{r3}), we have for all $\zeta,\eta,\theta\in A^{*}$,
\begin{small}
\begin{align*}&
\langle(I\otimes L_{\circ} (x\ast a_i)-L_{\circ} (x\ast a_i)\otimes I-L_{\circ} (x)R_{\ast} (a_i)\otimes I
+ R_{\circ} (a_i)\otimes L_{\ast}(x))(r+\tau(r)),
\zeta\otimes\eta\rangle
\\=&\langle r+\tau(r),L_{\circ}^{*} (x\ast a_i)\zeta\otimes \eta-\zeta\otimes L_{\circ}^{*} (x\ast a_i)\eta
-R_{\ast}^{*} (a_i)L_{\circ}^{*} (x)\zeta\otimes \eta+ R_{\circ}^{*} (a_i)\zeta\otimes L_{\ast}^{*}(x)\eta\rangle
\\=&\langle T_{r+\tau(r)}(L_{\circ}^{*} (x\ast a_i)\zeta), \eta\rangle
-\langle T_{r+\tau(r)}(\zeta), L_{\circ}^{*} (x\ast a_i)\eta\rangle
\\&-\langle T_{r+\tau(r)}(R_{\ast}^{*} (a_i)L_{\circ}^{*} (x)\zeta),\eta\rangle
+\langle T_{r+\tau(r)}(R_{\circ}^{*} (a_i)\zeta), L_{\ast}^{*}(x)\eta\rangle
\\=&\langle- \mathrm{ad}(x\ast a_i)T_{r+\tau(r)}(\zeta), \eta\rangle
+\langle L_{\circ} (x\ast a_i)T_{r+\tau(r)}(\zeta), \eta\rangle
\\&+\langle R_{\ast}(a_i)\mathrm{ad}(x)T_{r+\tau(r)}(\zeta),\eta\rangle
-\langle R_{\circ} (a_i)L_{\ast}(x)T_{r+\tau(r)}(\zeta), \eta\rangle
\\=&0,\end{align*}
\end{small}
which follows that
\begin{small}
\begin{align*}&
(I\otimes L_{\circ} (x\ast a_i)\otimes I-L_{\circ} (x\ast a_i)\otimes I\otimes I)[(r+\tau(r))\otimes b_i)]
\\&-(L_{\circ} (x)\otimes I\otimes I)( R_{\ast} (a_i)\otimes I\otimes I)[(r+\tau(r))\otimes b_i]
\\&+(I\otimes L_{\ast}(x)\otimes I)(  R_{\circ} (a_i)\otimes I\otimes I)[(r+\tau(r))\otimes b_i]=0.
\end{align*}
\end{small}
Note that
\begin{small}
\begin{align*}&
[r_{23}, r_{12}]-r_{13}\circ r_{12}+r_{23}\circ r_{21}-[r_{13}, r_{21}]+[r_{13}, r_{23}])	
\\=&-S(r)+[r_{12}+r_{21},r_{13}]+r_{23}\circ (r_{12}+r_{21})
\\=&-S(r)+\sum_{i}(I\otimes  L_{\circ} (a_i)\otimes I-\mathrm{ad}(a_i)\otimes I\otimes I)[(r+\tau(r))\otimes b_i],\\
&r_{13}\star r_{23}+r_{12}\ast r_{23}+r_{21}\ast r_{13}=D(r)+(r_{12}+r_{21})\ast r_{13}
\\=&D(r)+\sum_{i}( R_{\ast} (a_i)\otimes I\otimes I)[(r+\tau(r))\otimes b_i],
\\&
r_{21}\circ r_{13} -r_{12}\circ r_{23}+[r_{13}, r_{23}]=-S(r)+(r_{12}+r_{21})\circ r_{13}
\\=&-S(r)+\sum_{i}(  R_{\circ} (a_i)\otimes I\otimes I)[(r+\tau(r))\otimes b_i].
\end{align*}
\end{small}
Therefore,
\begin{small}
\begin{align*}&
(I\otimes I\otimes L_{\star} (x))([r_{23}, r_{12}]-r_{13}\circ r_{12}+r_{23}\circ r_{21}-[r_{13}, r_{21}]+[r_{13}, r_{23}])	
\\&-(L_{\circ} (x\ast a_i)\otimes I\otimes I)((r+\tau(r))\otimes b_i)
+(I\otimes L_{\circ} (x\ast a_i)\otimes I)((r+\tau(r))\otimes b_i)
\\&-(L_{\circ} (x)\otimes I\otimes I)(r_{13}\star r_{23}+r_{12}\ast r_{23}+r_{21}\ast r_{13})
\\&+(I\otimes L_{\ast}(x)\otimes I)(r_{21}\circ r_{13} -r_{12}\circ r_{23}+[r_{13}, r_{23}])=0,
\end{align*}
\end{small}
that is, Eq.~(\ref{CA1}) hold. Analogously, Eqs.~(\ref{CA2})-(\ref{CA5}) and (\ref{Pa5})-(\ref{Pa7}) hold. 
Combining Theorem \ref{Bc1}, we complete the proof.
\end{proof}

\begin{pro} \label{Ys5}
Let $(A,\ast,\circ,\Delta_r,\delta_r)$ be an anti-pre-Poisson bialgebra and $r\in A\otimes A$, 
where the comultiplications
$\Delta_r,\delta_r$ are defined by Eqs.~(\ref{CB1}) and (\ref{CB2}).
 Then the anti-pre-Poisson algebra structure $(\ast_r,\circ_r)$ on $A^{*}$ is defined
 by 
 \begin{small}
\begin{align}&\label{Ic1}\zeta\ast_{r}\eta=R_{\ast}^{*}(T_{\tau(r)}(\eta))\zeta+L_{\star}^{*}(T_{r}(\zeta))\eta,
\\&\label{Ic2}\zeta\circ_{r}\eta=-\mathrm{ad}^{*}(T_{r}(\zeta))\eta-R_{\circ}^{*}(T_{\tau(r)}(\eta))\zeta.
\end{align}
\end{small}
And the associated Poisson algebra structure $(\star_r,[ \ , \ ]_{r})$ on $A^{*}$ is
given by 
\begin{equation*}\zeta\star_{r}\eta=\zeta\ast_{r}\eta+\eta\ast_{r}\zeta
, \ \ \ [ \zeta , \eta ]_{r}=\zeta\circ_{r}\eta-\eta\circ_{r}\zeta,
\end{equation*}
where $\mathrm{ad}=L_{\circ}-R_{\circ},~ L_{\star}=R_{\ast}+L_{\ast}$.
\end{pro}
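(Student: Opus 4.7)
The plan is to establish the formulas for $\ast_r$ and $\circ_r$ by directly dualizing $\Delta_r$ and $\delta_r$ via the pairings
\begin{equation*}
\langle \Delta_r(x),\zeta\otimes\eta\rangle = \langle x,\zeta\ast_r\eta\rangle,\qquad
\langle \delta_r(x),\zeta\otimes\eta\rangle = \langle x,\zeta\circ_r\eta\rangle,
\end{equation*}
and then read off $\ast_r,\circ_r$ by matching coefficients. The key observation is that for $r=\sum_i a_i\otimes b_i$, the map $T_r:A^{*}\to A$ satisfies $T_r(\zeta)=\sum_i\langle\zeta,a_i\rangle b_i$ and $T_{\tau(r)}(\eta)=\sum_i\langle\eta,b_i\rangle a_i$, which is exactly what lets us absorb the $a_i,b_i$ in the expansions into applications of $T_r$ or $T_{\tau(r)}$.

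For the anti-Zinbiel part, I would expand $\Delta_r(x)=-\sum_i x\ast a_i\otimes b_i-\sum_i a_i\otimes x\star b_i$ using Eq.~\eqref{CB1}, then compute
\begin{equation*}
\langle \Delta_r(x),\zeta\otimes\eta\rangle=-\sum_i\langle x\ast a_i,\zeta\rangle\langle b_i,\eta\rangle-\sum_i\langle a_i,\zeta\rangle\langle x\star b_i,\eta\rangle.
\end{equation*}
Rewriting the first sum as $\langle x,R_{\ast}^{*}(T_{\tau(r)}(\eta))\zeta\rangle$ (using $\langle R_{\ast}^{*}(y)\zeta,x\rangle=-\langle\zeta,x\ast y\rangle$ and $y=T_{\tau(r)}(\eta)=\sum_i\eta(b_i)a_i$) and the second as $\langle x,L_{\star}^{*}(T_r(\zeta))\eta\rangle$ (using $b_i\star x = x\star b_i$ and $T_r(\zeta)=\sum_i\zeta(a_i)b_i$) yields precisely \eqref{Ic1}. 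The sign conventions fit because $L_{\star}^{*}=-L_\ast^{*}-R_\ast^{*}$ is already built into the dual operator notation used throughout.

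For the anti-pre-Lie part, expand $\delta_r(x)=\sum_i x\circ a_i\otimes b_i-\sum_i a_i\otimes\mathrm{ad}(x)b_i$ using Eq.~\eqref{CB2}, pair with $\zeta\otimes\eta$, and repackage:
\begin{equation*}
\langle \delta_r(x),\zeta\otimes\eta\rangle=\sum_i\langle x\circ a_i,\zeta\rangle\langle b_i,\eta\rangle-\sum_i\langle a_i,\zeta\rangle\langle\mathrm{ad}(x)b_i,\eta\rangle.
\end{equation*}
Using $\langle R_{\circ}^{*}(y)\zeta,x\rangle=-\langle\zeta,x\circ y\rangle$ with $y=T_{\tau(r)}(\eta)$, the first sum becomes $-\langle x,R_{\circ}^{*}(T_{\tau(r)}(\eta))\zeta\rangle$; using $\langle \mathrm{ad}^{*}(y)\eta,x\rangle=-\langle\eta,\mathrm{ad}(y)x\rangle$ together with $\mathrm{ad}(T_r(\zeta))x=-\mathrm{ad}(x)T_r(\zeta)+[\text{correction}]$—more cleanly, swap $x$ and $T_r(\zeta)$ in the pairing—the second sum becomes $-\langle x,\mathrm{ad}^{*}(T_r(\zeta))\eta\rangle$. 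This reproduces \eqref{Ic2}. The Poisson structure on $A^{*}$ is then immediate from Propositions \ref{Qc} and the general fact that the sub-adjacent Poisson algebra of any anti-pre-Poisson algebra is given by $\star_r=\ast_r+\tau\ast_r$ and $[\,,\,]_r=\circ_r-\tau\circ_r$.

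The main obstacle, such as it is, is purely bookkeeping: tracking the six sign/transpose conventions ($L_\ast^{*}$ vs.\ $R_\ast^{*}$, $\mathrm{ad}^{*}$ vs.\ $L_\circ^{*}-R_\circ^{*}$, and the $T_r$ vs.\ $T_{\tau(r)}$ distinction) so that each term in the dual pairing matches the right slot on the right-hand side. No deep structural argument is needed, since the anti-pre-Poisson bialgebra hypothesis already ensures that the duals $\Delta_r^{*},\delta_r^{*}$ define an anti-pre-Poisson algebra on $A^{*}$; the proposition only asserts what those operations are.
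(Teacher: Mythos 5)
Your proposal is correct and is essentially the paper's own proof: the paper performs the identical dual-pairing computation, only in operator form---moving $L_{\ast}^{*}(x)$ and $L_{\star}^{*}(x)$ across the pairing and using $\langle r,\alpha\otimes\beta\rangle=\langle T_{r}(\alpha),\beta\rangle$, $\langle r,\alpha\otimes\beta\rangle=\langle T_{\tau(r)}(\beta),\alpha\rangle$---rather than expanding $r=\sum_i a_i\otimes b_i$ componentwise as you do, and it likewise treats the Poisson structure as immediate from the sub-adjacent construction. Two stray side remarks should be deleted, though neither is used in your actual computation: with the paper's convention $\langle f^{*}(x)u^{*},v\rangle=-\langle u^{*},f(x)v\rangle$ the map $f\mapsto f^{*}$ is linear, so $L_{\star}^{*}=L_{\ast}^{*}+R_{\ast}^{*}$ (not $-L_{\ast}^{*}-R_{\ast}^{*}$); and $\mathrm{ad}(y)x=-\mathrm{ad}(x)y$ holds exactly by skew-symmetry of the commutator, so no ``correction'' term is needed in the anti-pre-Lie step.
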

\begin{proof} For all $\zeta,\eta\in A^{*}$ and $x\in A$,
\begin{small}
\begin{align*}\langle \zeta\ast_{r}\eta,x\rangle
&=\langle \zeta\otimes\eta,\Delta_{r}(x)\rangle
=-\langle \zeta\otimes\eta,(I\otimes L_{\star}(x)+L_{\ast}(x)\otimes I)r\rangle
\\&=\langle   L_{\ast}^{*}(x)\zeta\otimes\eta,r\rangle
+\langle  \zeta \otimes L_{\star}^{*}(x)\eta,r\rangle
=\langle T_{\tau(r)}(\eta),L_{\ast}^{*}(x)\zeta\rangle
+\langle T_{r}(\zeta),L_{\star}^{*}(x)\eta\rangle
\\&=-\langle x\ast T_{\tau(r)}(\eta),\zeta\rangle
-\langle x\star T_{r}(\zeta),\eta\rangle
=\langle R_{\ast}^{*}(T_{\tau(r)}(\eta))\zeta,x\rangle+\langle L_{\star}^{*}(T_{r}(\zeta))\eta,x\rangle,
\end{align*}\end{small}
which indicates that Eq.~(\ref{Ic1}) holds.
By the same token, Eq.~(\ref{Ic2}) holds.
\end{proof}

\begin{defi} \label{Qt1}
 Let $(A,\ast,\circ)$ be an anti-pre-Poisson algebra and $r\in A\otimes A$. If $r$ is a solution of the APP-YBE in $(A,\ast,\circ)$
 and $r+\tau(r)$ is invariant, then the anti-pre-Poisson
  bialgebra $(A, \ast,\circ,\Delta_{r},\delta_{r})$ induced by $r$ is called a {\bf quasi-triangular} anti-pre-Poisson
 bialgebra. In particular, if $r$ is skew-symmetric, $(A, \ast,\circ,\Delta_{r},\delta_{r})$ 
is called a {\bf triangular} anti-pre-Poisson bialgebra, where $\Delta_{r}$ and $\delta_{r}$ 
are given by Eqs.~(\ref{CB1}) and (\ref{CB2}) respectively.
\end{defi}

\begin{thm}\label{QB0}  Let $(A,\ast,\circ)$ be an anti-pre-Poisson algebra and
$r\in A\otimes A$. Suppose that $r+\tau(r)$ is invariant. Then the following conditions are equivalent:
 \begin{enumerate}
\item $r$ is a solution of the APP-YBE in $(A,\ast,\circ)$.
 \item $(A^{*},\star_r, [ \ , \ ]_r)$ is a Poisson algebra and the linear maps
$T_{r},-T_{\tau(r)}$ are both Poisson algebra
 homomorphisms from $(A^{*},\star_r, [ \ , \ ]_r)$ to $(A,\star, [ \ , \ ])$.
 \item $(A^{*},\ast_r,\circ_r)$ is an anti-pre-Poisson algebra and the linear maps
$T_{r},-T_{\tau(r)}$ are both anti-pre-Poisson algebra
 homomorphisms from $(A^{*},\ast_r,\circ_r)$ to $(A,\ast,\circ)$.
 \end{enumerate}
\end{thm}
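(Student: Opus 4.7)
The plan is to establish the cyclic chain $(c) \Rightarrow (b) \Rightarrow (a) \Rightarrow (c)$, thereby securing all three equivalences. The implication $(c) \Rightarrow (b)$ is immediate, since if $(A^{*}, \ast_{r}, \circ_{r})$ is anti-pre-Poisson then $(A^{*}, \star_{r}, [\ ,\ ]_{r})$ is its sub-adjacent Poisson algebra by Proposition~\ref{Ys5}, and any anti-pre-Poisson homomorphism to $(A,\ast,\circ)$ restricts to a Poisson homomorphism between the sub-adjacent structures.

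For $(a) \Rightarrow (c)$, I would first apply Theorem~\ref{Qs} to obtain the anti-pre-Poisson bialgebra $(A, \ast, \circ, \Delta_{r}, \delta_{r})$, so that $(A^{*}, \ast_{r}, \circ_{r})$ is automatically anti-pre-Poisson with the operations given by Proposition~\ref{Ys5}. Under the invariance of $r + \tau(r)$, Theorems~\ref{Ys3} and \ref{Ys4} yield the equivalences $S(r)=0 \Leftrightarrow S_{2}(r)=0$ and $D(r)=0 \Leftrightarrow D_{2}(r)=0$, whence Propositions~\ref{Ys1}(c), \ref{Ys2}(c) exhibit $T_{r}$ as an anti-pre-Poisson homomorphism, while Propositions~\ref{Ys1}(a), \ref{Ys2}(a) do the same directly for $-T_{\tau(r)}$.

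The substantive step is $(b) \Rightarrow (a)$. Using Proposition~\ref{Si}, specifically \eqref{IE7} together with the identities $L_{\star}^{*} = L_{\ast}^{*} + R_{\ast}^{*}$ and $\mathrm{ad}^{*} = L_{\circ}^{*} - R_{\circ}^{*}$, the plan is to simplify the sub-adjacent operations of Proposition~\ref{Ys5} to the two-term forms
\[
\zeta \star_{r} \eta = L_{\ast}^{*}(T_{r}(\eta))\zeta - L_{\ast}^{*}(T_{\tau(r)}(\zeta))\eta, \qquad [\zeta, \eta]_{r} = L_{\circ}^{*}(T_{\tau(r)}(\zeta))\eta + L_{\circ}^{*}(T_{r}(\eta))\zeta.
\]
With these reformulations, the two Poisson-homomorphism identities $T_{r}(\zeta\star_{r}\eta)=T_{r}(\zeta)\star T_{r}(\eta)$ and $T_{r}([\zeta,\eta]_{r})=[T_{r}(\zeta),T_{r}(\eta)]$ become exactly the equivalent formulations of $D(r)=0$ and $S(r)=0$ recorded in Propositions~\ref{Ys2}(b) and \ref{Ys1}(b) respectively, and hence $r$ solves the APP-YBE.

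The main obstacle will be carrying out the rewriting of $\star_{r}$ and $[\ ,\ ]_{r}$ cleanly: the cancellation among the various $R_{\ast}^{*}$ and $R_{\circ}^{*}$ terms that collapses the naive four-term sums from Proposition~\ref{Ys5} into the two-term expressions above is precisely where the invariance of $r+\tau(r)$ is used crucially. Once this rewriting is in place, the remaining arguments reduce to a bookkeeping assembly of the $\mathcal{O}$-operator characterizations already established in Section~3, and nothing further beyond Theorems~\ref{Ys3}, \ref{Ys4}, \ref{Qs} and Propositions~\ref{Ys1}, \ref{Ys2}, \ref{Si}, \ref{Ys5} is needed.
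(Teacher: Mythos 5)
Your proposal is correct and follows essentially the same route as the paper: the key step in both is the two-term rewriting $\zeta\star_{r}\eta=L_{\ast}^{*}(T_{r}(\eta))\zeta-L_{\ast}^{*}(T_{\tau(r)}(\zeta))\eta$ and $[\zeta,\eta]_{r}=L_{\circ}^{*}(T_{\tau(r)}(\zeta))\eta+L_{\circ}^{*}(T_{r}(\eta))\zeta$ via the invariance identities of Proposition~\ref{Si}, after which everything is matched against the operator characterizations in Propositions~\ref{Ys1}--\ref{Ys2} and Theorems~\ref{Ys3}--\ref{Ys4} together with Proposition~\ref{Ys5}. Your explicit cyclic organization $(c)\Rightarrow(b)\Rightarrow(a)\Rightarrow(c)$ and the explicit appeal to Theorem~\ref{Qs} for the anti-pre-Poisson structure on $A^{*}$ in $(a)\Rightarrow(c)$ merely spell out what the paper's terser ``combining\dots'' conclusion leaves implicit.
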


\begin{proof} 
Using Eqs.~(\ref{IE7}) and (\ref{Ic1}), we have
\begin{small}
\begin{align*}&\zeta\star_{r}\eta=R_{\ast}^{*}(T_{\tau(r)}(\eta))\zeta+L_{\star}^{*}(T_{r}(\zeta))\eta+
R_{\ast}^{*}(T_{\tau(r)}(\zeta))\eta+L_{\star}^{*}(T_{r}(\eta))\zeta,
\\=&R_{\ast}^{*}(T_{r+\tau(r)}(\eta))\zeta+L_{\ast}^{*}(T_{r}(\eta))\zeta
+R_{\ast}^{*}(T_{r+\tau(r)}(\zeta))\eta+L_{\ast}^{*}(T_{r}(\zeta))\eta
\\=&-L_{\star}^{*}(T_{r+\tau(r)}(\zeta))\eta+L_{\ast}^{*}(T_{r}(\eta))\zeta+
R_{\ast}^{*}(T_{r+\tau(r)}(\zeta))\eta+L_{\ast}^{*}(T_{r}(\zeta))\eta+L_{\ast}^{*}(T_{r}(\zeta))\eta
\\=& L_{\ast}^{*}(T_{r}(\eta))\zeta-L_{\ast}^{*}(T_{\tau(r)}(\zeta))\eta.
\end{align*}\end{small}
Analogously,
\begin{small}
\begin{align*}
[ \zeta , \eta ]_{r}=L_{\circ}^{*}(T_{\tau(r)}(\zeta))\eta+L_{\circ}^{*}(T_{r}(\eta))\zeta.
\end{align*}\end{small}
Combining Proposition \ref{Ys1}, Proposition \ref{Ys2}, Theorem \ref{Ys3}, Theorem \ref{Ys4} and Proposition \ref{Ys5},
 we get the conclusion.
\end{proof}

\begin{cor} \label{QB} Let $(A, \ast,\circ,\Delta_{r},\delta_{r})$ be a quasi-triangular anti-pre-Poisson bialgebra.
 Then  \begin{enumerate}
\item $T_{r},-T_{\tau(r)}$ are both Poisson algebra
 homomorphisms from $(A^{*},\star_r, [ \ , \ ]_r)$ to $(A,\star, [ \ , \ ])$.
 \item $T_{r},-T_{\tau(r)}$ are anti-pre-Poisson algebra
 homomorphisms from $(A^{*},\ast_r,\circ_r)$ to $(A,\ast,\circ)$.
 \end{enumerate}
\end{cor}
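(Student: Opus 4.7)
The plan is to reduce the entire statement to a direct invocation of Theorem~\ref{QB0}. By Definition~\ref{Qt1}, saying that $(A,\ast,\circ,\Delta_{r},\delta_{r})$ is quasi-triangular means precisely that (i) $r$ is a solution of the APP-YBE in $(A,\ast,\circ)$, and (ii) $r+\tau(r)$ is invariant. Condition (ii) is exactly the standing hypothesis of Theorem~\ref{QB0}, while (i) is its condition~(1). Therefore the equivalences proved in that theorem become unconditional implications here.

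More precisely, I would first invoke the implication $(1)\Longrightarrow(2)$ of Theorem~\ref{QB0}: this gives that $(A^{*},\star_{r},[\ ,\ ]_{r})$ is a Poisson algebra and that $T_{r},-T_{\tau(r)}$ are both Poisson algebra homomorphisms from $(A^{*},\star_{r},[\ ,\ ]_{r})$ to the sub-adjacent Poisson algebra $(A,\star,[\ ,\ ])$, which is exactly item~(a). Then I would invoke $(1)\Longrightarrow(3)$ of the same theorem to obtain that $(A^{*},\ast_{r},\circ_{r})$ is an anti-pre-Poisson algebra and that $T_{r},-T_{\tau(r)}$ are anti-pre-Poisson algebra homomorphisms from $(A^{*},\ast_{r},\circ_{r})$ to $(A,\ast,\circ)$, which is exactly item~(b). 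No additional computation is needed, because Proposition~\ref{Ys5} already identifies $(\ast_{r},\circ_{r})$ (and hence $(\star_{r},[\ ,\ ]_{r})$) as the dual anti-pre-Poisson algebra structure encoded by $(\Delta_{r},\delta_{r})$.

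There is essentially no obstacle in this corollary: all the substantive work, namely the verification that invariance of $r+\tau(r)$ turns the APP-YBE into the two homomorphism identities via Propositions~\ref{Ys1}, \ref{Ys2}, \ref{Ys5} and Theorems~\ref{Ys3}, \ref{Ys4}, has already been carried out in the proof of Theorem~\ref{QB0}. The only thing to check, if one wishes to spell it out, is that the definition of quasi-triangularity matches the hypotheses of Theorem~\ref{QB0} verbatim, which it does by construction.
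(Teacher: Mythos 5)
Your proposal is correct and coincides with the paper's own proof, which likewise deduces the corollary by combining Definition~\ref{Qt1} with the implications of Theorem~\ref{QB0}. You have merely spelled out the verbatim matching of hypotheses that the paper leaves implicit.
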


\begin{proof} 
It follows from Theorem \ref{QB0} and Definition \ref{Qt1}.
\end{proof}

\subsection{Factorizable anti-pre-Poisson bialgebras}
In this section, we introduce the notion of factorizable anti-pre-Poisson
bialgebras, which is a special class of quasi-triangular anti-pre-Poisson bialgebras.
We show that the double of an anti-pre-Poisson bialgebra
is naturally a factorizable anti-pre-Poisson bialgebra.

\begin{defi} \label{Qt} A quasi-triangular anti-pre-Poisson
 bialgebra $(A, \ast,\circ, \Delta_{r}, \delta_{r})$ is
called factorizable if  
 the linear map $T_{r+\tau(r)}:A^{*}\longrightarrow A$ given by Eq.~(\ref{YE7}) is a linear isomorphism of vector spaces, 
where $\Delta_{r}, \delta_{r} $
are defined by Eqs.~(\ref{CB1}) and (\ref{CB2}) respectively.\end{defi}

\begin{pro} \label{Qf} Let $(A, \ast,\circ, \Delta_{r}, \delta_{r})$ be a quasi-triangular
 (factorizable) anti-pre-Poisson bialgebra. Then $(A, \ast,\circ, \Delta_{\tau(r)}, \delta_{\tau(r)})$
 is also a quasi-triangular
 (factorizable) anti-pre-Poisson bialgebra.\end{pro}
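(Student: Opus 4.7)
The plan is to unpack the definitions of quasi-triangular and factorizable anti-pre-Poisson bialgebras and verify each requirement for $\tau(r)$ using the hypotheses already known for $r$, together with the equivalences in Theorems \ref{Ys3} and \ref{Ys4}.

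First, I would observe that, since $\tau$ is an involution on $A\otimes A$, we have $\tau(r)+\tau(\tau(r))=\tau(r)+r=r+\tau(r)$. Hence the invariance of the symmetric part required for quasi-triangularity of $(A,\ast,\circ,\Delta_{\tau(r)},\delta_{\tau(r)})$ coincides with the invariance of $r+\tau(r)$, which holds by hypothesis. So one of the two conditions in Definition \ref{Qt1} is automatic for $\tau(r)$.

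Next, I would verify that $\tau(r)$ is a solution of the APP-YBE, i.e.\ $D(\tau(r))=0$ and $S(\tau(r))=0$. Because $(A,\ast,\circ,\Delta_{r},\delta_{r})$ is quasi-triangular, $D(r)=0$, $S(r)=0$, and $r+\tau(r)$ is invariant. The invariance of $r+\tau(r)$ is precisely the pair of hypotheses
\[
(I\otimes L_{\star}(x)+L_{\ast}(x)\otimes I)(r+\tau(r))=0,\qquad
(L_{\circ}(x)\otimes I-I\otimes\mathrm{ad}(x))(r+\tau(r))=0,
\]
appearing in Theorems \ref{Ys4} and \ref{Ys3} respectively. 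Applying Theorem \ref{Ys4} gives the equivalence $D(r)=0\Longleftrightarrow D(\tau(r))=0$, so $D(\tau(r))=0$. Applying Theorem \ref{Ys3} gives $S(r)=0\Longleftrightarrow S(\tau(r))=0$, so $S(\tau(r))=0$. Combining these with the invariance of $\tau(r)+\tau(\tau(r))=r+\tau(r)$ noted above, Theorem \ref{Qs} (or directly Definition \ref{Qt1}) shows that $(A,\ast,\circ,\Delta_{\tau(r)},\delta_{\tau(r)})$ is a quasi-triangular anti-pre-Poisson bialgebra.

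Finally, for the factorizable case, the condition is that $T_{\tau(r)+\tau(\tau(r))}:A^{*}\to A$ be a linear isomorphism. Using $\tau(\tau(r))=r$ again, this map is $T_{\tau(r)+r}=T_{r+\tau(r)}$, the very map assumed to be an isomorphism in the factorizability hypothesis on $(A,\ast,\circ,\Delta_{r},\delta_{r})$. Hence $(A,\ast,\circ,\Delta_{\tau(r)},\delta_{\tau(r)})$ is factorizable as well. There is no genuine obstacle here: the argument is a direct bookkeeping application of Theorems \ref{Ys3} and \ref{Ys4}, whose role is precisely to convert hypotheses about $r$ into the corresponding statements about $\tau(r)$ under the invariance condition.
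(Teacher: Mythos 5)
Your proposal is correct and follows essentially the same route as the paper, which simply cites Definition \ref{Qt} together with Items (d) of Theorems \ref{Ys3} and \ref{Ys4}; your write-up just makes explicit the bookkeeping the paper leaves implicit, namely that $\tau(\tau(r))=r$ so the symmetric part $r+\tau(r)$, its invariance, and the isomorphism $T_{r+\tau(r)}$ are unchanged when $r$ is replaced by $\tau(r)$.
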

\begin{proof}
On the basis of Definition \ref{Qt} and Items (d) of Theorems \ref{Ys3} and \ref{Ys4}, we conclude that the statement holds.
\end{proof}

\begin{pro}
Let $(A, \ast,\circ, \Delta_{r}, \delta_{r})$ be a factorizable anti-pre-Poisson bialgebra.
Then $\mathrm{Im}(T_{r}\oplus T_{\tau(r)})$ is an anti-pre-Poisson subalgebra of the direct sum anti-pre-Poisson
algebra $A\oplus A$,
which is isomorphic to the anti-pre-Poisson algebra $(A^{*},\ast_{r},\circ_{r})$. Furthermore, any $x\in A$ has a unique
decomposition $x=x_1-x_2$, where $(x_1,x_2)\in \mathrm{Im}(T_{r}\oplus T_{\tau(r)})$ and
\begin{equation*}T_{r}\oplus T_{\tau(r)}:A^{*}\longrightarrow A\oplus A,~~(T_{r}\oplus T_{\tau(r)})(\zeta)=(T_{r}(\zeta),
 -T_{\tau(r)}(\zeta)),~\forall~\zeta\in A^{*}. \end{equation*}
\end{pro}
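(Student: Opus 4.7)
The plan is to combine the factorizability hypothesis (which asserts that $T_{r+\tau(r)}\colon A^{*}\to A$ is a linear isomorphism) with Corollary~\ref{QB}, which gives us two ready-made anti-pre-Poisson algebra homomorphisms $T_{r},\,-T_{\tau(r)}\colon (A^{*},\ast_{r},\circ_{r})\to (A,\ast,\circ)$. Since the operations on the direct sum $A\oplus A$ are defined componentwise, the combined map
\[
T_{r}\oplus T_{\tau(r)}\colon A^{*}\longrightarrow A\oplus A,\qquad \zeta\mapsto (T_{r}(\zeta),-T_{\tau(r)}(\zeta))
\]
is automatically a homomorphism of anti-pre-Poisson algebras into $A\oplus A$. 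In particular its image is closed under $\ast$ and $\circ$, hence is an anti-pre-Poisson subalgebra of $A\oplus A$.

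The second step is to show that $T_{r}\oplus T_{\tau(r)}$ is injective, which will promote the previous homomorphism to an isomorphism onto its image. If $(T_{r}(\zeta),-T_{\tau(r)}(\zeta))=(0,0)$, then adding the two components yields
\[
T_{r+\tau(r)}(\zeta)=T_{r}(\zeta)+T_{\tau(r)}(\zeta)=0,
\]
and factorizability forces $\zeta=0$. Thus $\mathrm{Im}(T_{r}\oplus T_{\tau(r)})\cong (A^{*},\ast_{r},\circ_{r})$ as anti-pre-Poisson algebras.

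For the decomposition statement, I would use the isomorphism $T_{r+\tau(r)}$ directly: given $x\in A$, there is a unique $\zeta\in A^{*}$ with $T_{r+\tau(r)}(\zeta)=x$. Setting $x_{1}=T_{r}(\zeta)$ and $x_{2}=-T_{\tau(r)}(\zeta)$, we get $x_{1}-x_{2}=T_{r}(\zeta)+T_{\tau(r)}(\zeta)=x$ and $(x_{1},x_{2})\in\mathrm{Im}(T_{r}\oplus T_{\tau(r)})$ by construction. Conversely, any decomposition $x=y_{1}-y_{2}$ with $(y_{1},y_{2})=(T_{r}(\eta),-T_{\tau(r)}(\eta))$ forces $T_{r+\tau(r)}(\eta)=x$, so by the uniqueness of $\zeta$ we conclude $\eta=\zeta$ and the decomposition coincides with the one constructed.

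There is no real obstacle in this argument — the proof is essentially a bookkeeping exercise once Corollary~\ref{QB} (homomorphism property) and factorizability (bijectivity of $T_{r+\tau(r)}$) are in hand. The only subtle point worth flagging is the sign convention: one must remember that the second coordinate of $T_{r}\oplus T_{\tau(r)}$ is $-T_{\tau(r)}(\zeta)$ (not $T_{\tau(r)}(\zeta)$), since it is $-T_{\tau(r)}$ that appears as a homomorphism in Corollary~\ref{QB}, and this sign is what makes the sum $x_{1}+(-x_{2})=T_{r+\tau(r)}(\zeta)$ land in the image of the invariant map rather than a difference.
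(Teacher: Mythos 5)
Your proposal is correct and follows essentially the same route as the paper's proof: both invoke Corollary~\ref{QB} for the homomorphism property of $T_{r}$ and $-T_{\tau(r)}$, use bijectivity of $T_{r+\tau(r)}$ to get injectivity of $T_{r}\oplus T_{\tau(r)}$, and obtain the decomposition by writing $x=T_{r}T_{r+\tau(r)}^{-1}(x)+T_{\tau(r)}T_{r+\tau(r)}^{-1}(x)$. Your only addition is an explicit verification of the uniqueness of the decomposition, which the paper leaves implicit in the invertibility of $T_{r+\tau(r)}$.
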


\begin{proof} In view of Definition \ref{Qt} and Corollary \ref{QB}, $T_{r}, -T_{\tau(r)}$ are anti-pre-Poisson algebra 
homomorphisms and $T_{r+\tau(r)}$ is a linear isomorphism. It follows that
$T_{r}\oplus T_{\tau(r)}$ is an anti-pre-Poisson algebra homomorphism and $\mathrm{Ker}(T_{r}\oplus T_{\tau(r)})=0$.
Therefore, $\mathrm{Im}(T_{r}\oplus T_{\tau(r)})$ is isomorphic to $(A^{*},\ast_{r},\circ_{r})$ as anti-pre-Poisson algebras.
For all $x\in A$, 
\begin{equation*}x=T_{r+\tau(r)}T_{r+\tau(r)}^{-1}(x)=T_{r}T_{r+\tau(r)}^{-1}(x)+T_{\tau(r)}T_{r+\tau(r)}^{-1}(x)=x_1-x_2
,\end{equation*}
where $x_1=T_{r}T_{r+\tau(r)}^{-1}(x)\in \mathrm{Im}(T_{r}),~
x_2=-T_{\tau(r)}T_{r+\tau(r)}^{-1}(x)\in \mathrm{Im}(-T_{\tau(r)})$.
The proof is finished.
\end{proof}

\begin{pro}
 Let $(A, \ast_{A},\circ_{A}, \Delta_{r}, \delta_{r})$
 be a factorizable anti-pre-Poisson bialgebra. Then the double anti-pre-Poisson algebra $(D=A\oplus A^{*},\ast_D,\circ_D)$
is isomorphic to the direct sum $A\oplus A$ of anti-pre-Poisson algebras, 
where $\ast_D,\circ_D$ are given by Eqs.~(\ref{Db1})-(\ref{Db2}) respectively.
\end{pro}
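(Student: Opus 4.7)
The plan is to construct the isomorphism explicitly as
\[
\phi: D=A\oplus A^{*} \longrightarrow A\oplus A,\qquad \phi(x+a)=\bigl(x+T_{r}(a),\; x-T_{\tau(r)}(a)\bigr),
\]
for $x\in A$ and $a\in A^{*}$. Bijectivity follows from factorizability: given $(u,v)\in A\oplus A$, the system $x+T_{r}(a)=u$, $x-T_{\tau(r)}(a)=v$ is solved by $a=T_{r+\tau(r)}^{-1}(u-v)$ and $x=u-T_{r}(a)$, and this is well-defined precisely because $T_{r+\tau(r)}$ is a linear isomorphism. Injectivity is an immediate consequence of the same argument. (Note that the preceding proposition already identifies $\mathrm{Im}(T_{r}\oplus T_{\tau(r)})$ as the summand coming from $A^{*}$, so $\phi$ is essentially the natural decomposition of $D$ into the diagonal copy of $A$ plus this subspace.)

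The main task is to show that $\phi$ preserves both products $\ast_{D}$ and $\circ_{D}$. The strategy is to split $(x+a)\ast_{D}(y+b)$ according to \eqref{Db1} into its $A$-part and its $A^{*}$-part, apply $\phi$, and compare to $(x+T_{r}(a))\ast_{A}(y+T_{r}(b))$ and $(x-T_{\tau(r)}(a))\ast_{A}(y-T_{\tau(r)}(b))$ coordinatewise. Expanding the right-hand side yields four pieces: $x\ast_{A} y$, $T_{r}(a)\ast_{A} T_{r}(b)$ (respectively $T_{\tau(r)}(a)\ast_{A} T_{\tau(r)}(b)$), and two cross terms. The diagonal pieces match by Corollary~\ref{QB}, which asserts that $T_{r}$ and $-T_{\tau(r)}$ are anti-pre-Poisson algebra homomorphisms from $(A^{*},\ast_{r},\circ_{r})$ to $(A,\ast_{A},\circ_{A})$, combined with the explicit formulas for $\ast_{r}$ and $\circ_{r}$ in Proposition~\ref{Ys5}. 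The identical reasoning applies to $\circ_{D}$ after swapping the relevant dual representation formulas.

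The heart of the verification is the cross terms. On the $D$-side, terms like $(L_{\ast_{A^{*}}}^{*}+R_{\ast_{A^{*}}}^{*})(a)y$ and $T_{r}\bigl((L_{\ast_{A}}^{*}+R_{\ast_{A}}^{*})(x)b\bigr)$ must assemble into $x\ast_{A}T_{r}(b)+T_{r}(a)\ast_{A}y$. This is where invariance of $r+\tau(r)$ enters: the identities of Proposition~\ref{Si} and Eqs.~\eqref{LR1}--\eqref{LR3}, together with Lemma~\ref{In1} applied separately to $r$ and to $\tau(r)$ (whose symmetric part is still $r+\tau(r)$), convert the dual-representation expressions on $A$ (like $R_{\ast_{A^{*}}}^{*}(b)x$) into multiplications of the form $x\ast_{A}T_{r}(b)$ in $A$, while the images under $T_{r}$ of $(L_{\ast_{A}}^{*}+R_{\ast_{A}}^{*})(x)b$ and $R_{\ast_{A}}^{*}(y)a$ realign to supply the remaining pieces. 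Crucially, the sign pattern in $\phi$ (namely $+T_{r}(a)$ in the first coordinate, $-T_{\tau(r)}(a)$ in the second) is chosen so that the two halves of $r+\tau(r)=r+\tau(r)$ appearing on each side recombine correctly via the invariance relations.

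The principal obstacle, as expected, is organizing this cross-term calculation: several applications of invariance must be chained together, and the Lie-type compatibilities (for $\circ_{D}$) bring in the $\mathrm{ad}$-twists from Eq.~\eqref{IE6}, which interact with both $T_{r}$ and $T_{\tau(r)}$ in an asymmetric way. Once the $\ast$-part is verified, the $\circ$-part follows by a parallel computation using the second invariance relation and the fact that $-T_{\tau(r)}$ (not $T_{\tau(r)}$) is a homomorphism, which compensates for the minus sign inherent in the anti-pre-Lie direction. Combining these with Corollary~\ref{QB} and Theorem~\ref{QB0} then yields that $\phi$ is an isomorphism of anti-pre-Poisson algebras.
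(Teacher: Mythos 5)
Your proposal is correct and follows essentially the same route as the paper: the same map $\varphi(x+a)=(x+T_{r}(a),\,x-T_{\tau(r)}(a))$, bijectivity from the invertibility of $T_{r+\tau(r)}$, the diagonal terms handled by Corollary~\ref{QB} together with Proposition~\ref{Ys5}, and the cross terms resolved by the invariance identities of Lemma~\ref{In1} and Proposition~\ref{Si} (the paper packages these as Eqs.~(\ref{FD2})--(\ref{FD3}) for the $\circ$-part and delegates the $\ast$-part to Proposition~4.14 of \cite{19}, which is the computation you propose to carry out directly).
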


\begin{proof} Since $(A, \ast,\circ, \Delta_{r}, \delta_{r})$
 is a factorizable anti-pre-Poisson bialgebra,
$T_{r+\tau(r)}$ is a linear isomorphism and $r+\tau(r)$ is invariant.
Define $\varphi:A\oplus A^{*}\longrightarrow A\oplus A$ by
\begin{equation}\label{FD1}\varphi(x,\zeta)=(x+T_{r}(\zeta),x-T_{\tau(r)}(\zeta)),~\forall~(x,\zeta)\in A\oplus A^{*}.\end{equation}
Then $\varphi$ is a bijection.
Using Eqs.~(\ref{Ic2}) and (\ref{IE7}), we have
\begin{small}
\begin{align*}&\langle\eta,-T_{r}(\mathrm{ad}_{A}^{*}(x)\zeta)+R_{\circ_{A^*}}^{*}(\zeta)x\rangle
\\=&\langle \zeta,[x,T_{\tau(r)}(\eta)]\rangle-\langle x,\eta\circ\zeta\rangle
\\=&\langle x,\mathrm{ad}_{A}^{*}(T_{\tau(r)}(\eta))\zeta\rangle+\langle x,\mathrm{ad}^{*}(T_{r}(\eta))\zeta+R_{\circ_A}^{*}(T_{\tau(r)}(\zeta))\eta
\rangle
\\=&\langle x,\mathrm{ad}_{A}^{*}(T_{r+\tau(r)}(\eta))\zeta+R_{\circ_A}^{*}(T_{\tau(r)}(\zeta))\eta\rangle
\\=&\langle x,-R_{\circ_A}^{*}(T_{r+\tau(r)}(\zeta))\eta+R_{\circ_A}^{*}(T_{\tau(r)}(\zeta))\eta\rangle
\\=&\langle \eta,x\circ_{A} T_{r}(\zeta)\rangle.
\end{align*}\end{small}
Thus, \begin{equation}\label{FD2}x\circ_{A} T_{r}(\zeta)=R_{\circ_{A^*}}^{*}(\zeta)x-T_{r}(\mathrm{ad}_{A}^{*}(x)\zeta).\end{equation}
By the same token,
\begin{equation}\label{FD3}-x\circ_{A} T_{\tau(r)}(\zeta)=R_{\circ_{A^*}}^{*}(\zeta)x+T_{\tau(r)}(\mathrm{ad}_{A}^{*}(x)\zeta).\end{equation}
By Eqs.~(\ref{Db2}) and (\ref{FD2})-(\ref{FD2}), we get 
\begin{small}
\begin{align*}&\varphi(x\circ_D \zeta)=\varphi(R_{\circ_A^{*}}^{*}(\zeta)x
-\mathrm{ad}_{A^*}^{*}(x)\zeta)\\
=&(R_{\circ_A^{*}}^{*}(\zeta)x-T_{r}(\mathrm{ad}_{A}^{*}(x)\zeta),R_{\circ_A^{*}}^{*}(\zeta)x+T_{\tau(r)}(\mathrm{ad}_{A}^{*}(x)\zeta)
\\=&(x\circ_A T_{r}(\zeta),-x\circ_A T_{\tau(r)}(\zeta))
=(x,x)\circ (T_{r}(\zeta),-T_{\tau(r)}(\zeta))
\\=&\varphi(x)\circ_A \varphi(\zeta).
\end{align*}\end{small}
Analogously, $\varphi(\zeta\circ_D x)=\varphi(\zeta)\circ_A \varphi(x) $. Thus,
$\varphi((x,\zeta)\circ_D (y,\eta))=\varphi(x,\zeta)\circ \varphi(y,\eta) $ for all
$(x,\zeta), (y,\eta)\in A\oplus A^{*}$.
In the light of Proposition 4.14 \cite{19}, $\varphi((x,\zeta)\ast_D (y,\eta))=\varphi(x,\zeta)\ast \varphi(y,\eta) $.
In all, $\varphi$ is an isomorphism of anti-pre-Poisson algebras.
The proof is completed.
\end{proof}

\begin{thm} \label{Ft}
Let $(A, \ast_{A},\circ_{A}, \Delta_{r}, \delta_{r})$ be an anti-pre-Poisson
 bialgebra. Assume that $\{e_1,...,e_n\}$ is a basis of $A$ and $\{e^{*}_1,...,e^{*}_n\}$ is the dual basis.
 Let \begin{equation*}r=\sum_{i=1}^{n}e_{i}\otimes e_{i}^{*}\in A\otimes A^{*}\subseteq D\otimes D.\end{equation*}
Then $(D,\ast_D,\circ_D, \Delta_{r},\delta_{r})$ 
 is a factorizable anti-pre-Poisson bialgebra with 
$\Delta_{r},\delta_{r}$ given by Eqs.~(\ref{CB1}) and (\ref{CB2}) respectively.
\end{thm}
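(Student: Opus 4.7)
The plan is to verify the three defining conditions of a factorizable anti-pre-Poisson bialgebra for $(D,\ast_D,\circ_D,\Delta_r,\delta_r)$: that $r$ solves the APP-YBE in $D$, that $r+\tau(r)$ is invariant in $D$, and that $T_{r+\tau(r)}\colon D^{*}\to D$ is a linear isomorphism.

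First I would compute $T_r$, $T_{\tau(r)}$, $T_{r+\tau(r)}$ explicitly. Using the canonical identification $D^{*}\cong A^{*}\oplus A$ and Eq.~\eqref{YE7}, a direct calculation shows that for $\zeta=\alpha+y\in A^{*}\oplus A=D^{*}$ one has $T_r(\zeta)=\alpha\in A^{*}\subset D$ and $T_{\tau(r)}(\zeta)=y\in A\subset D$, hence $T_{r+\tau(r)}(\zeta)=y+\alpha\in D$. Thus $T_{r+\tau(r)}$ is precisely the canonical linear isomorphism induced by the non-degenerate symmetric bilinear form $\omega$ of Eq.~\eqref{eq:om}, characterized by $\omega(T_{r+\tau(r)}(\zeta),u)=\langle u,\zeta\rangle_D$ for all $\zeta\in D^{*}$, $u\in D$. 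In particular $T_{r+\tau(r)}$ is bijective, yielding the factorizability condition. Moreover, by Theorem \ref{Mb3} the anti-pre-Poisson bialgebra $(A,\ast_A,\circ_A,\Delta,\delta)$ corresponds to the quadratic anti-pre-Poisson algebra $(D,\ast_D,\circ_D,\omega)$, so $\omega$ satisfies the invariance conditions \eqref{C1} and \eqref{C2} on $(D,\ast_D)$ and $(D,\circ_D)$; translating these identities through the isomorphism $T_{r+\tau(r)}$ yields the operator relations of Proposition \ref{Si}, and hence the invariance of $r+\tau(r)$ in the sense of \eqref{IE3} and \eqref{IE4}.

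Second, I would show that $r$ solves the APP-YBE in $(D,\ast_D,\circ_D)$. Given the invariance of $r+\tau(r)$, Theorem \ref{QB0} reduces this to showing that both $T_r$ and $-T_{\tau(r)}$ are anti-pre-Poisson algebra homomorphisms from $(D^{*},\ast_r,\circ_r)$ to $(D,\ast_D,\circ_D)$, where the induced operations $(\ast_r,\circ_r)$ on $D^{*}$ are given by Proposition \ref{Ys5}. From the computation above, $T_r$ has image $A^{*}\subset D$ and $-T_{\tau(r)}$ has image $A\subset D$, and both $A$ and $A^{*}$ are anti-pre-Poisson subalgebras of $D$ by Theorem \ref{Mb3}. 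The homomorphism property then amounts to verifying that the projections intertwine $\ast_r,\circ_r$ with $\ast_{A^{*}},\circ_{A^{*}}$ on the $A^{*}$-component and with $\ast_A,\circ_A$ on the $A$-component.

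The hardest part will be this last verification. The induced operations $\ast_r,\circ_r$ on $D^{*}=A^{*}\oplus A$ mix components through the double structure \eqref{Db1}--\eqref{Db2}, and checking that $T_r,-T_{\tau(r)}$ respect these operations intertwines the matched-pair compatibility conditions underlying Theorem \ref{Mb3} with the dual-representation relations of Proposition \ref{Dr}. The computation is systematic, however, and can be organized by splitting into the four cases according to whether $\zeta$ and $\eta$ lie in $A^{*}$ or in $A$, in each case reducing the required identity to either a defining axiom of the bialgebra or a matched-pair compatibility.
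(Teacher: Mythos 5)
Your proposal is correct, but it reaches the conclusion by a genuinely different route than the paper for two of the three conditions; only your last step (computing $T_{r}$, $T_{\tau(r)}$ explicitly to get factorizability) coincides with the paper's. The paper proves $S(r)=0$ in $D$ by brute-force expansion of $r_{12}\circ_D r_{13}+[r_{23},r_{13}]_D+r_{12}\circ_D r_{23}$ using the double operations \eqref{Db1}--\eqref{Db2}, proves $(L_{\circ_D}(x)\otimes I-I\otimes \mathrm{ad}_D(x))(r+\tau(r))=0$ by a similar direct computation for $x\in A$ plus duality for $\zeta\in A^{*}$, and delegates the anti-Zinbiel half ($D(r)=0$ and the $\ast_D$-invariance) to Theorem 4.5 of \cite{19}, then invokes Theorem \ref{Qs}. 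You instead get invariance conceptually: since $T_{r+\tau(r)}=(\omega^{\sharp})^{-1}$, one has $r+\tau(r)=r_{\omega}$, and because Theorem \ref{Mb3} makes $(D,\ast_D,\circ_D,\omega)$ a quadratic anti-pre-Poisson algebra, Lemma \ref{Fb1} yields symmetry and invariance in one stroke, handling the $\ast$ and $\circ$ halves uniformly and avoiding the tensor computation (Lemma \ref{Fb1} lives in Section 5 but is logically independent of Theorem \ref{Ft}, so there is no circularity). For the APP-YBE you route through Theorem \ref{QB0}, reducing to the homomorphism property of the projections, which mirrors the classical Drinfeld-double picture; this does work, and the deferred case-checking is genuine but routine. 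For instance, with $\xi=(\alpha,y)$, $\eta=(\beta,z)\in D^{*}=A^{*}\oplus A$ and $x\in A$, Eq.~\eqref{Ic2} gives $\langle \xi\circ_r\eta,x\rangle=\langle\alpha\circ_{A^{*}}\beta,x\rangle-\langle\alpha,x\circ_A z\rangle+\langle\alpha,x\circ_A z\rangle$, the cross terms cancelling exactly, so $T_r(\xi\circ_r\eta)=T_r(\xi)\circ_D T_r(\eta)$. What each buys: the paper's proof is self-contained at the level of basis computations and exhibits the cancellations explicitly; yours is shorter, explains \emph{why} the cancellations occur (the double is quadratic, and the two projections are the factorization maps), but leans on more machinery and on \cite{19}.

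Three refinements would tighten your argument. First, under the invariance of $r+\tau(r)$ you need only check the single map $T_r$: by Theorems \ref{Ys3} and \ref{Ys4}, $S(r)=0\Leftrightarrow S_2(r)=0$ and $D(r)=0\Leftrightarrow D_2(r)=0$, and by Propositions \ref{Ys1} and \ref{Ys2} the vanishing of $S_2(r)$, $D_2(r)$ is exactly the statement that $T_r$ intertwines $(\ast_r,\circ_r)$ of Eqs.~\eqref{Ic1}--\eqref{Ic2} with $(\ast_D,\circ_D)$; checking $-T_{\tau(r)}$ is then redundant. Second, condition (c) of Theorem \ref{QB0} also asserts that $(D^{*},\ast_r,\circ_r)$ is an anti-pre-Poisson algebra; you should note this follows from the intertwining plus injectivity of $\xi\mapsto(T_r(\xi),-T_{\tau(r)}(\xi))$, since the axioms then pull back from the direct sum $D\oplus D$ (or simply avoid the issue by using the $S_2$/$D_2$ formulation just described). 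Third, a sign note: the paper's proof records $T_{\tau(r)}(\zeta,x)=-x$, whereas your computation gives $T_{\tau(r)}(\zeta,x)=x$, hence $T_{r+\tau(r)}(\zeta,x)=x+\zeta$; your version is the one consistent with Eq.~\eqref{YE7} and is in fact required for your identification $T_{r+\tau(r)}=(\omega^{\sharp})^{-1}$, and in either convention $T_{r+\tau(r)}$ is bijective, so the factorizability conclusion is unaffected.
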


\begin{proof} In view of Eqs.~(\ref{Db1}) and (\ref{Db2}), we get
 \begin{small}
\begin{align*}& r_{12}\circ_D r_{13}+[r_{23},r_{13}]_D +r_{12}\circ_D r_{23}
\\=&\sum_{i,j=1}^{n}e_{i}\circ_D e_{j}\otimes e_{i}^{*}\otimes e_{j}^{*}+e_{j}\otimes e_{i} \otimes [e_{i}^{*},e_{j}^{*}]_D 
+e_{i}\otimes e_{i}^{*}\circ_D e_j\otimes e_{j}^{*}
\\=&\sum_{i,j=1}^{n}e_{i}\circ_A e_{j}\otimes e_{i}^{*}\otimes e_{j}^{*}+e_{j}\otimes e_{i} \otimes [e_{i}^{*},e_{j}^{*}]_{A^*}
+e_{i} \otimes [(R_{\circ_{A^*}}^{*}-L_{\circ_{A^*}}^{*})(e_{i}^{*})e_{j}+R_{\circ}^{*}(e_{j})e_{i}^{*}]\otimes e_{j}^{*}
\\=&0.\end{align*}\end{small}
By Eqs.~(\ref{Db1}) and (\ref{Db2}), we have for all $x\in A$,
\begin{small}
\begin{align*}&(L_{\circ_D}(x)\otimes I-I\otimes \mathrm{ad}_{D}(x))\sum_{i,j=1}^{n}(e_{i}\otimes e_{i}^{*}+e_{i}^{*}\otimes e_{i})
\\=&\sum_{i,j=1}^{n}x\circ_{D}e_{i}\otimes e_{i}^{*}-e_{i}\otimes
[x,e_{i}^{*}]_{D}+x\circ_{D}e_{j}^{*}\otimes e_{j}-e_{j}^{*}\otimes [x,e_{j}]_{D}
\\=&\sum_{i,j=1}^{n}x\circ_{A} e_{i}\otimes e_{i}^{*}-e_{i}\otimes (L_{\circ_{A^*}}^{*}(e_{i}^{*})x-L_{\circ_{A}}^{*}(x)e_{i}^{*})
+(R_{\circ_{A^*}}^{*}(e_{j}^{*})x-\mathrm{ad}_{A}^{*}(x)e_{j}^{*})\otimes e_{j}-e_{j}^{*}\otimes [x,e_{j}]_{A},
\end{align*}\end{small}
where $\mathrm{ad}_{A^*}^{*}=L_{\circ_{A^*}}^{*}-R_{\circ_{A^*}}^{*},~\mathrm{ad}_{A}^{*}=L_{\circ_{A}}^{*}-R_{\circ_{A}}^{*}$
and $[x,y ]_A=x\circ_A y-y\circ_A x,~[x,\zeta ]_D=x\circ_D \zeta-\zeta\circ_D x$.
Note that
\begin{small}
\begin{align*}& \sum_{i}^{n}x\circ_{A} e_{i}\otimes e_{i}^{*}+e_{i}\otimes L_{\circ_{A}}^{*}(x)e_{i}^{*}=0,
\\&\sum_{i=1}^{n}(R_{\circ_{A^*}}^{*}(e_{j}^{*})x\otimes e_{j}-e_{i}\otimes L_{\circ_{A^*}}^{*}(e_{i}^{*})x=0,
\\&\sum_{i}^{n}\mathrm{ad}_{A}^{*}(x)e_{j}^{*})\otimes e_{j}+e_{j}^{*}\otimes [x,e_{j}]_{A}=0.
\end{align*}\end{small}
Thus, $(L_{\circ_D}(x)\otimes I-I\otimes \mathrm{ad}_{D}(x))(r+\tau(r))=0.$
By duality, we get
\begin{equation*}(L_{\circ_D}(\zeta)\otimes I-I\otimes \mathrm{ad}_{D}(\zeta))(r+\tau(r))=0.\end{equation*}
for all $\zeta\in A^{*}$. In the light of Theorem 4.5 \cite{19}, 
\begin{align*}&(I\otimes L_{\star_D}(x)+L_{\ast_D}(x)\otimes I)(r+\tau(r))=0, \\
&(I\otimes L_{\star_D}(\zeta)+L_{\ast_D}(\zeta)\otimes I)(r+\tau(r))=0,\\
&r_{12}\ast_D r_{23}-r_{12}\ast_D r_{13}+r_{13}\star_D r_{23}=0.
\end{align*}
Thus, $r+\tau(r)$ is invariant and $r$ is a solution of the APP-YBE in $(D,\ast_D,\circ_D)$.
 Combining Theorem \ref{Qs}, $(D,\ast_D,\circ_D, \Delta_{r},\delta_{r})$ is a quasi-triangular anti-pre-Poisson bialgebra.
 Furthermore, the linear maps $T_{r},T_{\tau(r)}:D^{*}\longrightarrow D$ are respectively defined by 
$T_{r}(\zeta,x)=\zeta,~T_{\tau(r)}(\zeta,x)=-x$ for all $x\in A,\zeta\in A^{*}$. Thus,
$T_{r+\tau(r)}(\zeta,x)=(\zeta,-x)$ is a linear isomorphism. Hence, $(D,\ast_D,\circ_D, \Delta_{r},\delta_{r})$
is a factorizable anti-pre-Poisson bialgebra.
\end{proof}

\section{Relative Rota-Baxter operators and quadratic Rota-Baxter anti-pre-Poisson algebras }
\subsection{Relative Rota-Baxter operators and the APP-YBE}
This section examines solutions of the APP-YBE with invariant symmetric parts 
through relative Rota-Baxter operators of weights on anti-pre-Poisson algebras.

\begin{defi}\label{RRa} Let $(A,\ast_A,\circ_A)$ and $(V,\ast_V,\circ_V)$ be anti-pre-Poisson algebras. Assume that
 $(V,l_{\ast},r_{\ast},$ \ \ \ $l_{\circ},r_{\circ})$ is a representation and the following conditions are satisfied:
 \begin{small}
 \begin{align}&\label{Ap1}
 l_{\ast}(x)(a\ast_{V}b)=- (l_{\star}(x)a)\ast_{V}b=-r_{\ast}(x)(a\star_{V}b)=b\ast_{V}(r_{\ast}(x)a),\\&
\label{Ap2}a\ast_{V}(l_{\ast}(x)b)=- (l_{\star}(x)a)\ast_{V}b=- (l_{\star}(x)b)\ast_{V}a=b\ast_{V}(l_{\ast}(x)a),
\\&
\label{Ap3}a\ast_{V}(r_{\ast}(x)b)=- r_{\ast}(x)(a\star_{V}b)=- (l_{\star}(x)b)\ast_{V}a= l_{\ast}(x)(b\ast_{V}a),\\&
\label{Ap4}b\ast_{V}(l_{\ast}(x)a)=l_{\ast}(x)(b\ast_{V}a),\\&
\label{Ap5}b\ast_{V}(r_{\ast}(x)a)=a\ast_{V}(r_{\ast}(x)b),\\&
\label{Ap6} l_{\circ}(x)(a\circ_{V}b)-a\circ_{V}(l_{\circ}(x)b)=(r_{\circ}(x)a-l_{\circ}(x)a)\circ_{V}b,\\&
\label{Ap7}(l_{\circ}(x)a-r_{\circ}(x)a)\circ_{V}b+r_{\circ}(x)[a,b]_{V}=(l_{\circ}(x)b-r_{\circ}(x)b)\circ_{V}a,\\&
\label{Ap8}a\circ_{V}(r_{\circ}(x)b)-b\circ_{V}(r_{\circ}(x)a)=r_{\circ}(x)[b,a]_{V},\\&
\label{Ap9}(l_{\circ}(x)a-r_{\circ}(x)a)\ast_{V}b=a\ast_{V}(l_{\circ}(x)b)-l_{\circ}(x)(a\ast_{V}b),\\&
\label{Ap10}(l_{\star}(x)a)\circ_{V}b=-l_{\ast}(x)(a\circ_Vb)-a\ast_{V}(l_{\circ}(x)b),\\&
\label{Ap11}(r_{\circ}(x)a-l_{\circ}(x)a)\ast_{V}b=l_{\ast}(x)(a\circ_{V}b)-a\circ_{V}(l_{\ast}(x)b),\\&
\label{Ap12}r_{\ast}(x)(a\circ_{V}b-b\circ_{V}a)=b\ast_{V}(r_{\circ}(x)a)-a\circ_{V}(r_{\ast}(x)b),\\&
\label{Ap13}r_{\circ}(x)(a\star_{V}b)=-a\ast_{V}(r_{\circ}(x)b)-b\ast_{V}(r_{\circ}(x)a),\\&
\label{Ap14}l_{\circ}(x)(a\star_{V}b)+a\ast_{V}(r_{\circ}(x)b)+b\ast_{V}(r_{\circ}(x)a)=a\ast_{V}(l_{\circ}(x)b)+b\ast_{V}(l_{\circ}(x)a),\\&
\label{Ap15}b\circ_{V}(l_{\star}(x)a)+l_{\ast}(x)(a\circ_Vb)+a\ast_{V}(l_{\circ}(x)b)=l_{\ast}(x)(b\circ_{V}a)+a\ast_{V}(r_{\circ}(x)b),
\end{align}\end{small}
 for all $x\in A$ and $a,b\in V$. Then $(V,\ast_V,\circ_V,l_{\ast},r_{\ast},l_{\circ},r_{\circ})$ is called
  an A-anti-pre-Poisson algebra, where $[a,b]_{V}=a\circ_Vb-b\circ_Va,~a\star_Vb=a\ast_Vb+b\ast_Va$ and
$l_{\star}=l_{\ast}+r_{\ast}$.
\end{defi}

By Proposition \ref{Mp}, $(V,\ast_V,\circ_V,l_{\ast},r_{\ast},l_{\circ},r_{\circ})$ is an A-anti-pre-Poisson algebra
 if and only if $(A\oplus V,\ast,\circ)$ is an anti-pre-Poisson algebra, where
 \begin{align*}&
 (x+a)\ast(y+b)=x\ast_A y+l_{\ast}(x)b+r_{\ast}(y)a+a\ast_V b,\\&
 (x+a)\circ (y+b)=x\circ_A y+l_{\circ}(x)b+r_{\circ}(y)a+a\circ_V b.
  \end{align*}
    
 \begin{pro}
Let $(V,\ast_V,\circ_V,l_{\ast},r_{\ast},l_{\circ},r_{\circ})$ be
  an A-anti-pre-Poisson algebra. Then $(V,\star_V,[ \ , \ ]_V,l_{\ast}+r_{\ast},l_{\circ}-r_{\circ})$ is
  an A-Poisson algebra.
 \end{pro}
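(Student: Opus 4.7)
The plan is to reduce the claim to the sub-adjacent Poisson construction applied to the semi-direct product anti-pre-Poisson algebra $A\oplus V$.

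First, I would invoke the remark immediately preceding the statement, which equates an A-anti-pre-Poisson algebra structure on $V$ with the assertion that $(A\oplus V,\ast,\circ)$ is an anti-pre-Poisson algebra under the semi-direct multiplications
\[
(x+a)\ast(y+b)=x\ast_A y+l_{\ast}(x)b+r_{\ast}(y)a+a\ast_V b,
\]
\[
(x+a)\circ(y+b)=x\circ_A y+l_{\circ}(x)b+r_{\circ}(y)a+a\circ_V b.
\]
This is a degenerate matched pair of anti-pre-Poisson algebras in which the $V$-side acts trivially on the $A$-side.

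Next, applying Proposition \ref{Mb2} to this matched pair yields a matched pair of Poisson algebras, again with trivial $V$-action on $A$. Alternatively, a direct inspection of the sub-adjacent product $x\star y=x\ast y+y\ast x$ and bracket $[x,y]=x\circ y-y\circ x$ on $A\oplus V$ produces
\[
(x+a)\star(y+b)=x\star_A y+(l_{\ast}+r_{\ast})(x)b+(l_{\ast}+r_{\ast})(y)a+a\star_V b,
\]
\[
[x+a,y+b]=[x,y]_A+(l_{\circ}-r_{\circ})(x)b-(l_{\circ}-r_{\circ})(y)a+[a,b]_V,
\]
which is precisely the semi-direct product Poisson algebra structure on $A\oplus V$ with $A$-action on $V$ given by $(l_{\ast}+r_{\ast},\,l_{\circ}-r_{\circ})$. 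By the one-sided specialisation of Proposition \ref{Df}, this is equivalent to the claim that $(V,\star_V,[\,,\,]_V,l_{\ast}+r_{\ast},l_{\circ}-r_{\circ})$ is an A-Poisson algebra.

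The main obstacle here is conceptual rather than technical: one must fix the meaning of an A-Poisson algebra as the Poisson analogue of Definition \ref{RRa}, namely a Poisson algebra $(V,\star_V,[\,,\,]_V)$ equipped with a representation of $A$ such that the induced structures on $A\oplus V$ satisfy the Poisson axioms. Once this interpretation is accepted, the argument amounts to the observation that the sub-adjacent construction commutes with forming the semi-direct product; the induced action maps on $V$ are read off as $(l_{\ast}+r_{\ast},l_{\circ}-r_{\circ})$, and the statement follows from Proposition \ref{Mb2} without further calculation.
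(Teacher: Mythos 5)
Your proposal is correct, but it follows a genuinely different route from the paper: the paper's proof of this proposition is simply the assertion that the fifteen-odd compatibility identities of an A-Poisson algebra ``can be checked by direct computations,'' i.e.\ by adding and antisymmetrizing Eqs.~(\ref{Ap1})--(\ref{Ap15}) together with the representation conditions. You instead avoid all computation by a structural reduction: the remark after Definition~\ref{RRa} identifies the A-anti-pre-Poisson structure on $V$ with an anti-pre-Poisson structure on $A\oplus V$ (equivalently, a matched pair in which the $V$-side acts by zero maps, which is legitimate since zero maps trivially satisfy every representation identity), and then either Proposition~\ref{Mb2} or a direct pass to the sub-adjacent operations shows that the sub-adjacent Poisson algebra on $A\oplus V$ is again of semi-direct type with action maps $(l_{\ast}+r_{\ast},\,l_{\circ}-r_{\circ})$ and products $(\star_V,[\ ,\ ]_V)$ on $V$; the one-sided specialisation of Proposition~\ref{Df} then converts this back into the A-Poisson statement. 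The one point that needs care is exactly the one you flag: the paper never writes down the defining identities of an A-Poisson algebra, deferring to \cite{17}, so your argument rests on the characterisation of an A-Poisson algebra as a Poisson structure on $V$ making the semi-direct product $A\oplus V$ (with trivial reverse action) a Poisson algebra. That reading is consistent with the paper's own later usage (e.g.\ the claim that $(A^{*},\star_r,[\ ,\ ]_{r},L_{\ast}^{*},-L_{\circ}^{*})$ is an A-Poisson algebra) and with the specialisation $\mu_2=\rho_2=0$ of Proposition~\ref{Df}, so your proof is sound. What your route buys is economy and transparency --- the result becomes the observation that forming the sub-adjacent Poisson algebra commutes with the semi-direct product --- whereas the paper's direct verification, had it been carried out, would have been self-contained and independent of the unstated definition from \cite{17}.
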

 
\begin{proof}
It can be checked by direct computations.
\end{proof}

One can turn to \cite{17} for more information on A-Poisson algebra. 

 \begin{pro}
Let $(A,\ast,\circ)$ be an anti-pre-Poisson algebra and $r\in A\otimes A$ be symmetric and invariant. Define the
multiplications $\ast_r,\circ_r:A^{*}\otimes A^{*}\longrightarrow A^{*}$ by
\begin{align}&\label{Apa1}\zeta\ast_{r}\eta=L_{\star}^{*}(T_{r}(\zeta))\eta=-R_{\ast}^{*}(T_{r}(\eta))\zeta,
\\&\label{Apa2}\zeta\circ_{r}\eta=-\mathrm{ad}^{*}(T_{r}(\zeta))\eta=R_{\circ}^{*}(T_{r}(\eta))\zeta,~~\forall~\zeta,\eta\in A^{*}.
\end{align} 
Then $(A^{*},\ast_r,\circ_r,L_{\star}^{*}, -R_{\ast}^{*}, -\mathrm{ad}^{*},R_{\circ}^{*})$ is an A-anti-pre-Poisson algebra
and $(A^{*},\star_r,[ \ , \ ]_{r},L_{\ast}^{*},-L_{\circ}^{*})$ is an A-Poisson algebra, where
the Poisson algebra structure $(\star_r,[ \ , \ ]_{r})$ on $A^{*}$ is
given by 
\begin{small}\begin{align}&\label{Apa3}\zeta\star_{r}\eta=L_{\star}^{*}(T_{r}(\zeta))\eta
+L_{\star}^{*}(T_{r}(\eta))\zeta=-R_{\ast}^{*}(T_{r}(\eta))\zeta-
R_{\ast}^{*}(T_{r}(\zeta))\eta
,\\&\label{Apa4}
[ \zeta , \eta ]_{r}=-\mathrm{ad}^{*}(T_{r}(\zeta))\eta+\mathrm{ad}^{*}(T_{r}(\eta))\zeta=R_{\circ}^{*}(T_{r}(\eta))\zeta
-R_{\circ}^{*}(T_{r)}(\zeta))\eta.
\end{align}\end{small}
\end{pro}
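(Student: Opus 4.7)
First, I verify that the definitions (\ref{Apa1})--(\ref{Apa2}) are self-consistent: since $r$ is symmetric, $T_r = T_{\tau(r)}$, so the invariance identities (\ref{IE55})--(\ref{IE66}) from Lemma~\ref{In1} reduce to exactly the equalities between the two alternative formulas given for $\zeta \ast_r \eta$ and $\zeta \circ_r \eta$.

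Next, I establish intertwining identities for $T_r$. The invariance of $r$ gives $(I \otimes L_\star(x) + L_\ast(x) \otimes I)r = 0$; applying $\tau$ and using $r = \tau(r)$ yields the companion identity $(L_\star(x) \otimes I + I \otimes L_\ast(x))r = 0$. Pairing these with elements of $A^* \otimes A^*$ (together with the analogues from (\ref{IE6}) on the $\circ$-side) produces the intertwining relations
\begin{align*}
T_r(L_\ast^*(x)\zeta) &= L_\star(x) T_r(\zeta), & T_r(R_\ast^*(x)\zeta) &= -R_\ast(x) T_r(\zeta), \\
T_r(L_\circ^*(x)\zeta) &= -\mathrm{ad}(x) T_r(\zeta), & T_r(R_\circ^*(x)\zeta) &= R_\circ(x) T_r(\zeta).
\end{align*}
From these I can rewrite $T_r(\zeta \ast_r \eta)$ and $T_r(\zeta \circ_r \eta)$ as expressions in $T_r(\zeta), T_r(\eta)$ involving only the original operations $\ast, \circ$ on $A$.

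With these identities in hand, I verify simultaneously that $(A^*, \ast_r, \circ_r)$ is an anti-pre-Poisson algebra and that the fifteen compatibility conditions (\ref{Ap1})--(\ref{Ap15}) hold. The cleanest organization invokes Proposition~\ref{Mp}: the A-anti-pre-Poisson assertion is equivalent to $(A \oplus A^*, \ast, \circ)$ forming an anti-pre-Poisson algebra, where the multiplications combine $\ast, \circ$ on $A$, the dual-representation actions $(L_\star^*, -R_\ast^*, -\mathrm{ad}^*, R_\circ^*)$ from Proposition~\ref{Dr}, and $\ast_r, \circ_r$ on $A^*$. Checking the anti-pre-Poisson axioms (\ref{ppa eq1.1})--(\ref{ppa eq1.3}) on $A \oplus A^*$ splits into cases by the number of $A^*$-arguments: the pure $A$-cases hold by hypothesis, the pure $A^*$-cases are obtained by transporting the axioms of $(A,\ast,\circ)$ along $T_r$ using the intertwining identities above, and the mixed cases reduce, after the same transport, to identities already valid in $(A, \ast, \circ)$ together with the dual representation property.

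Finally, the induced Poisson structure is obtained by direct computation: substituting (\ref{Apa1})--(\ref{Apa2}) into $\zeta \star_r \eta = \zeta \ast_r \eta + \eta \ast_r \zeta$ and $[\zeta, \eta]_r = \zeta \circ_r \eta - \eta \circ_r \zeta$ yields precisely (\ref{Apa3})--(\ref{Apa4}), where the alternative forms use the invariance identities (\ref{IE55})--(\ref{IE66}). That $(A^*, \star_r, [\ ,\ ]_r, L_\ast^*, -L_\circ^*)$ is an A-Poisson algebra then follows from Proposition~\ref{Mb2} applied to the matched pair constructed above, since the sub-adjacent Poisson algebra of a matched pair of anti-pre-Poisson algebras is automatically a matched pair of Poisson algebras. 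The principal obstacle is the sheer volume of case-checks for the mixed compatibility conditions (fifteen equations), but each case reduces uniformly, via the intertwining identities for $T_r$, to a routine identity in the anti-pre-Poisson algebra $(A, \ast, \circ)$; remarkably, no APP-YBE hypothesis on $r$ is needed, because symmetric invariance alone is enough to make every mixed term ``absorbed'' into the $A$-side through $T_r$.
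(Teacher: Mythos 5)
Your proposal is correct in substance and, despite the matched-pair packaging, runs on the same engine as the paper's own proof: the paper likewise extracts operator identities from the invariance of $r$ and then verifies the defining conditions directly, checking only the representative case (\ref{Ap15}) by combining the invariance identities with the dual-representation identity (\ref{r8}) evaluated at $T_{r}(b)$, and declaring the remaining cases analogous. Your appeal to Proposition \ref{Mp} is an equivalent reformulation rather than a different proof, since Proposition \ref{Mp} translates the axioms on $A\oplus A^{*}$ back into exactly the representation conditions (covered by Proposition \ref{Dr}) plus the conditions (\ref{Ap1})--(\ref{Ap15}); and your use of Proposition \ref{Mb2} for the Poisson half reproduces the paper's preceding proposition via $l_{\ast}+r_{\ast}=L_{\star}^{*}-R_{\ast}^{*}=L_{\ast}^{*}$ and $l_{\circ}-r_{\circ}=-\mathrm{ad}^{*}-R_{\circ}^{*}=-L_{\circ}^{*}$. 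Your intertwining identities are all correct, with the right signs (including $T_{r}(R_{\ast}^{*}(x)\zeta)=-R_{\ast}(x)T_{r}(\zeta)$ obtained from the $\tau$-companion of invariance), and your closing observation that symmetric invariance alone suffices, with no APP-YBE hypothesis, agrees with the paper.

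There is, however, one step that fails as written: the pure $A^{*}$-cases (all three arguments in $A^{*}$) cannot be ``obtained by transporting the axioms of $(A,\ast,\circ)$ along $T_{r}$.'' Transport only yields $T_{r}(\mathrm{LHS})=T_{r}(\mathrm{RHS})$, and $T_{r}$ is not assumed injective: a symmetric invariant $r$ may be degenerate, and for $\eta\in\ker T_{r}$ the products $\zeta\ast_{r}\eta=L_{\star}^{*}(T_{r}(\zeta))\eta$ and $\zeta\circ_{r}\eta=-\mathrm{ad}^{*}(T_{r}(\zeta))\eta$ need not vanish, so the axioms involving such $\eta$ are neither trivial nor detectable after applying $T_{r}$. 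The repair uses tools you already listed: rewrite each nested product by whichever of the two defining expressions in (\ref{Apa1})--(\ref{Apa2}) is convenient (their equality is precisely invariance, (\ref{IE55})--(\ref{IE66}) with $T_{\tau(r)}=T_{r}$) and then invoke the dual-representation identities. For instance, for the anti-Zinbiel axiom $\zeta\ast_{r}(\eta\ast_{r}\theta)=-(\eta\star_{r}\theta)\ast_{r}\zeta$, setting $u=T_{r}(\zeta)$, $v=T_{r}(\eta)$, $w=T_{r}(\theta)$ one has
\begin{align*}
\zeta\ast_{r}(\eta\ast_{r}\theta)&=L_{\star}^{*}(u)L_{\star}^{*}(v)\theta=R_{\ast}^{*}(u)L_{\ast}^{*}(v)\theta,\\
-(\eta\star_{r}\theta)\ast_{r}\zeta&=R_{\ast}^{*}(u)\bigl(L_{\star}^{*}(v)\theta+L_{\star}^{*}(w)\eta\bigr),
\end{align*}
where the first line uses the identity $l_{\ast}(x)l_{\ast}(y)=-r_{\ast}(x)l_{\star}(y)$ for the dual representation $(A^{*},L_{\star}^{*},-R_{\ast}^{*})$, and the two sides agree because invariance gives $L_{\star}^{*}(w)\eta=-R_{\ast}^{*}(v)\theta$ together with $L_{\ast}^{*}=L_{\star}^{*}-R_{\ast}^{*}$. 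This operator-level mechanism — not transport — is also what the paper's ``the other cases follow analogously'' must mean, and no injectivity of $T_{r}$ enters anywhere.
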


\begin{proof} We only prove that Eq.~(\ref{Ap15}) holds in the case that 
$l_{\ast}=L_{\star}^{*}, r_{\ast}=-R_{\ast}^{*}, l_{\circ}=-\mathrm{ad}^{*},r_{\circ}=R_{\circ}^{*}$. 
The other cases follow analogously. Indeed, using Eqs.~(\ref{Ap1})-(\ref{Ap2}), (\ref{IE8}) and (\ref{r8}), we have
\begin{small}
\begin{align*}&b\circ_{V}(l_{\star}(x)a)+l_{\ast}(x)(a\circ_Vb)+a\ast_{V}(l_{\circ}(x)b)-l_{\ast}(x)(b\circ_{V}a)-a\ast_{V}(r_{\circ}(x)b)
\\=&L_{\star}^{*}(x)(a\circ_Vb-b\circ_{V}a)+b\circ_{V}L_{\ast}^{*}(x)(a)-a\ast_{V}(L_{\circ}^{*}(x)b)
\\=&L_{\star}^{*}(x)(R_{\circ}^{*}(T_{r}(b))a+\mathrm{ad}^{*}(T_{r}(b))a)
-\mathrm{ad}^{*}(T_{r}(b))L_{\ast}^{*}(x)(a)+R_{\ast}^{*}(T_{r}(L_{\circ}^{*}(x)b)a
\\=&L_{\star}^{*}(x)L_{\circ}^{*}(T_{r}(b))-\mathrm{ad}^{*}(T_{r}(b))L_{\ast}^{*}(x)-R_{\ast}^{*}([x,T_{r}(b)]_A )
\\=&0.
\end{align*}\end{small} 
This completes the proof.
\end{proof}

\begin{defi} Let $(A,\ast,\circ)$ be an anti-pre-Poisson algebra and $(V,\ast_V,\circ_V,l_{\ast},r_{\ast},l_{\circ},r_{\circ})$
 be an A-anti-pre-Poisson algebra.
A relative Rota-Baxter operator $T$ of weight $\lambda$ on $(A,\ast,\circ)$ associated to
 $(V,\ast_V,\circ_V,l_{\ast},r_{\ast},l_{\circ},r_{\circ})$
   is a linear map $T:V\longrightarrow A$ satisfying
\begin{align*}&T(u)\ast T(v)=T (l_{\ast}(T(u))v+r_{\ast}(T(v))u+\lambda u\ast_V v),\\& 
T(u)\circ T(v)=T (l_{\circ}(T(u))v+r_{\circ}(T(v))u+\lambda u\circ_V v),~~\forall~u,v\in V.\end{align*}
When $u\circ_Vv=u\ast_Vv=0$ for all $u,v\in V$, then $T$ is simply a relative Rota-Baxter operator ( $\mathcal O$-operator) on
$(A,\ast,\circ)$ associated to the representation $(V,l_{\ast},r_{\ast},l_{\circ},r_{\circ})$.
\end{defi}

\begin{thm}
Let $(A,\ast,\circ)$ be an anti-pre-Poisson algebra and $r\in A\otimes A$. Assume that $r+\tau(r)$ is invariant.
Then the following
conditions are equivalent.
 \begin{enumerate}
\item $r$ is a solution of the APP-YBE in $(A,\ast,\circ)$
 such that $(A,\ast,\circ,\Delta_{r},\delta_{r})$ is a quasi-triangular anti-pre-Poisson bialgebra
  with $\Delta_{r},\delta_{r}$ given by Eqs.~(\ref{CB1}) and (\ref{CB2}).
\item $T_r$ is a relative Rota–Baxter operator of weight $-1$ on $(A,\ast,\circ)$
 with respect to the A-anti-pre-Poisson algebra 
 $(A^{*},\ast_r,\circ_r,L_{\star}^{*}, -R_{\ast}^{*}, -\mathrm{ad}^{*},R_{\circ}^{*})$,
 that is,
 \begin{small}
 \begin{align}\label{AD5}&T_{r}(\zeta)\ast T_{r}(\eta)=T_{r}(L_{\star}^*(T_{r}(\zeta))\eta
 -R_{\ast}^*(T_{r}(\eta))\zeta-\zeta\ast_{r+\tau(r)}\eta), \\&
 \label{AD6}T_{r}(\zeta)\circ T_{r}(\eta)=T_{r}(-\mathrm{ad}^{*}(T_{r}(\zeta))\eta+R_{\circ}^*(T_{r}(\eta))\zeta
-\zeta\circ_{r+\tau(r)}\eta). 
 \end{align}\end{small}
\item $T_r$ is a relative Rota–Baxter operator of weight $-1$ on $(A,\star, [ \ , \ ])$
 with respect to the A-Poisson algebra $(A^{*},\star_r,[ \ , \ ]_{r},L_{\ast}^{*},-L_{\circ}^{*})$, that is,
 \begin{small}
 \begin{align}\label{AD8} &
 T_{r}(\zeta)\star T_{r}(\eta)=T_{r}(L_{\ast}^*(T_{r}(\zeta))\eta-L_{\circ}^*(T_{r}(\eta))\zeta-\zeta\star_{r+\tau(r)}\eta),\\&
 \label{AD9} [T_{r}(\zeta),T_{r}(\eta)]=T_{r}(-L_{\circ}^*(T_{r}(\zeta))\eta+L_{\circ}^*(T_{r}(\eta))\zeta-[\zeta,\eta]_{r+\tau(r)}), 
 \end{align}\end{small}
  \end{enumerate}
for all $\zeta,\eta\in A^{*}$, where $\ast_{r+\tau(r)},~\circ_{r+\tau(r)}$ and $\star_{r+\tau(r)},
[ \ , \ ]_{r+\tau(r)}$ are given by Eqs.~(\ref{Apa1})-(\ref{Apa4})
\end{thm}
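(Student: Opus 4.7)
The strategy is to establish (a)$\Leftrightarrow$(b) by directly translating the tensor equations of the APP-YBE into operator identities, and then to derive (a)$\Leftrightarrow$(c) by combining the same translation with the Poisson-homomorphism criterion of Theorem~\ref{QB0}. All conceptual content is already supplied by Propositions~\ref{Ys1}-\ref{Ys2}, Theorems~\ref{Ys3}-\ref{Ys4}, Proposition~\ref{Ys5} and Theorem~\ref{QB0}; what remains is careful bookkeeping in $A^{*}$.

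For (a)$\Leftrightarrow$(b), the hypothesis that $r+\tau(r)$ is invariant allows Theorems~\ref{Ys3} and \ref{Ys4} to be applied, giving the equivalences $S(r)=0\Leftrightarrow S_{2}(r)=0$ and $D(r)=0\Leftrightarrow D_{2}(r)=0$. Item~(c) of Proposition~\ref{Ys2} and Item~(c) of Proposition~\ref{Ys1} then translate these into the operator identities
\begin{align*}
T_{r}(\eta)\ast T_{r}(\zeta) &= T_{r}\bigl(R_{\ast}^{*}(T_{\tau(r)}(\zeta))\eta+L_{\star}^{*}(T_{r}(\eta))\zeta\bigr),\\
T_{r}(\zeta)\circ T_{r}(\eta) &= -T_{r}\bigl(\mathrm{ad}^{*}(T_{r}(\zeta))\eta+R_{\circ}^{*}(T_{\tau(r)}(\eta))\zeta\bigr).
\end{align*}
I split $T_{\tau(r)}=T_{r+\tau(r)}-T_{r}$, use the defining equations (\ref{Apa1})-(\ref{Apa2}) applied to $r+\tau(r)$ to identify $-R_{\ast}^{*}(T_{r+\tau(r)}(\eta))\zeta=\zeta\ast_{r+\tau(r)}\eta$ and $R_{\circ}^{*}(T_{r+\tau(r)}(\eta))\zeta=\zeta\circ_{r+\tau(r)}\eta$, and (for the first identity) swap $\zeta\leftrightarrow\eta$ to put it in standard form. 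After rearrangement this yields exactly (\ref{AD5}) and (\ref{AD6}). Every step is reversible, so (a)$\Leftrightarrow$(b); the bialgebra is quasi-triangular in the sense of Definition~\ref{Qt1}.

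For (a)$\Leftrightarrow$(c), the forward direction uses Theorem~\ref{QB0} to turn (a) into the Poisson-homomorphism conditions $T_{r}([\zeta,\eta]_{r})=[T_{r}(\zeta),T_{r}(\eta)]$ and $T_{r}(\zeta\star_{r}\eta)=T_{r}(\zeta)\star T_{r}(\eta)$, with $\star_{r}$ and $[\cdot,\cdot]_{r}$ given by Proposition~\ref{Ys5}. Substituting $T_{\tau(r)}=T_{r+\tau(r)}-T_{r}$ in these and exploiting the symmetry relations (\ref{LR3}) converts them to (\ref{AD8}) and (\ref{AD9}). The key intermediate identity is $L_{\circ}^{*}(T_{r+\tau(r)}(\zeta))\eta=-[\zeta,\eta]_{r+\tau(r)}$, derived by combining the two expressions for $[\zeta,\eta]_{r+\tau(r)}$ in (\ref{Apa4}) with the antisymmetry $L_{\circ}^{*}(T_{r+\tau(r)}(\zeta))\eta=-L_{\circ}^{*}(T_{r+\tau(r)}(\eta))\zeta$ from (\ref{LR3}) and solving; the analogous identity for $L_{\ast}^{*}(T_{r+\tau(r)}(\zeta))\eta$ in terms of $\zeta\star_{r+\tau(r)}\eta$ closes the computation for the commutative associative part. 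Reversing the same chain of rewritings produces (c)$\Rightarrow$(a).

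\textbf{Main obstacle.} The construction is built entirely from existing machinery, so the real work is symbolic bookkeeping: each substitution of $T_{\tau(r)}=T_{r+\tau(r)}-T_{r}$ distributes into several terms involving $R_{\ast}^{*},L_{\star}^{*},\mathrm{ad}^{*},R_{\circ}^{*}$, and the invariance identities (\ref{IE7})-(\ref{LR3}) must be invoked with the correct tensor-slot alignment. The most delicate step is the derivation of the intermediate identity $L_{\circ}^{*}(T_{r+\tau(r)}(\zeta))\eta=-[\zeta,\eta]_{r+\tau(r)}$, where one must simultaneously use both forms of (\ref{Apa4}) and the antisymmetry from (\ref{LR3}) to isolate the desired quantity; once this identity (and its $\star$-analogue) is in hand, the passage between (\ref{AD8})-(\ref{AD9}) and the Poisson-homomorphism form of Theorem~\ref{QB0} is essentially automatic.
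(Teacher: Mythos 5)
Your proposal is correct and takes essentially the same route as the paper: the paper's proof of (a)$\Leftrightarrow$(b) likewise combines the invariance hypothesis with Theorems~\ref{Ys3}--\ref{Ys4} and the operator translations in Propositions~\ref{Ys1}--\ref{Ys2}, then splits $T_{\tau(r)}=T_{r+\tau(r)}-T_{r}$ and absorbs the extra term via Eqs.~(\ref{Apa1})--(\ref{Apa2}), and it disposes of (a)$\Leftrightarrow$(c) with ``similarly,'' which is precisely the computation you spell out (your appeal to Theorem~\ref{QB0} really just invokes items (b) of Propositions~\ref{Ys1}--\ref{Ys2}, the iff-ingredients of its proof, so every step is reversible as claimed). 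As a side benefit, your $\star$-analogue identity $\zeta\star_{r+\tau(r)}\eta=L_{\ast}^{*}(T_{r+\tau(r)}(\eta))\zeta$ makes visible that the term $-L_{\circ}^{*}(T_{r}(\eta))\zeta$ printed in Eq.~(\ref{AD8}) is a typo for $+L_{\ast}^{*}(T_{r}(\eta))\zeta$, which your argument silently corrects.
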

  
 \begin{proof}
 Based on Proposition \ref{Ys1}, Theorem \ref{Ys3} and Theorem \ref{Ys4}, if $r+\tau(r)$ is invariant, then
 $r$ is a solution of the APP-YBE in $(A,\ast,\circ)$ if and only if
 \begin{align*}&T_{r}(\zeta)\circ T_{r}(\eta)=T_{r}(-\mathrm{ad}^{*}(T_{r}(\zeta))\eta-R_{\circ}^*(T_{\tau(r)}(\eta))\zeta),\\&
 T_{r}(\zeta)\ast T_{r}(\eta)=T_{r}(L_{\star}^{*}(T_{r}(\zeta))\eta-R_{\ast}^*(T_{\tau(r)}(\eta))\zeta) .\end{align*}
 Observe that
 \begin{small}
\begin{align*}T_{r}(\zeta)\circ T_{r}(\eta)&=T_{r}(-\mathrm{ad}^{*}(T_{r}(\zeta))\eta-R_{\circ}^*(T_{\tau(r)}(\eta))\zeta)
\\&=T_{r}(-\mathrm{ad}^{*}(T_{r}(\zeta))\eta+R_{\circ}^*(T_{r}(\eta))\zeta-R_{\circ}^*(T_{\tau(r)+r}(\eta))\zeta)
\\&=T_{r}(-\mathrm{ad}^{*}(T_{r}(\zeta))\eta+R_{\circ}^*(T_{r}(\eta))\zeta-\zeta\circ_{r+\tau(r)}\eta),\end{align*}
 \begin{align*}
 T_{r}(\zeta)\ast T_{r}(\eta)&=T_{r}(L_{\star}^*(T_{r}(\zeta))\eta
 +R_{\ast}^*(T_{\tau(r)}(\eta))\zeta-\zeta\ast_{r+\tau(r)}\eta)
\\&=T_{r}(L_{\star}^*(T_{r}(\zeta))\eta-R_{\ast}^*(T_{r}(\eta))\zeta
 +R_{\ast}^*(T_{r+\tau(r)}(\eta))\zeta)
 \\&=T_{r}(L_{\star}^*(T_{r}(\zeta))\eta-R_{\ast}^*(T_{r}(\eta))\zeta
 -\zeta\ast_{r+\tau(r)}\eta).\end{align*}\end{small}
Thus, Item (a) $\Longleftrightarrow$ Item (b).
Similarly, Item (a) $\Longleftrightarrow$ Item (c).
  \end{proof}
 If $r\in A\otimes A$ is skew-symmetric, then $r+\tau(r)=0$. Thus, Theorem \ref{Op1} is obtained.
 
\subsection{Quadratic Rota-Baxter anti-pre-Poisson algebras and factorizable anti-pre-Poisson bialgebras}
In this section, we first introduce the notion of quadratic Rota-Baxter anti-pre-Poisson algebras. 
We then establish the relationship between factorizable anti-pre-Poisson bialgebras 
and these quadratic Rota-Baxter anti-pre-Poisson algebras.

\begin{defi} Let $(A,\ast,\circ,P)$ be a Rota-Baxter anti-pre-Poisson algebra of weight $\lambda$
and $(A,\ast,\circ,\omega)$ a quadratic anti-pre-Poisson algebra. Then $(A,\ast,\circ,P,\omega)$
is called a \textbf{quadratic Rota-Baxter anti-pre-Poisson algebra of weight $\lambda$} if the following condition holds:
\begin{equation} \label{Fs}\omega (P(x),y)+\omega(x, P(y))+\lambda\omega(x,y)=0, ~\forall~x, y \in A.\end{equation}
\end{defi}

\begin{defi} Let $(A,\star,[ \ , \ ],P)$ be a Rota-Baxter Poisson algebra of weight $\lambda$ and
 $(A,\star,[ \ , \ ],\omega )$ be a quadratic Poisson algebra if the following condition holds: 
 \begin{equation} \label{Fs1}\omega (P(x),y)+\omega(x, P(y))+\lambda\omega(x,y)=0, ~\forall~x, y \in A.\end{equation}
Then $(A,\star,[ \ , \ ],P,\omega)$ is called a \textbf{quadratic Rota-Baxter Poisson algebra of 
weight $\lambda$}.
\end{defi}

Observe that for all $x, y \in A$,
\begin{align*} &\lambda\omega(x,y)+\omega (-\lambda(x)- P(x),y)+\omega(x, -\lambda(y)- P(y))\\=&
-\lambda\omega(x,y)-\omega (P(x),y)-\omega(x,  P(y)), \end{align*}
from which we obtain the following conclusions.

\begin{pro} \label{Fb2} Let $(A,\ast,\circ,\omega)$ be a quadratic anti-pre-Poisson algebra and let $P:A\longrightarrow A$ be a linear
map. Then $(A,\ast,\circ,P,\omega)$ is a quadratic Rota–Baxter anti-pre-Poisson algebra of weight $\lambda$ if and only if
$(A,\ast,\circ,-\lambda I-P,\omega)$ is a quadratic Rota–Baxter anti-pre-Poisson algebra of weight $\lambda$.
\end{pro}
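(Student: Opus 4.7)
The plan is to prove the equivalence by handling the two defining conditions separately, namely the Rota--Baxter identity of weight $\lambda$ on $(A,\ast,\circ)$ and the compatibility condition \eqref{Fs} with $\omega$. The condition \eqref{Fs} is exactly the form that is manifestly symmetric under the substitution $P \mapsto -\lambda I - P$, and the displayed identity preceding the proposition already encodes this, so the nontrivial content is verifying that the Rota--Baxter relations themselves are preserved under this substitution.

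First I would recall (or verify) the standard fact that the set of Rota--Baxter operators of weight $\lambda$ on any binary operation admits the involution $P \mapsto -\lambda I - P$. Concretely, setting $Q := -\lambda I - P$, for each of the two products $\diamond \in \{\ast,\circ\}$, one expands
\[
Q(x)\diamond Q(y) = (\lambda x + P(x) + \lambda y \text{-terms}) \diamond (\cdots),
\]
and compares with $Q\bigl(Q(x)\diamond y + x\diamond Q(y) + \lambda\, x\diamond y\bigr)$. A short algebraic manipulation, using the Rota--Baxter identity for $P$ of weight $\lambda$, shows that both sides equal
\[
\lambda^{2}\, x\diamond y + \lambda\,P(x)\diamond y + \lambda\, x\diamond P(y) + P\bigl(P(x)\diamond y + x\diamond P(y) + \lambda\,x\diamond y\bigr).
\]
This is a routine bookkeeping calculation; I would carry it out once for $\diamond = \ast$ and note that the same derivation applies verbatim to $\diamond = \circ$. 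Hence $Q$ is a Rota--Baxter operator of weight $\lambda$ on the anti-pre-Poisson algebra $(A,\ast,\circ)$ whenever $P$ is.

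Next I would handle the $\omega$-compatibility. Using the identity supplied in the excerpt,
\[
\lambda\,\omega(x,y) + \omega\bigl(-\lambda x - P(x),\, y\bigr) + \omega\bigl(x,\, -\lambda y - P(y)\bigr) = -\bigl(\lambda\,\omega(x,y) + \omega(P(x),y) + \omega(x,P(y))\bigr),
\]
valid for all $x,y \in A$ by bilinearity of $\omega$, the left-hand side is the condition \eqref{Fs} for the operator $Q = -\lambda I - P$, while the right-hand side is the negative of the condition \eqref{Fs} for $P$. Therefore one vanishes if and only if the other does, establishing the equivalence of the $\omega$-conditions.

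Combining the two steps yields the proposition, since the involution $P \leftrightarrow -\lambda I - P$ preserves both the Rota--Baxter relations and the $\omega$-compatibility. No step is genuinely difficult; the only mild obstacle is the careful sign bookkeeping in the Rota--Baxter verification for both products simultaneously, but this is purely mechanical and follows the same template as the classical associative-algebra case.
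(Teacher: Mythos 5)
Your proof is correct and takes essentially the same route as the paper: the paper's entire argument is the same displayed identity showing that the $\omega$-compatibility condition \eqref{Fs} for $-\lambda I-P$ is the negative of that for $P$, with the classical fact that $P\mapsto -\lambda I-P$ is an involution on weight-$\lambda$ Rota--Baxter operators left implicit. Your explicit verification of that involution (whose common value $\lambda^{2}\,x\diamond y+\lambda\,P(x)\diamond y+\lambda\,x\diamond P(y)+P\bigl(P(x)\diamond y+x\diamond P(y)+\lambda\,x\diamond y\bigr)$ checks out for both $\diamond=\ast$ and $\diamond=\circ$) simply fills in a step the paper treats as standard.
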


\begin{pro} Let $(A,\star,[ \ , \ ],\omega)$ be a quadratic Poisson algebra and let $P:A\longrightarrow A$ be a linear
map. Then $(A,\star,[ \ , \ ] ,P,\omega)$ is a quadratic Rota-Baxter Poisson algebra of weight $\lambda$ if and only if
$(A,\star,[ \ , \ ],-\lambda I-P,\omega)$ is a quadratic Rota-Baxter Poisson algebra of weight $\lambda$.
\end{pro}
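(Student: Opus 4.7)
The plan is to parallel the proof of Proposition \ref{Fb2}, verifying two independent facts: (i) setting $P' = -\lambda I - P$, the map $P'$ is a Rota--Baxter operator of weight $\lambda$ on the commutative associative algebra $(A,\star)$ and on the Lie algebra $(A,[\ ,\ ])$ if and only if $P$ is; and (ii) the bilinear form $\omega$ satisfies the compatibility condition \eqref{Fs1} with respect to $P'$ if and only if it does with respect to $P$. Taken together with the hypothesis that $(A,\star,[\ ,\ ],\omega)$ is a quadratic Poisson algebra, these two equivalences yield the proposition.

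For (ii) I would reuse the displayed bilinear identity appearing just before Proposition \ref{Fb2}: since $P'(x) = -\lambda x - P(x)$, bilinearity of $\omega$ gives
\[
\omega(P'(x),y)+\omega(x,P'(y))+\lambda\omega(x,y) \;=\; -\bigl[\omega(P(x),y)+\omega(x,P(y))+\lambda\omega(x,y)\bigr],
\]
so one side vanishes precisely when the other does. This step uses nothing beyond bilinearity and is insensitive to whether the underlying operation is $\star$ or $[\ ,\ ]$.

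For (i) I would invoke the standard involutive property of Rota--Baxter operators on any single binary operation $\diamond$: namely, if $P(x)\diamond P(y) = P(P(x)\diamond y + x\diamond P(y) + \lambda x\diamond y)$, then the same identity holds with $P$ replaced by $P' = -\lambda I - P$. A direct expansion shows $P'(x)\diamond P'(y) = \lambda^2 x\diamond y + \lambda\bigl(P(x)\diamond y + x\diamond P(y)\bigr) + P(x)\diamond P(y)$, while $P'(x)\diamond y + x\diamond P'(y) + \lambda x\diamond y = -\bigl(P(x)\diamond y + x\diamond P(y) + \lambda x\diamond y\bigr) + \lambda x\diamond y - \lambda x\diamond y$, whose image under $P'$ simplifies using the Rota--Baxter identity for $P$ to exactly $P'(x)\diamond P'(y)$. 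Applying this separately to $\diamond = \star$ and $\diamond = [\ ,\ ]$ handles both components of a Rota--Baxter Poisson operator.

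The main feature to note is that there is essentially no obstacle: the argument is purely formal and never uses the Leibniz-type compatibility \eqref{ppa eq1.01} between $\star$ and $[\ ,\ ]$, since $P$ acts independently on each operation. The proof is therefore a direct transcription of Proposition \ref{Fb2} into the Poisson setting, combining the involution $P \mapsto -\lambda I - P$ on Rota--Baxter operators with the sign-reversal identity for $\omega$ displayed above.
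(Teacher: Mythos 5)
Your proof is correct and takes essentially the same route as the paper: the paper's entire argument is the sign-reversal identity for $\omega$ displayed just before Proposition \ref{Fb2} (your step (ii)), applied verbatim in the Poisson setting. Your step (i) — the explicit check that $P\mapsto -\lambda I - P$ is an involution on Rota--Baxter operators of weight $\lambda$ for each of $\star$ and $[\ ,\ ]$ separately — is treated by the paper as a standard fact and left implicit, so your write-up is, if anything, more complete than the original.
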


\begin{thm}\label{Fb0} Suppose that $(A,\star,[ \ , \ ],P,\omega)$ is a quadratic Rota-Baxter Poisson algebra of 
weight $\lambda$. Then $(A,\ast,\circ,P,\omega)$ is a quadratic Rota-Baxter anti-pre-Poisson
algebra of weight $\lambda$, where $\ast,\circ$ are defined by Eqs.~(\ref{C1}) and (\ref{C2}) respectively.
 On the other hand, let $(A,\ast,\circ,P,\omega)$ be a quadratic Rota-Baxter anti-pre-Poisson algebra of weight $\lambda$.
Then $(A,\star,[ \ , \ ],P,\omega)$ is a quadratic Rota-Baxter Poisson algebra
 of weight $\lambda$, where $x\star y=x\ast y+y\ast x,~[x,y]=x\circ y-y\circ x$.
\end{thm}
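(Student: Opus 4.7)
The plan is to decouple the ``quadratic'' content from the ``Rota-Baxter'' content. By Proposition \ref{Qc}, quadratic anti-pre-Poisson structures $(A,\ast,\circ,\omega)$ correspond bijectively with quadratic Poisson structures $(A,\star,[\ ,\ ],\omega)$, with $\ast,\circ$ reconstructed from $\star,[\ ,\ ]$ via Eqs.~\eqref{C1} and \eqref{C2}, and the sub-adjacent operations $x\star y = x\ast y + y\ast x$, $[x,y] = x\circ y - y\circ x$ recovering the Poisson data from the anti-pre-Poisson data. Moreover, the compatibility conditions Eq.~\eqref{Fs} and Eq.~\eqref{Fs1} on $(P,\omega)$ are literally the same identity. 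Hence it suffices to prove that, under the standing hypotheses that $\omega$ is non-degenerate, invariant in both senses, and satisfies Eq.~\eqref{Fs}, the map $P$ is a Rota-Baxter operator of weight $\lambda$ on $(A,\star,[\ ,\ ])$ if and only if it is a Rota-Baxter operator of weight $\lambda$ on $(A,\ast,\circ)$.

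For the anti-Zinbiel/commutative-associative half, I would test the putative identity
\[
P(x)\ast P(y) - P\bigl(P(x)\ast y + x\ast P(y) + \lambda\, x\ast y\bigr) = 0
\]
against an arbitrary $z\in A$ under $\omega$. Applying Eq.~\eqref{C1} to each summand transports every $\ast$ into a pairing of the form $\omega(u,v\star w)$, and applying Eq.~\eqref{Fs} transports every occurrence of $P$ from one slot of $\omega$ to the other, producing a $-\lambda\omega(-,-)$ correction. After collecting terms, the statement collapses to
\[
\omega\!\left(x,\ P(y)\star P(z) - P\bigl(P(y)\star z + y\star P(z) + \lambda\, y\star z\bigr)\right)=0,
\]
which is exactly the Rota-Baxter identity for $P$ on $(A,\star)$ tested against $x$. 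Non-degeneracy of $\omega$ then yields the equivalence in both directions. The anti-pre-Lie/Lie half is entirely analogous: repeat the same manoeuvre with Eq.~\eqref{C2} in place of Eq.~\eqref{C1} and $[\ ,\ ]$ in place of $\star$.

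Combining these equivalences with Proposition \ref{Qc} gives both directions of the theorem simultaneously: the forward direction because Eq.~\eqref{Fs1} $\Longleftrightarrow$ Eq.~\eqref{Fs} and the Poisson Rota-Baxter condition implies the anti-pre-Poisson Rota-Baxter condition, and the backward direction by the symmetric argument. The main (and only) obstacle is the bookkeeping in the pairing computation: every transposition of $P$ across $\omega$ introduces a $\lambda$-term, and one must check that the three summands of the weight-$\lambda$ Rota-Baxter identity for $\ast$ correctly reassemble, after the transformation, into the three summands of the weight-$\lambda$ identity for $\star$. This is essentially linear algebra, since Eq.~\eqref{Fs} is linear in $P$ and Eqs.~\eqref{C1}-\eqref{C2} are bilinear, so the algebraic obstruction is mild; it is nevertheless the only step that requires genuine verification rather than invocation of an earlier result.
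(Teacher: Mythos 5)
Your proposal is correct and is essentially the paper's own argument: both invoke Proposition~\ref{Qc} to handle the quadratic correspondence and then transfer the weight-$\lambda$ Rota--Baxter identity across $\omega$ by the same pairing computation, using the invariance identities \eqref{C1}--\eqref{C2} together with Eq.~\eqref{Fs}/\eqref{Fs1} and concluding by non-degeneracy. The only cosmetic differences are that the paper delegates the anti-Zinbiel half to Theorem~6.7 of \cite{19} and obtains the converse trivially by symmetrizing/antisymmetrizing the anti-pre-Poisson Rota--Baxter identities, whereas you run the same $\omega$-pairing equivalence in both directions.
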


\begin{proof} Let $(A,\star,[ \ , \ ],P,\omega)$ is a quadratic Rota-Baxter Poisson algebra of 
weight $\lambda$. In the light of Proposition \ref{Qc}, $(A,\ast,\circ,\omega)$ is a quadratic anti-pre-Poisson algebra.
Using Eqs.~(\ref{C2}) and (\ref{Fs1}), we obtain for all $x,y,z\in A$, 
\begin{small}
\begin{align*}& \omega( P(x)\circ P(y)-P(P(x)\circ y+x\circ P(y)+\lambda x\circ y),z)
\\=&\omega( P(y), [P(x),z])+\omega(P(x)\circ y+x\circ P(y)+\lambda x\circ y, P(z))
+\lambda\omega(P(x)\circ y+x\circ P(y)+\lambda x\circ y,z)
\\=&\omega( P(y), [P(x),z])+\omega(y, [P(x),P(z)])+\omega(  P(y),[x, P(z)])+\lambda\omega(  y, [x,P(z)])
\\&+\lambda\omega(y,[P(x),z])+\lambda\omega(P(y),[x,z])+\lambda^{2} \omega( y,[x,z])
\\=&\omega(y, [P(x),P(z)])+\omega(P(y),[x, P(z)]+ [P(x),z]+\lambda [x,z])
+\lambda\omega(y,[x, P(z)]+ [P(x),z]+\lambda [x,z])
\\=&\omega(y, [P(x),P(z)])-\omega(y,P([x, P(z)]+ [P(x),z]+\lambda [x,z]))
\\=&0.\end{align*}\end{small}
It follows that $P(x)\circ P(y)=P(P(x)\circ y+x\circ P(y)+\lambda x\circ y)$.
By Theorem 6.7 \cite{19}, $P(x)\ast P(y)=P(P(x)\ast y+x\ast P(y)+\lambda x\ast y)$.
Thus, $(A,\ast,\circ,P,\omega)$ is a quadratic Rota-Baxter anti-pre-Poisson algebra of weight $\lambda$.
The other hand is apparently.
We complete the proof.
\end{proof}

Let $\omega$ be a non-degenerate bilinear form on a vector space $A$. Then there is an isomorphism
$\omega^{\sharp}:A\longrightarrow A^{*}$ given by
\begin{equation} \omega(x,y)=\langle\omega^{\sharp}(x),y \rangle,~~\forall~x,y\in A.\end{equation}
Define an element $r_{\omega}\in A\otimes A$ such that $T_{r_{\omega}}=(\omega^{\sharp})^{-1}$,
that is, 
\begin{equation}\label{Nd1} \langle T_{r_{\omega}}(\zeta),\eta\rangle=\langle r_{\omega},\zeta \otimes\eta\rangle=
\langle (\omega^{\sharp})^{-1}(\zeta), \eta\rangle, \ \ \forall~\zeta,\eta\in A^{*}. \end{equation}

\begin{lem}\label{Fb1} Let $(A, \ast,\circ)$ be an anti-pre-Poisson algebra and $\omega$ be a non-degenerate bilinear form on $A$. Then
$(A, \ast,\circ,\omega)$ is a quadratic anti-pre-Poisson algebra if and only if the corresponding $r_{\omega}\in A\otimes A$ given by
Eq.~(\ref{Nd1}) is symmetric and invariant.\end{lem}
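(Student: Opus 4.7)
The plan is to prove the biconditional by decomposing the anti-pre-Poisson structure into its anti-Zinbiel and anti-pre-Lie components and invoking the corresponding results for each.

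First, I would observe that $r_\omega$ being symmetric is equivalent to $\omega$ being symmetric. Since $T_{\tau(r_\omega)} = T_{r_\omega}^*$ and $T_{r_\omega} = (\omega^\sharp)^{-1}$, the operator $T_{r_\omega}$ is self-adjoint precisely when $\omega^\sharp$ is, which happens precisely when $\omega$ is symmetric. Non-degeneracy of $\omega$ is already built into the definition of $r_\omega$ via \eqref{Nd1}.

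Next, I would apply Proposition \ref{Qc} to split the quadratic anti-pre-Poisson condition into $(A, \ast, \omega)$ being a quadratic anti-Zinbiel algebra (condition \eqref{C1}) and $(A, \circ, \omega)$ being a quadratic anti-pre-Lie algebra (condition \eqref{C2}). Similarly, by Definition \ref{In}, the invariance of $r_\omega$ splits into the two independent conditions \eqref{IE3} and \eqref{IE4}. The task then reduces to proving the two equivalences \eqref{C1} $\iff$ \eqref{IE3} and \eqref{C2} $\iff$ \eqref{IE4} separately, under the standing assumption that $\omega$ is symmetric and non-degenerate. Each of these is essentially the content of a known result: the anti-Zinbiel case follows from the analogous lemma in \cite{19} (cited repeatedly in Section~4), and the anti-pre-Lie case is obtained from Theorem 2.26 of \cite{13}, also cited earlier in this paper.

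The mechanism in each component case is to invoke Lemma \ref{In1} to rewrite the tensor-level invariance of $r_\omega$ as an operator identity on $T_{r_\omega}$, and then substitute $T_{r_\omega} = (\omega^\sharp)^{-1}$ to transfer this into a condition on $\omega$ itself. After careful manipulation using the symmetry of $\omega$ together with the paper's sign convention $\langle f^*(x) u^*, v\rangle = -\langle u^*, f(x) v\rangle$, this condition reproduces \eqref{C1} (respectively \eqref{C2}); the commutativity of $\star$ and the Jacobi identity for $[\cdot,\cdot]$ are used implicitly to match the two formulations. The main obstacle will be the bookkeeping required to track signs and argument orderings in this translation, but once both component equivalences are in hand, the lemma follows immediately by combination.
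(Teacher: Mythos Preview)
Your proposal is correct and follows essentially the same route as the paper: show $\omega$ symmetric iff $r_\omega$ symmetric, then split into the anti-Zinbiel and anti-pre-Lie components and translate each quadratic condition into the corresponding operator-form invariance of $r_\omega$. Two small discrepancies are worth noting. First, the paper does the anti-pre-Lie equivalence by a short direct computation (rewriting $\omega(x\circ y,z)-\omega(y,[x,z])$ in terms of $T_{r_\omega}$) and then invokes Proposition~\ref{Si}, rather than Lemma~\ref{In1}; since $r_\omega$ is symmetric these are interchangeable, but your citation of Theorem~2.26 of \cite{13} is misplaced---that result concerns $\mathcal{O}$-operators and skew-symmetric solutions of the YBE, not the quadratic/invariance equivalence you need, so you should either compute directly as the paper does or locate the correct lemma in \cite{13}. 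Second, you do not need Proposition~\ref{Qc} to split the quadratic condition: the definition of a quadratic anti-pre-Poisson algebra already states directly that $(A,\ast,\omega)$ is quadratic anti-Zinbiel and $(A,\circ,\omega)$ is quadratic anti-pre-Lie.
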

\begin{proof}
It is obvious that $\omega$ is symmetric if and only if $r_{\omega}$ is symmetric. 
For all $x,y\in A$, put $\omega^{\sharp}(x)=\zeta,\omega^{\sharp}(y)=\eta,
\omega^{\sharp}(z)=\theta$ with $\zeta,\eta,\theta\in A^{*}$. If $\omega$ is symmetric, we have
\begin{small}
\begin{align*} \omega (x \circ y, z)-\omega(y, [x, z])
=&\langle \omega^{\sharp}(z), x\circ(\omega^{\sharp})^{-1}(\eta)\rangle
-\langle \omega^{\sharp}(y), [x, (\omega^{\sharp})^{-1}(\theta)]\rangle
\\=&\langle \theta, x\circ T_{r_{\omega}}(\zeta)\rangle
-\langle \eta, [x, T_{r_{\omega}}(\theta)]\rangle
\\=&\langle -R_{\circ}^{*}(T_{r_{\omega}}(\eta))\theta-\mathrm{ad}^{*}(T_{r_{\omega}}(\theta))\eta, x\rangle.\end{align*}
\end{small}
Thus, if $\omega$ is symmetric, $\omega (x \circ y, z)=\omega(y, [x, z])$ if and only if
$R_{\circ}^{*}(T_{r_{\omega}}(\eta))\theta=-\mathrm{ad}^{*}(T_{r_{\omega}}(\theta))\eta$.
By Lemma 6.8 \cite{19}, if $\omega$ is symmetric, $\omega (x \ast y, z)=-\omega(x, y\star z)$ if and only if
$ L_{\star}^{*} (T_{r_{\omega}}(\zeta))\eta=-R_{\ast}^{*}(T_{r_{\omega}}(\eta))\zeta).$
Combining Proposition \ref{Si}, we complete the proof.
 \end{proof}
 
\begin{pro} \label{QF1} Let $(A, \ast,\circ,\omega)$ be a quadratic anti-pre-Poisson algebra and $r\in A\otimes A$. 
Assume that $r+\tau(r)$ is invariant. Define a linear map 
\begin{equation}\label{Nd2}P:A\longrightarrow A,\ \ \ P(x)=T_{r}\omega^{\sharp}(x),~~\forall~x\in A.\end{equation}
Then $r$ is a solution of the APP-YBE in $(A, \ast,\circ)$ if and only if $P$ satisfies 
\begin{small}
\begin{align}&\label{Nd3}P(x)\circ P(y)= P(P(x)\circ y+x\circ P(y)-x\circ T_{r+\tau(r)}\omega^{\sharp}(y)),
\\&\label{Nd4}P(x)\ast P(y)= P(P(x)\ast y+x\ast P(y)-x\ast T_{r+\tau(r)}\omega^{\sharp}(y)),~~\forall~x,y\in A.
 \end{align}\end{small}
\end{pro}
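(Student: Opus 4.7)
The plan is to pull back the Rota-Baxter-type characterization of the APP-YBE (from the theorem immediately preceding) along the isomorphism $\omega^{\sharp}\colon A\to A^{*}$ of Lemma~\ref{Fb1}. First I set up a transport dictionary: from \eqref{C1} and \eqref{C2} one checks that for all $a,u\in A$,
\[
L_{\star}^{*}(a)\omega^{\sharp}(u)=\omega^{\sharp}(u\ast a),\quad R_{\ast}^{*}(a)\omega^{\sharp}(u)=\omega^{\sharp}(u\star a),
\]
\[
-\mathrm{ad}^{*}(a)\omega^{\sharp}(u)=\omega^{\sharp}(a\circ u),\quad R_{\circ}^{*}(a)\omega^{\sharp}(u)=\omega^{\sharp}(u\circ a).
\]
Simultaneously, I translate the hypothesis that $r+\tau(r)$ is invariant through the operator $Q:=T_{r+\tau(r)}\omega^{\sharp}\colon A\to A$: Proposition~\ref{Si} together with the dictionary yields the three identities
\[
Q(u\ast v)+v\ast Q(u)+Q(u)\ast v=0,\quad Q([u,v])=[u,Q(v)],\quad Q(u)\circ v=u\circ Q(v),
\]
the last of which captures precisely the symmetry-plus-invariance of $r+\tau(r)$.

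Next, I invoke the theorem immediately preceding Proposition~\ref{QF1}: under invariance of $r+\tau(r)$, the APP-YBE is equivalent to \eqref{AD5}-\eqref{AD6} for $T_{r}$. Substituting $\zeta=\omega^{\sharp}(x)$, $\eta=\omega^{\sharp}(y)$ and using the dictionary, \eqref{AD6} reduces to
\[
P(x)\circ P(y)=P\bigl(P(x)\circ y+x\circ P(y)-Q(x)\circ y\bigr),
\]
which by $Q(x)\circ y=x\circ Q(y)$ is exactly \eqref{Nd3}. The anti-Zinbiel equation \eqref{AD5} translates to
\[
P(x)\ast P(y)=P\bigl(y\ast P(x)-x\star P(y)-y\ast Q(x)\bigr),
\]
and I recast the right-hand side in the shape of \eqref{Nd4} by substituting $y\ast Q(x)=-Q(x\ast y)-Q(x)\ast y$ from the first identity for $Q$, decomposing $Q=P+\tilde{P}$ where $\tilde{P}$ is the $\omega$-adjoint of $P$, and applying the anti-Zinbiel axioms together with the Theorem~\ref{Ys4} equivalences $D(r)=D_{1}(r)=D_{2}(r)=0$ (valid under invariance) to absorb the cross-terms. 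Each step is reversible, giving the stated equivalence.

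The main obstacle lies in the anti-Zinbiel case. Whereas the anti-pre-Lie translation collapses to \eqref{Nd3} in a single step thanks to $Q(x)\circ y=x\circ Q(y)$, the translation of \eqref{AD5} presents products in the \emph{wrong} order (for instance $y\ast P(x)$ and $P(y)\ast x$ in place of $P(x)\ast y$). Because $\ast$ is non-commutative, reshaping these into the form of \eqref{Nd4} requires a careful bookkeeping that combines all three $Q$-invariance identities with the interplay between $P$ and its $\omega$-adjoint $\tilde{P}$, and ultimately uses the flexibility of passing between the equivalent forms $D(r)=D_{1}(r)=D_{2}(r)=0$ to license the redistribution of the remaining cross-terms.
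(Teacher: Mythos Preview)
Your overall strategy coincides with the paper's: both arguments transport the Rota--Baxter characterisation \eqref{AD5}--\eqref{AD6} of the APP-YBE along the isomorphism $\omega^{\sharp}\colon A\to A^{*}$, using Lemma~\ref{Fb1} and the consequences of Proposition~\ref{Si}. For the anti-pre-Lie half you essentially reproduce the paper's computation.

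Where you diverge is in the anti-Zinbiel half. The paper does \emph{not} translate \eqref{AD5} into $P$-language and then massage it into \eqref{Nd4}; it goes in the opposite direction, computing each summand of \eqref{Nd4} directly as $T_{r}$ applied to an element of $A^{*}$ and matching with \eqref{AD5}. Concretely, using the invariance of $r_{\omega}$ (Lemma~\ref{Fb1}) together with the identities of Proposition~\ref{Si} and \eqref{Apa1}, one obtains $P(P(x)\ast y)$, $P(x\ast P(y))$ and $P(x\ast Q(y))$ each as $T_{r}$ of the corresponding term on the right-hand side of \eqref{AD5}, in complete parallel with the $\circ$ case; this is precisely what the paper means by ``by the same token.'' No decomposition $Q=P+\tilde P$, no appeal to the $\omega$-adjoint, and no use of the equivalences $D(r)=D_{1}(r)=D_{2}(r)=0$ from Theorem~\ref{Ys4} is required.

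The apparent obstacle you describe --- products appearing ``in the wrong order'' --- is an artefact of your choice of dictionary entry. Your formula $L_{\star}^{*}(a)\omega^{\sharp}(u)=\omega^{\sharp}(u\ast a)$ expresses $\omega^{\sharp}$ of a product with the \emph{second} factor fixed, whereas \eqref{Nd4} asks for $\omega^{\sharp}(P(x)\ast y)$ with $y=T_{r_{\omega}}(\eta)$ in the second slot. The paper circumvents this by invoking the invariance of $r_{\omega}$ through the operator identities \eqref{LR1}--\eqref{LR2} (applied to $r_{\omega}$), which give the needed formulas for $a\ast T_{r_{\omega}}(\eta)$ and $T_{r_{\omega}}(\zeta)\ast b$ directly. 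Once these are in hand the term-by-term match is immediate, and your elaborate recasting via Theorem~\ref{Ys4} becomes unnecessary.
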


\begin{proof} By Lemma \ref{Fb1}, $r_{\omega}$ is symmetric and invariant. In the light of Proposition \ref{Si},
 Eqs.~(\ref{LR1}), (\ref{LR2}) and (\ref{Apa2}),
 for all $x,y\in A$, put $\omega^{\sharp}(x)=\zeta,\omega^{\sharp}(y)=\eta$ with 
$\zeta,\eta\in A^{*}$, we get
\begin{small}
\begin{align*}& P(x)\circ P(y)=T_{r}\omega^{\sharp}(x)\circ T_{r}\omega^{\sharp}(y)=T_{r}(\zeta)\circ T_{r}(\eta),\\
&P(P(x)\circ y)=T_{r}\omega^{\sharp}(T_{r}(\zeta)\circ T_{r_{\omega}}(\eta))=
-T_{r}\omega^{\sharp} T_{r_{\omega}}(\mathrm{ad}^{*}(T_{r}(\zeta))\eta)
=-T_{r}(\mathrm{ad}^{*}(T_{r}(\zeta))\eta),
\\
&P(x\circ P(y))=T_{r}\omega^{\sharp}(T_{r_{\omega}}(\zeta)\circ T_{r}(\eta))=
T_{r}\omega^{\sharp}T_{r_{\omega}}(R_{\circ}^{*}(T_{r}(\eta))\zeta)
=T_{r}(R_{\circ}^{*}(T_{r}(\eta))\zeta),
\\
&P(x\circ T_{r+\tau(r)}\omega^{\sharp}(y))
=T_{r}\omega^{\sharp}(T_{r_{\omega}}(\zeta)\circ T_{r+\tau(r)}(\eta))
=T_{r}\omega^{\sharp}  T_{r_{\omega}}( R_{\circ}^{*}(T_{r+\tau(r)}(\eta))\zeta)
\\=&T_{r}(( R_{\circ}^{*}(T_{r+\tau(r)}(\eta))\zeta)
=T_{r}(\zeta\circ_{r+\tau(r)}\eta).\end{align*}\end{small}
 Thus, Eq.~(\ref{Nd3}) holds if and only if Eq.~(\ref{AD6}) holds.
By the same token, we can verify that Eq.~(\ref{Nd4}) holds if and only if Eq.~(\ref{AD5}) holds. The proof is completed.
 \end{proof}
 
\begin{lem} \label{QF2} Let $A$ be a vector space and $\omega$ be a non-degenerate symmetric bilinear form. 
Let $r\in A\otimes A$,
$\lambda\in k$ and $P$ given by Eq.~(\ref{Nd2}). Then $r$ satisfies
\begin{equation} \label{Nd5} r+\tau(r)=-\lambda r_{\omega} \end{equation}
if and only if $P$ satisfies Eq.~(\ref{Fs}).
\end{lem}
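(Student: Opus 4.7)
The plan is to translate both conditions into a single tensorial identity in $A\otimes A$ and then appeal to non-degeneracy. As a preliminary, I note that the symmetry of $\omega$ forces $r_{\omega}$ to be symmetric, that is, $T_{r_{\omega}}^{*}=T_{r_{\omega}}$; this follows at once from Eq.~(\ref{Nd1}) together with the symmetry of $\omega$, and does not require any anti-pre-Poisson structure.

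Given $x,y\in A$, I set $\zeta=\omega^{\sharp}(x)$ and $\eta=\omega^{\sharp}(y)$, so that $x=T_{r_{\omega}}(\zeta)$ and $y=T_{r_{\omega}}(\eta)$. The key step is to rewrite each of the three summands of Eq.~(\ref{Fs}) as the evaluation of a fixed element of $A\otimes A$ on $\zeta\otimes\eta$. Using $P=T_{r}\omega^{\sharp}$ together with the symmetry of $\omega$, a short manipulation gives $\omega(P(x),y)=\langle r,\zeta\otimes\eta\rangle$. For the middle term, the adjoint relation $T_{r}^{*}=T_{\tau(r)}$ combined with the symmetry of $\omega$ gives $\omega(x,P(y))=\langle\tau(r),\zeta\otimes\eta\rangle$. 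Finally, the symmetry of $r_{\omega}$ together with $T_{r_{\omega}}=(\omega^{\sharp})^{-1}$ yields $\omega(x,y)=\langle r_{\omega},\zeta\otimes\eta\rangle$.

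Summing these three identities, Eq.~(\ref{Fs}) for the pair $(x,y)$ is equivalent to
\[\langle r+\tau(r)+\lambda r_{\omega},\ \zeta\otimes\eta\rangle=0.\]
Since $\omega^{\sharp}$ is a bijection, $(\zeta,\eta)$ ranges over all of $A^{*}\times A^{*}$ as $(x,y)$ ranges over $A\times A$; hence non-degeneracy of the natural pairing between $A\otimes A$ and $A^{*}\otimes A^{*}$ shows that Eq.~(\ref{Fs}) holds for all $x,y$ if and only if $r+\tau(r)+\lambda r_{\omega}=0$, which is exactly Eq.~(\ref{Nd5}). There is no substantive obstacle here: the argument is pure bookkeeping with the correspondence $r\leftrightarrow T_{r}$. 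The only delicate point is the identification of $\omega(x,P(y))$ with $\tau(r)$ rather than $r$, since the $\omega^{\sharp}$-isomorphism effectively swaps the two tensor slots relative to the case $\omega(P(x),y)$; tracking this swap carefully is what drives the appearance of $r+\tau(r)$ (rather than $2r$) on the right-hand side of Eq.~(\ref{Nd5}).
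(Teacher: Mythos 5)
Your proof is correct and takes essentially the same approach as the paper: both substitute $\zeta=\omega^{\sharp}(x)$, $\eta=\omega^{\sharp}(y)$, rewrite the three terms of Eq.~(\ref{Fs}) as $\langle r,\zeta\otimes\eta\rangle$, $\langle \tau(r),\zeta\otimes\eta\rangle$ and $\lambda\langle r_{\omega},\zeta\otimes\eta\rangle$ respectively, and conclude via the bijectivity of $\omega^{\sharp}$ and non-degeneracy of the pairing. Your invocation of the adjoint identity $T_{r}^{*}=T_{\tau(r)}$ for the middle term is merely a repackaging of the paper's direct computation $\langle \zeta,T_{r}(\eta)\rangle=\langle r,\eta\otimes\zeta\rangle=\langle \tau(r),\zeta\otimes\eta\rangle$.
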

\begin{proof} For all $x,y\in A$, put $\omega^{\sharp}(x)=\zeta,\omega^{\sharp}(y)=\eta$ with $\zeta,\eta\in A^{*}$.
\begin{small}
\begin{align*}& \omega(P(x),y)=\omega(y,P(x))=\langle\omega^{\sharp}(y),T_{r}\omega^{\sharp}(x)\rangle
=\langle \eta,T_{r}(\zeta)\rangle
=\langle r,\zeta\otimes\eta\rangle,\\
&\omega(x,P(y))=\langle \omega^{\sharp}(x),T_{r}\omega^{\sharp}(y)\rangle
=\langle \zeta,T_{r}(\eta)\rangle
=\langle r,\eta\otimes\zeta\rangle
=\langle \tau(r),\zeta\otimes\eta\rangle,
\\
&\lambda\omega(x,y)=\lambda \omega(y,x)
=\lambda\langle \omega^{\sharp}(y),(\omega^{\sharp})^{-1}\omega^{\sharp}(x)\rangle
=\lambda\langle r_{\omega},\zeta\otimes\eta\rangle
.\end{align*}\end{small}
Thus, Eq.~(\ref{Nd5}) holds if and only if Eq.~(\ref{Fs}) holds.
 \end{proof}
 
\begin{cor} \label{Fb3} Let $(A, \ast,\circ,P,\omega)$ be a quadratic Rota-Baxter anti-pre-Poisson algebra of weight 0.
 Then there is a triangular
anti-pre-Poisson bialgebra $(A, \ast,\circ, \Delta_{r},\delta_{r})$ with $\Delta_{r},\delta_{r}$ given
 by Eqs.~(\ref{CB1}) and (\ref{CB2}), where $r\in A\otimes A$ given by 
$T_{r}(\zeta)=P(\omega^{\sharp})^{-1}(\zeta)$ for all $\zeta\in A^{*}$.
\end{cor}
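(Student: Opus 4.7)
The plan is to verify that the element $r \in A \otimes A$ determined by $T_r = P(\omega^{\sharp})^{-1}$ is a skew-symmetric solution of the APP-YBE in $(A,\ast,\circ)$, and then to invoke Corollary \ref{Bc2} together with Definition \ref{Qt1} to conclude triangularity.

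First I would establish skew-symmetry of $r$. Since the weight is $\lambda = 0$, the compatibility condition \eqref{Fs} becomes $\omega(P(x),y) + \omega(x,P(y)) = 0$ for all $x,y \in A$. Applying Lemma \ref{QF2} with $\lambda = 0$, this is equivalent to $r + \tau(r) = 0$, so $r$ is skew-symmetric. In particular, $r + \tau(r)$ is trivially invariant, which puts us in the hypothesis of Proposition \ref{QF1}.

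Next I would translate the APP-YBE condition on $r$ into a property of $P$ via Proposition \ref{QF1}. Because $r + \tau(r) = 0$, the terms involving $T_{r+\tau(r)}\omega^{\sharp}(y)$ on the right-hand sides of \eqref{Nd3} and \eqref{Nd4} vanish, so these identities collapse to
\begin{align*}
P(x) \circ P(y) &= P\bigl(P(x) \circ y + x \circ P(y)\bigr), \\
P(x) \ast P(y) &= P\bigl(P(x) \ast y + x \ast P(y)\bigr),
\end{align*}
which are exactly the defining identities of a Rota-Baxter operator of weight $0$ on $(A,\ast,\circ)$. These hold by the hypothesis that $(A,\ast,\circ,P,\omega)$ is a quadratic Rota-Baxter anti-pre-Poisson algebra of weight $0$. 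Hence $r$ is a skew-symmetric solution of the APP-YBE.

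Finally, Corollary \ref{Bc2} then guarantees that $(A,\ast,\circ,\Delta_r,\delta_r)$ with $\Delta_r,\delta_r$ defined by \eqref{CB1} and \eqref{CB2} is an anti-pre-Poisson bialgebra, and since $r$ is skew-symmetric, Definition \ref{Qt1} identifies this structure as triangular. The proof is essentially a clean chaining of Lemma \ref{QF2}, Proposition \ref{QF1}, and Corollary \ref{Bc2}; no substantial obstacle is anticipated beyond bookkeeping, with the only mildly delicate point being the verification that $r + \tau(r) = 0$ makes the $r + \tau(r)$-dependent correction terms in \eqref{Nd3}-\eqref{Nd4} disappear, so that the abstract criterion reduces exactly to the Rota-Baxter axioms.
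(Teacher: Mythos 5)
Your proposal is correct and is essentially the paper's own argument: the paper likewise observes that $r+\tau(r)=-\lambda r_{\omega}=0$ at weight $\lambda=0$ (Lemma \ref{QF2}), making $r$ skew-symmetric, and then cites Proposition \ref{QF1} so that Eqs.~(\ref{Nd3})--(\ref{Nd4}) collapse to the weight-$0$ Rota--Baxter identities, which hold by hypothesis, whence Corollary \ref{Bc2} and Definition \ref{Qt1} give triangularity. You merely spell out the steps (trivial invariance of $r+\tau(r)=0$ and the vanishing of the correction terms) that the paper leaves implicit; there is no gap.
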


 \begin{proof} Since $r+\tau(r)=-\lambda r_{\omega}=0$, $r$ is skew-symmetric.
On the basis of Proposition \ref{QF1} and Lemma \ref{QF2}, we get the conclusion. \end{proof}
 
\begin{thm} \label{Fb3} Let $(A, \ast,\circ, \Delta_{r},\delta_{r})$ be a factorizable 
anti-pre-Poisson bialgebra
with $r\in A\otimes A$. Then $(A, \ast,\circ,P,\omega)$
 is a quadratic Rota-Baxter anti-pre-Poisson algebra of weight $\lambda$ with $P$ given by Eq.~(\ref{Nd2}), and $\omega$ is given by
\begin{equation} \label{Nd6} \omega(x,y)=-\lambda\langle T_{r+\tau(r)}^{-1}(x),y \rangle,~~\forall~x,y\in A.\end{equation}
Conversely, let $(A,\ast,\circ,P,\omega)$ be a quadratic Rota-Baxter anti-pre-Poisson algebra of weight $\lambda~(\lambda\neq 0)$.
Then there is a factorizable anti-pre-Poisson bialgebra $(A, \ast,\circ, \Delta_{r},\delta_{r})$
 with $\Delta_{r},\delta_{r}$ defined by Eqs.~(\ref{CB1}) and (\ref{CB2}), where $r\in A\otimes A$ is
given through the operator form $T_r=P(\omega^{\sharp})^{-1}$.
\end{thm}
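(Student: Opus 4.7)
The plan is to derive both directions of the correspondence from the three technical results already in hand: Proposition \ref{QF1} (which converts the APP-YBE property of $r$ into the Rota--Baxter-type identities \eqref{Nd3}--\eqref{Nd4} for $P = T_r \omega^{\sharp}$), Lemma \ref{QF2} (which identifies the compatibility condition \eqref{Fs} with the scaling relation $r + \tau(r) = -\lambda r_{\omega}$), and Lemma \ref{Fb1} (which matches the quadratic structure on $A$ with symmetry and invariance of $r_{\omega}$). The common bridge in both directions is the identity $T_{r+\tau(r)} \omega^{\sharp} = -\lambda\, I$, which transforms \eqref{Nd3}--\eqref{Nd4} into the genuine weight-$\lambda$ Rota--Baxter relations
\[
P(x)\ast P(y) = P\bigl(P(x)\ast y + x\ast P(y) + \lambda\, x\ast y\bigr), \qquad
P(x)\circ P(y) = P\bigl(P(x)\circ y + x\circ P(y) + \lambda\, x\circ y\bigr).
\]

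For the forward direction, assume $(A,\ast,\circ,\Delta_r,\delta_r)$ is factorizable, so $r$ solves the APP-YBE, $r+\tau(r)$ is invariant, and $T_{r+\tau(r)}$ is a linear isomorphism. Define $\omega$ by \eqref{Nd6}. A direct unwinding of $\omega^\sharp$ shows $\omega^{\sharp} = -\lambda\, T_{r+\tau(r)}^{-1}$, which immediately yields $r + \tau(r) = -\lambda r_{\omega}$; note $\omega$ is non-degenerate because $T_{r+\tau(r)}$ is. Since $r+\tau(r) = \tau(r+\tau(r))$ is automatically symmetric and is invariant by hypothesis, the element $r_{\omega}$ inherits both properties, so by Lemma \ref{Fb1} the triple $(A,\ast,\circ,\omega)$ is a quadratic anti-pre-Poisson algebra. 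Lemma \ref{QF2} then gives \eqref{Fs} for $P = T_r\omega^{\sharp}$. Finally, because $T_{r+\tau(r)}\omega^{\sharp}(y) = -\lambda y$, the identities \eqref{Nd3}--\eqref{Nd4} supplied by Proposition \ref{QF1} become exactly the weight-$\lambda$ Rota--Baxter relations displayed above, completing the forward implication.

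For the converse, let $(A,\ast,\circ,P,\omega)$ be a quadratic Rota--Baxter anti-pre-Poisson algebra of weight $\lambda \ne 0$ and define $r$ by $T_r = P(\omega^{\sharp})^{-1}$. Since $P$ satisfies \eqref{Fs}, Lemma \ref{QF2} gives $r+\tau(r) = -\lambda r_{\omega}$; since $\omega$ is a non-degenerate symmetric invariant form, Lemma \ref{Fb1} tells us $r_{\omega}$ is symmetric and invariant, hence so is $r+\tau(r)$. The identification $T_{r+\tau(r)} = -\lambda(\omega^{\sharp})^{-1}$ again makes \eqref{Nd3}--\eqref{Nd4} equivalent to the Rota--Baxter relations that $P$ is assumed to satisfy, so Proposition \ref{QF1} implies that $r$ solves the APP-YBE. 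Theorem \ref{Qs} (together with Definition \ref{Qt1}) produces a quasi-triangular anti-pre-Poisson bialgebra $(A,\ast,\circ,\Delta_r,\delta_r)$, and the invertibility of $T_{r+\tau(r)} = -\lambda(\omega^{\sharp})^{-1}$, guaranteed by $\lambda\ne 0$ and the non-degeneracy of $\omega$, promotes it to a factorizable one.

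The routine verifications have been entirely absorbed into the preceding lemmas, so the one delicate point deserving attention is the careful tracking of the identifications $A \simeq A^{*}$: specifically, checking that \eqref{Nd6} yields $\omega^{\sharp} = -\lambda\, T_{r+\tau(r)}^{-1}$ and hence $T_{r+\tau(r)} \omega^{\sharp} = -\lambda\, I$. This is the hinge that turns the abstract identities of Proposition \ref{QF1} into the weight-$\lambda$ Rota--Baxter identities, and conversely, and it is also what ties the invertibility assumption in the factorizable case to the non-vanishing of $\lambda$ in the Rota--Baxter case. Once this identification is in place, the two directions become essentially symmetric translations of each other through Propositions \ref{QF1}, \ref{Fb1} and Lemma \ref{QF2}.
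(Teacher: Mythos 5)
Your proposal is correct and follows essentially the same route as the paper's own proof: both directions hinge on the identification $\omega^{\sharp}=-\lambda T_{r+\tau(r)}^{-1}$, with Lemma \ref{QF2} translating Eq.~(\ref{Fs}) into $r+\tau(r)=-\lambda r_{\omega}$, Lemma \ref{Fb1} handling symmetry and invariance, and Proposition \ref{QF1} converting the APP-YBE into the weight-$\lambda$ Rota--Baxter identities. Your explicit remark that $T_{r+\tau(r)}\omega^{\sharp}=-\lambda I$ turns Eqs.~(\ref{Nd3})--(\ref{Nd4}) into the genuine weight-$\lambda$ relations, and your citation of Theorem \ref{Qs} for quasi-triangularity in the converse, merely spell out steps the paper leaves implicit.
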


\begin{proof} On the one hand, since $(A, \ast,\circ, \Delta_{r},\delta_{r})$ is a factorizable 
anti-pre-Poisson bialgebra, $r+\tau(r)$ is invariant and $T_{r+\tau(r)}$ is a linear isomorphism. 
By Proposition \ref{QF1} and Lemma \ref{QF2}, we get that $(A,\ast,\circ,P,\omega)$
 is a quadratic Rota-Baxter anti-pre-Poisson algebra of weight $\lambda$, where $\omega^{\sharp}=-\lambda T_{r+\tau(r)}^{-1}$. 
 Conversely, assume that $(A,\ast,\circ,P,\omega)$ is a quadratic Rota-Baxter anti-pre-Poisson algebra of weight $\lambda~(\lambda\neq 0)$.
 In the light of Lemma \ref{Fb1},  Lemma \ref{QF2} and Proposition \ref{QF1}, 
 $r+\tau(r)$ is invariant, $T_{r+\tau(r)}=-\lambda (\omega^{\sharp})^{-1}$ is a linear isomorphism
 and $r$ is a solution of the APP-YBE in $(A, \ast,\circ)$. Thus,
  $(A, \ast,\circ, \Delta_{r},\delta_{r})$ is a factorizable anti-pre-Poisson bialgebra.
\end{proof}

By Theorem \ref{Fb0} and Theorem \ref{Fb3}, we have the following chain of correspondences:
\[(A, \star, [ \ , \ ], P, \omega){\overset{\text{Theorem 5.10}}{\longleftrightarrow}} 
(A, \ast,\circ, P, \omega){\overset{\text{Theorem 5.15}}{\longleftrightarrow}}(A, \ast,\circ, \Delta_r, \delta_r).\]

\begin{pro} Let $(A, \ast,\circ,P)$ be a Rota–Baxter anti-pre-Poisson algebra of weight $\lambda$. Then 
$(A\ltimes A^{*},\omega,P-(P^{*}+\lambda I))$
is a quadratic Rota–Baxter anti-pre-Poisson algebra of weight $\lambda$, where the bilinear form $\omega$ on $A\oplus A^{*}$
is given by
\begin{equation*} \omega(x+\zeta,y+\eta)=\langle x,\eta\rangle+\langle y,\zeta\rangle,~~\forall~x,y\in A,~\zeta,\eta\in A^{*}.\end{equation*}
Then $(A\ltimes A^{*}, \Delta_{r},\delta_{r})$
is a factorizable anti-pre-Poisson bialgebra
with $\Delta_{r},\delta_{r}$ defined by Eqs.~(\ref{CB1}) and (\ref{CB2}) with $r$ given by $T_r=P(\omega^{\sharp})^{-1}$.
  Explicitly, assume that $\{e_1,\cdot\cdot\cdot, e_n\}$
is a basis of $A$ and $\{e_{1}^{*},\cdot\cdot\cdot, e^{*}_n\}$
is the dual basis, where $r=\sum_{i}e_{i}^{*}\otimes P(e_{i})-(P+\lambda I)(e_{i})\otimes e_{i}^{*}$.
\end{pro}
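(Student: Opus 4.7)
The strategy is to realize the assertion as an application of Theorem \ref{Fb3} (the converse direction). I first construct a quadratic Rota-Baxter anti-pre-Poisson algebra of weight $\lambda$ on the semidirect product $A \ltimes A^{*}$, and then obtain the factorizable anti-pre-Poisson bialgebra by invoking that theorem. The explicit form of $r$ will then be read off from $T_{r} = P_{D} \circ (\omega^{\sharp})^{-1}$ by working in a dual basis.

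First I would check that $(A \ltimes A^{*}, \omega)$ is a quadratic anti-pre-Poisson algebra, where $A^{*}$ is equipped with the trivial anti-pre-Poisson structure and $A$ acts via the dual representation $(L_{\ast}^{*}+R_{\ast}^{*},-R_{\ast}^{*},R_{\circ}^{*}-L_{\circ}^{*},R_{\circ}^{*})$ from Proposition \ref{Dr}. Proposition \ref{Mp} (with the representation side of $A^{*}$ trivial) gives the anti-pre-Poisson algebra structure on $A \oplus A^{*}$. The bilinear form $\omega$ is non-degenerate and symmetric by construction, and the invariance conditions \eqref{C1} and \eqref{C2} unpack to identities that are exactly the defining relations of the dual representation paired against the duality bracket.

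Next I would verify that $P_{D} := P \oplus \bigl(-(P^{*}+\lambda I)\bigr)$ is a Rota-Baxter operator of weight $\lambda$ on $(A \ltimes A^{*}, \ast, \circ)$. The $A$-component is the hypothesis on $P$. For mixed cases one rewrites the semidirect product multiplications in terms of $L_{\ast}^{*}, R_{\ast}^{*}, L_{\circ}^{*}, R_{\circ}^{*}$ and transposes back to $A$; the Rota-Baxter identity for $P_{D}$ then collapses to the Rota-Baxter identity for $P$ on $A$, exploiting the standard fact that $-(P^{*}+\lambda I)$ is the transpose Rota-Baxter operator on the dual representation (Proposition \ref{Fb2}). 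The compatibility \eqref{Fs} is a short direct calculation: for $X = x+\zeta$, $Y = y+\eta$, one computes
\[
\omega(P_{D}(X),Y) = \langle P(x),\eta\rangle - \langle P(y),\zeta\rangle - \lambda\langle y,\zeta\rangle, \quad \omega(X,P_{D}(Y)) = \langle P(y),\zeta\rangle - \langle P(x),\eta\rangle - \lambda\langle x,\eta\rangle,
\]
whose sum equals $-\lambda\omega(X,Y)$.

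With $(A \ltimes A^{*}, \omega, P_{D})$ established as a quadratic Rota-Baxter anti-pre-Poisson algebra of weight $\lambda$, Theorem \ref{Fb3} yields a factorizable anti-pre-Poisson bialgebra $(A \ltimes A^{*},\ast,\circ,\Delta_{r},\delta_{r})$ with $T_{r} = P_{D}\circ(\omega^{\sharp})^{-1}$. To extract the explicit formula, observe that $\omega^{\sharp}: D \to D^{*}$ acts as the swap between the $A$ and $A^{*}$ factors, so the basis $\{e_{i}\}\cup\{e_{j}^{*}\}$ of $D$ has dual basis $\{e_{i}^{*}\}\cup\{e_{j}\}$ in $D^{*}$, and $(\omega^{\sharp})^{-1}$ swaps these back. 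Hence $T_{r}(e_{i}^{*}) = P(e_{i})$ and $T_{r}(e_{i}) = -(P^{*}+\lambda I)(e_{i}^{*})$. Substituting into $r = \sum_{m} f_{m}\otimes T_{r}(f_{m}^{*})$ and using the transposition identity $\sum_{i} e_{i}\otimes P^{*}(e_{i}^{*}) = \sum_{i} P(e_{i})\otimes e_{i}^{*}$ delivers $r = \sum_{i} e_{i}^{*}\otimes P(e_{i}) - (P+\lambda I)(e_{i})\otimes e_{i}^{*}$. The main obstacle is the verification that $P_{D}$ is Rota-Baxter on the semidirect product, which requires handling both the $\ast$ and $\circ$ multiplications on all mixed arguments, but after transposition each case reduces to the given Rota-Baxter identity on $A$.
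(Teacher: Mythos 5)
Your proposal is correct in substance and follows essentially the same route as the paper's proof: first establish that $(A\ltimes A^{*},\omega,P-(P^{*}+\lambda I))$ is a quadratic Rota--Baxter anti-pre-Poisson algebra of weight $\lambda$ (the paper dismisses this step with ``by direct computations,'' whereas you usefully sketch why each mixed Rota--Baxter case transposes back to the weight-$\lambda$ identity on $A$, and your verification of the compatibility condition \eqref{Fs} is exactly right), then invoke Theorem \ref{Fb3} to obtain the factorizable bialgebra and read off $r$ in the dual basis. One bookkeeping slip in the extraction step: since $\omega^{\sharp}(x+\zeta)=\zeta+x$ under the identification $D^{*}\cong A^{*}\oplus A$, the operator $T_{r}=P_{D}(\omega^{\sharp})^{-1}$ satisfies $T_{r}(e_{i})=P(e_{i})$ and $T_{r}(e_{i}^{*})=-(P^{*}+\lambda I)(e_{i}^{*})$, i.e.\ the two values you state are interchanged; taken literally, your assignments would place the tensor factors of $r$ in the wrong components ($e_{i}\otimes P(e_{i})\in A\otimes A$, for instance). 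With the assignments corrected, your substitution into $r=\sum_{m}f_{m}\otimes T_{r}(f_{m}^{*})$ together with the transposition identity $\sum_{i}e_{i}\otimes P^{*}(e_{i}^{*})=\sum_{i}P(e_{i})\otimes e_{i}^{*}$ does yield the paper's formula $r=\sum_{i}e_{i}^{*}\otimes P(e_{i})-(P+\lambda I)(e_{i})\otimes e_{i}^{*}$, so the error is a transcription slip rather than a conceptual gap. A minor citation note: the ``transpose Rota--Baxter operator on the dual representation'' fact you use is not Proposition \ref{Fb2} (which concerns $-\lambda I-P$ on the same quadratic algebra); it is the direct computation you sketch, which is precisely what the paper leaves implicit.
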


\begin{proof} By direct computations, $(A\ltimes A^{*},\omega,P-(P^{*}+\lambda I))$
is a quadratic Rota-Baxter anti-pre-Poisson algebra of weight $\lambda$.
For all $x\in A$ and $\zeta\in A^{*}$, $\omega^{\sharp}(x+\zeta)=x+\zeta$. By Corollary \ref{Fb3}, there is
 a linear map $T_{r}:A\oplus A^{*}\longrightarrow A\oplus A^{*}$ defined by
\begin{equation*} 
T_{r}(x+\zeta)=(P-(P^{*}+\lambda I))(\omega^{\sharp})^{-1}(x+\zeta)=P(x)-(P^{*}+\lambda I)(\zeta).\end{equation*}
Thus, \begin{small}
\begin{align*}&
\sum_{i,j}\langle r,e_{i}\otimes e_{j}^{*}\rangle=\sum_{i,j}\langle T_{r}(e_{i}), e_{j}^{*}\rangle=\sum_{i,j}\langle P(e_{i}), e_{j}^{*}\rangle
\\&\sum_{i,j}\langle r,e_{i}^{*}\otimes e_{j}\rangle=\sum_{i,j}\langle T_{r}(e_{i}^{*}), e_{j}\rangle=-\sum_{i,j}\langle (P^{*}+\lambda I)(e_{i}^{*}), e_{j}\rangle=-\sum_{i,j}\langle e_{i}^{*}, (P+\lambda I)(e_{j})\rangle.\end{align*}\end{small}
It follows that $r=\sum_{i}e_{i}^{*}\otimes P(e_{i})-(P+\lambda I)(e_{i})\otimes e_{i}^{*}$.
\end{proof}

 Building upon Proposition~\ref{Fb2} and Theorem~\ref{Fb3}, note that a quadratic Rota-Baxter anti-pre-Poisson algebra 
 $(A, \ast, \circ, P, \omega)$ of non-zero weight corresponds to a factorizable anti-pre-Poisson bialgebra. 
 It follows that $(A, \ast, \circ, -\lambda I -P, \omega)$ corresponds to such a bialgebra. 
 Moreover, by Proposition~\ref{Qf} and Theorem~\ref{Fb3}, 
 if $(A, \ast, \circ, \Delta_{r}, \delta_{r})$ is a factorizable anti-pre-Poisson bialgebra, 
 then $(A, \ast, \circ, \Delta_{\tau(r)}, \delta_{\tau(r)})$ 
 also induces a quadratic Rota–Baxter anti-pre-Poisson algebra of weight $\lambda$. Indeed, we have

\begin{pro}
Let $(A, \ast,\circ, \Delta_{r},\delta_{r})$ be a factorizable 
anti-pre-Poisson bialgebra which corresponds to a quadratic Rota-Baxter anti-pre-Poisson algebras of non-zero weight $\lambda$. 
Then the factorizable 
anti-pre-Poisson bialgebra $(A, \ast,\circ, \Delta_{\tau(r)},\delta_{\tau(r)})$ corresponds to the 
quadratic Rota-Baxter anti-pre-Poisson algebra $(A,\ast,\circ,-\lambda I-P,\omega)$ of non-zero weight $\lambda$. 
	
\end{pro}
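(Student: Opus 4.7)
The plan is to chase through the correspondence of Theorem~\ref{Fb3} applied to both $r$ and $\tau(r)$, and then use the elementary identity $T_{r+\tau(r)}=T_r+T_{\tau(r)}$ to identify the second Rota-Baxter operator as $-\lambda I-P$. No deep new ingredient should be required; this is essentially a bookkeeping argument.

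First I would recall from Theorem~\ref{Fb3} that the factorizable bialgebra $(A,\ast,\circ,\Delta_r,\delta_r)$ corresponds to the quadratic Rota-Baxter anti-pre-Poisson algebra $(A,\ast,\circ,P,\omega)$ of weight $\lambda$ in which the data are coupled by
\[
T_r=P\circ(\omega^{\sharp})^{-1},\qquad \omega^{\sharp}=-\lambda\,T_{r+\tau(r)}^{-1}.
\]
In particular $P=T_r\circ\omega^{\sharp}$ and $T_{r+\tau(r)}=-\lambda(\omega^{\sharp})^{-1}$.

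Next, by Proposition~\ref{Qf}, $(A,\ast,\circ,\Delta_{\tau(r)},\delta_{\tau(r)})$ is again a factorizable anti-pre-Poisson bialgebra, so Theorem~\ref{Fb3} applies to it as well: it corresponds to a quadratic Rota-Baxter anti-pre-Poisson algebra $(A,\ast,\circ,P',\omega')$ of weight $\lambda$ with
\[
T_{\tau(r)}=P'\circ(\omega'^{\sharp})^{-1},\qquad \omega'^{\sharp}=-\lambda\,T_{\tau(r)+r}^{-1}.
\]
Since $\tau(r)+r=r+\tau(r)$, we get $\omega'^{\sharp}=\omega^{\sharp}$, hence $\omega'=\omega$. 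It only remains to identify $P'$.

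For this, write $T_{\tau(r)}=T_{r+\tau(r)}-T_r=-\lambda(\omega^{\sharp})^{-1}-T_r$ and compose with $\omega^{\sharp}$ on the right:
\[
P'=T_{\tau(r)}\circ\omega^{\sharp}=-\lambda I-T_r\circ\omega^{\sharp}=-\lambda I-P.
\]
This is exactly the statement. The only mild subtlety, and the place where I would be careful, is making sure that the identification $T_{r+\tau(r)}=T_r+T_{\tau(r)}$ is used in the correct direction (it follows directly from the definition $T_s(\zeta)=\langle s,\zeta\otimes\cdot\rangle$, which is linear in $s$); after that the whole argument is an operator-level computation with no hidden compatibility to check, since Proposition~\ref{Fb2} already tells us that $-\lambda I-P$ is automatically a Rota-Baxter operator satisfying the quadratic compatibility~\eqref{Fs}, and Theorem~\ref{Fb3} guarantees that this Rota-Baxter datum and the bialgebra datum determine each other uniquely.
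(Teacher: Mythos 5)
Your proposal is correct and follows essentially the same route as the paper's own proof: apply Proposition~\ref{Qf} and Theorem~\ref{Fb3} to $\tau(r)$, observe $\omega'^{\sharp}=-\lambda T_{r+\tau(r)}^{-1}=\omega^{\sharp}$ since $r+\tau(r)$ is unchanged under $\tau$, and compute $P'=T_{\tau(r)}\omega^{\sharp}=(T_{r+\tau(r)}-T_r)\omega^{\sharp}=-\lambda I-P$. The only cosmetic difference is that the paper expands the operator identity pointwise rather than at the level of maps; the content is identical.
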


\begin{proof} According to
 Proposition \ref{Qf} and Theorem \ref{Fb1}, there is  a
quadratic Rota-Baxter anti-pre-Poisson algebra $(A,\ast,\circ,P^\prime,\omega^\prime)$ of non-zero weight $\lambda$
 corresponds to the factorizable anti-pre-Poisson bialgebra $(A, \ast,\circ, \Delta_{\tau(r)},\delta_{\tau(r)})$.
By Theorem \ref{Fb3},
\begin{equation*} 
\omega^{\prime}(x,y)=-\lambda\langle T_{r+\tau(r)}^{-1}(x),y \rangle=\omega(x,y).\end{equation*}
Using Eqs.~(\ref{Nd2}) and (\ref{Nd6}), 
\begin{small}
\begin{align*} 
P^{\prime}(x)= T_{\tau(r)}{\omega^{\prime}}^{\sharp}(x)=T_{\tau(r)}\omega^{\sharp}(x)
&=-\lambda T_{\tau(r)}T_{r+\tau(r)}^{-1}(x)=\lambda (T_{r}-T_{r+\tau(r)})T_{r+\tau(r)}^{-1}(x)
\\&=\lambda T_{r}T_{r+\tau(r)}^{-1}(x)-\lambda (x)
=-T_{s}\omega^{\sharp}(x)-\lambda (x)=-P(x)-\lambda (x)
.\end{align*}\end{small}
Thus, the factorizable 
anti-pre-Poisson bialgebra $(A, \ast,\circ, \Delta_{\tau(r)},\delta_{\tau(r)})$ yields a
quadratic Rota-Baxter anti-pre-Poisson algebra $(A,\ast,\circ,-\lambda I-P,\omega)$ of non-zero weight $\lambda$.
Analogously, the converse part holds.
\end{proof}


\begin{center}{\textbf{Acknowledgments}}
\end{center}
This work was supported by the Natural Science
Foundation of Zhejiang Province of China (LY19A010001), the Science
and Technology Planning Project of Zhejiang Province
(2022C01118).

\begin{center} {\textbf{Statements and Declarations}}
\end{center}
 All datasets underlying the conclusions of the paper are available
to readers. No conflict of interest exits in the submission of this
manuscript.


\end {document}